\documentclass[preprint]{imsart}

\usepackage{color}
\usepackage{cancel}
\RequirePackage[OT1]{fontenc}
\RequirePackage{amsthm,amsmath}
\RequirePackage[numbers]{natbib}
\RequirePackage[colorlinks,citecolor=blue,urlcolor=blue]{hyperref}
  

\usepackage{amsfonts}
\usepackage{amsmath,color}
\usepackage{amssymb}
\usepackage{url}
\usepackage{latexsym}
\usepackage{amsthm}  
 

 \hoffset= - 1.0 cm  \voffset= - 0.1 cm \setlength{\textheight}{9.05       
in} \setlength{\topmargin}{- 0.4 in}  
\setlength{\oddsidemargin}{40pt}
\setlength{\evensidemargin}{\oddsidemargin} \flushbottom
 \setlength{\textwidth}{6.05 in}

\setcounter{MaxMatrixCols}{30}
\providecommand{\U}[1]{\protect\rule{.1in}{.1in}}

\newtheorem{theorem}{Theorem}

\newtheorem{hypothesis}{Hypothesis}

\newtheorem{lemma}[theorem]{Lemma}

\newtheorem{proposition}[theorem]{Proposition}
\newtheorem{remark}[theorem]{Remark}

\def\P{\mathbb P}
\def\E{\mathbb E}
\def\N{\mathbb N}
\def\Q{\mathbb Q}
\def\R{\mathbb R}
\def\ds{\displaystyle}
\def\L{\mathcal L}

\def\A{\mathcal A}
\def\H{\mathcal  H}
\def\W{\mathcal  W}

\begin{document}

\begin{frontmatter}
\title{ Correction to  ``An optimal  regularity result
 for 
Kolmogorov equations  
  and weak uniqueness 
  for  some   critical 
  SPDEs'' } 
\runtitle{Critical SPDEs} 
\begin{aug}
\author{Enrico Priola  
(\thanks{This paper is a correction of  \cite{priolaAOP}  
 which deals with SPDEs like \eqref{qq1} where $F : H \to H$
  is only {\sl continuous  with at most linear growth}. Since there is a mistake in the proof of the regularity lemma   \cite[Lemma 6]{priolaAOP} (see in particular the change of variable at the end of page 1319)
  it remains an open problem if the weak  uniqueness result of \cite[Theorem 1]{priolaAOP} holds  under the sole hypothesis of continuity  on $F$ plus  growth condition (cf.  Remark \ref{peccato}).  
To prove  weak uniqueness for \eqref{qq1} we replace   the continuity condition on $F$ with the  stronger assumption  that {\sl $F$ is locally H\"older continuous}.  
  Moreover, 
    \cite[Theorem 7]{priolaAOP} is replaced by 
      Theorem \ref{ss13}, which is an optimal regularity result in  H\"older spaces and further  \cite[Sections 5.2 and 5.3]{priolaAOP} are replaced by Section 5.2. Then we basically follow the lines of   \cite{priolaAOP}.  
}}) 

\runauthor{E. Priola} 

\address{    Dipartimento di  
 Matematica,  
 \\
Universit\`a  di Pavia, Pavia, Italy  
\\
enrico.priola@unipv.it
}

\end{aug}

\vskip 0.5 cm

\begin{abstract}
   We  show uniqueness in law for  the  critical  SPDE  
 \begin{eqnarray} \label{qq1}
 dX_t = AX_t dt + (-A)^{1/2}F(X(t))dt +  dW_t,\;\;   
 X_0 =x \in H, 
\end{eqnarray} 
where   $A$ $ : \text{dom}(A) \subset    H \to H$ is a  negative definite  self-adjoint operator on  a separable Hilbert space $H$ having $A^{-1}$ of trace class  and  $W$ is 
   a cylindrical Wiener process on  $H$. 
 Here 
{  $F: H \to H $  can be   {\sl  locally H\"older continuous 
  with at most linear growth }
  (some functions    
  $F$ which grow more than linearly 
    can  also be considered)}.  This leads to new uniqueness results for 
 generalized stochastic Burgers equations and for three-dimensional   stochastic Cahn-Hilliard type equations 
  which have interesting  applications.  
  We do not know if  uniqueness  holds under the sole assumption of continuity of $F$ plus  growth condition 
   as stated in Priola \cite{priolaAOP}.    
  To get weak uniqueness we use an infinite dimensional localization principle and   an  optimal regularity result for   
the   Kolmogorov equation   $ \lambda u -  L u = f$  
 associated to the SPDE when  $F = z \in H$  is constant   and $\lambda >0$.
 This optimal    result is  
similar  to a theorem  of   Da Prato \cite{D2}.  
\end{abstract}

\begin{keyword}[class=MSC]
\kwd[Primary ]{60H15}
\kwd{35R60}
\kwd[; secondary ]{35R15}
\\ 
\end{keyword}

 \begin{keyword}
\kwd{Critical SPDEs}
\kwd{Weak  uniqueness in infinite dimensions}
\kwd{Optimal regularity for Kolmogorov 
  operators}
\end{keyword}     
 
\end{frontmatter}

 \startlocaldefs
\numberwithin{equation}{section}
\theoremstyle{plain}
\newtheorem{thm}{Theorem}[section]
\endlocaldefs  
 
\section{Introduction}

 
We establish weak uniqueness (or uniqueness in law)    for critical  stochastic evolution equations like 
\begin{equation} \label{sde}
dX_t = AX_t dt + (-A)^{1/2}F(X_t)dt +  dW_t, \;\; X_0 =x \in H.
\end{equation}   
Here $H$ is a separable Hilbert space, $ A: D(A) \subset H \to H$ is a 
self-adjoint operator of negative type 
such that the inverse $A^{-1}$ is of trace class (cf. Section 1.1 and see also Remark \ref{serve}), $W = (W_t)$ is a cylindrical Wiener process on  $H$, cf.  \cite{DZ1},    \cite{DZ},  \cite{hairer}     and the references therein. We    assume that
there exists $\theta \in (0,1)$ such that 
\begin{eqnarray} \label{lin1} 
F: H \to H \;\; \text{is locally $\theta$-H\"older continuous }
 \text{and verifies}   
\;\; |F(x)|_H \le C_F (1 + |x|_H),  \;\; x \in H,
\end{eqnarray}   
   for some  constant $C_F >0$. The first assumption means that 
 {\sl $F$ is $\theta$-H\"older continuous  on each bounded set of $H$.} 
    This allows to prove  both weak existence and weak uniqueness for \eqref{sde}.
  
 
   Assumption \eqref{lin1} can be   relaxed if we assume  that  weak existence holds for \eqref{sde}; 
  see Section 7 
  where we 
  consider  $F$ which is   only  locally  $\theta$-H\"older continuous   
 without imposing the  growth condition.

\smallskip  
 Using the analytic  semigroup   $(e^{tA})$   generated by $A$ 
 we consider   mild solutions to \eqref{sde},  i.e., 
\begin{equation*} 
  \label{mq1} 
X_{t}=e^{tA}x+\int_{0}^{t}(-A)^{1/2}e^{(  t-s)  A}F  (
X_{s})
ds
+\int_{0}^{t}e^{(  t-s)  A}dW_{s},\;\;\; t \ge 0
\end{equation*}  
(cf. Section 1.1) and   prove
  the following result.
\begin{theorem}
\label{base}  
 Under Hypothesis \ref{d1}  and assuming \eqref{lin1}, for any $x \in H$,  there exists a weak mild solution defined on some filtered probability space.  
 Moreover uniqueness in law (or weak uniqueness) holds for \eqref{sde}, for any $x \in H$.     
\end{theorem}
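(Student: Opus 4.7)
The plan is to treat weak existence and weak uniqueness separately. For weak existence, I would approximate $F$ by a sequence of Lipschitz maps $F_n : H \to H$ obtained by composing $F$ with the projection $\pi_n$ onto the span of the first $n$ eigenvectors of $A$ and truncating/mollifying in finite dimensions. For each $F_n$, classical SPDE theory yields a mild solution $X^n$. The linear growth hypothesis in \eqref{lin1} gives uniform bounds on $\E |X_t^n|_H^2$, and the smoothing properties of $(e^{tA})$ together with factorization techniques (exploiting that $A^{-1}$ is trace class) give tightness of $\mathcal L(X^n)$ in $C([0,T]; H)$. A Skorokhod representation argument then extracts an a.s.\ convergent subsequence on some filtered probability space, and the local H\"older continuity of $F$ combined with the uniform moment bounds allows one to pass to the limit in the mild formulation, producing the desired weak mild solution.

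For weak uniqueness, I would follow a Kolmogorov-equation strategy combined with an infinite-dimensional localization principle. The key input is the optimal regularity result Theorem \ref{ss13}, which, for each constant $z \in H$, provides sharp H\"older estimates on the solution $u$ of $\lambda u - L_z u = f$, where $L_z$ is the generator associated with the Ornstein--Uhlenbeck equation with constant drift $(-A)^{1/2} z$. The crucial point is that the estimate controls $(-A)^{1/2} Du$ in a H\"older-type space, which is exactly the regularity needed to give meaning, via It\^o's formula, to $u(X_t)$ when $X$ solves \eqref{sde}.

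The uniqueness argument then proceeds in a Stroock--Varadhan style. Given two weak mild solutions $X$ and $\tilde X$ starting at the same $x \in H$, one introduces stopping times $\tau_R = \inf\{t : |X_t|_H \ge R\}$ (and analogously for $\tilde X$) and works on small time intervals. On $[0, t \wedge \tau_R]$, one freezes the drift at $z = F(x_0)$ for a reference point $x_0$, so that the equation is a perturbation of the constant-drift Ornstein--Uhlenbeck equation by a term of the form $(-A)^{1/2}\bigl(F(X_s) - z\bigr)$, whose norm is controlled by the local H\"older seminorm of $F$ times $|X_s - x_0|_H^{\theta}$. Applying It\^o's formula to $u(X_t)$, where $u$ solves $\lambda u - L_z u = f$ for $f$ in a rich test class, and using the sharp bound on $(-A)^{1/2} Du$ furnished by Theorem \ref{ss13}, the H\"older remainder is absorbed (for $|x - x_0|_H$ small and $t$ small) into the left-hand side, yielding $\E f(X_t) = \E f(\tilde X_t)$. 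This gives equality of the one-dimensional marginals at small times; a standard Markov-type iteration in $t$, together with varying $x$, then upgrades this to equality of all finite-dimensional distributions, hence uniqueness in law.

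The main obstacle is the critical nature of the drift: $(-A)^{1/2} F$ is genuinely unbounded as a map $H \to H$, so neither Girsanov nor a direct fixed-point argument applies, and the classical Kolmogorov-equation regularity ($Du$ bounded) is insufficient. One really needs the optimal regularity on $(-A)^{1/2} Du$, which is the technical heart of the argument, as well as a careful localization in $H$ (based on hitting times of balls) to exploit the merely local H\"older continuity of $F$. Handling the interplay between the infinite-dimensional localization, the H\"older modulus $\theta$, and the critical factor $(-A)^{1/2}$ inside the It\^o correction is the delicate step; everything else is essentially machinery.
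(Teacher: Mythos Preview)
Your existence sketch is essentially the paper's approach: approximation, uniform moment bounds from linear growth, tightness via compactness of the semigroup, Skorokhod, and passage to the limit. The paper approximates by replacing $(-A)^{1/2}$ with $(-A_m)^{1/2}=(-A\pi_m)^{1/2}$ rather than mollifying $F$, but this is a minor technical choice.

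For uniqueness, your outline has the right architecture (Theorem~\ref{ss13}, freezing the drift at $z$, Stroock--Varadhan localization, stopping times for unbounded $F$), but there is a genuine gap at the It\^o step. The function $u^{(z)}$ solving $\lambda u - L^{(z)}u = f$ is only known to be in $C^1_b(H)$ with $(-A)^{1/2}Du^{(z)}\in C^\theta_b(H,H)$; it is \emph{not} $C^2$, so you cannot apply It\^o's formula to $u^{(z)}(X_t)$ directly. The paper avoids this entirely: it recasts weak mild solutions as solutions of a martingale problem for the operator $\L$ on the space $C^2_{cil}(H)$ of cylindrical test functions (Proposition~\ref{ser}), derives a resolvent identity $G^{\lambda,x_0}g = R^{(z)}_\lambda g(x_0) + G^{\lambda,x_0}(S R^{(z)}_\lambda g)$ with $Sg=\langle F-z,(-A)^{1/2}Dg\rangle$ first for cylindrical $g$, and then extends it to all $g\in C^\theta_b(H)$ by approximation, using the pointwise stability part of Theorem~\ref{ss13}(ii). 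The contraction $\|S R^{(z)}_\lambda g\|_{C^\theta}\le \tfrac12\|g\|_{C^\theta}$ (Lemma~\ref{stima1}) then forces the difference of the two resolvent functionals to vanish, and Laplace-transform uniqueness gives equal one-dimensional marginals.

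Your ``Markov-type iteration'' to pass from marginals to full law is also not available as stated: a priori you do not know the solutions are Markov. The paper instead invokes the general martingale-problem result (Theorem~\ref{ria}) that marginal uniqueness for every initial point implies law uniqueness. Likewise, the localization and the handling of unbounded $F$ are carried out at the level of the martingale problem (Theorems~\ref{uni1} and~\ref{key}), not via ad hoc stopping and absorption estimates. So while your strategic picture is correct, the execution requires the martingale-problem framework and the density/approximation argument in $C^\theta_b(H)$ rather than a direct It\^o formula applied to $u^{(z)}$.
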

We do not know if weak uniqueness holds, for any $x \in H$, when $F: H \to H $ is only continuous with at most linear growth. It is an open problem if such more general result 
holds.   This general result is stated in Theorem 1 of \cite{priolaAOP} but there is a mistake in the proof of Lemma 6 of \cite{priolaAOP};  see  Remark \ref{peccato} for more details.  
  On the other hand 
existence  of weak solutions holds only assuming  continuity of $F$ with at most linear growth; see Section  4. 
 
As in \cite{priolaAOP} examples of SPDEs of the form \eqref{sde} are considered in Section 2 (replacing the continuity of  the coefficients considered in \cite{priolaAOP} with  a  H\"older type condition). In  particular, we can deal with  stochastic Burgers-type equations like  
 $$ 
d u (t, \xi)=   \frac{\partial^2}{\partial   \xi^2}  u(t, \xi) {   dt} +   \frac{\partial }{\partial \xi} {  h( } u(t, \xi)){   dt}  + dW_t(\xi), \;\; u(0, \xi) = u_0(\xi), \;\;\; \xi \in (0,\pi), 
$$ 
 with suitable boundary conditions (cf. \cite{Gy}, \cite{D2} and \cite{RS}) and stochastic Cahn-Hilliard equations (cf. \cite{EM}, \cite{DD}, \cite{NC}, \cite{ES}) like   
$$ 
d u (t, \xi)= -  \triangle^2_{\xi} u(t, \xi){   dt} + \triangle_\xi {  h( }u(t, \xi)){   dt} + dW_t(\xi), \;\;t>0,\;\;  u(0, \xi) = u_0(\xi)\;\; \text{on $G$}, 
$$
 with suitable boundary conditions ($G \subset \R^3$ is a regular bounded open set).
  We  prove {\sl weak well-posedness} for both SPDEs when     $h = h_1 + h_2$  where  $h_1$ is $\theta$-H\"older continuous, for some $\theta \in (0,1)$,  and $h_2$ is Lipschitz continuous   (see also the end of   Section 2.0.1 where we  consider different non-local nonlinearities like $h(u) = u \cdot g(|u|_H)$).  
 Assumptions of Theorem \ref{base}   do not cover classical stochastic  Burgers equations (i.e., ${   h( }u) = \frac{u^2}{2}$) and stochastic Chan-Hiliard equations (i.e., ${   h( }u) = u^3 -u$) for which strong existence and uniqueness can be proved by different methods (cf. \cite{brezniak} and \cite{DD}).
  On the other hand, in Section 7 we  consider   some  locally H\"older continuous  perturbations of classical Burgers equations (cf. Propositions \ref{dap} and \ref{well1}).

 We  mention   \cite{Za}   
  and \cite{bass1}   where  weak uniqueness has been investigated for stochastic evolution equations   
with H\"older continuous  coefficients and 
non-degenerate  multiplicative noise (the diffusion coefficient must be sufficiently close to  
a fixed operator).   
  Such papers do not cover our main result. Indeed both in \cite{Za} and  in \cite{bass1} the term $(-A)^{1/2} F$ is replaced by $F $ 
   which   is  
   $\theta$-H\"older continuous and bounded (cf.    Hypothesis 2 in \cite{Za} and hypotheses (5.4) and (5.5) in Theorem 5.6 of \cite{bass1})
   On the other hand, 
 weak uniqueness for \eqref{sde} follows by  Section 4 of  \cite{D2} assuming  that $F$ is $\theta$-H\"older continuous and bounded, $\theta \in (0,1)$,  
$
 \text{with} \; \| F\|_{C^{\theta}} \; $ $\text{small enough. }
$
     
     To establish weak uniqueness  for \eqref{sde} we first  prove   
an  optimal regularity result 
 for  the following infinite-dimensional Kolmogorov equation
 \begin{equation} \label{sdd}
  \lambda u -Lu -\langle z, (-A)^{1/2} D u 
\rangle= f,   
\end{equation} 
with $z \in H$; see Theorem \ref{ss13} which  is   similar to a result proved in 
\cite{D2}.
In \eqref{sdd}  $\lambda >0$, $f: H \to \R$ is a given $\theta$-H\"older continuous and  bounded function (i.e., $f \in C_b^{\theta}(H)$)   and 
 $L$ is  an infinite-dimensional Ornstein-Uhlenbeck operator   which is  formally given by  
\begin{equation*} 
 L g(x) = \frac{1}{2} \mbox{Tr}(D^2 g(x)) +
 \langle Ax ,  Dg (x)\rangle, \;\; x \in D(A),
\end{equation*}
where $D g(x)$ and $D^2 g(x)$ denote respectively the
first and second Fr\'echet   derivatives of a regular function $g$ at $x \in H$  and $\langle \cdot, \cdot \rangle$ is the inner product in $H $ {  
(for regularity results concerning $L$  when $H = \R^n$ see \cite{lorenzi} and the references therein).} 
According to Chapter 6 in \cite{DZ1} (see also \cite{D2} and     \cite{DFPR})  we investigate properties of  the bounded  solution $u^{(z)} : H \to \R$,
 given by 
 \begin{equation}\label{dd9}  
u^{(z)}(x)= \int_0^{\infty} e^{-\lambda t } P_t^{(z)} f(x)dt, \;\;\; x \in H;
 \end{equation}
here $(P_t^{(z)})$ is an Ornstein-Uhlenbeck type semigroup associated to $L + \langle z, (-A)^{1/2} D u 
\rangle $, see \eqref{gt}.   When $z=0$ we write $P_t^{(0)} = P_t $ and we find the well-known Ornstein-Uhlenbeck  semigroup:

$
P_tf(x) $ $= \E[f(Z_t^x)] $ $ = \E \Big[f(e^{tA} x$ $ + \int_0^t e^{(t-s)A}  dW_s)\Big]
$;    $Z^x$ denotes the Ornstein-Uhlenbeck process which  solves \eqref{sde}
 when $F=0$ (cf. Section 1.2).    

     It easy to prove that  $u^{(z)} \in C^1_b(H) $, i.e., $u^{(z)}$ is continuous and bounded with  the first {   Fr\'echet} derivative $Du^{(z)} : H \to H$ which is continuous and bounded.   
  In  Theorem \ref{ss13}   we prove   that  $D u^{(z)} (x) \in D((-A)^{1/2})$, for any $x \in H,$ $z \in H$,  and there exists  constants $ M_{\theta} >0 $ and $ C_{\theta}(\lambda) >0 $ (independent of $z$ and $f$)
 such that 
\begin{equation}\label{ss9}     
 \sup_{x \in H}\, | (-A)^{1/2} Du^{(z)}(x) |_{H} \, \le  C_{\theta}(\lambda) \, \| f \|_{C^{\theta}}, 
 \;\;\;
  [ (-A)^{1/2} Du^{(z)} ]_{C^{\theta}} \, \le  M_{\theta} \| f \|_{C^{\theta}}, 
\end{equation} 
with $\lim_{\lambda \to \infty} C_{\theta}(\lambda) =0$ (here $ [ \cdot ]_{C^{\theta}} $ stands for the H\"older seminorm, see \eqref{sx}).   
  The fact that 
in \eqref{ss9} the  constant $M_{\theta}$ is independent of $\lambda$ and the fact that
 $C_{\theta}(\lambda) \to 0$ are  important in the proof of Lemma \ref{stima1}; this   allows to perform the localization principle. 
 Estimates like \eqref{ss9} have not been proved in recent papers on Schauder estimates in infinite dimensions, see in particular 
 \cite{bass}, \cite{CL},   \cite{LR} and the references therein.   Bounds similar to \eqref{ss9} are given  in Theorem 3.3 of \cite{D2};  we  improve the estimates in \cite{D2}
  clarifying   the  dependence of the constants on $\lambda$ 
  (see the remarks before  Theorem \ref{ss13}).
  
  {    The  bound 
  \begin{equation} \label{ww5}
  \begin{array}{l}
   \sup_{x \in H} | (-A)^{1/2} D P_t^{(z)} f (x) |
    = 
\| (-A)^{1/2} D P_t^{(z)} f\|_0 \sim \frac{ c_2}{t} \| f\|_0, \;\;\text{as} \;\;  t \to 0^+
\end{array}  
\end{equation}
 (see \eqref{vfg})  
 containing 
 the singular term $\frac{1}{t}$ 
 suggests  
 that \eqref{ss9}
 cannot be    improved
 replacing $(-A)^{1/2}$ by  $(-A)^{\gamma}$, $\gamma \in (\frac{1}{2}, 1)$; 
 see  also 
 Chapter 6 of \cite{DZ1} and Remark \ref{mai}.}  
  

When $z=0$ we can mention  related optimal   regularity results   in $L^p(H, \mu)$-spaces with respect to the Gaussian invariant measure $\mu$ for $(P_t)$ (cf. Section 3 of \cite{CG1}):  
      \begin{equation}  \label{q33}
\|(-A)^{1/2}D u\|_{L^{p}(\mu)} \le C_p \|f\|_{L^p(\mu)}.    
\;\;\; 1 < p < \infty.
\end{equation}
 When   $f \in L^2(\mu)$ the fact that  the estimate  $\|(-A)^{1/2}D u\|_{L^{2}(\mu)} \le C_2 \|f\|_{L^2(\mu)}$ is sharp follows by   
  Proposition 10.2.5 in \cite{DZ1}.  

We stress that for $p=\infty$ in general  the previous estimate \eqref{q33}  does not hold in infinite dimensions (cf. 
 Remark \ref{peccato}). A counterexample is given in \cite{doleraP} when $(P_t)$ is  associated to a  stochastic heat equation in one dimension.

Concerning the SPDE \eqref{sde}  we first prove   the weak existence in Section 4 (see also Remark \ref{sd}).
To this purpose we adapt a compactness argument already used in \cite{GG}  (see also Chapter 8 in \cite{DZ}).
The proof of the uniqueness part of Theorem \ref{base} is more involved and it is done in various steps (see Sections 5 and 6). In the case when $F \in C_b^{\theta}(H,H)$ we first  consider   equivalence between mild solutions and solutions to the martingale problem  of Stroock and Varadhan \cite{SV79}     (cf. Section 5.1). This allows to use some  uniqueness results available for the martingale problem (cf. Theorems \ref{ria}, \ref{uni1} and \ref{key}).  On this respect we point out that an infinite-dimensional generalization of the martingale problem is   given in Chapter 4 of \cite{EK}. 
  
   In Section 5.2 we prove   weak uniqueness when $F \in C^{\theta}_b(H,H)$  assuming an additional condition. More precisely, we show that there exists a constant ${\tilde C_0}>0$   such that if  $F \in C^{\theta}_b(H,H)$ verifies 
  \begin{gather} \label{aa}
\sup_{x \in H} | F(x)- z |_H =  \| F- z \|_0 < {\tilde C_0}
\end{gather}
 for some $z \in H$ then weak uniqueness holds for  \eqref{sde} for any initial condition $x \in H$. 
  Note that  ${\tilde C_0}$ is a constant   small enough, depending on $\theta $ and $\|F \|_{C^{\theta}}$.  
  
  Estimate  \eqref{ss9} is needed   in order to prove that 
 \begin{gather} \label{qgg}
 \| \langle F - z, (-A)^{1/2} D u^{(z)} 
\rangle \|_{C^{\theta}} \le \frac{1}{2} \| f\|_{C^{\theta}},\;\; \; f \in C_b^{\theta}(H), 
\end{gather}   
    for $\lambda$ large enough if  $F$  verifies \eqref{aa}
    (see Lemma \ref{stima1}).  
    We obtain weak uniqueness using \eqref{qgg} and adapting   an argument  
    used  in finite dimension in \cite{SV79} and \cite{IW} (see the proof of Theorem 3.3 in \cite{IW}). 
    This argument is simpler than the other approach to get uniqueness  passing through the study of the  equation 
 $ 
  \lambda u -Lu -\langle z, (-A)^{1/2} D u 
\rangle$ $= f  + $ $\langle F - z, (-A)^{1/2} D u 
\rangle $ 
(cf. Sections 5.2 and 5.3 in \cite{priolaAOP}).

  In Section 5.3 we prove uniqueness in law when $F \in C_b^{\theta} (H,H)$ (removing condition \eqref{aa}). To this purpose we  adapt the localization principle which has been introduced  in \cite{SV79} (cf. Theorem \ref{uni1}).  
  In Section 6 we complete the proof of Theorem \ref{base}, showing weak     uniqueness under  \eqref{lin1}. To this purpose 
   we truncate $F$ and 
   prove uniqueness  for the martingale problem up to a stopping time (cf. Theorem  \ref{key}). {  Section 7 considers  the more general case of $F$ which is only locally $\theta$-H\"older continuous    without imposing a growth condition.}

  We finally mention 
  recent papers 
 which investigate  pathwise uniqueness for  SPDEs with additive noise like \eqref{sde} 
 when  $(-A)^{1/2}F$ is replaced by a measurable drift   $F$ 
 (cf. \cite{DFPR}, \cite{DFRV}  and  see also \cite{mytnik} for the case of semilinear stochastic heat equations). 
  In \cite{DFPR} and  \cite{DFRV} pathwise uniqueness holds for $\mu$-a.e.  $x \in H$. It is still not clear if  pathwise uniqueness holds, for any initial  $x \in H$, when   $F \in C_b(H,H)$.   
 On the other hand if $F \in C_b^{\theta}(H,H)$ then pathwise uniqueness holds, for any $x \in H$; see  \cite{DF}. 

 \begin{remark} \label{peccato} {\em As we say before in \cite[Theorem 1]{priolaAOP} it is claimed weak uniqueness for \eqref{sde}, for any  $x \in H$, assuming only that $F: H \to H$ is continuous with at most a linear growth. Actually we do not know if this result holds or not. 
In fact the proof of \cite[Theorem 1]{priolaAOP}  uses    \cite[Theorem 7]{priolaAOP} which 
 shows in particular that 
 there exists $C >0$, independent of $f$ and $z$, such that 
\begin{equation}\label{magari}         
 \sup_{x \in H}\, | (-A)^{1/2} Du^{(z)} (x)|_H \, \le C\,  \sup_{x \in H}| f(x)|,\;\; f \in C_b^1(H)
\end{equation}
 ($u^{(z)}$ is defined in \eqref{dd9}).  This estimate corresponds to the case $p =\infty$ of \eqref{q33}.  The proof of   \eqref{magari} is  based on \cite[Lemma 6]{priolaAOP} but there is a mistake in the proof of such lemma (see in particular the change of variable at the end of page 1319 in \cite{priolaAOP}). On the other hand, a counterexample given in  \cite{doleraP} shows that in general the  $L^{\infty}$-bound  \eqref{magari}   fails to hold in  infinite dimensions even with $z=0$.  
  Theorem 1 in \cite{priolaAOP} could be true with a different proof.
   } 
\end{remark}

\begin{remark} {\em  If we replace $(-A)^{1/2} F$  in \eqref{sde} with $(-A)^{1/2 - \epsilon}F$ with $\epsilon 
\in (0, 1/2]$ then following    Sections 5 and 6  one could  prove uniqueness in law for $F: H \to H$  continuous with at most a linear growth. To this purpose one can use that for any $x \in H$, $f \in B_b(H),$   one   has $ D u^{(z)}(x) \in D( (-A)^{1/2 - \epsilon})$ and 
 $$  
\|   (-A)^{1/2 - \epsilon} D u^{(z)}  \|_{0} \le {c_{\epsilon}}   \, \| f \|_0 
 $$
(this follows by the estimate $\|   (-A)^{1/2 - \epsilon} D P_t^{(z)} f \|_{0} $ $\le \frac{C_{\epsilon}} {t^{1-\epsilon}} \| f \|_0,$ $ t>0,$ which can be obtained  in the same way we get     \eqref{vfg}).  However such assumption  excludes the  examples of Sections 2 and 7.} 
\end{remark}

\begin{remark} \label{mai}   {\em      It is not clear if the   uniqueness result holds for \eqref{sde} when $(-A)^{1/2}$ is replaced by $(-A)^{\gamma}$, $\gamma \in (1/2,1)$.
 We believe that  for  $\gamma \in (1/2,1)$ there  should exist a $\theta$-H\"older continuous and bounded drift $F_{\gamma}: H \to H$, $\theta \in (0,1)$,  and $x_{\gamma} \in H$ such that weak uniqueness fails for  
  $dX_t = AX_t dt $ $+ (-A)^{\gamma}F_{\gamma} (X_t)dt +  dW_t,$ $  X_0 =x_{\gamma} $ (on the other hand, weak existence holds, cf. Remark \ref{sd}). 
 In this sense \eqref{sde}  can be considered   as a  critical SPDE.}
\end{remark}

\subsection{ Notations and preliminaries}

Let $H$ be a   real separable Hilbert space.  Denote its
norm and inner product  by $\left\vert \cdot \right\vert_H $
 and 
$\left\langle \cdot , \cdot \right\rangle $ respectively. Moreover  ${\mathcal B}(H)$ indicates its Borel $\sigma$-algebra.  
 Concerning \eqref{sde} 
 as in \cite{D2}, \cite{DFPR} and  \cite{DFRV} 
 we  assume  
\begin{hypothesis} \label{d1} 
 $A:D(A)\subset H\to H$ is a negative definite
self-adjoint operator with domain $D(A)$ (i.e., there exists $\omega >0$ such that  $\langle Ax, x\rangle \le - \omega |x|^2_H$, $x \in D(A)$). Moreover
 $A^{-1}$ 
 is a trace class operator.
 \end{hypothesis} 
 In the sequel we will concentrate on an infinite dimensional Hilbert space  $H.$ 
 Since $A^{-1}$ is compact, there exists an orthonormal basis $(e_k)$
in $H$ and an infinite sequence of positive numbers $(\lambda_k)$ such that
 \begin{equation} \label{e1a}
 \begin{array}{l}
  Ae_k=-\lambda_k e_k,\quad k\ge 1,\;\; \text{and } \;\; \sum_{k \ge 1} {\lambda_k^{-1}} < \infty.
\end{array}
\end{equation} 
  Note that $D(A)$ is dense in $H$.
We denote by  ${\mathcal L}(H)$  the Banach space of  bounded and linear operators $T: H \to H$ endowed with the operator norm $\| \cdot \|_{\mathcal  L}.$  
  The operator $A$ generates an analytic semigroup $(e^{tA})$ on $H$
such that $e^{tA} e_k = e^{- \lambda_k t } e_k$, $t \ge 0$.  
 Remark that  
\begin{equation} \label{ewd}
\begin{array}{l}
 \|  (-A)^{1/2} e^{tA} \|_{\cal L} = \sup_{k \ge 1}\, \{  (\lambda_k)^{1/2} e^{-\lambda_k t} \} \le \frac{c}{ \sqrt{t}},\;\;\; t>0,
\end{array}   
 \end{equation}
with  $c= \sup_{u \ge 0} u e^{-u^2}= (2e)^{-1/2}$.  We will also use orthogonal projections with respect to $(e_k)$:
\begin{equation} \label{pp1}
 \begin{array} {l}
\pi_m=\sum_{j=1}^me_j\otimes e_j, \;\;\; 
\pi_m x = \sum_{k=1}^m x^{(k)} e_k, \;\; \text{where $x^{(k)} = \langle x, e_k\rangle $, $x \in H$, $m \ge 1$. }      
 \end{array}
\end{equation} 
   Let $(E, |\cdot |_E)$ be a real separable Banach space. We denote by 
${B}_b(H, E)$
  the Banach space of all real, bounded and Borel functions on
  $H$ with values in $E$, endowed with the supremum norm $\| f \|_0 = \sup_{x \in H} |f(x)|_E$, $f \in {B}_b(H, E).$ Moreover 
  $C_b(H, E) \subset  B_b(H, E)  $ indicates the subspace of all   bounded and continuous  functions.
 We denote by $C^{k}_b (H,E) \subset {B}_b(H, E)$, $k \ge
1$, the  space of all functions $f: H \to E$ which are bounded
and Fr\'echet differentiable on $H$ up to the order $k \ge 1$ with
all the derivatives $D^j f$ bounded and continuous on $H$, $1 \le j \le k$. 

Moreover $C^{\theta}_b(H,E)$, $\theta \in (0,1)$, denotes the Banach space of all functions $f :H \to E$ which are $\theta$-H\"older continuous and bounded endowed with the norm
\begin{equation}\label{sx}
\| f\|_{C^{\theta}} = \| f\|_0 + [f]_{C^{\theta}},
\end{equation}
where $[f]_{C^{\theta}} =  \displaystyle{ \sup_{x\neq x'\in H}
 {(|f(x)-f(x')|_E}\, {|x-x'|^{-\theta}_H}).}$

We also set $B_b(H) = B_b(H, \R), C_b(H) = C_b(H,\R)$, $C_b^{\theta}(H) = C_b^{\theta}(H, \R)$ and $C^{k}_b (H, \R) = C^{k}_b (H)$.  

Let $\theta \in (0,1)$. We say that $F: H \to H$ is locally $\theta$-H\"older continuous if for any bounded set $B \subset H$, we have that
 $
 F: B \to H
 $
 is $\theta$-H\"older continuous. Note that this condition implies that $F$ is  is  bounded on each bounded set of $H$.

\smallskip   
We  will deal with the SPDE \eqref{sde} 
 where  $W = (W_t)$ $= (W(t))$ is a {\sl cylindrical Wiener} process on $H$.  Thus   
  $W$ is formally given by ``$W_t = \sum_{k \ge 1}
  W^{(k)}_t e_k$'' where  $(W^{(k)})_{k \ge 1}   $   are independent real Wiener processes and   $(e_k)$ is the basis of eigenvectors  of $A$ (cf. \cite{DZ1},  \cite{hairer} and \cite{DZ}).   
 The next definition is meaningful for $F: H \to H$ which is only continuous  because of \eqref{ewd}. 
  
  
 \vskip 1mm 
  A  { \sl  weak mild solution}  to 
(\ref{sde})    is a sequence $( 
\Omega,$ $ {\mathcal F},
 ({\mathcal F}_{t}),$ $ \P, W, X ) $, where $(
\Omega, {\mathcal F},$ $
 ({\mathcal F}_{t}), \P )$ is a  filtered probability space  
  on which it is defined a
cylindrical Wiener process $W$ and
 an ${\cal F}_t$-adapted,   $H$-valued
continuous process $X$ $ = (X_t)$ $ = (X_t)_{t \ge 0}$ such that, $\P$-a.s., 
 \begin{equation}
  \label{mqq}
X_{t}= \,  e^{tA}x \, + \, \int_{0}^{t}(-A)^{1/2}e^{(  t-s)  A}F  (
X_{s})
ds
+\int_{0}^{t}e^{(  t-s)  A}dW_{s},\quad  t \ge 0.
\end{equation}
(hence  $X_0 =x$, $\P$-a.s.). 
 We say that  {\sl uniqueness in law holds for \eqref{sde} for any $x \in H$ } if  given two weak mild solutions   $X$ and $Y$     (possibly defined on different filtered probability spaces and starting at $x \in H$), we have that  $X$ and $Y$ have the same law on ${\cal B}(C([0, \infty); H))$ which is the Borel $\sigma$-algebra of $C([0, \infty); H)$  (this is the  Polish    space of  all continuous functions  from $[0, \infty)$ into $H$ endowed with the metric of the uniform convergence on bounded intervals; cf. \cite{KS} and  \cite{DZ}).
  Note that   the stochastic convolution
$$    
  \begin{array} {l}
W_A(t) =\int_{0}^{t}e^{\left(  t-s\right)  A}dW_{s} = \sum_{k \ge 1}
\int_0^t e^{-(t-s) \lambda_k}e_k dW^{(k)}(s)
\end{array} 
$$
is well defined since 
 the series converges in $L^2(\Omega; H)$, for any $t \ge 0$. Moreover $W_A(t)$ is a Gaussian random variable with values in $H$ with distribution $N(0,Q_t)$
 where 
 \begin{equation}\label{qt1}
 \begin{array} {l}
Q_t = \int_0^t e^{2 sA} ds =  (-2 A)^{-1}(I-e^{2tA}),\;\;\; t \ge 0,
\end{array}
\end{equation}
 is   the covariance operator
(see also Chapter 1 in \cite{DZ1}).   
Note that $ W_A $ has a continuous version with values in $H$ (see Corollary 2 in  \cite{Tala});   if we assume in addition that $(-A)^{-1 + \delta}$ is of trace class, for some $\delta \in
(0,1)$, then  this fact follows by Theorem 5.11 in \cite{DZ}.

Equivalence between {   different} notions of solutions for \eqref{sde}  
  are clarified  in \cite{DZ1} and \cite{hairer} (see also  \cite{kunze} for a more general setting).    If we write  $ X^{(k)}(t)=  X^{(k)}_t = \langle  X(t), e_k \rangle $, $k \ge 1$, \eqref{sde} is equivalent to the system 
\begin{gather} \label{d33}
   X^{(k)}_t =  x^{(k)} -    \lambda_{k} \int_0^t   X^{(k)}_s ds 
  \, + \, \lambda_{k}^{1/2} \, \int_0^t F^{(k)}( X_s )ds + W_{t}^{(k)},\;\;\; k \ge 1,
\end{gather}     
or to 
 $ \ds  
X^{(k)}_t =  e^{- \lambda_k t}  x^{(k)}    +
  \int_0^t e^{-\lambda_k (t-s)}(\lambda_k)^{1/2}F^{(k)}(X_s)  ds +  \int_0^t e^{-\lambda_k (t-s)} d W^{(k)}_s,
$ 
for $k \ge 1$, $t \ge 0$, with 
$
F(x) = \sum_{k \ge 1}   F^{(k)}(x)e_k, \;\;\; x \in H.   
$

 We will also use the natural filtration of $X$ which is  denoted by  $({\cal F}_t^X)$; 
  ${\cal F}_t^X = \sigma(X_s \, : \, 0 \le s \le t)$ is the  $\sigma$-algebra generated by the r.v. 
 $X_s$, $0 \le s \le t$ (cf. Chapter 2 in \cite{EK}).
\begin{remark} \label{serve} {\em   We point out that Theorem \ref{base}  
   holds under the following  more general hypothesis:
$A:D(A)\subset H\to H$ is self-adjoint, $\langle A x,x  \rangle \le 0$, $x \in D(A)$, and
   $(I -A)^{-1 }$ 
is  of  trace class, with $I = I_H$.
 Indeed in this case  one can  rewrite  equation \eqref{sde} in the form
 $$ 
d X_{t}=(A -I) X_{t}  dt \, 
+ \,  (I - A)^{1/2} [(I -A)^{-1/2} X_t     + (-A)^{1/2} (I -A)^{-1/2}  F(X_{t})] dt \, + \, dW_{t},
$$ 
$ X_{0}=x. $  Now the linear operator   
  $\tilde A = I-A$ and the nonlinear term   $\tilde F(x)=[(I -A)^{-1/2} x         + \, (-A)^{1/2} (I -A)^{-1/2}  F(x)],$
  $ x \in H,$ verify    Hypothesis \ref{d1} and condition    \eqref{lin1} respectively.  
 }
\end{remark}

\subsection {A generalised Ornstein-Uhlenbeck semigroup}    
  
Let us fix $z \in H$.  We will consider  generalised Ornstein-Uhlenbeck operators like  
\begin{equation} \label{ou3}
L^{(z)} g(x) = \frac{1}{2} \mbox{Tr}(D^2 g(x)) +
 \langle x , A Dg (x)\rangle +  \langle z , (-A)^{1/2} Dg (x)\rangle , \; x \in H, \;\; g \in C^2_{cil}(H).
\end{equation}
Here $C^2_{cil}(H)$ denotes the space of {\sl regular 
cylindrical functions.} We say that  $g: H \to \R$ 
   belongs to $C^2_{cil}(H)$ if there exist elements $e_{i_1}, \ldots, e_{i_n}$ of the basis $(e_k)$ of eigenvectors of $A$ and a $C^2$-function  $\tilde g : \R^n \to \R$ with compact support such that 
\begin{gather} \label{cil2}
 g(x) = \tilde g (\langle  x, e_{i_1}\rangle, \ldots, \langle  x, e_{i_n} \rangle),\;\;\; x \in H.
\end{gather}  
By writing the stochastic equation $dX_t = AX_t dt +  (-A)^{1/2} z dt + dW_t,$ $ X_0 =x$ in mild form as  
$
X_t = e^{tA} x $ $+ \int_0^t e^{(t-s)A} dW_s $ $ +  \int_0^t (-A)^{1/2}e^{(t-s)A} z\,  ds, 
$
 one can easily check that the Markov semigroup associated to 
 $L^{(z)}$ is a  generalized Ornstein-Uhlenbeck semigroup $(P_t^{(z)})$:
\begin{gather}\label{gt}
 \begin{array}{l}  \ds 
P_t^{(z)} f (x) \, 
 =\, \int_{H} f(e^{tA} x+ y + \Gamma_t z  )  \; {    N  (0 , Q_t)} \, (dy),\;\; 
f \in { B}_b (H),\; x \in H,
\\   
 \text{ setting $\Gamma_t = (-A)^{1/2} \int_0^t e^{sA} ds, $  } 
 \;\;\;\;\; 
 \Gamma_t z = (-A)^{-1/2}[z- e^{tA}z] 
   = \sum_{k \ge 1} \, \frac {(1- e^{-t \lambda_k })}{(\lambda_k)^{1/2}} z^{(k)} \,  e_k. 
 \end{array} 
\end{gather} 
The  case $z=0$. i.e., $(P_t^{(0)})= (P_t)$ corresponds to the well-known Ornstein-Uhlenbeck semigroup (see, for instance, \cite{DZ1}, \cite{DZ}, \cite{D2}, \cite{DFPR} and \cite{DFRV}) which has  a unique invariant measure $\mu =N(0,S)$
where $S=-\frac12\;A^{-1}$. 
  It is also well-known (see, for instance, \cite{DZ1} and  \cite{DZ}) that under Hypothesis \ref{d1}, $(P_t)$ is strong Feller, i.e, $P_t (B_b(H)) \subset C_b(H)$, $t>0$.  
  Indeed we have $e^{tA}(H) \subset Q^{1/2}_t(H)$, $t>0$, or, equivalently,
\begin{equation}
\label{lam}  
\begin{array} {l} 
 \Lambda_t=Q_t^{-1/2}e^{tA}=\sqrt 2\;
(-A)^{1/2}e^{tA}(I-e^{2tA})^{-1/2} \in {\mathcal L}(H),\;\; t>0.
\end{array} 
\end{equation} 
Moreover $P_t (B_b(H)) \subset C_b^{k}(H)$, $t>0$, for any $k \ge 1$. Following the same proof of Theorem 6.2.2 in \cite{DZ1} one can show that 
 under Hypothesis \ref{d1}, for any $z \in H,$ we have 
$P_t^{(z)} (B_b(H)) \subset  C_b^{k}(H)$, $t>0$, for any  $k \ge 1$.
  Moreover, for any $f \in C_b(H)$, $t>0$, the following formula for the directional derivative along a direction $h$ holds: 
\begin{equation}
\label{e3} 
D_h P_t^{(z)} f(x) = 
\langle D P_t^{(z)} f(x),h
 \rangle  = \int_H \langle
  \Lambda_t h,Q_t^{-\frac12} y\rangle \, f (e^{tA}x+y+ \Gamma_t z) \mu_t(dy),
  \; x,  h \in H,
\end{equation}
 where $\mu_t = N(0,Q_t)$ (cf. \eqref{qt1}) and the mapping: $y \mapsto \langle
  \Lambda_t h,Q_t^{-\frac12} y\rangle$ is a centered
   Gaussian random variable
 on $(H, {\cal B}(H),\mu_t)$  with variance $ |\Lambda_t h|^2$
   (cf. Theorem 6.2.2 in \cite{DZ1}). 
   
   We deduce that,  for $t >0$, $g   \in C_b(H)$, $h,k \in H,$
   \begin{equation}\label{wdc}
 \begin{array}{l}
 \| D_h P_t^{(z)} g\|_0 \le |\Lambda_t  h|_H \,  \| g\|_0, \;\;\; 
 \| D^2_{hk} P_t^{(z)} g\|_0 \le 
  |  \Lambda_t h|_H \, |\Lambda_t k|_H \| g\|_0, 
\end{array} 
 \end{equation}
where $D_h P_t^{(z)} g= \langle D P_t^{(z)} g(\cdot),h \rangle $, $D^2_{hk} P_t^{(z)} g
 =  \langle D^2 P_t^{(z)} g(\cdot)h,k \rangle $.   
   We have 
 \begin{equation} \label{e5}  
 \begin{array}{l}
 \ds   \Lambda_t e_k =\sqrt
2\;(\lambda_k)^{1/2}e^{-t\lambda_k}(1-e^{-2t\lambda_k})^{-1/2} e_k,
\\  \ds   
\;\,  \| \Lambda_t \|_{\cal L}\le C_1
t^{-\frac{1}{2}}, \;\; t>0, \; \; C_1 = \sqrt
2 \cdot \sup_{u \ge 0} { [u \, e^{-u^2}}{(1- e^{-2u^2})^{-1/2}}].
\end{array}
\end{equation}    
and so    $ \| D_h P_t^{(z)} g\|_0 \le \frac{C_1}{\sqrt{t}}|h|_H \,  \| g\|_0$, 
$ \| D^2_{hk} P_t^{(z)} g\|_0 \le \frac{\sqrt{2}\, C_1^2}{{t}} \| g\|_0 |h|_H \, |k|_H$. 
 
 \smallskip 
  To study equation \eqref{sdd} we will investigate regularity properties of the continuous function
 \begin{equation}\label{wv}
u^{(z)}(x)=
 \int_0^{\infty} e^{-\lambda t }   P_t^{(z)} f(x)dt, \; \;\; x \in H,\; f \in C_b(H) 
\end{equation}
 (we drop the dependence of $u^{(z)}$ on $\lambda $); see  also the remark below. 
  \begin{remark} \label{ss} 
  {\em  Let us fix $z \in H$.   
 For any $\lambda>0,$ $u^{(z)}: H \to \R$  given in \eqref{wv}   belongs to $C_b(H)$.    
Moreover, also 
the mapping: $t \mapsto D_h P_t^{(z)} f(x)$ is right-continuous  on $(0, \infty)$, for $x,h \in H 
$.

Since  $\sup_{x \in H}|D P_t^{(z)} f(x)|_H \le \frac{c \| f\|_0}{\sqrt{t} }$, $t>0,$ differentiating under the integral sign, one  shows that there exists 
the directional derivative $D_h u^{(z)}(x)$ at any point $x \in H$ along any direction $h \in H$. 
    Moreover, it is not difficult to prove that  there exists  
the  first {   Fr\'echet}  derivative  $Du^{(z)}(x)$ at any $x \in H$  and  $Du^{(z)} : H \to H$ is continuous and bounded (cf. the proof of Lemma 9 in \cite{DFPR}).  Finally we have the formula
 \begin{equation}\label{uu1}  
\;\; \; D_h u^{(z)}(x) =   
 \int_0^{\infty} e^{-\lambda t } D_h P_t^{(z)} f(x)dt,\;\;\; x,h \in H
\end{equation} 
and the straightforward estimate $\| Du^{(z)} \|_0 \le c(\lambda) \| f\|_0$ with $c(\lambda)$  independent of $z \in H$. We will prove a  better  regularity result for $Du^{(z)}$ in Section 3. 
} 
\end{remark}   
 

 \section{Examples}  
\subsubsection{  One-dimensional  stochastic Burgers-type equations      
}   We consider 
  \begin{equation} \label{bur0}
d u (t, \xi)=   \frac{\partial^2}{\partial   \xi^2}  u(t, \xi)dt  +   \frac{\partial }{\partial \xi} {   h( }\xi, u(t, \xi))dt + dW_t(\xi), \;\; u(0, \xi) = u_0(\xi), \;\;\; \xi \in (0,\pi),
 \end{equation}
 with Dirichlet boundary condition $u(t,0) = u(t,\pi)=0$, $t>0$ (cf. \cite{Gy} and \cite{D2} and see the references therein). Here $u_0 \in H = L^2(0,\pi)$ and $A = \frac{d^2}{d  \xi^2}$ with Dirichlet boundary conditions, i.e. $D(A) = H^2(0, \pi) \cap H^1_0 (0, \pi)$. It is well-known that $A$ verifies Hypothesis \ref{d1}. The
eigenfunctions are
 $
 e_k (\xi) = \sqrt{2/\pi} \,  \sin (k \xi), $ $ \xi \in \R,\;\; k \ge 1.
 $
 
 The  eigenvalues
 are $-\lambda_k$, where $\lambda_k = k^2 $. 
  The cylindrical noise is $W_t(\xi) = \sum_{k \ge 1 }  
   W_t^{(k)} e_k(\xi)$ (cf. \cite{DZ}).
   {  Classical stochastic Burgers equations with ${   h( }\xi, u) = \frac{u^2}{2}$ are examples of locally monotone SPDEs and strong uniqueness holds (cf. \cite{brezniak}).}
   In \cite{Gy} {  strong uniqueness  is proved}
  assuming that ${   h( }\xi, \cdot )$ is locally Lipschitz with a linearly growing Lipschitz constant.  
  
  Here we   assume that {\sl  $h: [0, \pi] \times \R \to \R$ is continuous in both variables; 
  moreover 
   $h = h_1 + h_2$  where  $h_1$ is $\theta$-H\"older continuous in the second variable, for some $\theta \in (0,1)$,  and $h_2$ is Lipschitz continuous in the second variable, uniformly with respect to the first variable.} Hence  we assume that 
   there exists $C_{\theta}>0$ such that   
 $$
 |   h_1( \xi, s) - h_1(\xi, s')| \le C_{\theta} |s- s'|^{\theta},
 $$
 $s , s' \in \R$, $\xi \in [0, \pi]$; $h_2$ verifies a similar condition with $\theta =1$. 
 It is easy to prove that  
 the  Nemiskii operator: 
   $x \in H \mapsto {S (x)=  h( }  \cdot, x(\cdot)) \in H$ is  $\theta$-H\"older continuous from $H$ into $H$  if $h_2=0$. In the general case it is locally $\theta$-H\"older continuous. Moreover  it has at most a linear growth. 
   

  To write \eqref{bur0} in the form \eqref{sde} we define $F: H \to H$ as follows
$$ 
F(x)(\xi) = (-A)^{-1/2}\,  \partial_{\xi}  [{   h( }\cdot , x(\cdot))](\xi),\;\;\; x \in  L^2(0,\pi)= H. 
$$
To check that  $F$ verifies \eqref{lin1} it is enough to prove  that 
$T= (-A)^{-1/2} \, \partial_{\xi}$ can be extended to a bounded linear {   operator} from $L^2(0,\pi)$ into $L^2(0,\pi)$. 
We briefly verify this fact.
 Recall that the domain $D((-A)^{1/2})$ coincides with the Sobolev space $ H^1_0(0, \pi)$.
Take $y \in H^1_0(0, \pi)$ and $x \in L^2(0, \pi)$. Define $x_N = \pi_N x$ (cf. \eqref{pp1}). 
 Using that $(-A)^{1/2}$ is self-adjoint and integrating by parts we find (we use inner product in $ L^2(0, \pi)$ and the fact  that $y(0) = y(\pi)=0$)
\begin{gather*} 
 \langle (-A)^{-1/2}  \partial_{\xi} \, y , x_N \rangle = \langle   \partial_{\xi} y , (-A)^{-1/2} x_N \rangle =  - \langle    y , \partial_{\xi} (-A)^{-1/2} x_N \rangle.
\end{gather*}   
 Now   $ \partial_{\xi} (-A)^{-1/2} x_N (\xi)    =  \sqrt{2/\pi} \sum_{k=1}^N  x^{(k)} \cos (k \xi)$ and so  $| \partial_{\xi} (-A)^{-1/2} x_N  |_{L^2(0,\pi)}^2$ $=  |x_N  |_{L^2(0,\pi)}^2$.

 It follows that, for any $N \ge 1$,
$| \langle (-A)^{-1/2}  \partial_{\xi} y , x_N \rangle | $ $\le |y|_{L^2(0,\pi)} \, 
 |x |_{L^2(0,\pi)}$
 and we easily get the assertion. 
 Hence $F = T \circ S$ verifies \eqref{lin1} and {\sl  SPDE \eqref{bur0} is  well-posed in weak sense, for any initial condition $u_0 \in L^2(0,\pi)$.}
   
{ Note that instead of ${   h( }\xi, u)$ one can consider different non-local nonlinearities like, for instance, $u   \, g(|u|_H)$ assuming that $g: \R \to \R$ is bounded and  locally $\theta$-H\"older continuous, for some $\theta \in (0,1)$. }

Indeed let $M>0$; if $u,v \in B =\{ x \in H \, :\, |x|_H \le M \}$ we have 
\begin{gather} \nonumber
\int_0^{ \pi} |u(t) g(|u|_H) - v(t) g(|v|_H)|^2 dt \\ 
\nonumber \le 
2 \int_0^{ \pi} |u(t)|^2 \, |g(|u|_H) - g(|v|_H)|^2 dt +  2 \int_0^{ \pi} |g(|v|_H)|^2
\, |u(t)  - v(t) |^2 dt
\\ \nonumber \le 
2 C_{M, \theta}\int_0^{ \pi} |u(t)|^2 dt \, | u - v |^{2\theta}_H  +  2 \|g \|_0^2 \int_0^{\pi} 
\, |u(t)  - v(t) |^2 dt
\\  \label{bene} 
\le K  \, | u - v |^{2\theta}_H,
\end{gather}
for some constant $K$ possibly depending on $M, g $ and $\theta$, and   assumption \eqref{lin1} follows easily.

\subsubsection{ Three-dimensional stochastic Cahn-Hilliard equations      
}  

 The  Cahn-Hilliard equation is a  model to describe phase separation in a binary alloy and some other media, in the presence of thermal fluctuations; 
  we refer to \cite{NC} for a survey
 on this model. The stochastic Cahn-Hilliard equation
  has been recently much investigated  under monotonicity conditions on $h$ which allow to prove pathwise uniqueness; in one dimension a typical example is ${   h( }s) = s^3 -s$  (see \cite{EM}, \cite{DD}, \cite{NC}, \cite{ES} and the references therein).   
 
 We can treat  such SPDE in one, two or three dimensions. Let us  consider   Neumann boundary conditions in a regular bounded open set $G \subset \R^3$. For the sake of simplicity we concentrate on the cube $G = (0, \pi)^3$.
  The equation has the  form 
 \begin{equation} \label{all}\begin{cases}
d u (t, \xi)= -  \triangle^2_{\xi} u(t, \xi)dt + \triangle_\xi {   h( }u(t, \xi))dt + dW_t(\xi), \;\;t>0,\;\;  u(0, \xi) = u_0(\xi)\;\; \text{on $G$}, 
\\
\frac{\partial}{\partial n} u = \frac{\partial}{\partial n} (\triangle u) =0 \;\; on \; \partial G,
\end{cases}
\end{equation}    
 where $\triangle_{\xi}^2 $ is the bilaplacian and $n$    is the outward unit normal vector on the boundary $\partial G$.  Let us introduce the Sobolev spaces $H^j(G) = W^{j,2}(G)$ and the Hilbert space $H$,
 $$
 \begin{array}{l} 
 H = \big \{ f \in L^2 (G) \, :\, \int_G f(\xi) d\xi =0 \big \}.
 \end{array} 
 $$
 We assume $u_0 \in H$ and  define $ D(A) = \{ f \in H^4 (G) \cap H \, :\,  \frac{\partial}{\partial n} f = \frac{\partial}{\partial n} (\triangle f) =0 $    on $ \partial G \big \}, $ $A f = - \triangle^2_{\xi} f$, $f \in D(A)$. Using also the divergence theorem, we have $A : D(A) \to H$.
 
 The square root has domain $D[(-A)^{1/2}]   = \{ f \in H^2 (G) \cap H \, :\,  \frac{\partial}{\partial n} f  =0 $  on $ \partial G\big \};$ $- (-A)^{1/2} f = \triangle_{\xi} f$, $f \in D[(-A)^{1/2}]$. 
 Note that 
 $A$   is self-adjoint with compact resolvent  and it is   negative definite with $\omega =1$ (cf. Hypothesis \ref{d1}). 
  The
eigenfunctions are
 $$
 e_k (\xi_1, \xi_2, \xi_3) = (\sqrt{2/\pi})^3 \cos (k_1 \xi_1) \cos (k_2\xi_2)
 \cos(k_3 \xi_3), \;\;\; \xi = (\xi_1, \xi_2, \xi_3) \in
 \R^3,  $$
 $k=(k_1, k_2, k_3) \in
 \N^3$, $k \not = (0,0,0)= 0^*$.
 The corresponding  eigenvalues  
 are $-\lambda_k$, where $\lambda_k =  (k_1^2 + k_2^2 + k_3^2 )^2$.
 Since $\sum_{k  \in \N^3, \, k \not = 0^*} \, \lambda_k^{-1} < + \infty 
 $  we see that $A$ verifies Hypothesis \ref{d1}. 
The cylindrical Wiener process  is     $W_t(\xi) = \sum_{k \in \N^3,\; k \not = 0^*}  
   W_t^{(k)} e_k(\xi)$. 
   Note that  $\triangle_\xi {   h( }u(t, \xi)) =  \triangle_\xi \big [ 
  {   h( }u(t, \xi)) - \int_G  {   h( }u(t,\xi))d\xi\big] $.
     
Assuming that {\sl $h = h_1 + h_2$, with $h_1, h_2  : \R \to \R$,  where $h_2$ is Lipschitz continuous and $h_1$ is $\theta$-H\"older continuous, $\theta \in (0,1)$,
}
we can define $F : H \to H$ as follows:
$$
\begin{array}{l} 
F(x)(\xi) = {   h( }x(\xi)) - \int_G  {   h( }x(\xi))d\xi,\;\;\; x \in H,\; \xi \in G.  
\end{array} 
$$
 It is not difficult to prove that  $F$ verifies \eqref{lin1}. Thus {\sl SPDE \eqref{all} is well-posed in weak sense, for any initial condition $u_0 \in H$.  
 }
 
 \section {An   optimal  regularity result  }

Let $f \in C_b^{\theta}(H)$, $\theta \in (0,1),$ and fix $z \in H$. Here we are interested in the regularity property of the function $u^{(z)}: H \to \R$ given in \eqref{wv}. 
  By Remark \ref{ss} we know that  $u^{(z)} \in C^1_b(H)$ and we have a formula for the directional derivative:  
\begin{gather} \label{s55} 
  D_h u^{(z)}(x) = \langle Du^{(z)}(x), h \rangle = \int_0^{\infty} e^{- \lambda t}  D_{ h} P_t^{(z)} f(x) dt,\;\;\; x,h \in H, \; \lambda >0. 
\end{gather}

 The next assertion (i)    is similar to  Theorem 3.3 in \cite{D2}
  (recall that 
 Section 3 of \cite{D2}  considers a little different generalized OU operator 
 $L^{(z)} $  where the term $\langle z , (-A)^{1/2} Dg (x)\rangle$ in  
  \eqref{ou3} is replaced by $\langle z , Dg (x)\rangle$). Such theorem  shows    that 
  $\| (-A)^{1/2} Du^{(z)} \|_{C_b^{\theta}} $ $= [ (-A)^{1/2} Du^{(z)} ]_{C_b^{\theta}} 
 + \| (-A)^{1/2} Du^{(z)} \|_{0}  $ $\le K_{\theta}(\lambda) \|f\|_{C^{\theta}}$ with
 $K_{\theta}(\lambda)$ which is independent 
  of $f$ (see also Remark \ref{estendi}). Below we will improve the result
 by clarifying the dependence of  $K_{\theta}(\lambda)$ on $\lambda$.
   Indeed  the fact that in \eqref{w44} we have $C_{\theta}(\lambda) \to 0$ will be important in the proof of Lemma \ref{stima1}.

\begin{theorem}\label{ss13} Let $f \in C_b^{\theta}(H)$, $\lambda >0$, $z \in H$ and consider $u^{(z)} \in C^1_b(H)$ given in \eqref{wv}.
 The following assertions hold.  
\\ 
(i) For any $x \in H$,
$Du^{(z)}(x) \in D((-A)^{1/2})$ and $(-A)^{1/2} Du^{(z)} \in C_b^{\theta}(H, H)$.
  There exist constants $C_{\theta}(\lambda)$ and $M_{\theta} >0$ with $C_{\theta}(\lambda)$ which is decreasing  in $\lambda \in (0, \infty)$, $\lim_{\lambda \to \infty} C_{\theta}(\lambda) =0$, such that  
\begin{gather}\label{w44}    
 \sup_{x \in H} \big [\sum_{k \ge 1} \lambda_k  (D_{e_k} u^{(z)} (x))^2  \big]^{1/2}= \| (-A)^{1/2} Du^{(z)}\|_0  \le C_{\theta}(\lambda)\|f\|_{C^{\theta}};
\\ \label{w441}
  [ (-A)^{1/2} Du^{(z)} ]_{C_b^{\theta}}  \le M_{\theta} \|f\|_{C^{\theta}}.
\end{gather} 
(ii) Let $(f_n) \subset C_b^{\theta}(H)$ be such  that $\sup_{n \ge 1 }\| f_n\|_{C^{\theta}} \le C < \infty$ and  $f_n(x) \to f(x)$, $x \in H$. Define
$$
u^{(z)}_n(x) = \int_0^{\infty} e^{- \lambda t}  P_{t}^{(z)}  f_n(x) dt.
$$ 
Then 
 \begin{equation} \label{s1144} 
\langle (-A)^{1/2} Du^{(z)}_n(x), h \rangle  
\to  
\langle (-A)^{1/2} Du^{(z)}(x), h \rangle,    \;\; \text{as $n \to \infty$, $h, x, z  \in H$.}    
\end{equation}
\end{theorem}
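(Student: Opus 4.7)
The plan is to differentiate $u^{(z)}(x)=\int_0^\infty e^{-\lambda t}P_t^{(z)}f(x)\,dt$ under the integral sign and analyze the pointwise behavior in $t$ of $(-A)^{1/2}DP_t^{(z)}f(x)$, following the strategy of Theorem 3.3 in \cite{D2} while tracking the dependence on $\lambda$. For each fixed $t>0$, the regularizing property noted after \eqref{lam} gives $P_t^{(z)}f\in C_b^\infty(H)$, so $DP_t^{(z)}f(x)\in D((-A)^{1/2})$ for every $x$. Starting from \eqref{e3} and subtracting a constant (the Gaussian measure $\mu_t$ has mean zero), for $h\in H$ I will write
\[
D_{(-A)^{1/2}h}P_t^{(z)}f(x)=\int_H \langle \Lambda_t(-A)^{1/2}h,\,Q_t^{-1/2}y\rangle\bigl[f(e^{tA}x+y+\Gamma_tz)-f(e^{tA}x+\Gamma_tz)\bigr]\mu_t(dy),
\]
and apply Cauchy--Schwarz in $L^2(\mu_t)$, using $\|\langle \Lambda_t(-A)^{1/2}h,Q_t^{-1/2}\cdot\rangle\|_{L^2(\mu_t)}=|\Lambda_t(-A)^{1/2}h|_H$ together with the Gaussian moment bound $\bigl(\int_H|y|_H^{2\theta}\mu_t(dy)\bigr)^{1/2}\le c_\theta(\mathrm{Tr}\,Q_t)^{\theta/2}$ (Jensen, $\theta<1$). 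This yields $|D_{(-A)^{1/2}h}P_t^{(z)}f(x)|\le c_\theta [f]_{C^\theta}|h|_H\,\phi(t)$, where the majorant $\phi$ depends only on $A$ and $\theta$; a refined coordinatewise version of the centering will be needed to secure $\phi\in L^1(0,\infty)$.

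For the supremum bound \eqref{w44}, once $\phi\in L^1(0,\infty)$ is established, the estimate $\|(-A)^{1/2}Du^{(z)}\|_0\le C_\theta(\lambda)\|f\|_{C^\theta}$ with $C_\theta(\lambda):=c_\theta\int_0^\infty e^{-\lambda t}\phi(t)\,dt$ follows. This $C_\theta(\lambda)$ is automatically decreasing in $\lambda$ and $C_\theta(\lambda)\to 0$ as $\lambda\to\infty$ by dominated convergence. The inclusion $Du^{(z)}(x)\in D((-A)^{1/2})$ is a consequence of the closedness of $(-A)^{1/2}$ applied to the Bochner integral $\int_0^\infty e^{-\lambda t}(-A)^{1/2}DP_t^{(z)}f(x)\,dt$ which is absolutely convergent in $H$ by the bound just derived. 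For the H\"older seminorm \eqref{w441}, I would fix $x\neq x'\in H$ and apply a standard Schauder splitting $\int_0^{t_0}+\int_{t_0}^\infty$ with $t_0=|x-x'|_H^2$: on the short-time piece I bound the difference of the two integrands by $2\|(-A)^{1/2}DP_t^{(z)}f\|_0$, and on the long-time piece I use the mean value inequality with a second-derivative bound of the form $\|(-A)^{1/2}D^2P_t^{(z)}f\|_{\mathcal{L}(H)}\le C[f]_{C^\theta}\phi_2(t)$, obtained by iterating the centering argument on \eqref{wdc}. A direct calculation shows that both pieces produce $|x-x'|_H^\theta$ times a $\lambda$-independent constant, since dropping the factor $e^{-\lambda t}\le 1$ still leaves the time-integrals convergent; this delivers $M_\theta$ independent of $\lambda$.

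For part (ii), two successive applications of dominated convergence suffice. For each $t>0$, the inner Gaussian integral \eqref{e3} with $f_n$ in place of $f$ converges in $x$ to the one with $f$, since $\sup_n\|f_n\|_0\le \sup_n\|f_n\|_{C^\theta}<\infty$ and the Gaussian integrand is in $L^1(\mu_t)$, giving $\langle(-A)^{1/2}DP_t^{(z)}f_n(x),h\rangle\to\langle(-A)^{1/2}DP_t^{(z)}f(x),h\rangle$; the outer $t$-integral then passes to the limit with the majorant $e^{-\lambda t}c_\theta C|h|_H\phi(t)$, which is integrable by part (i). The main obstacle is the production of an integrable majorant $\phi\in L^1(0,\infty)$ with the sharp dependence on $A$: the crude estimate $\phi(t)\le C/t\cdot(\mathrm{Tr}\,Q_t)^{\theta/2}$ is only borderline integrable near $t=0$ in infinite dimensions, so the refinement I would use is the coordinatewise centering writing $Z_t^{x,z}=Y_k+\sqrt{q_k(t)}\eta_k e_k$ with $Y_k$ independent of $\eta_k$; the resulting one-dimensional H\"older estimate for each component yields the coordinate bound $|\lambda_k^{1/2}D_{e_k}P_t^{(z)}f(x)|\le c_\theta[f]_{C^\theta}\lambda_k^{1-\theta/2}e^{-\lambda_k t}(1-e^{-2\lambda_k t})^{(\theta-1)/2}$, whose square summed in $k$ and then integrated against $e^{-\lambda t}dt$ is finite and vanishes as $\lambda\to\infty$ thanks to the trace-class assumption on $(-A)^{-1}$ in Hypothesis \ref{d1}.
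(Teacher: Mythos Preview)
Your overall architecture matches the paper: differentiate under the Laplace integral, bound $(-A)^{1/2}DP_t^{(z)}f$ pointwise in $t$, integrate to get \eqref{w44}, use the splitting $\int_0^{|h|^2}+\int_{|h|^2}^\infty$ for \eqref{w441}, and apply dominated convergence twice for (ii). The gap is in the pointwise $t$-estimate.

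Your coordinatewise centering yields
\[
|\lambda_k^{1/2}D_{e_k}P_t^{(z)}f(x)|\le c_\theta[f]_{C^\theta}\,\lambda_k^{1-\theta/2}e^{-\lambda_k t}(1-e^{-2\lambda_k t})^{(\theta-1)/2},
\]
but summing squares and integrating does \emph{not} give a finite quantity under Hypothesis~\ref{d1}. Integrating first and then squaring, one gets $|\lambda_k^{1/2}D_{e_k}u^{(z)}(x)|\le c_\theta[f]_{C^\theta}\,\lambda_k^{-\theta/2}J(\lambda/\lambda_k)$ with $J$ bounded, so $\|(-A)^{1/2}Du^{(z)}\|_0^2$ would be controlled by $\sum_k\lambda_k^{-\theta}$. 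The trace-class assumption only gives $\sum_k\lambda_k^{-1}<\infty$; for $\theta<1$ this does \emph{not} force $\sum_k\lambda_k^{-\theta}<\infty$ (take $\lambda_k=k^2$, where the sum diverges for $\theta\le 1/2$, or $\lambda_k=k(\log k)^2$, where it diverges for every $\theta<1$). The same obstruction blocks your majorant $\phi\in L^1(0,\infty)$ and hence also the short-time piece of the Schauder splitting. The crude full-space centering you mention first is no better: $\|(-A)^{1/2}\Lambda_t\|\cdot(\mathrm{Tr}\,Q_t)^{\theta/2}\sim t^{-1}(\mathrm{Tr}\,Q_t)^{\theta/2}$, and $\mathrm{Tr}\,Q_t$ need not vanish fast enough at $0$.

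What the paper does instead is bypass any spectral summation by interpolation. From the two endpoint bounds
\[
\|(-A)^{1/2}DP_t^{(z)}g\|_0\le \tfrac{c_2}{t}\|g\|_0\quad (g\in C_b),\qquad
\|(-A)^{1/2}DP_t^{(z)}g\|_0\le \tfrac{c}{\sqrt t}\|Dg\|_0\quad (g\in C_b^1),
\]
and the identification $(C_b(H),C_b^1(H))_{\theta,\infty}=C_b^\theta(H)$, one obtains $\|(-A)^{1/2}DP_t^{(z)}f\|_0\le c_\theta t^{-(1-\theta/2)}\|f\|_{C^\theta}$, integrable near $0$ for every $\theta\in(0,1)$; the Laplace integral then gives $C_\theta(\lambda)=R_\theta\lambda^{-\theta/2}$. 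An analogous interpolation between $t^{-3/2}$ and $t^{-1}$ supplies the second-derivative bound $t^{-(3-\theta)/2}$ needed on the long-time piece. Once you have these two $t$-power estimates, your splitting and your dominated-convergence argument for (ii) go through unchanged.
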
   
\begin{proof} 
 Let us prove (i). The  first preliminary step of the proof uses an interpolation argument similar to the one used in    \cite{D2}.
 In the second step we argue  similarly to \cite{PriolaStudia}.  

\vskip 1mm  
\noindent {\it I step.} We first prove  there there exists $c_{\theta} >0$ such that 
\begin{equation}\label{qhol}
 \| (-A)^{1/2} DP_t^{(z)}  f \|_0  \le  \frac{c_{\theta}}{t^{1 - \frac{\theta}{2}}}\|f\|_{C^{\theta}},\;\; t>0,\;\; f \in C_b^{\theta}(H) 
\end{equation}
(cf. Section 2 of \cite{D2}). 
  Let $g \in C_b(H)$. Note that if $l \in D((-A)^{1/2})$ then by \eqref{e3} and \eqref{wdc} we have, for any $x \in H$,
\begin{gather} \label{256}
|\langle DP_t^{(z)} g(x) , (-A)^{1/2} l\rangle| 
 \le |(-A)^{1/2} \Lambda_t l |_H \| g\|_0 \le {c_2} {t}^{-1} |l|_H \| g\|_0,
\end{gather}
using   that 
 \begin{gather*}
\|  (-A)^{1/2} \Lambda_t \|_{\cal L} \le {c_2} {t}^{-1},\;\;\; t>0,
\end{gather*}
where $c_2 = \sup_{r \ge 0} \sqrt{2 r^2}  e^{-r} \,  (1 - e^{-2r})^{-1/2}$.
We get 
 that for $t>0$, $x \in H$, $DP_t^{(z)} g(x) \in D((-A)^{1/2})$ and
 \begin{equation}\label{vfg}
\|   (-A)^{1/2} D P_t^{(z)} g \|_{0} \le \frac{c_2} {t} \| g \|_0, \; \; t>0. 
\end{equation}
If $g \in C^1_b(H)$ then  $ \langle (-A)^{1/2} D P_t^{(z)} g , h\rangle =  
  P_t^{(z)} (\langle Dg(\cdot ) , (-A)^{1/2} e^{tA}  h\rangle $ and 
 \begin{equation}\label{qww}
\|   (-A)^{1/2} D P_t^{(z)} g \|_{0} \le \frac{(2e)^{-1/2}} {t^{1/2}} \| Dg\|_0, \; \; t>0
\end{equation} 
 (see \eqref{ewd}). Now we use an interpolation result proved in Theorem 2.3.3 of \cite{DZ1}:
 \begin{gather*} 
 (C_b(H),C_b^1(H) )_{\theta,\infty} =C_b^{\theta}(H), \;\;\;  \beta\in(0,1).
\end{gather*}
Interpolating between \eqref{vfg} and \eqref{qww} as in page 491 of \cite{D2}   we obtain \eqref{qhol} with $c_{\theta} = c_2^{1 - \theta} (2e)^{-\theta /2 } k_{\theta}$; here $k_{\theta} >0$ verifies
\begin{gather*}
 {k_{\theta}^{-1}} \, \| \varphi \|_{C^{\theta}} \le  \| \varphi  \|_{(C_b(H),C_b^1(H) )_{\theta,\infty} } \le  k_{\theta}\| \varphi \|_{C^{\theta}},\;\;\; \varphi \in C_b^\theta(H).
\end{gather*}
Similarly one can prove that, for any $h \in H$, $l \in D((-A)^{1/2})$, $f \in C_b^{\theta}(H)$,
\begin{equation}\label{qhol1}
\| \langle  D^2 P_t^{(z)}  f (\cdot) (-A)^{1/2} l, h \rangle \|_0  \le  
\frac{\tilde c_{\theta}}{t^{\frac{3}{2} - \frac{\theta}{2}}} \, |  l|_H \,  |  h|_H \, \|f\|_{C^{\theta}},\;\; t>0.   
\end{equation}  
To prove the previous estimate, let us first consider  $g \in C_b(H)$; we have 
\begin{gather} \label{asw} 
\| \langle  D^2 P_t^{(z)}  g (\cdot) (-A)^{1/2} l, h \rangle \|_0  \le |(-A)^{1/2} \Lambda_t l |_H  \, | \Lambda_t h|_H \| g \|_0
\\ \nonumber 
\le c_2 C_1 \, |  l|_H  |  h|_H  \frac{1}{t^{3/2}}\, \| g \|_0
, \; \; t>0 
\end{gather}  
 (cf. \eqref{wdc}). 
 If $g \in C^1_b(H)$ then $\langle  D^2 P_t^{(z)}  g (x) (-A)^{1/2} l, h \rangle$
   $= \langle D P_t^{(z)} \big (\langle Dg(\cdot ) , (-A)^{1/2} e^{tA}  l \rangle \big) (x), h \rangle $ and 
 \begin{gather} \label{asw1}
\| \langle  D^2 P_t^{(z)}  g (\cdot) (-A)^{1/2} l, h \rangle \|_0  \le |(-A)^{1/2} e^{tA}l |_H  \, | \Lambda_t h|_H \| Dg \|_0
\\ \nonumber  
\le c \, C_1 \, |  l|_H \,  |  h|_H  \frac{1}{t}\,  \| Dg \|_0
, \; \; t>0.  
\end{gather} 
  Interpolating between \eqref{asw} and \eqref{asw1}    we obtain \eqref{qhol1} with $\tilde c_{\theta} = (c_2C_1)^{1 - \theta} (c C_1)^{\theta } k_{\theta}$.
 \\ \\
{\it II step.} We  prove (i).

For any $z \in H$, $\lambda >0$, $l \in D((-A)^{1/2}) $  we have (cf. \eqref{s55})
\begin{gather*}
  \langle D u^{(z)}(x), (-A)^{1/2} l\rangle =  \int_0^{\infty} e^{- \lambda t} \langle   (-A)^{1/2} D_{} P_t^{(z)} f(x),  l \rangle dt,\;\;\; x\in H . 
\end{gather*}
 By estimate \eqref{qhol} we obtain  that $D u^{(z)}(x) \in D((-A)^{1/2})$ and moreover 
 \begin{equation}\label{qcv}
 \| (-A)^{1/2} Du^{(z)} \|_0  \le  c_{\theta}\|f\|_{C^{\theta}}  \int_0^{\infty} e^{- \lambda t} \frac{1} {t^{1- \theta/2} }    dt =   \frac{R_{\theta}}{\lambda^{\theta/2}}\|f\|_{C^{\theta}},\;\; \lambda>0
\end{equation}
 (with $R_{\theta} >0$ only depending on $\theta \in (0,1)$).   This shows  estimate in \eqref{w44}.

To prove  \eqref{w441} we argue as in  the proof of Theorem 4.2 in \cite{PriolaStudia}.   We fix $x, h \in H$.
 We have
\begin{gather*}
|  (-A)^{1/2} Du^{(z)}(x + h) -  (-A)^{1/2} D u^{(z)} (x)|_H  \le
 \,  u_h^{(z)} (x) \, +  \, v_h^{(z)}(x), \;\; \text{where} 
 \\
u_h^{(z)}(x) = \int_0^{|h|^{2}_H}  e^{- \lambda t } |(-A)^{1/2} 
 D P_t^{(z)} f (x+ h) -  (-A)^{1/2}  D P_t^{(z)} f (x)|_H dt; 
 \\  v_h^{(z)}(x) =
\int_{|h|^{2}_H}^{\infty} e^{- \lambda t } |  (-A)^{1/2}  D
P_t^{(z)} f (x+  h) -  (-A)^{1/2}  D  P_t^{(z)} f (x)|_H \, dt.
\end{gather*} 
In order to estimate $u_h^{(z)}(x)$ we use  \eqref{qhol}. 
    We find  
$$
\sup_{x \in H} u_h^{(z)}(x) \le c_{\theta}  \| f\|_{C^\theta} \,
\int_0^{|h|^{2}_H} t^{\frac{\theta}{2} - 1} \, dt 
\le
C_{\theta}' \,  \| f\|_{C^{\theta}} \, \, |h|^{ \theta}_H.
$$ 
 Concerning $v_h^{(z)}(x)$ we will use estimate   \eqref{qhol1}.   Let $B_1 = \{ x \in H \, :\, |x|_H \le 1 \}$. Recall that $D((-A)^{1/2}) \cap B_1  
 $ is dense in $B_1$. 
  For $t>0$, we have:
 \begin{gather*}
|  (-A)^{1/2}  D
P_t^{(z)} f (x+  h) -  (-A)^{1/2}  D  P_t^{(z)} f (x)|_H 
\\
= \sup_{l \in D((-A)^{1/2} ), \, \,  |l|_H \le 1}
 | \langle  D
P_t^{(z)} f (x+  h) -     D  P_t^{(z)} f (x), (-A)^{1/2} l \rangle|.
\end{gather*}
 Let us consider $l \in D((-A)^{1/2} ),$ with $ |l|_H \le 1$. We write  
\begin{gather*} 
\big | \langle  D
P_t^{(z)} f (x+  h) -     D  P_t^{(z)} f (x), (-A)^{1/2} l \rangle \big |
= 
\Big |   \int_0^1 \langle D^2
P_t^{(z)} f (x+ s h) h  , (-A)^{1/2} l \rangle ds \Big |
\\
\le {\tilde c_{\theta}} t^{ \frac{\theta}{2} - \frac{3}{2}} \, |  l|_H \,  |  h|_H \|f\|_{C^{\theta}},\;\; t>0.     
\end{gather*}  
 and so  $ |  (-A)^{1/2}  D
P_t^{(z)} f (x+  h) -  (-A)^{1/2}  D  P_t^{(z)} f (x)|_H  \le {\tilde c_{\theta}}{t^{      \frac{\theta}{2}- \frac{3}{2} }}  \,   |  h|_H \|f\|_{C^{\theta}},\;\; t>0.  $ 
  We obtain 
  \begin{gather*} 
\sup_{x \in H}
 v_h^{(z)}(x) \le  \tilde c_{\theta}  \| f\|_{C^{\theta}} \, |h|_H \,
\int_{|h|^{2}_H} ^{\infty}  \big (
 t^{\frac{\theta}{2} - \frac{3}{2}  }    \big ) dt \,
  \le   \, C_{\theta}'' \,  |h|^{\theta -1}_H \, |h|_H 
  \| f\|_{C^{\theta}} = \, C_{\theta}'' \,  |h|^{\theta}_H 
  \| f\|_{C^{\theta}}.
\end{gather*}  
 By the previous  estimates on  $u_h^{(z)}$ and $v_h^{(z)}$ we deduce easily  \eqref{w441}.
 
  To prove (ii) 
  we fix $x \in H$ and $l \in D((-A)^{1/2})$. We  write (see \eqref{e3}) for $t>0$
  \begin{gather*}
 D_{ (-A)^{1/2} l} {P_{t}^{(z)}}f(x) = \langle D P_t^{(z)} f_n(x),  (-A)^{1/2} l
 \rangle  = \int_H \langle
  \Lambda_t (-A)^{1/2} l ,Q_t^{-\frac12} y\rangle \, f_n (e^{tA}x+y+ \Gamma_t z) \mu_t(dy).
\end{gather*}
  We can  pass to the limit as $n \to \infty$   by the Lebesgue convergence theorem and get $D_{ (-A)^{1/2} l} {P_{t}^{(z)}}f_n(x) \to $  $D_{ (-A)^{1/2} l} {P_{t}^{(z)}}f(x)$ as $n \to \infty$.

  Similarly,  using also the estimate $ |D {{P_{t}^{(z)}}} f_n(x)|_H \le  \frac{c_{\theta}}{t^{1 - \frac{\theta}{2}}}\|f_n\|_{C^{\theta}}  \le 
  \frac{c_{\theta} C}{t^{1 - \frac{\theta}{2}}} $, $t>0$,
     we have, for any $l \in D( (-A)^{1/2} )$, $x \in H, $
$$
 \lim_{n \to \infty} \langle Du^{(z)}_n(x) ,(-A)^{1/2} l \rangle    = 
  \langle Du^{(z)} (x) ,(-A)^{1/2} l \rangle.  
$$
We deduce easily that \eqref{s1144} holds. 
 \end{proof}

\begin{remark} \label{fi12} {\em  This following fact will be useful in the sequel:
 if $G \in C_b(H,H)$, then \eqref{s1144} implies that
\begin{equation}
\label{w12} 
\lim_{n \to \infty} \langle (-A)^{1/2} Du^{(z)}_n(x) , G(x)\rangle =
  \langle (-A)^{1/2}   Du^{(z)}(x) , G(x)\rangle,\;\; x,z \in H. 
\end{equation}
}
\end{remark}

\begin{remark} \label{estendi}{\em  (a) Actually Theorem 3.3 in \cite{D2}  shows    that 
  $\| (-A)^{1/2} Du^{(z)} \|_{C_b^{\theta}} $ $= [ (-A)^{1/2} Du^{(z)} ]_{C_b^{\theta}} 
 + \| (-A)^{1/2} Du^{(z)} \|_{0}  $ $\le K_{\theta}(\lambda) \|f\|_{C^{\theta}}$ with $K_{\theta}(\lambda) $ independent of $f\in C^{\theta}_b(H)$ and $z \in H$.
  The fact that  $K_{\theta}(\lambda)$ in \cite{D2} depends also on $\lambda$ follows from  estimate  (2.15) in \cite{D2}. Indeed in such estimate one has also to consider the supremum norm 
   $\| b_t\|_0$.

 (b) We do not know if estimates \eqref{w44} and \eqref{w441} hold in  a stronger form  with $\| f\|_{C^{\theta}}$ replaced by $[ f ]_{C^{\theta}}$ (see \eqref{sx}). This happens, for instance, in the finite-dimensional case   considered in \cite{DL}.
}
\end{remark}

\section{Proof of weak existence of Theorem \ref{base} (only assuming continuity of $F$)}
  
In this section we  require that   
\begin{equation} \label{qdd}
 F: H \to H \;\; \text{is continuous and verifies   } \;\; |F(x)|_H \le C_F (1 + |x|_H), \;\; x \in H,
\end{equation}  
   for some  constant $C_F >0$.
   We will prove weak existence  by adapting  a  compactness approach of  \cite{GG}. 
This approach   { is inspired by   \cite{DKP}}  (it is also explained  in  Chapter 8 of \cite{DZ}).

Let us fix $x \in H$.  To construct the solution we start with some approximating mild solutions. We introduce, for each $m \ge 1$,  
$$
A_m = A \circ \pi_m ,\;\;\; A_m e_k =  - \lambda_k  \,  e_k,\;\; k = 1, \ldots m, 
$$
 $   A_m e_k =0, $ $k >m;$  here  
 $\pi_m=\sum_{j=1}^me_j\otimes e_j$ ($(e_j)$ is the basis of eigenvectors  of $A$; see \eqref{pp1}).

  For each $m$  there exists a weak mild solution $X_m= (X_m(t))_{t \ge 0}$ on some filtered probability space, possibly depending on $m$  (such solution can also be constructed by the Girsanov theorem, see \cite{GG0}, \cite{DZ} and  \cite{DFPR}). 
  
Usually the mild solutions $X^m$ are constructed on a time interval $[0,T]$.  However there is a standard procedure based on the Kolmogorov extension theorem to define the solutions  on $[0, \infty)$. On this respect, we refer to  Remark 3.7, page 303, in \cite{KS}.  

We know that  
 \begin{equation}\label{2ww}
X_m (t)=e^{tA} x +\int_{0}^{t}e^{\left(  t-s\right)  A} (-A_m)^{1/2}F^{}(X_m(s))
ds+\int_{0}^{t}e^{\left(  t-s\right)  A}dW_{s},\;\;\; t \ge 0.
\end{equation}
Recall    that, for any $t \ge 0,$ the stochastic convolution $W_A(t)= \int_{0}^{t}e^{\left(  t-s\right)  A}dW_{s}$ is a Gaussian random variabile with law $N(0,Q_t)$.  Let $p > 2$ and $q = \frac{p}{p-1} <2$.  We find (using also \eqref{ewd} {   and the H\"older inequality) }
\begin{gather*}
|X_m (t)|^p_H \le c_p ( |e^{tA} x|^p_H + \big | \int_{0}^{t}e^{\left(  t-s\right)  A} (-A_m)^{1/2}F^{}(X_m(s))ds |^p_H +
| W_A(t)|^p_H ) 
\\ \le 
  c_T| x|^p_H +  c_T ( \int_{0}^{t}  (t-s)^{-q/2} ds)^{p/q} \cdot    \int_{0}^{t} (1+ |X_m(s) |_H^p  )  ds  +
c_p | W_A(t)|^p_H 
 \\
\le C_T | x|^p_H + C_T +  C_T  \int_{0}^{t}   |X_m(s) |^p_H ds +
C_T | W_A(t)|^p_H,\;\;  t \in [0,T].  
\end{gather*}
 By the Gronwall lemma we find the bound
\begin{equation} \label{sd1}
\begin{array}{l}
\sup_{m \ge 1} \sup_{t \in [0,T]}\E|X_m (t)|^p_H = {   C_T } < \infty.
\end{array}  
\end{equation} 
{\sl The mild solution $X$ will be a weak limit of solutions $(X_m)$.} To this purpose we need some compactness results.
 The next  result is proved in \cite{GG} 
(the proof  uses that $(e^{tA})$ is a compact semigroup). 
\begin{proposition}
\label{p8.40}
If
$0<\frac{1}{p}<\alpha \leq 1$ then the
operator $G_{\alpha }: L^{p}(0,T;H) \to C([0,T];H) $
\begin{displaymath}
G_{\alpha }f(t)=\frac{\sin \pi \alpha}{\pi}  \int_{0}^{t}(t-s)^{\alpha -
1} e^{(t-s)A}f(s)ds,\quad f \in L^{p}(0,T;H),\;t \in [0,T], \;\; \text{is compact}. 
\end{displaymath}
 \end{proposition}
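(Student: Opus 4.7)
The plan is to verify the hypotheses of the Arzelà--Ascoli theorem for the image $G_\alpha(B)$ of an arbitrary bounded set $B\subset L^p(0,T;H)$. Since $C([0,T];H)$ carries the sup-norm topology, I need to check three properties of $G_\alpha(B)$: (a) uniform boundedness in $C([0,T];H)$; (b) equicontinuity in $t$; and (c) pointwise relative compactness in $H$, i.e.\ $\{G_\alpha f(t):f\in B\}$ is relatively compact in $H$ for every fixed $t\in[0,T]$.

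For (a), a direct H\"older estimate with conjugate exponents $p,q=p/(p-1)$ gives
$$ |G_\alpha f(t)|_H \le \frac{|\sin\pi\alpha|}{\pi}\Big(\int_0^t (t-s)^{(\alpha-1)q}\,ds\Big)^{1/q}\|f\|_{L^p(0,T;H)}, $$
which is finite uniformly in $t\in[0,T]$ precisely because the assumption $\alpha>1/p$ forces $(\alpha-1)q>-1$. For (c), the crucial input is Hypothesis \ref{d1}: $A^{-1}$ is of trace class and hence compact, so $(-A)^{-\beta}$ is compact for every $\beta>0$, making the embedding $D((-A)^\beta)\hookrightarrow H$ compact. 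I fix $\beta\in(0,\alpha-1/p)$ and combine the standard analytic-semigroup bound $\|(-A)^\beta e^{tA}\|_{\mathcal L}\le c_\beta t^{-\beta}$ with H\"older to get
$$ |(-A)^\beta G_\alpha f(t)|_H \le C\Big(\int_0^t(t-s)^{(\alpha-1-\beta)q}\,ds\Big)^{1/q}\|f\|_{L^p}, $$
uniformly in $t\in[0,T]$ and $f\in B$, the integral being convergent exactly because $\beta<\alpha-1/p$. Hence $G_\alpha f(t)$ lies in a fixed bounded subset of $D((-A)^\beta)$, which is relatively compact in $H$.

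For (b), I take $0\le t_1<t_2\le T$ and split $G_\alpha f(t_2)-G_\alpha f(t_1)=I_1+I_2$ with
$$ I_1=\frac{\sin\pi\alpha}{\pi}\int_{t_1}^{t_2}(t_2-s)^{\alpha-1}e^{(t_2-s)A}f(s)\,ds, $$
$$ I_2=\frac{\sin\pi\alpha}{\pi}\int_0^{t_1}\big[(t_2-s)^{\alpha-1}e^{(t_2-s)A}-(t_1-s)^{\alpha-1}e^{(t_1-s)A}\big]f(s)\,ds. $$
H\"older, together with the contractivity $\|e^{tA}\|_{\mathcal L}\le 1$, controls $|I_1|_H$ by $C(t_2-t_1)^{\alpha-1/p}\|f\|_{L^p}$, which tends to $0$ uniformly on $B$ as $t_2-t_1\to 0$. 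For $I_2$, the integrand goes to $0$ pointwise in $s\in(0,t_1)$ by strong continuity of $(e^{tA})$ and continuity of $s\mapsto(t-s)^{\alpha-1}$ away from $s=t_1$; a uniformly integrable majorant provided by H\"older lets me pass the limit under the integral and conclude $|I_2|_H\to 0$ uniformly for $f\in B$ by dominated convergence.

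The main obstacle will be (b): producing an $L^q$-majorant for the integrand of $I_2$ that is independent of $f\in B$ and of $t_1,t_2$, so as to make the interchange of the limit $t_2-t_1\to 0^+$ with the supremum over $B$ rigorous (one typically bounds the bracket by $2(t_1-s)^{\alpha-1}$ on a suitable subinterval and handles a neighborhood of $s=t_1$ separately). Parts (a) and (c) are essentially direct H\"older estimates combined with the compactness of negative fractional powers of $A$ furnished by Hypothesis \ref{d1}.
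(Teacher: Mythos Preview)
Your overall strategy via Arzel\`a--Ascoli matches the paper's. Note that the paper does not actually prove Proposition~\ref{p8.40} but defers to \cite{GG}, remarking only that the proof ``uses that $(e^{tA})$ is a compact semigroup''; it then proves the closely analogous Proposition~\ref{p8.41}, and it is against that proof that one can compare.

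Your argument for pointwise relative compactness (c) differs from the paper's. You show that $G_\alpha f(t)$ stays in a bounded set of $D((-A)^\beta)$ for some $\beta\in(0,\alpha-1/p)$ and invoke compactness of $(-A)^{-\beta}$. The paper instead writes, for $\varepsilon\in(0,t)$, $G_\alpha^{\varepsilon,t}f=e^{\varepsilon A}\int_0^{t-\varepsilon}(t-\varepsilon-s)^{\alpha-1}e^{(t-\varepsilon-s)A}f(s)\,ds$, uses compactness of $e^{\varepsilon A}$ directly, and shows $G_\alpha^{\varepsilon,t}\to G_\alpha^t$ in operator norm via H\"older. Both routes are correct; yours gives a slightly more quantitative statement (a uniform fractional-domain bound), while the paper's avoids introducing fractional powers at all.

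There is, however, a genuine gap in your equicontinuity argument for $I_2$. You write that the integrand goes to $0$ pointwise ``by strong continuity of $(e^{tA})$'' and then claim $|I_2|_H\to 0$ \emph{uniformly for $f\in B$}. Strong continuity gives, for each fixed $f$ and each $s<t_1$, convergence of the $H$-valued integrand; dominated convergence then yields $I_2\to 0$ for that fixed $f$, but not uniformly over $B$. To get uniformity you must first apply H\"older to factor out $\|f\|_{L^p}$, reducing to showing that
\[
\int_0^{t_1}\big\|(t_2-s)^{\alpha-1}e^{(t_2-s)A}-(t_1-s)^{\alpha-1}e^{(t_1-s)A}\big\|_{\mathcal L}^{q}\,ds\longrightarrow 0.
\]
This requires the \emph{operator-norm} convergence $e^{(t_2-t_1)A}e^{(t_1-s)A}\to e^{(t_1-s)A}$ in $\mathcal L(H)$ for each $s<t_1$, which does \emph{not} follow from strong continuity alone: it holds precisely because $e^{(t_1-s)A}$ is compact for $s<t_1$ (strong convergence composed with a compact operator becomes norm convergence). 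This is exactly the mechanism the paper invokes in its proof of Proposition~\ref{p8.41}. Once you use compactness here, the majorant $2(t_1-s)^{(\alpha-1)q}$ (independent of $t_1,t_2$ after the change of variable $\sigma=t_1-s$) lets dominated convergence finish the job.
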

Below we consider a variant of the previous result. In the proof we use  estimate \eqref{ewd}.  
\begin{proposition}
\label{p8.41}
 Let $p>2$.   Then the
operator ${Q} : L^{p}(0,T;H) \to C([0,T];H)  $,
\begin{displaymath}
{Q} f(t)=\int_{0}^{t} (-A)^{1/2} e^{(t-s)A}f(s)ds,\quad f \in L^{p}(0,T;H),\;t \in [0,T], \;\; \text{is compact}. 
\end{displaymath}
\end{proposition}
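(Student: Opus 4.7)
The plan is to verify the Arzelà--Ascoli criterion for the family $\{Qf : \|f\|_{L^p(0,T;H)} \le 1\}$ in $C([0,T];H)$, relying on the singular bound \eqref{ewd} and on the compactness of $e^{tA}$ for $t>0$ (which follows from $A^{-1}$ being compact and the diagonal form of $(e^{tA})$).

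First, I would establish that $Q$ is a bounded operator from $L^p(0,T;H)$ to $C([0,T];H)$. Setting $q=p/(p-1)$, the hypothesis $p>2$ gives $q/2<1$, so H\"older's inequality together with \eqref{ewd} yields
$$|Qf(t)|_H \le c \int_0^t (t-s)^{-1/2} |f(s)|_H\, ds \le c_{T,p}\, \|f\|_{L^p(0,T;H)}, \qquad t \in [0,T],$$
and an analogous estimate on $\int_{t_1}^{t_2}(t-s)^{-q/2}ds$ delivers continuity of $t\mapsto Qf(t)$.

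The next key step is an approximation by finite-range operators. For each $\varepsilon \in (0,T)$ set $Q_\varepsilon f(t) = \int_0^{(t-\varepsilon)\vee 0}(-A)^{1/2}e^{(t-s)A}f(s)\,ds$. For $t\ge \varepsilon$, the semigroup property lets me factor
$$Q_\varepsilon f(t) = e^{\varepsilon A}\!\int_0^{t-\varepsilon}(-A)^{1/2}e^{(t-s-\varepsilon)A}f(s)\,ds,$$
and H\"older plus \eqref{ewd} show that the inner integral stays in a bounded subset of $H$ uniformly for $t \in [0,T]$ and $\|f\|_{L^p}\le 1$. Since $e^{\varepsilon A}$ is a compact operator, the set $\mathcal K_\varepsilon = \{Q_\varepsilon f(t) : \|f\|_{L^p}\le 1,\ t\in [0,T]\}$ is relatively compact in $H$. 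Moreover, bounding the discarded tail by H\"older,
$$\sup_{t\in[0,T]} |Qf(t)-Q_\varepsilon f(t)|_H \le c_p\, \varepsilon^{\,1/q - 1/2}\,\|f\|_{L^p(0,T;H)},$$
which goes to $0$ as $\varepsilon \to 0^+$ (the exponent is positive since $p>2$). Thus $Q_\varepsilon \to Q$ in operator norm, and a standard totally-bounded argument combined with the relative compactness of each $\mathcal K_\varepsilon$ implies that the full range $\{Qf(t) : \|f\|_{L^p}\le 1,\ t\in [0,T]\}$ is relatively compact in $H$.

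Finally, for equicontinuity, I split for $0\le t_1<t_2\le T$
$$Qf(t_2)-Qf(t_1) = \int_{t_1}^{t_2}(-A)^{1/2}e^{(t_2-s)A}f(s)\,ds \,+\, \bigl(e^{(t_2-t_1)A}-I\bigr)Qf(t_1),$$
using that $A$ commutes with $e^{tA}$. The first piece is $O(|t_2-t_1|^{1/q-1/2})$ uniformly in $\|f\|_{L^p}\le 1$; the second piece tends to $0$ uniformly in $f$ and in $t_1$ because $(e^{hA}-I)\to 0$ uniformly on compact subsets of $H$ (by strong continuity and local boundedness of the semigroup) and we have just shown the relative compactness of the set of values $Qf(t_1)$. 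Arzel\`a--Ascoli then concludes. The main obstacle I anticipate is exactly the equicontinuity step: pointwise relative compactness of $\{Qf(t)\}$ for each fixed $t$ would be insufficient, and it is the uniform operator-norm approximation by the compact-range $Q_\varepsilon$ that upgrades it to joint relative compactness in $t$, enabling the semigroup continuity to act uniformly.
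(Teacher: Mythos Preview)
Your argument is correct and follows the same overall route as the paper: Arzel\`a--Ascoli together with the factorisation $Q_\varepsilon f(t)=e^{\varepsilon A}\!\int_0^{t-\varepsilon}(-A)^{1/2}e^{(t-\varepsilon-s)A}f(s)\,ds$ and the compactness of $e^{\varepsilon A}$, plus the operator-norm estimate $\|Q-Q_\varepsilon\|=O(\varepsilon^{1/q-1/2})$.

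The only real difference is in the equicontinuity step. The paper keeps parts (i) and (ii) of Arzel\`a--Ascoli separate: it writes
\[
Qf(t+u)-Qf(t)=\int_0^t\bigl[(-A)^{1/2}e^{(u+s)A}-(-A)^{1/2}e^{sA}\bigr]f(t-s)\,ds+\int_t^{t+u}(-A)^{1/2}e^{(t+u-s)A}f(s)\,ds,
\]
bounds the second integral by $O(u^{1/q-1/2})$, and handles the first via dominated convergence after observing that $\|(-A)^{1/2}e^{(u+s)A}-(-A)^{1/2}e^{sA}\|_{\mathcal L}\to 0$ for each $s>0$ because $(-A)^{1/2}e^{sA}$ is compact. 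Your version instead upgrades (i) to \emph{joint} relative compactness of $\{Qf(t):\|f\|_{L^p}\le 1,\ t\in[0,T]\}$ and then uses the splitting $Qf(t_2)-Qf(t_1)=\int_{t_1}^{t_2}(\cdots)+(e^{(t_2-t_1)A}-I)Qf(t_1)$, controlling the second term by the uniform continuity of $(e^{hA})$ on compact subsets of $H$. This recycles part (i) to get (ii) and avoids the dominated-convergence computation; the paper's route, in turn, does not need the joint-in-$t$ compactness upgrade. Both are short and clean.
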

\begin{proof} Since the proof is similar to the one of Proposition \ref{p8.40} we only give a sketch of the proof. 
  Denote by $| \cdot |_{p}$ the norm in
$L^{p}(0,T;H)$. According to the infinite 
dimensional version of the Ascoli-Arzel\'{a}
theorem one has to show that
\begin{enumerate}
\item[(i)] For arbitrary $t \in [0,T]$ the sets
$\{ {Q}f(t): \; |f|_{p}\leq 1\}$
 are relatively
compact in $H$.  

\item[(ii)] For arbitrary $\varepsilon >0$ there exists
$\delta >0$ such that
\begin{equation}
|{Q}f(t)-{Q}f(s)|_H \leq
\varepsilon,\;\; \text { if}\; \;  |f |_{p}\leq
1, \; |t-s|\leq \delta,\quad s,t \in [0,T].
\label{e8.17}
\end{equation}
\end{enumerate}
 To check (i)
let us fix $t \in (0,T]$ and define operators $Q^t $  and  $    Q^{\varepsilon,t }$ from  $L^{p}(0,T;H)$ into $H$, for
$\varepsilon  \in (0,t)$, 
\begin{displaymath}
{Q}^{t}f= Qf (t),\;\; \;\;
{Q}^{\varepsilon ,t}f=\int_{0}^{t-
\varepsilon } (-A)^{1/2} e^{(t-s)A} f(s)ds,\quad f \in
L^{p}(0,T;H).
\end{displaymath}
Since
 $
  {Q}^{\varepsilon ,t}f=e^{\varepsilon
A}\int_{0}^{t-\varepsilon } (-A)^{1/2} e^{(t-
\varepsilon -s) A}f(s)ds
$
and $e^{\varepsilon A},\varepsilon >0,$ is
compact, the  operators
${Q}^{\varepsilon ,t}$ are compact.
 Moreover, 
by using \eqref{ewd} and the 
H\"{o}lder inequality  (setting $q=\frac{p}{p-1} <2$)
\begin{displaymath}
 \begin{array}{l}
 \ds |{Q}^t f \, - \, {Q}^{\epsilon,t}f|_H =\left|\int_{t-\varepsilon }^{t } (-A)^{1/2} e^{(t-s)A}f(s)ds\right|_H 
\\ 
 \ds \leq M \left(\int_{t-\varepsilon }^{t }(t-
s)^{ - q/2}ds
\right)^{1/q}\left( \int_{t-\varepsilon }^{t} | f(s)|^{p}_H ds
\right)^{1/p} 
{   \le } \, \,  M_q \varepsilon ^{-1/2 \,  +1/q  } |f|_{p}
\end{array}
\end{displaymath}
with $ -\frac{1}{2} +  \frac{1}{q}  >0 $.  Hence
$Q^{\varepsilon,t  } \rightarrow
{{Q}^t} $, as $\epsilon \to 0^+$, in the operator norm so that
${Q}^t$ is compact and
(i) follows. Let us consider (ii).
For $0\leq t \leq t+u \leq T$ and $|f|_{p}\leq
1$, we have
\begin{displaymath}
 \begin{array}{l}
\ds |{Q}f(t+u)-{Q}f(t)|_H     
 \leq \int_{0}^{t} \|(-A)^{1/2} e^{(t+u-s)A} - 
(-A)^{1/2} e^{(t-s)A}\|_{\cal L}\,   |f(s)|_H ds 
 \\
\ds+\int_{t}^{t+u}| (-A)^{1/2}   e^{(t+u-s)A} f(s)|_H ds 
 \\
 \ds
 \le {   M_p} \Big(   \int_{0}^{u} s^{-q /2}ds
\Big)^{1/q}
+ 
 \Big( \int_{0}^{T} \|(-A)^{1/2} e^{(u+s)A}-  (-A)^{1/2}  e^{sA} \|^{q}_{\cal L} ds
\Big)^{1/q }   = I_1 + I_2.   
\end{array}
\end{displaymath}
It is clear that $I_{1} = M_p' \,   u^{1/2 -
1/p}      \rightarrow 0$ as
$u\rightarrow 0$.

     Moreover, for $s>0$, $(-A)^{1/2} e^{sA}$ is compact; indeed $(-A)^{1/2} e^{sA} e_k$ $= (\lambda_k)^{1/2} e^{-s \lambda_k} e_k$ and 
     $\displaystyle{(\lambda_k)^{1/2} e^{-s \lambda_k}}$ $ \to 0$ as $k \to \infty$). It follows that  
  $\|e^{uA}(-A)^{1/2}e^{sA}-
(-A)^{1/2}e^{sA}\|_{\cal L}$ $ \rightarrow 0$ as $u\rightarrow 0$  for
arbitrary $s>0$.

Since
 $
\ds \|(-A)^{1/2} e^{(u+s)A}- (-A)^{1/2} e^{sA}
\|^{q}\leq \frac{ (2M)^q }{s^{q/2}} , \;   \,       s >0,
 $ $ u\geq 0, 
$
 and $q<2$,
 by the Lebesgue's dominated 
convergence theorem $I_{2}\rightarrow 0$ as   
$u\rightarrow 0$. Thus the proof of
(ii) is complete.
 \end{proof}

\noindent  {\bf Proof of the existence part of Theorem \ref{base}}. Let $x \in H$. We proceed {   in}  two steps. 
\\
{\it I Step.} Let $( X_{m})$ be solutions of \eqref{2ww}.  
 We prove that  their laws $\{ {\cal
L}(X_{m})\}$ form a tight family of 
probability measures on ${\cal B}(C([0,\infty);H))$.

To this purpose it is enough to show that for each $T>0$ the laws $\{ {\cal
L}(X_{m})\}$ form a tight family of 
probability measures on ${\cal B}(C([0,T];H))$.

 Let us fix  $p>2$ and $T>0$.  
   We know by \eqref{sd1}
 that there exists a constant $c_{p}>0$ such that $\E|X_{m}(t)|^{p}_H  $ $\leq c_{p},$ $  m\ge 1,$ $t \in [0,T].$ It follows that
\begin{equation}
\sup_{m \ge 1} \E\int_0^T |F_m(X_{m}(t))|^{p}_H < \infty.
\label{e8.18} 
\end{equation} 
with $\pi_m \circ F = F_m$, since   $|F_m(x)|_H \le C_F (1 + |x|_H)$, $m \ge 1$. 

{In order to prove the tightness of $\{ \mathcal L(X_{m})\} $ on  ${\cal B}(C([0,T];H))$ we note  that 
\begin{equation} \label{224c3}
 \begin{array}{lll}
 X_{m}(t) 
&=& e^{tA} x+ Q(F_{m}(X_{m}))(t)+
    W_A(t),\;\; t \in [0,T].
\end{array}
\end{equation} 
 Let $Z_m(t) = e^{tA} x+ Q(F_{m}(X_{m}))(t)$, $t \in [0,T]$.
It is not difficult to prove that the tightness of $\{ \mathcal L(Z_{m})\}$ implies the tightness of $\{ \mathcal L(Z_{m} + W_A)\}$ 
$=\{\mathcal L(X_{m} )\}$   on  ${\cal B}(C([0,T];H))$.

Thus it remains to  show the 
   tightness of $\{ \mathcal L(Z_{m}) \}$.     
 By \eqref{e8.18} and  Chebishev's
inequality, for $\varepsilon >0$ one can   
find $r>0$ such that for all $m \ge 1$
\begin{equation}
\P\Big(   \big(\int_{0}^{T}|F_{m}( X_{m}(s) ) |^{p}_H ds \big)^  
{1/p}  \leq r \Big)> 1-\varepsilon.
\label{e8.19}   
\end{equation}
By Proposition     \ref{p8.41} (recall that $|\cdot|_p $ denotes the norm in
$L^{p}(0,T;H)$) the set
\begin{displaymath}
K=\{ e^{(\cdot )}x+    Qg(\cdot ): \;  \; |g|_{p}\leq r\} \subset C([0,T];H)  
\end{displaymath}
is  relatively  compact. Since $\P (Z_m \in K) = \mathcal  L(Z_{m})(K)> 1-\varepsilon
,$ for any $ m \ge 1,$  the tightness follows. }  

\vskip 1mm       
\noindent 
  {\it II Step. } By the Skorokhod representation theorem, possibly passing to a subsequence of $(X_m)$ still denoted by $(X_m)$,  
 there exists a probability space $(\hat  \Omega, \hat  {\cal F}, \hat  \P )$ and  random variables $\hat X $ and  $\hat X_m$, $m \ge 1$, defined on $\hat \Omega $ with values in $C(0,\infty; H)$  such that the law of $X_m$ coincide with the law of $\hat X_m$, $m \ge 1,$ and moreover 
\begin{gather*}
 \hat X_m \to \hat X,\;\;\; \hat \P-a.s. 
\end{gather*}
Let us fix ${k_0} \ge 1$. Let $\hat X^{(k_0)}_m = \langle \hat X^{}_m,  e_{k_0} \rangle
$.  Recall that $\pi_m \circ F = F_m$.
 It is not difficult to prove that     the processes $(M_{m}^{(k_0)})_{m \ge 1}$  
$$
M_{m}^{(k_0)}(t) = \begin{cases} \hat X^{(k_0)}_m(t) - x^{(k_0)} +   \lambda_{k_0} \int_0^t  \hat X^{(k_0)}_m(s) ds \, - \, \lambda_{k_0}^{1/2} \, \int_0^t F^{(k_0)}(\hat X_m (s))ds,\; \; {k_0} \le m 
\\     \hat X^{(k_0)}_m(t) - x^{(k_0)} +      \lambda_{k_0} \int_0^t \hat X_m^{(k_0)}(s) ds,    \, \, \; \;\; {k_0} > m,\;\; t \ge 0,   
\end{cases}
$$
are square-integrable continuous  ${\cal F}_t^{\hat X_m}$-martingales on $(\hat  \Omega, \hat  {\cal F}, \hat  \P )$ with $M_{m}^{(k_0)}(0) =0$. 
 Moreover the quadratic variation process  $\langle M_{m}^{(k_0)}\rangle_t \, = \, t$, $m \ge 1$ (cf. Section 8.4 in \cite{DZ}).  
 
 Passing to the limit as $m \to \infty$ we find that 
\begin{equation} \label{w11}
M_{}^{(k_0)}(t) =  \hat X^{(k_0)}(t) - x^{(k_0)} +    \lambda_{k_0} \int_0^t  \hat X^{(k_0)}(s) ds \, - \, \lambda_{k_0}^{1/2} \, \int_0^t F^{(k_0)}(\hat X (s))ds, \;  t \ge 0,   
   \end{equation}
 is a square-integrable continuous  ${\cal F}_t^{\hat X}$-martingale with $M_{}^{(k_0)}(0) =0$. To check the martingale property,    
 let us fix $0 < s < t $. We know that $\hat \E [M_{m}^{(k_0)}(t) - M_{m}^{(k_0)}(s) /\,  {\cal F}_s^{\hat X_m}]=0$, $m \ge 1$.   
  
  Consider $0 \le s_1 < \ldots < s_n \le  s$, $n \ge 1$. For  any   $h_j \in C_b (H)$, we have, for $m \ge k_0$,
\begin{gather}\label{w113}  
 \begin{array}{l}
 \hat \E  \Big[ \big ( \hat X^{(k_0)}_m (t) - \hat X^{(k_0)}_m(s) +    \lambda_{k_0} \int_s^t  \hat X_m^{(k_0)}(r) dr  -  \lambda_{k_0}^{1/2} \, \int_s^t F^{(k_0)}(\hat X_m (r))dr    \big )  \\
    \cdot \prod_{j=1}^n 
h_j(\hat X_m ({s_j}))\Big]=0.
\end{array}     
\end{gather}
 Using that $| F^{(k_0)} (x) | \le C_F   (1 + |x|_H)$ and that,  for any  $T>0$,
 $\ds \sup_{m \ge 1} \sup_{t \le T} \hat \E [ |\hat X_m(t)|^p_H ]
 \le C < \infty$
 (cf. \eqref{sd1}) 
by the Vitali   convergence theorem we get easily that \eqref{w113} holds when
 $\hat X_m$ is replaced by $\hat X$ {   (note that  this assertion could   be proved by using only the   dominated convergence theorem).} Then  we obtain that $M_{}^{(k_0)}$ 
 is   a square-integrable continuous  ${\cal F}_t^{\hat X}$-martingale.   
  \\ 
Moreover, by a limiting procedure, arguing as before, we find that $( (M_{}^{(k_0)}(t))^2 -t)$ is a martingale. It follows that $M_{}^{(k_0)}$ is {\it a real Wiener process} on $(\hat \Omega, \hat {\cal F}, \hat \P)$.

Hence, for any $k \ge 1,$ we find that there exists a real Wiener process $M^{(k)}$ such that
\begin{gather*}
\begin{array}{l}
  \hat X^{(k)}(t) =  x^{(k)} -    \lambda_{k} \int_0^t  \hat X^{(k)}(s) ds 
  \, + \, \lambda_{k}^{1/2} \, \int_0^t F^{(k)}(\hat X (s))ds + M_{}^{(k)}(t). 
  \end{array} 
\end{gather*}
We prove now that   $(M^{(k)})_{k \ge 1}$ are independent Wiener processes. 
\\
We fix $N \ge 2$ and
 introduce the processes $(S_{m}^N)_{m \ge 1}$, $S_m^{N}(t) =  
 \big ( M_{m}^{(1)}(t), \ldots,   \ M_{m}^{(N)}(t)\big)_{t \ge 0}$, with values in $\R^N$.     The components of $S_m^{N}$  are square-integrable continuous  ${\cal F}_t^{\hat X_m}$-martingales. Moreover  the    quadratic covariation $\langle M_{m}^{(i)},  M_{m}^{(j)} \rangle_t = \delta_{ij} t $.
 
 Passing to the limit as before we obtain that also the $\R^N$-valued process $(S^N(t))$, 
 $
 S^N (t) = \big ( M^{(1)}(t), \ldots,   \ M^{(N)}(t) \big), \;\; t \ge 0,  
  $
 has  components which are square-integrable continuous  ${\cal F}_t^{\hat X}$-martingales with quadratic covariation $\langle M^{(i)},  M^{(j)} \rangle_t = \delta_{ij} t $. Note that $S^N(0)=0$, $\hat \P$-a.s.
 
  By the L\'evy characterization of the Brownian motion (see Theorem 3.16 in \cite{KS}) we have that  $\big ( M^{(1)}(t), \ldots,   \ M^{(N)}(t) \big)$ is a standard Wiener process with values in $\R^N$. Since $N$ is arbitrary, 
  $(M^{(k)})_{k \ge 1}$ are independent real Wiener processes and  the proof is complete.

\begin {remark} \label{sd} {\em 
Following the previous method one can prove existence of weak mild solution even for 
\begin{equation*} 
dX_{t}=AX_{t}dt +  (-A)^{\gamma}F(X_{t})dt+ dW_{t},\qquad X_{0}=x\in H,
\end{equation*}
with $\gamma \in (0,1)$ and $F: H \to H$  continuous and having at most a linear growth.   
}    
\end {remark}

\section {Proof of weak uniqueness  when $F \in C_b^{\theta}(H,H)$, for some $\theta \in (0,1)$}

To get the weak uniqueness of Theorem \ref{base} when 
 $F \in C_b^{\theta}(H,H)$
we  first show  the equivalence between martingale solutions and mild solutions. Indeed  for martingale problems  some useful uniqueness  results  are  available even in infinite dimensions (see, in particular, Theorems \ref{ria}, \ref{uni1} and \ref{key}).

\subsection{Mild solutions and  martingale problem   } 

 We  formulate the martingale problem of Stroock and Varadhan \cite{SV79} for the operator $\L$ given below in \eqref{ll} and associated to \eqref{sde}. We stress that  an infinite-dimensional generalization of the martingale problem is proposed in Chapter 4 of \cite{EK}.  Here we follow 
   Appendix of \cite{PrPot}.  In such  appendix  some extensions and modifications of theorems given in
Sections 4.5 and 4.6 of \cite{EK} are proved. 

The results of this section hold more generally when $F \in C_b(H,H)$ in \eqref{sde}.

 \vskip 1mm       
   We use the space $C^2_{cil}(H)$
  of regular
  cylindrical functions (cf. \eqref{cil2}).
  We 
 deal with the following linear operator ${\L}: D(\L) \subset C_b(H) \to C_b(H)$,
with $D(\L) = C^2_{cil}(H)$:   
 \begin{eqnarray} \label{ll}
\L f (x) &=& \frac{1}{2} Tr(D^2 f(x)) + \langle x, ADf(x) \rangle +
\langle F(x),  (-A)^{1/2} Df(x) \rangle 
\\ \nonumber & = &
L f (x) +  
\langle F(x),  (-A)^{1/2} Df(x) \rangle,\;\;\; f \in D(\L),\; x \in H.
\end{eqnarray}

\begin{remark} {\em   
 We stress  that the linear operator $(\L, D(\L ))$  in \eqref{ll} is countably pointwise determined,
   i.e., it verifies Hypothesis 17 in \cite{PrPot}. Indeed, arguing  as in Remark 8 of \cite{PrPot}, one shows that there exists  a countable
set ${\mathcal H}_0 \subset D(\L)$ such
 that for any $f \in D(\L) $, there exists 
a sequence $(f_n) \subset {\mathcal H}_0$ satisfying
$$
 \lim_{n \to \infty} ( \| f - f_n \|_{0} +   \|  \L f_n -   \L f \|_{0}) =0. \qed
$$ 
}
\end{remark}

Let $x \in H$. An $H$-valued stochastic process $X = (X_t)= (X_t)_{  t \ge 0 }$
defined on some probability space $(\Omega, {\cal F}, \P)$
with continuous trajectories is a \textsl { solution of the martingale
problem for $(\L, \delta_x)$} if,
 for any $f \in D(\L)$,
\begin{equation}\label{mart}
 \begin{array}{l}
    M_t(f) = f(X_t) - \int_0^t \L f(X_s) ds, \;\; t \ge 0, \;\; \text{is a
    martingale}
\end{array} 
    \end{equation}
(with respect to the natural filtration $({\cal F}_t^X)$) 
  and, moreover,     $X_0 =x, \P$-a.s.. 
  
  If we do not assume that $F$ is bounded then in general    
   $M_t(f)$ is only a local martingale because   in general  $\L f$ is not    a bounded function.

 \smallskip 
  We say that \textsl{ the martigale problem for ${ \L}$ is  well-posed} if, for any $x  \in H$,   there exists a martingale solution
 for $({ \L}, \delta_x)$ and, moreover,  uniqueness in law    holds  for  the martingale problem for $({ \L}, \delta_x)$.

  Equivalence between 
    mild solutions and martingale solutions  has been proved in a  general setting  in  \cite{kunze} even for SPDEs in Banach spaces.   
  We only give a sketch of the proof of  the next result for the sake of completeness
  (see also Chapter 8 in \cite{DZ}). 
  

\begin{proposition} \label{ser}  Let   $F \in C_b (H,H)$  and $x \in H.$   

(i)  If $X$ is a weak mild solution to \eqref{sde}    with $X_0 = x$, $\P$-a.s.,  then
 $X$ is also 
a  solution  of the martingale
problem for $(\L, \delta_x)$.

(ii) Viceversa, if $X= (X_t)$ is  a  solution of the martingale
problem for $(\L,  \delta_x)$ on some probability space $(\Omega, {\cal F}, \P)$ then there exists a cylindrical Wiener process on $(\Omega, {\cal F},({\cal F}_t^X), \P)$ such that 
$X$ is  a weak mild solution to \eqref{sde}  
  on $(\Omega, {\cal F},({\cal F}_t^X), \P)$
with initial
condition $x$. 
\end{proposition}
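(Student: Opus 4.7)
The plan is to reduce part (i) to a finite-dimensional Itô computation on the coordinates of $X$, and to build the driving cylindrical Wiener process in part (ii) via the L\'evy characterization after extracting scalar martingales from the martingale problem.

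For part (i), take $f \in C^2_{cil}(H)$ of the form \eqref{cil2} and observe that projecting the mild formula \eqref{mqq} on an eigenvector $e_{i_j}$ yields the scalar Ornstein--Uhlenbeck-type formula
\[
X^{(i_j)}_t = e^{-\lambda_{i_j} t} x^{(i_j)} + \lambda_{i_j}^{1/2}\!\int_0^t \! e^{-\lambda_{i_j}(t-s)} F^{(i_j)}(X_s)\,ds + \int_0^t \! e^{-\lambda_{i_j}(t-s)}\,dW^{(i_j)}_s,
\]
which by Itô applied to $e^{\lambda_{i_j} s} X^{(i_j)}_s$ is equivalent to the differential form \eqref{d33}. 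Then the finite-dimensional Itô formula applied to $\tilde g(X^{(i_1)}_t,\dots,X^{(i_n)}_t)=f(X_t)$ produces a drift which, because $A e_{i_j}=-\lambda_{i_j} e_{i_j}$ and $(-A)^{1/2}e_{i_j}=\lambda_{i_j}^{1/2}e_{i_j}$, equals exactly $\langle X_t, ADf(X_t)\rangle + \langle F(X_t),(-A)^{1/2}Df(X_t)\rangle$; independence of the scalar Wiener processes turns the covariation term into $\tfrac12\operatorname{Tr}(D^2 f(X_t))$. The whole drift is therefore $\L f(X_t)$, and the remaining term is a stochastic integral against finitely many $W^{(k)}$ of the bounded derivatives of $\tilde g$, hence a true martingale relative to $({\cal F}_t^X)$.

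For part (ii), I proceed in three steps. First, for each $k$ I want to show that
\[
M^{(k)}_t := X^{(k)}_t - x^{(k)} + \lambda_k\!\int_0^t X^{(k)}_s\,ds - \lambda_k^{1/2}\!\int_0^t F^{(k)}(X_s)\,ds
\]
is a continuous $({\cal F}_t^X)$-martingale with $\langle M^{(j)},M^{(k)}\rangle_t=\delta_{jk}t$. The obstacle is that $x\mapsto \langle x,e_k\rangle$ is not in $C^2_{cil}(H)$, so the martingale property \eqref{mart} is not immediately applicable. I plug in $f_n(x)=\chi_n(\langle x,e_k\rangle)$ with $\chi_n\in C^2_c(\mathbb R)$ equal to the identity on $[-n,n]$ and derivatives uniformly bounded on $[-n,n]$, and localize with the stopping times $\tau_n=\inf\{t:|X^{(k)}_t|>n\}$. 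Boundedness of $F$ and continuity of the trajectories of $X$ then give uniform integrability allowing passage to the limit $n\to\infty$; the analogous argument with $f_n(x)=\chi_n(\langle x,e_j\rangle)\chi_n(\langle x,e_k\rangle)$ delivers the covariation identity. Second, by L\'evy's characterization applied for each $N$ to $(M^{(1)},\dots,M^{(N)})$, the $(M^{(k)})_{k\ge 1}$ are independent real Wiener processes on $(\Omega,{\cal F},({\cal F}_t^X),\P)$, and $W_t=\sum_{k\ge 1}M^{(k)}_t e_k$ is the desired cylindrical Wiener process. Third, rewriting the definition of $M^{(k)}$ as $dX^{(k)}_t=-\lambda_k X^{(k)}_t\,dt+\lambda_k^{1/2}F^{(k)}(X_t)\,dt+dM^{(k)}_t$ and applying Itô to $e^{\lambda_k t} X^{(k)}_t$ recovers the scalar mild formula; reassembling $X_t=\sum_k X^{(k)}_t e_k$ and checking $L^2(\Omega;H)$-convergence using $\sum_k \lambda_k^{-1}<\infty$ (see \eqref{e1a}) together with boundedness of $F$ gives back the vector mild equation \eqref{mqq}.

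The main technical obstacle is the truncation step at the start of part (ii): the test functions admissible in \eqref{mart} have compact support, so extracting martingale information for the linear functionals $\langle\,\cdot\,,e_k\rangle$ requires a careful localization together with uniform integrability (supplied by boundedness of $F$ and the continuity of $X$) in order to keep the martingale property in the limit. Everything else—the Itô computations in (i), the L\'evy characterization, and the variation-of-parameters passage from the scalar SDEs to the vector mild formula—is routine.
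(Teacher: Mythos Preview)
Your proposal is correct and follows essentially the same route as the paper's (sketched) proof: finite-dimensional It\^o on the coordinate processes for (i), and for (ii) a truncation of the linear test functions $\langle\,\cdot\,,e_k\rangle$ to land in $C^2_{cil}(H)$, followed by the L\'evy characterization to identify the $M^{(k)}$ as independent real Wiener processes. The paper's truncation is $\langle x,e_k\rangle\,\eta(\langle x,e_k\rangle/n)$, matching your $\chi_n(\langle x,e_k\rangle)$, and its covariation argument (comparing \eqref{g5} with the It\^o expansion of $(M^{(k)})^2$) is just a rephrasing of your product-cutoff computation. Your Step~3, which explicitly reassembles the vector mild formula from the scalar equations via variation of parameters, is left implicit in the paper by appealing to the equivalence between \eqref{d33} and \eqref{mqq} already recorded in Section~1.1.
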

\begin{proof}(i) Let   $X$ be  a weak mild solution to \eqref{sde} with $X_0 = x$, $\P$-a.s.
 defined on  a filtered probability space $(
\Omega,$ $ {\mathcal F},
 ({\mathcal F}_{t}), \P) $. Let $f \in D(\L)$. Since  $f$ depends only on a finite number of variables  by the It\^o formula   
  we obtain that
$
f(X_t) $ $ - \int_0^t \L f(X_s) ds
$
 is an ${\cal F}_t$-martingale, for any $f \in D(\L)$. 
 We get easily  the assertion since ${\cal F}_t^X \subset 
{\cal F}_t$, $t \ge 0$.

\vskip 1mm \noindent  (ii)  Let $X$ be a solution to the martingale problem
 for $(\L, \delta_x)$ defined on $(\Omega, {\cal F}, \P) $.

\vskip 0.5 mm       
\noindent  \textit{I Step.}  {\it Let $X^{(k)}_t = \langle X_t, e_k \rangle $ and $F(x) = \sum_{k \ge 1} F^{(k)}(x) e_k$.
We show that, for any $k \ge 1$,
$$
 \begin{array}{l}
X^{(k)}_t - x^{(k)} + \lambda_k\int_0^t X^{(k)}_s ds - \int_0^t (\lambda_k)^{1/2}F^{(k)}(X_s) ds
\end{array} 
$$
is a one-dimensional Wiener process $W^{(k)}= (W^{(k)}_t)$.}
 
Let $k \ge 1$. We will  modify   a well known argument
 (see, for instance, the proof of Proposition 5.3.1 in \cite{EK}).
 By the definition of martingale solution,
it follows easily that if
$f(x)= x^{(k)} = \langle x, e_k\rangle $, $x \in H$, the process
 \begin{equation} \label{f8}
\begin{array}{l}
M_t^{(k)} = 
 X^{(k)}_t  -  x^{(k)} + \lambda_k\int_0^t X^{(k)}_s ds - \int_0^t b_k(s) ds \,\; \text{is a continuous  local martingale},
 \end{array} 
 \end{equation}
   which is ${\cal F}_t^X$-adapted,  with    
 $b_k(s) = (\lambda_k)^{1/2}F^{(k)}(X_s)$ {  (to this purpose one has  to approximate the unbounded function $l_k(x)=\langle x, e_k\rangle$ by functions 
  $l_k (x) \eta (\frac{\langle x, e_k\rangle}{n})$, $n \ge 1$, where  
  $\eta \in {\tilde C_0}^{\infty}(\R)$ is such that  $\eta(s)=1$ for $|s| \le 1$).
 }     Then
    using $f(x) = (\langle x, e_k\rangle)^2 $, $x \in H$, we find that
\begin{equation} \label{g5}
\begin{array}{l}     
N_t^k = (X^{(k)}_t)^2 - (x^{(k)})^2 + 2\lambda_k\int_0^t (X^{(k)}_s)^2 ds -
2\int_0^t b_k(s)\, X^{(k)}_s  ds - t,
\end{array} 
\end{equation}
is also a continuous local martingale.  On the other hand, starting from \eqref{f8}   and
applying the It\^o formula (cf. Theorem 5.2.9 in \cite{EK}), we get
$$
\begin{array}{l}
(X^{(k)}_t)^2 = (x^{(k)})^2 - 2\lambda_k\int_0^t (X^{(k)}_s)^2 ds +
2\int_0^t b_k(s) \,X^{(k)}_s  ds + 2 \int_0^t b_k(s)   dM_s^{(k)} +  \langle M^{(k)} \rangle_t,
\end{array} 
$$
where $(\langle M^{(k)} \rangle_t)$      is the variation process of $M^{(k)}$.
Comparing this identity with \eqref{g5} we deduce: $N_t^k - 2 \int_0^t b_k(s)   dM_s^{(k)}= \langle M^{(k)}
\rangle_t -t $ and so 
 $\langle M^{(k)}
\rangle_t =t$  
 (a continuous local martingale of bounded variation is constant).
 By the  L\'evy martingale characterization of the Wiener process (see Theorem 5.2.12 in \cite{EK}) we get that $M^{(k)}$ is a real Wiener process.

\vskip 0.5 mm       
 \noindent \textit{II Step.} \textsl{   We prove that the previous Wiener processes $W^{(k)} = M^{(k)} $   are independent.}

\vskip 0.5 mm  We fix any   $N \ge 2$ and prove that $W^{(k)}$, $k=1, \ldots, N$ are independent. 
  We will argue similarly  to the first step.
By using  functions $f(x)= x^j x^k$, $j,k \in \{ 1, \ldots ,N\}$, we get that
$
\langle W^{(j)}, W^{(k)} \rangle_t
$ $ = \delta_{jk} t.
$
 Again by the  L\'evy martingale characterization of the Wiener process (cf. Theorem 3.16 in \cite{KS})
we get that $(W^{(1)}, \ldots, W^{(N)})$ is an $N$-dimensional standard Wiener process.
It follows that $\{W^{(k)} \}_{k =1, \ldots, N}$ are independent real  Wiener processes.
\end{proof}

For the martingale problem for $\L$ in \eqref{ll}  we have  the following  uniqueness  result (we refer to   Corollary 21 in \cite{PrPot}; see also Theorem 4.4.6 in \cite{EK} and Theorem 2.2 in \cite{kunze}).

\begin{theorem} \label{ria} 
  Suppose the following two conditions:

 (i)  for any $x \in H$, there exists a  martingale
solution
  for $(\L, \delta_x)$;

 (ii)  for any $x \in H$, any two  martingale solutions $X$ and $Y$ for $(\L, \delta_x)$ have the same one dimensional marginal laws (i.e., for    $t \ge 0$, the law of $X_t$ is the same as $Y_t$ on ${\cal B}(H)$).   

 Then the
   martingale
   problem for  $\L$ is well-posed. 
\end{theorem}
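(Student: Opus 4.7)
The plan is to adapt the classical Stroock--Varadhan conditional argument (see Theorem 4.4.2 of \cite{EK} in the finite-dimensional setting, and Corollary 21 of \cite{PrPot} in the present infinite-dimensional framework) to upgrade one-dimensional marginal uniqueness into uniqueness in law on $\mathcal{B}(C([0,\infty);H))$. The crucial structural fact I would invoke at the outset is the remark immediately preceding the theorem: $(\L, D(\L))$ is countably pointwise determined, so there is a countable family $\mathcal{H}_0 \subset D(\L)$ that, together with standard monotone class reasoning, detects whether a continuous $H$-valued process is a martingale solution.

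Fix $x \in H$ and let $X, Y$ be two martingale solutions for $(\L, \delta_x)$, defined on probability spaces $(\Omega_X,\mathcal{F}_X,\P_X)$ and $(\Omega_Y,\mathcal{F}_Y,\P_Y)$. I would prove by induction on $n \geq 1$ that for every $0 \le t_1 < \cdots < t_n$ the joint laws of $(X_{t_1},\ldots,X_{t_n})$ and $(Y_{t_1},\ldots,Y_{t_n})$ coincide on $\mathcal{B}(H^n)$. The base case $n=1$ is exactly hypothesis (ii). For the inductive step, the key tool is the existence of a regular conditional probability: since $H$ is Polish, $C([0,\infty);H)$ is Polish, so for any $s \ge 0$ there is a regular version $\omega \mapsto Q_s^X(\omega,\cdot)$ of the conditional law of the shifted path $\theta_s X := X_{s+\cdot}$ given $\mathcal{F}_s^X$. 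Using the countable family $\mathcal{H}_0$ and the optional stopping identity applied to the martingales $M_t(f) = f(X_t) - \int_0^t \L f(X_r)dr$ with $f \in \mathcal{H}_0$, I would show that for $\P_X$-a.e.\ $\omega$ the measure $Q_s^X(\omega,\cdot)$ is a martingale solution for $(\L, \delta_{X_s(\omega)})$; the argument is the standard one (the $\mathcal{F}_s^X$-conditional expectation $f(X_{s+t})-f(X_s)-\int_0^{t}\L f(X_{s+r})dr$ has mean zero against any bounded $\mathcal{F}_s^X$-measurable weight, so a null-set argument plus countability of $\mathcal{H}_0$ yields a simultaneous martingale property under $Q_s^X(\omega,\cdot)$ on a full-measure $\omega$-set).

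With that in place, the inductive step runs as follows. Given $0 \le t_1 < \cdots < t_n < t_{n+1}$ and a bounded continuous $g:H^{n+1}\to \R$, write $s=t_n$, $h=t_{n+1}-t_n$, and condition on $\mathcal{F}_s^X$:
\begin{equation*}
\E_X\bigl[g(X_{t_1},\ldots,X_{t_{n+1}})\bigr]
=\E_X\!\left[\int g(X_{t_1},\ldots,X_{t_n},\zeta_h)\,Q_s^X(\cdot,d\zeta)\right].
\end{equation*}
By the claim above, $\zeta \mapsto \zeta_h$ under $Q_s^X(\omega,\cdot)$ has the one-dimensional marginal law at time $h$ of some martingale solution starting from $X_s(\omega)$, and by hypothesis (ii) this law depends only on $X_s(\omega)$; call it $\nu_h(X_s(\omega),\cdot)$. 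Thus the right-hand side becomes $\E_X\!\bigl[\int g(X_{t_1},\ldots,X_{t_n},\zeta)\,\nu_h(X_{t_n},d\zeta)\bigr]$, which depends on $X$ only through the joint law of $(X_{t_1},\ldots,X_{t_n})$. The same formula holds for $Y$ with the same kernel $\nu_h$, and the inductive hypothesis closes the step. Finally, because $X$ and $Y$ take values in the Polish space $C([0,\infty);H)$, equality of all finite-dimensional distributions determines the law on $\mathcal{B}(C([0,\infty);H))$, completing the proof.

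The main obstacle I anticipate is the rigorous construction of $Q_s^X(\omega,\cdot)$ as a martingale solution for $\P_X$-a.e.\ $\omega$ \emph{simultaneously} for all $f \in D(\L)$; this is precisely where the countably pointwise determined property of $(\L,D(\L))$ is essential, since without it one would a priori need an uncountable union of null sets. Once that technical point is handled---exactly as in Corollary 21 of \cite{PrPot}---the rest is a straightforward induction plus a Polish-space finite-dimensional-distributions argument.
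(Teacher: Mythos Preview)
Your proposal is correct and follows exactly the classical Stroock--Varadhan conditioning argument that underlies Corollary~21 of \cite{PrPot} and Theorem~4.4.2/4.4.6 of \cite{EK}. The paper itself does not give a proof of this theorem: it simply invokes Corollary~21 of \cite{PrPot} (together with the remark that $(\L,D(\L))$ is countably pointwise determined, which you correctly identified as the technical hypothesis needed to discard only countably many null sets when building the regular conditional distribution as a martingale solution). So your sketch is not an alternative route but rather a faithful unpacking of the very reference the paper cites.
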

Throughout  Section 5 we will apply the previous result and also the next localization principle for $\L$ (cf.  Theorem 26 in \cite{PrPot}). 
\begin{theorem} \label{uni1}  Suppose   
    that for any $x \in
H$ there exists a martingale solution  for  $(\L, \delta_x)$.
   Suppose that
 there exists
a family $\{ U_j\}_{j \in J}$ of open sets $U_j \subset H$  with $\cup_{j \in J} U_j = H$  and  linear operators $\L_j$ with the same domain of $\L$, i.e., 
 $ \L_j: D(\L) \subset C_b (H) \to C_b(H)$, $j \in J $ such that

i)  for any $j \in J$, the martingale problem for $\L_j$ is well-posed.

ii) for any $j \in J$, $f \in D(\L)$, we have
$
 \L_j  f(x) = \L  f(x),\;\; x \in U_j.
$
\\ 
Then the martingale problem for $\L$ is well-posed.  
  \end{theorem}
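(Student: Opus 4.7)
First I would invoke Theorem \ref{ria}: since existence of a martingale solution for $(\L, \delta_x)$ is assumed for every $x \in H$, it suffices to establish uniqueness of the one-dimensional marginals. So fix $x \in H$ and let $X$, $Y$ be two martingale solutions for $(\L, \delta_x)$, defined a priori on possibly distinct filtered probability spaces; the goal is to show that $\mathrm{Law}(X_t) = \mathrm{Law}(Y_t)$ on $\mathcal{B}(H)$ for every $t \ge 0$.

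The core idea is the Stroock--Varadhan style localization by stopping. For each $y \in H$ choose $j(y) \in J$ with $y \in U_{j(y)}$ and a closed ball $V(y) \subset U_{j(y)}$ centred at $y$; let $\tau = \inf\{t \ge 0 : X_t \notin V(x)\}$, an $(\mathcal{F}^X_t)$-stopping time since $V(x)$ is closed and $X$ has continuous trajectories. The key technical ingredient, which is the infinite-dimensional analogue of Theorem~6.1.2 of \cite{SV79} (cf.\ the Appendix of \cite{PrPot}), is a \emph{concatenation lemma}: using a regular conditional distribution of $X$ given $\mathcal{F}^X_\tau$ together with a martingale solution for $(\L_{j(x)}, \delta_{X_\tau(\omega)})$ (which exists by well-posedness of $\L_{j(x)}$), one assembles a continuous $H$-valued process $\tilde X$ that coincides with $X$ on $[0, \tau]$ and then evolves as a $\L_{j(x)}$-solution. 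Because $\L = \L_{j(x)}$ on $U_{j(x)} \supset V(x)$, the optional sampling combined with $(ii)$ of the hypothesis shows that $\tilde X$ is a martingale solution for $(\L_{j(x)}, \delta_x)$. Applying the same construction to $Y$ yields $\tilde Y$; by well-posedness of $\L_{j(x)}$ we conclude that $\tilde X$ and $\tilde Y$ have identical laws on $C([0,\infty);H)$, so in particular the stopped processes $X^{\tau}$ and $Y^{\tau_Y}$ (with $\tau_Y$ defined analogously) share the same law.

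Iterating the argument produces stopping times $0 = \tau_0 \le \tau_1 \le \tau_2 \le \dots$ such that on $[\tau_n, \tau_{n+1}]$ the process remains in a closed ball $V(X_{\tau_n}) \subset U_{j(X_{\tau_n})}$; at step $n$ the concatenation lemma is applied conditionally on $\mathcal{F}^X_{\tau_n}$, propagating equality of laws from $[0, \tau_n]$ to $[0, \tau_{n+1}]$. In particular, for every fixed $t$, the laws of $X_t$ and $Y_t$ coincide on the event $\{\tau_n > t\}$.

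The main obstacle is to guarantee that $\tau_n \uparrow \infty$ almost surely, so that for each fixed $t \ge 0$ one has $\P(\tau_n > t) \to 1$. This requires a measurable choice of the ball $V(y)$ (i.e.\ of its radius and of the index $j(y)$) that does not shrink too fast along typical trajectories. Exploiting the separability of $H$ one extracts a countable subfamily from $\{U_j\}$ and applies a measurable selection to obtain Borel functions $y \mapsto j(y)$ and $y \mapsto r(y) > 0$ with $B(y, r(y)) \subset U_{j(y)}$; since a.s.\ every trajectory is uniformly continuous and bounded on compacts, the increments $\tau_{n+1} - \tau_n$ are bounded below by the exit time of a Brownian-like continuous path from a ball of deterministic radius on compact time intervals, forcing $\tau_n \to \infty$. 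Once this is in place, equality of one-dimensional marginals of $X$ and $Y$ follows for every $t$, and Theorem~\ref{ria} yields well-posedness of the $\L$-martingale problem.
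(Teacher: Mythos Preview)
The paper does not supply its own proof of this theorem; it is quoted from Theorem~26 of \cite{PrPot}, which adapts the Stroock--Varadhan localization (cf.\ \cite{EK}, Sections~4.5--4.6) to the present infinite-dimensional setting. Your outline --- reduce via Theorem~\ref{ria} to one-dimensional marginals, then concatenate at successive exit times using well-posedness of each $\L_j$ --- is exactly this classical route, so in structure you match the cited reference.

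The one genuine gap is the last paragraph. You correctly flag $\tau_n\uparrow\infty$ as the obstacle, but the justification you offer (``the increments $\tau_{n+1}-\tau_n$ are bounded below by the exit time of a Brownian-like continuous path from a ball of deterministic radius'') does not work as written: there is no Brownian structure to invoke, and a generic Borel selection $y\mapsto r(y)$ need not be bounded below along the random sequence $X_{\tau_n}$. What does work is to take the specific choice $r(y)=\tfrac12\sup\{\rho\in(0,1]:\exists j,\ \overline{B(y,\rho)}\subset U_j\}$, which is lower semicontinuous, and then argue pathwise by contradiction: if $\tau_n(\omega)\uparrow\tau_\infty(\omega)<\infty$, continuity gives $X_{\tau_n}(\omega)\to z:=X_{\tau_\infty}(\omega)$; picking $\rho>0$ with $\overline{B(z,2\rho)}\subset U_j$ for some $j$ forces $r(X_{\tau_n}(\omega))\ge\rho/2$ for all large $n$, and then uniform continuity of $t\mapsto X_t(\omega)$ on $[0,\tau_\infty(\omega)]$ yields $\tau_{n+1}(\omega)-\tau_n(\omega)\ge\eta(\omega)>0$ eventually, contradicting convergence of $(\tau_n(\omega))$. (A minor side point: take $V(y)$ to be an \emph{open} ball, so that the exit time is the hitting time of a closed set and hence an $\mathcal{F}^X_t$-stopping time without passing to the right-continuous filtration.) With this step made rigorous, the plan is complete.
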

  
 In {  Sections 6 and 7}  we   treat {\sl possibly unbounded} $F$; we will prove uniqueness 
by truncating $F$ 
 and using uniqueness  for the martingale problem up to a stopping time.   According to  Section 4.6 of \cite{EK} this leads to the  concept of {\sl stopped martigale problem} for  $\cal L$ which we define now.  
  
\smallskip   
Let us fix an open set $U \subset H$ and consider the Kolmogorov operator $\L$ in \eqref{ll} with $F \in C_b (H,H)$. 

 Let $x \in H$.  A  stochastic process $Y = (Y_t)_{t \ge 0}$ with values in $H$
defined on some probability space  $ (\Omega, {\cal F}, \P)$
with continuous paths 
is a  solution of the stopped martingale problem for $(\L, \delta_x, U)$ if  $Y_0 =x$, $\P$-a.s.  and the
 following conditions hold:

 (i) $Y_t = Y_{t \wedge \tau}$, $t \ge 0,$  $\P$-a.s, where
\begin{equation} \label{ta1}
 \begin{array}{l}
\tau = \tau^Y_U= \inf \{ t \ge 0 \; : \; Y_t \not \in U  \}
 \end{array}
 \end{equation}   
($\tau = + \infty$ if the set is empty;  this exit time $\tau$ is an
${\cal F}_t^Y$-stopping time);

 (ii) for any $f \in D(\L) =  C^2_{cil}(H)$, 
  $  \ds  f(Y_t) - \int_0^{t \wedge \tau}
    \L f(Y_s) ds, \;\;\; t \ge 0, \;\; \text{is a ${\cal F}_t^Y$-martingale.}$
 
 A key result
 says, roughly speaking,  
 that if the (global) martingale problem for an operator  is well-posed then also the
 stopped martingale problem for such operator is well-posed for any choice
  of the open set $U$ and for any initial condition $x$ (we refer to  Theorem 22 in \cite{PrPot}; see   also the beginning of Section A.3 for a comparison  between this result and  Theorem 4.6.1 in \cite{EK}).   
   We state this  result for the operator $\L$ in \eqref{ll}.
   \begin{theorem} \label{key} Suppose  that
 the  martingale problem for $\L$ is well-posed.

  Then also the
 stopped martingale problem for $(\L, \delta_x, U)$ is well-posed for any  $x \in H$ and for any open set $U$ of $H$. In particular uniqueness in law holds for the
 stopped martingale problem  for $(\L, \delta_x, U)$, for any $x \in H$ and   $U$ open set in $H$.  
 \end{theorem}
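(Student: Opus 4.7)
\textbf{Existence.} This part is immediate. By hypothesis, for every $x \in H$ there is a martingale solution $X$ for $(\L, \delta_x)$ on some probability space; I would simply set $Y_t = X_{t \wedge \tau}$ where $\tau = \tau_U^X$ is the first exit time of $X$ from $U$ (which is an $({\cal F}_t^X)$-stopping time since $U$ is open and $X$ is continuous). The process $Y$ has continuous paths, $Y_0 = x$ a.s., satisfies $Y_t = Y_{t\wedge\tau}$, and $\tau = \tau_U^Y$. For any $f \in D(\L)$, the optional stopping theorem gives that $M_{t \wedge \tau}(f) = f(X_{t\wedge\tau}) - \int_0^{t\wedge\tau} \L f(X_s)\, ds$ is a martingale with respect to $({\cal F}_t^X)$, and since $s \wedge \tau = s$ on $\{s \le \tau\}$ we can rewrite this as $f(Y_t) - \int_0^{t\wedge\tau} \L f(Y_s)\, ds$. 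The conditional-expectation projection onto the smaller filtration $({\cal F}_t^Y)$ then delivers the required martingale property.

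\textbf{Uniqueness: strategy of reduction to the global problem.} Let $Y^1, Y^2$ be two solutions of the stopped martingale problem for $(\L, \delta_x, U)$ on (possibly distinct) probability spaces, with exit times $\tau^i = \tau_U^{Y^i}$. The plan is to produce, for each $i$, a (global) martingale solution $\tilde X^i$ for $(\L, \delta_x)$ on an enlarged probability space such that $\tilde X^i_{t \wedge \tau_U^{\tilde X^i}} = Y^i_t$ for all $t \ge 0$ almost surely. Once this is done, the hypothesis that the global martingale problem for $\L$ is well-posed forces $\tilde X^1$ and $\tilde X^2$ to have the same law on $C([0,\infty); H)$, and consequently $Y^1$ and $Y^2$ (being the same functional of the path of $\tilde X^i$, namely its stopping at the first exit of $U$) have the same law.

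\textbf{Construction of the global extension.} To build $\tilde X^i$ from $Y^i$, I would distinguish two parts of the sample space. On the event $\{\tau^i = \infty\}$, the path $Y^i$ stays in $U$ forever and one verifies directly that $Y^i$ itself satisfies the global martingale identity, so no extension is needed there. On $\{\tau^i < \infty\}$ I would use regular conditional probabilities on the Polish path space $C([0,\infty); H)$ (see \cite{KS}) to disintegrate the law of $Y^i$ with respect to $\sigma(Y^i_s : s \le \tau^i)$. For each realization $\omega$ with $\tau^i(\omega) = t_0 < \infty$ and $Y^i_{t_0}(\omega) = y$, the hypothesis of well-posedness of the global martingale problem supplies a unique law $P_y$ of a martingale solution starting at $y$. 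Concatenating the path of $Y^i$ on $[0, t_0]$ with an independent draw from $P_y$ shifted to start at time $t_0$ produces a new path whose law I call $\tilde{\P}^i$. The fact that this concatenated process is a martingale solution for $(\L, \delta_x)$ follows from Hypothesis 17 / the countable-pointwise-determined structure of $(\L, D(\L))$ recalled before Theorem \ref{ria}: it suffices to test the defining martingale property on the countable collection ${\mathcal H}_0 \subset D(\L)$, and at each such $f$ the identity follows by splitting the time integral at $t_0$ and invoking the strong Markov-type gluing lemma. Finally, one must verify that the first exit time of the concatenated process from $U$ coincides $\tilde\P^i$-a.s.\ with $\tau^i$; this uses continuity of the path and the fact that $Y^i_{\tau^i} \in \partial U$ (or the process was stopped at infinity).

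\textbf{Main obstacle.} The delicate step is the concatenation/gluing argument in infinite dimensions. Two technical points demand care: (a) constructing regular conditional probabilities on $C([0,\infty); H)$ along the stopping time $\tau^i$, which is standard only because $H$ (hence $C([0,\infty); H)$) is Polish; and (b) verifying that after gluing, the process genuinely solves the global martingale problem, which is where the countable-pointwise-determined property of $\L$ — already noted in the remark following \eqref{ll} and crucial in \cite{PrPot} — is used to reduce the verification to a countable family of test functions and to avoid measurability pathologies. Modulo these infinite-dimensional technicalities, the argument is the Stroock--Varadhan template as carried out in Theorem 22 of \cite{PrPot}, and the statement follows.
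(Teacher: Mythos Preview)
The paper does not actually prove this theorem: it is stated as a direct citation of Theorem 22 in \cite{PrPot} (with a pointer to the comparison with Theorem 4.6.1 in \cite{EK}). Your sketch follows exactly the Stroock--Varadhan concatenation template that underlies that cited result---stop a global solution for existence, and for uniqueness glue an independent global solution after the exit time using regular conditional probabilities on the Polish path space, invoking the countable-pointwise-determined property of $(\L,D(\L))$ to handle measurability---so your approach and the paper's (outsourced) proof coincide.
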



\subsection{ Weak uniqueness  
 when $F \in C_b^{\theta} (H,H)$ and  $\| F- z \|_{0} < {\tilde C_0} $    }

 Here  we show that there exists a constant ${\tilde C_0}>0$ small enough (depending on $\theta$ and $\|F \|_{C^{\theta}}$) such that if  $F \in C^{\theta}_b(H,H)$ verifies in addition
  \begin{gather} \label{111}
\sup_{x \in H} | F(x)- z |_H =  \| F- z \|_0 < {\tilde C_0}
\end{gather}
 for some $z \in H$,   then uniqueness in law holds for \eqref{sde} for any initial $x \in H$.

To this purpose we will also use Proposition \ref{ser} on equivalence between mild solutions and martingale solutions. 
 We start with a lemma which is a consequence of Theorem \ref{ss13}. We consider the resolvent  
\begin{gather} \label{2we} 
 R^{(z)}_{\lambda} f(x) = u^{(z)}(x) =
 \int_0^{\infty} e^{-\lambda t }  P_{t}^{(z  )} f(x) dt, \;\;\; f \in B_b(H),\; x \in H,\;\; \lambda>0.
\end{gather}
\begin{lemma}   \label{stima1} 
There exists   $\lambda_0 = \lambda_0 (\| F \|_{C^{\theta}}, \theta ) >0$ large enough and  ${\tilde C_0}= {\tilde C_0}  (\| F \|_{C^{\theta}}, \theta )>0$ small enough such that if  $F \in C_b^{\theta}(H,H)$ verifies 
 \eqref{111}   for some $z \in H$ then
for any $\lambda  \ge \lambda_0 $, $g \in  C_b^{\theta}(H)$,  
\begin{equation}\label{2dv}
  \|  \langle F - z, A^{1/2} D R^{(z)}_{\lambda} g \rangle \|_{C^{\theta}}
   \le \frac{1}{2}\, \| g  \|_{C^{\theta}}.
\end{equation}
\end{lemma}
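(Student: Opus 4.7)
The plan is to apply Theorem~\ref{ss13} directly to the function $G(x) := \langle F(x)-z,\, (-A)^{1/2} D R^{(z)}_{\lambda} g(x)\rangle$. This is well defined because Theorem~\ref{ss13}(i) asserts that $D R^{(z)}_{\lambda} g(x) \in D((-A)^{1/2})$ for every $x \in H$ and $(-A)^{1/2} D R^{(z)}_{\lambda} g \in C_b^{\theta}(H,H)$. The key observation is that Theorem~\ref{ss13} supplies two qualitatively different estimates for $u^{(z)} = R^{(z)}_{\lambda} g$: the sup-norm bound carries a factor $C_{\theta}(\lambda)$ which vanishes as $\lambda\to\infty$, whereas the H\"older-seminorm bound carries the $\lambda$-independent constant $M_{\theta}$. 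The strategy is therefore to absorb the H\"older-seminorm contribution using the smallness of $\tilde C_0$, and the sup-norm contribution by taking $\lambda$ large.

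First I would bound $\|G\|_0$ by the Cauchy--Schwarz inequality:
\[
\|G\|_0 \;\le\; \|F-z\|_0 \,\|(-A)^{1/2} D R^{(z)}_{\lambda} g\|_0 \;\le\; \tilde C_0 \, C_{\theta}(\lambda)\,\|g\|_{C^{\theta}}.
\]
For the H\"older seminorm, for $x\neq x'$ I would write
\[
G(x)-G(x') \;=\; \langle F(x)-F(x'),\,(-A)^{1/2} D R^{(z)}_{\lambda} g(x)\rangle \,+\, \langle F(x')-z,\,(-A)^{1/2} D R^{(z)}_{\lambda} g(x)-(-A)^{1/2} D R^{(z)}_{\lambda} g(x')\rangle,
\]
take absolute values, divide by $|x-x'|_H^{\theta}$, and invoke Theorem~\ref{ss13} to obtain
\[
[G]_{C^{\theta}} \;\le\; [F]_{C^{\theta}}\, C_{\theta}(\lambda)\,\|g\|_{C^{\theta}} \,+\, \tilde C_0\, M_{\theta}\,\|g\|_{C^{\theta}}.
\]
Summing the two bounds gives
\[
\|G\|_{C^{\theta}} \;\le\; \Bigl[\bigl(\tilde C_0+\|F\|_{C^{\theta}}\bigr)\,C_{\theta}(\lambda) \,+\, \tilde C_0\, M_{\theta}\Bigr]\,\|g\|_{C^{\theta}}.
\]

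The final step is the two-stage choice of constants in the correct order. First I fix $\tilde C_0 := 1/(4 M_{\theta})$, which depends only on $\theta$ and yields $\tilde C_0 M_{\theta}\le 1/4$. Then, using $\lim_{\lambda\to\infty}C_{\theta}(\lambda)=0$, I choose $\lambda_0 = \lambda_0(\|F\|_{C^{\theta}},\theta)$ large enough that $(\tilde C_0+\|F\|_{C^{\theta}})\,C_{\theta}(\lambda_0) \le 1/4$. For every $\lambda \ge \lambda_0$ the two contributions sum to at most $1/2$, which is the claimed bound. There is no genuine obstacle here, since the heavy lifting was already done in proving Theorem~\ref{ss13}: the whole point of carefully tracking the dependence on $\lambda$ (so that $C_{\theta}(\lambda)\to 0$) while keeping $M_{\theta}$ independent of $\lambda$ was precisely to make this ``small perturbation kills the seminorm, large $\lambda$ kills the sup norm'' argument work, which is what will drive the localization principle in Lemma's application and distinguishes this corrected argument from the flawed one in \cite{priolaAOP}.
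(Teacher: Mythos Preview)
Your proof is correct and follows essentially the same approach as the paper's own proof: split $\|G\|_{C^\theta}$ into the sup-norm part and the H\"older-seminorm part, use the product-type estimate $[lm]_{C^\theta}\le \|l\|_0[m]_{C^\theta}+\|m\|_0[l]_{C^\theta}$ (which you write out explicitly), and then invoke the two estimates \eqref{w44}--\eqref{w441} of Theorem~\ref{ss13}. The only cosmetic differences are that you spell out the decomposition of $G(x)-G(x')$ and make the choice $\tilde C_0=1/(4M_\theta)$ explicit, whereas the paper simply says ``by choosing $\lambda_0$ large enough and $\tilde C_0$ small enough''; your final bound also has $\|F\|_{C^\theta}$ in place of the slightly sharper $[F]_{C^\theta}$, which is harmless.
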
 
\begin{proof}  
    Let $g \in C_b^{\theta}(H)$, $z \in H$ and   
  $\lambda>0$.  We are considering the map: $x \mapsto \langle F(x) - z, A^{1/2} D R^{(z)}_{\lambda} g (x)\rangle$ from $H$ into $\R$.
  By  Theorem \ref{ss13} we know that  $A^{1/2} D R^{(z)}_{\lambda} g  \in C_b^{\theta} (H,H)$. Moreover, 
\begin{gather*}
\| \langle   [  F   - z]  , (-A)^{1/2}    D  R^{(z)}_{\lambda} g \rangle \|_{C^{\theta}} = \| \langle   [  F   - z]  , (-A)^{1/2}    D  R^{(z)}_{\lambda} g \rangle \|_{0} \\ + \,
 [ \langle   [  F   - z]  , (-A)^{1/2}    D  R^{(z)}_{\lambda} g \rangle ]_{C^{\theta}}.
\end{gather*}  
Recall  that  
 given two $\theta$-H\"older continuous and bounded functions $l,m$ we have 
$ [ lm ]_{C^{\theta}}$ $\le \| l\|_0 [m]_{C^{\theta}}$ $+ \| m\|_0 [l]_{C^{\theta}}$.  We get
\begin{gather*}
[ \langle   [  F   - z]  , (-A)^{1/2}     D  R^{(z)}_{\lambda} g  \rangle ]_{C^{\theta}} \le 
 {\tilde C_0}  \,  [ (-A)^{1/2}  D  R^{(z)}_{\lambda} g  ]_{C^{\theta}}  
 \\
 + [F]_{C^{\theta}} \, \| (-A)^{1/2}   D  R^{(z)}_{\lambda} g  \|_0. 
 \end{gather*}
By Theorem \ref{ss13}
there exist $M_{\theta}$  and $C_{\theta}(\lambda) \downarrow 0$ as $\lambda \to \infty$ such that 
\begin{gather*}
 \| (-A)^{1/2} D  R^{(z)}_{\lambda} g \|_0 \le  C_{\theta}(\lambda) \,   \|g \|_{C^{\theta}},\;\; [ (-A)^{1/2} D  R^{(z)}_{\lambda} g ]_{C^{\theta}}
  \le M_{\theta} \| g \|_{C^{\theta}},\;\;\; \lambda>0.
\end{gather*}
It follows that 
\begin{gather*}
[ \langle   [  F   - z]  , (-A)^{1/2}     D  R^{(z)}_{\lambda} g  \rangle ]_{C^{\theta}}
\le ({\tilde C_0} M_{\theta}  \, + C_{\theta}(\lambda)  [F]_{C^{\theta}}) \,   \| g  \|_{C^{\theta}}, \;\; \lambda>0.  
\end{gather*}
On the other hand $\|  \langle   [  F   - z]  , (-A)^{1/2}     D  R^{(z)}_{\lambda} g  \rangle \|_0  \le {\tilde C_0} C_{\theta}(\lambda) \| g  \|_{C^{\theta}}$, $\lambda >0.$
  By choosing $\lambda \ge \lambda_0$ with $\lambda_0$ large enough  and ${\tilde C_0}$ small enough, we obtain the assertion. 
  \end{proof} 

 \begin{lemma}\label{loc}   Let $x \in H$ and consider the  SPDE \eqref{sde}. 
   If there exists   $z \in H$  such that \eqref{111} holds (the constant $\tilde C_0$ is given in Lemma \ref{stima1}) 
 then we have uniqueness in law for \eqref{sde}. 
\end{lemma}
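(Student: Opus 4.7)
The plan is to reduce uniqueness in law for \eqref{sde} to uniqueness of the one-dimensional marginals of the martingale problem for $\L$, and then to invert the resolvent identity via a Neumann-type series whose convergence is supplied by the contraction estimate in Lemma \ref{stima1}. By Proposition \ref{ser}, weak uniqueness for \eqref{sde} starting at $x$ is equivalent to uniqueness in law for the martingale problem for $(\L, \delta_x)$, and by Theorem \ref{ria} this further reduces to showing that any two martingale solutions $X, \tilde X$ starting at $x$ have the same one-dimensional marginals. Since $C_b^{\theta}(H)$ is measure-determining on $H$ and Laplace transforms in time are injective, it suffices to check that
\[
V_\lambda(g)(x) := \E_x\!\int_0^\infty e^{-\lambda s}g(X_s)\,ds
\]
is independent of the chosen solution for every $g \in C_b^\theta(H)$ and every $\lambda \ge \lambda_0$, with $\lambda_0$ from Lemma \ref{stima1}.

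Given such $g$ and $\lambda$, set $u^{(z)} = R^{(z)}_\lambda g$ as in \eqref{2we}. From $\lambda u^{(z)} - L^{(z)} u^{(z)} = g$ together with $\L = L^{(z)} + \langle F - z,(-A)^{1/2} D\,\cdot\,\rangle$, we obtain $\lambda u^{(z)} - \L u^{(z)} = g - Tg$, where
\[
Tg(x) := \langle F(x) - z,\,(-A)^{1/2} D R^{(z)}_\lambda g(x)\rangle.
\]
Applying the martingale property to $u^{(z)}$ and taking the Laplace transform yields the key identity
\[
V_\lambda(g)(x) = u^{(z)}(x) + V_\lambda(Tg)(x).
\]
Lemma \ref{stima1} gives $\|Tg\|_{C^\theta} \le \tfrac{1}{2}\|g\|_{C^\theta}$, so iterating this identity produces
\[
V_\lambda(g) = \sum_{n \ge 0} R^{(z)}_\lambda T^n g,
\]
a series converging in $C_b(H)$ (via $\|T^n g\|_0 \le 2^{-n}\|g\|_{C^\theta}$ and $\|R^{(z)}_\lambda h\|_0 \le \|h\|_0/\lambda$) whose right-hand side depends only on $F, z, \lambda$ and $g$, but not on the chosen martingale solution. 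This forces the resolvents, and hence the one-dimensional marginals, to coincide.

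The main obstacle is the rigorous justification of the identity $V_\lambda(g) = u^{(z)} + V_\lambda(Tg)$, since $u^{(z)}$ does not lie in $D(\L) = C^2_{cil}(H)$ and so cannot be inserted directly into the martingale equation \eqref{mart}. I plan to proceed by a double approximation: first pick $(g_n) \subset C^2_{cil}(H) \cap C_b^\theta(H)$ with $\sup_n \|g_n\|_{C^\theta} < \infty$ and $g_n \to g$ pointwise, and then replace $L^{(z)}$ by a finite-dimensional OU operator built from $A_m = A \circ \pi_m$, so that the corresponding resolvents $u_{n,m}$ are genuinely cylindrical and lie in $D(\L)$. For these the martingale identity applies verbatim, producing the analog of the displayed identity with $u_{n,m}$, $Tg_n$ in place of $u^{(z)}$, $Tg$. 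Theorem \ref{ss13}(ii), combined with Remark \ref{fi12}, then guarantees that $\langle F - z,(-A)^{1/2} D u_{n,m}\rangle \to \langle F - z,(-A)^{1/2} D u^{(z)}\rangle$ pointwise and with uniformly bounded sup-norm, so that dominated convergence allows passage to the limit first in $m$ and then in $n$ inside $V_\lambda$, delivering the desired identity and completing the proof.
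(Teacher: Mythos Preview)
Your overall strategy matches the paper's: reduce to one-dimensional marginals via Proposition \ref{ser} and Theorem \ref{ria}, derive the resolvent identity
\[
G^{\lambda,x_0}g = R^{(z)}_\lambda g(x_0) + G^{\lambda,x_0}\bigl(\langle F-z,(-A)^{1/2}D R^{(z)}_\lambda g\rangle\bigr),
\]
and exploit the contraction from Lemma \ref{stima1}. Your Neumann-series formulation is equivalent to the paper's argument, which simply takes the difference $G^{\lambda,x_0}_1-G^{\lambda,x_0}_2$ and shows its operator norm on $C_b^\theta(H)$ is at most half of itself.

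There is, however, a genuine gap in your approximation scheme. You claim that the finite-dimensional resolvents $u_{n,m}$ ``lie in $D(\L)$''. But $D(\L)=C^2_{cil}(H)$ is, by definition \eqref{cil2}, the space of cylindrical functions whose profile $\tilde g$ has \emph{compact support}. The resolvent of any OU operator (finite-dimensional or not) applied to a compactly supported function is never compactly supported, so $u_{n,m}\notin D(\L)$ and the martingale identity \eqref{mart} cannot be applied to it directly. The paper handles this with an explicit preliminary step: it first extends the identity $\lambda G^{\lambda,x_0}f=f(x_0)+G^{\lambda,x_0}(\L f)$ from $C^2_{cil}(H)$ to the larger class $C^2_{cil,b}(H)$ of cylindrical functions with bounded (not compactly supported) $C^2$ profiles, by multiplying by cutoffs $\phi(\cdot/n)$ and passing to the limit using the linear bound $\E|X_t|_H\le ct+1$.

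Your second approximation in $m$ is also unnecessary and introduces a spurious error term $(L^{(z)}-L^{(z)}_m)u_{n,m}$ that you would have to control. The paper avoids this entirely by observing that, since $A$ is diagonal in the eigenbasis $(e_k)$, the semigroup $P_t^{(z)}$ and hence $R^{(z)}_\lambda$ map a cylindrical function $g\in C^2_{cil,b}(H)$ (depending on $e_{i_1},\ldots,e_{i_m}$) to a cylindrical function in the \emph{same} coordinates. So once the identity is extended to $C^2_{cil,b}(H)$, one can insert $f=R^{(z)}_\lambda g$ directly for cylindrical $g\in C^2_{cil,b}(H)$, and then extend to all $g\in C_b^\theta(H)$ by the two-stage pointwise approximation (mollification in $\R^m$, then $g_k(x)=g(\pi_kx)$) using Theorem \ref{ss13}(ii) and Remark \ref{fi12}, exactly as you sketched for the final passage.
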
  
\begin{proof} By Section 4,  for any $x \in H$, there exists a weak mild solution starting at $x$. Equivalently, by Proposition \ref{ser}, for any $x \in H$, 
 there exists a  solution to the martingale problem for $({\cal L}, \delta_x)$.
 
Taking into account  Proposition \ref{ser}, we will prove   that given  two martingale solutions $X^1$ and $X^2$  for $ ( \L,  \delta_{x})$
 we   have that the law of $X_t^1$ coincides with the law of $X_t^2$ on ${\mathcal B}(H)$, for any $t \ge 0$. By Theorem \ref{ria} we will deduce that $X^1$ a $X^2$ have the same law on ${\mathcal B}(C([0, \infty);H)$.

  
  Thanks to Lemma  \ref{stima1} we will  
 adapt an argument used  in finite dimension in \cite{SV79} and \cite{IW} (see the proof of Theorem 3.3 in \cite{IW}). 
 
Let us fix $x_0 \in H$ and let $X = (X_t)  $ be a martingale solution for
 $({\cal L}, \delta_{x_0})$
 (defined on some probability space $ (\Omega, {\cal F}, \P)$); recall that
\begin{eqnarray} \label{ll} 
\L f (x)  = 
L f (x) +  
\langle F(x),  (-A)^{1/2} Df(x) \rangle,\;\;\; f \in D(\L),\; x \in H,
\end{eqnarray}
 where $Lf (x) = \frac{1}{2} Tr(D^2 f(x)) + \langle x, ADf(x) \rangle$ 
 and  $D(\L) = C^2_{cil}(H)$ the space of all regular cylindrical functions
  (cf. \eqref{cil2}).
 By the martingale property we have, for $f \in D(\L)$,
\begin{gather*}
 \E [f(X_t)] = f(x_0) + \E \int_0^t {\L} f(X_t) dt.
\end{gather*}
Integrating over $[0, \infty)$ with respect to $e^{-\lambda t}$ and using the  Fubini theorem we get 
\begin{gather*}
 \int_0^{\infty} e^{-\lambda t} \E[f(X_t)] dt = \frac{f(x_0)}{\lambda} +
  \frac{1}{\lambda}  \int_0^{\infty} e^{-\lambda t} \E[\L f(X_t)] dt. 
\end{gather*} 
Hence, introducing the bounded linear operators $G^{\lambda,x_0} : B_b(H) \to \R$:
\begin{equation}\label{rggg}
G^{\lambda,x_0} h =  \int_0^{\infty} e^{-\lambda t} \E[h(X_t)] dt,  \;\; h \in B_b(H), \;\; \lambda > 0,
\end{equation}
we can write 
\begin{equation}\label{aqq3}
\lambda G^{\lambda,x_0} f = f(x_0) + G^{\lambda,x_0} (\L f),\;\; \lambda>0. 
\end{equation}
Next  we proceed in some steps.
 
\vskip 1mm 
\noindent {\sl I step. We  check that \eqref{aqq3} holds  also  for any $f \in C^{2}_{cil, \, b} (H)$   with $D(\L) \subset C^{2}_{cil, \, b} (H)$.}
 
 \vskip 1mm
We say that  
  $f: H \to \R$   
   belongs to $C^2_{cil,\, b}(H)$ if there exist elements $e_{i_1}, \ldots, e_{i_m}$ of the basis $(e_k)$ of eigenvectors of $A$ and a bounded $C^2$-function  $\tilde f : \R^m \to \R$ with  all bounded derivatives such that  
\begin{gather} \label{qvv} 
 f(x) = \tilde f (\langle  x, e_{i_1}\rangle, \ldots, \langle  x, e_{i_m} \rangle),\;\;\; x \in H. 
\end{gather}  
(cf. \eqref{cil2}).  To check that \eqref{aqq3} holds for $f \in C^{2}_{cil, \, b} (H)$ we start from \eqref{qvv} and  first  use  a standard  argument to approximate   $\tilde f \in C^2_b(\R^m)$ with a 
 sequence of functions $(\tilde f_n) \subset C^2_b(\R^m)$  having compact support. To this purpose   
 let $\phi \in C^{\infty}(\R^m)$ be such that $0 \le \phi \le 1$, $\phi (x) = 1$,
  $|x| \le 1$ and $\phi (x)=0$, $|x|>2$. Define $\tilde f_n(y) = \tilde f(y) \cdot \phi_n(y)$, $y \in \R^m,$
  where $\phi_n(y) = \phi(\frac{y}{n}) $,
  for $n \ge 1$.  
  
  Set  $f_n(x) = \tilde f_n (\langle  x, e_{i_1}\rangle, \ldots, \langle  x, e_{i_m} \rangle),$ $x \in H.$ We know  that 
  \begin{gather} \label{saa}
\lambda G^{\lambda,x_0} f_n = f_n(x_0) + G^{\lambda,x_0} (\L f_n),\;\; n \ge 1. 
\end{gather}
 By Proposition \ref{ser}
  $X_t$ is also a weak mild solution. Then  we obtain easily that $\E |X_t|_H \le (ct+1)$, $t \ge 0$, for some constant $c>0$. On the other hand, using that first and second derivatives of $(\tilde f_n)$ are uniformly bounded, we have
 \begin{gather*}
\E|\L f_n(X_t)| \le (C + 1 ) \E|X_t|_H \le M t +1, \,\, \;\;  n \ge 1,
\end{gather*}
 where $C$ and $M$ are positive constants independent of $t$ and $n$ ($C$ and $M$ may depend on $m$). Passing to the limit in \eqref{saa} by the dominated convergence theorem we get easily the assertion.
 It follows that   
\begin{equation*}
 G^{\lambda,x_0} (\lambda f - \L f) = f(x_0),\;\;\; \lambda >0,\;\; f \in C^{2}_{cil, \, b} (H).
\end{equation*}
Let  $z \in H$  be such that \eqref{111} holds, we write 
\begin{gather}
 \L f (y) =
L f(y)+\langle (-A)^{1/2}Df(y), z   \rangle  +  \langle (-A)^{1/2}Df(y),    F(y) -z \rangle 
\\ \nonumber =  L^{(z)} f(y) + Sf(y),
 \\ \nonumber   
 Sf(y) = \langle (-A)^{1/2}Df(y),   F(y) -z  \rangle,\;\;\; y \in H, 
\end{gather}
 $L^{(z)} f(y) = L f(y)+\langle (-A)^{1/2}Df(y), z   \rangle$ (cf. \eqref{ou3}). Hence  
 \begin{equation}\label{srr3}
G^{\lambda,x_0} (\lambda f - L^{(z)} f) = f(x_0) + G^{\lambda,x_0} (Sf),\;\;\; \lambda >0,\;\; f \in C^{2}_{cil, \, b} (H).
\end{equation}
  Let us consider the resolvent associated to $L^{(z)}$:
\begin{equation*}\label{erq}
R^{(z)}_{\lambda}  g (x) = \int_0^{\infty} e^{-\lambda t }   P_{t}^{(z)}  g(x) dt,\;\;\; g \in B_b(H),\; x \in H
\end{equation*} 
 (cf. \eqref{2we}). Note that  $f = R^{(z)}_{\lambda}  g \in C^{2}_{cil, \, b} (H)$ if $g \in C^{2}_{cil, \, b} (H).$

{ Inserting in  \eqref{srr3} $f = R^{(z)}_{\lambda}  g $ with $g \in C^{2}_{cil, \, b} (H)$, using that $(\lambda I - L^{(z)} ) (R^{(z)}_{\lambda}  g)$
 $=g$, we obtain 
\begin{gather} \label{s455}
G^{\lambda,x_0} g =  R^{(z)}_{\lambda}  g(x_0) + G^{\lambda,x_0} (S \, R^{(z)}_{\lambda} g), \;\; \; g \in C^{2}_{cil, \, b} (H) , \;\; \lambda >0.
\end{gather}
}
 From now on we consider  $\lambda \ge \lambda_0$ (where $\lambda_0 >0$ is given in Lemma \ref{stima1}). 
 
 \vskip 2mm
 
 \noindent {\sl  II step. We   prove that  \eqref{s455} holds even for $g \in C^{\theta}_b(H)$, $\lambda \ge \lambda_0$.  }
  
\vskip 1mm
Let us first consider $g \in  C^{\theta}_b(H)$ which is cylindrical, i.e.,
 $g(x) = \tilde g (\langle  x, e_{1_1}\rangle, \ldots, \langle  x, e_{i_m} \rangle),$ $x \in H$, with $\tilde g \in C^{\theta}_b(\R^m)$.
  A standard argument based on convolution with mollifiers, shows that there exists $(\tilde g_n) \subset C_b^2(\R^m)$ such that 
  $\tilde g_n (y) \to \tilde g(y)$,  as $n \to \infty$, $y \in \R^m$, and 
  $$
  \| \tilde g_n \|_{C^{\theta}} \le \| \tilde g \|_{C^{\theta}}, \;\;\; 
  n\ge 1. 
$$
 Define $g_n(x) = \tilde g_n (\langle  x, e_{i_1}\rangle, \ldots, \langle  x, e_{i_m} \rangle),$ $x \in H.$
  
  Applying  (ii) of Theorem \ref{ss13}  with $f_n = g_n$ and $f = g$ 
 (see also \eqref{w12}) we deduce that, for $\lambda \ge \lambda_0$,  
 \begin{equation}
\label{w122} 
\lim_{n \to \infty} \langle (-A)^{1/2} D  R^{(z)}_{\lambda} g_n (x) , F(x) -z\rangle =
  \langle (-A)^{1/2}   D R^{(z)}_{\lambda} g (x) , F(x) -z\rangle,\;\; x \in H, 
\end{equation} 
i.e.,  $S R^{(z)}_{\lambda}g_n(x) \to S R^{(z)}_{\lambda}g(x)$, as $n \to \infty$, for any $x \in H$. We write  \eqref{s455}  with  $g_n$:
\begin{gather} \label{qqqq}
\int_0^{\infty} e^{-\lambda t} \E[g_n(X_t)] dt
  =  R^{(z)}_{\lambda}  g_n(x_0) + 
  \int_0^{\infty} e^{-\lambda t} \E[
    (S \, R^{(z)}_{\lambda} g_n) (X_t)] dt.
\end{gather} 
Since by \eqref{w44}
 \begin{gather*}
 \| S R^{(z)}_{\lambda}g_n \|_0 \le \tilde C_0 C_{\theta}(\lambda) \| g_n\|_{C^{\theta}} \le  \tilde C_0 C_{\theta}(\lambda_0) \| g\|_{C^{\theta}},\; \;\; n \ge 1, \; \lambda \ge \lambda_0, 
\end{gather*}
we can pass to the limit as $n \to \infty$ in \eqref{qqqq} 
 by the dominated convergence theorem and  get that   
 \eqref{s455} folds with $g \in C^{\theta}_b(H)$  cylindrical.
   
If $g \in C^{\theta}_b(H)$ we consider the cylindrical functions $g_k(x) = g(\pi_k x)$,  $x \in H$, $k \ge 1$, where $\pi_k$ are the orthogonal projections considered in \eqref{pp1}. We have that $g_k \in C^{\theta}_b(H)$ and 
\begin{gather*}
\lim_{k \to \infty} g_k(x) = g(x),\;\; x \in H,\;\;\;\;\;\; \| g_k\|_{C^{\theta}}
 \le \| g \|_{C^{\theta}},\;\; k \ge 1.
\end{gather*}
Arguing as before, applying   Theorem \ref{ss13}, we pass to the limit as $k \to \infty$ in
\begin{gather*}
 G^{\lambda,x_0} g_k =  R^{(z)}_{\lambda}  g_k(x_0) + G^{\lambda,x_0} (S \, R^{(z)}_{\lambda} g_k); 
\end{gather*}
we finally obtain that  
\eqref{s455} folds for any  $g \in C^{\theta}_b(H)$.
 
\vskip 1mm  
\noindent {\sl  III step. Given  two martingale solutions $X^1$ and $X^2$ 
 for $({\cal L}, \delta_{x_0})$.
We show that they have the same  law.}

\vskip 1mm  
  $X^1$ and $X^2$ are defined respectively on the   probability spaces $ (\Omega^1, {\cal F}^1, \P^1)$ and $ (\Omega^2, {\cal F}^2, \P^2)$; we consider 
\begin{gather*}
G^{\lambda,x_0}_i f =  \int_0^{\infty} e^{-\lambda t} \E^{i}  [f(X_t^i)] dt, \;\;
 f \in B_b^{}(H),\;\; i =1,2
\end{gather*}
(using the expectation on each probability space).  By \eqref{s455} we infer with $g \in C_b^{\theta}(H)$, $\lambda \ge \lambda_0$,
\begin{gather*}
(G^{\lambda,x_0}_1 -  G^{\lambda,x_0}_2) g =   (G^{\lambda,x_0}_1 -  G^{\lambda,x_0}_2) \,  (S \, R^{(z)}_{\lambda} g).
\end{gather*}
Define  $T^{\lambda ,x_0}  = G^{\lambda,x_0}_1 -  G^{\lambda,x_0}_2$. 
 Clearly,  $T^{\lambda ,x_0} :  C_b^{\theta}(H) \to \R$ is a bounded linear  functional (we denote by $\| T^{\lambda ,x_0}\|_{{\cal L}(C_b^{\theta}(H), \R )} = \| T^{\lambda ,x_0}\|_{{\cal L}}$ its norm).
 We have,  for $\lambda \ge \lambda _0$,
\begin{gather*}
 \| T^{\lambda ,x_0} \|_{{\cal L}} 
 = \sup_{ g \in  C_b^{\theta}(H),\; \| g\|_{C^{\theta}} \le 1  } \, 
  | T^{\lambda ,x_0} g |  
  = \sup_{ g \in  C_b^{\theta}(H),\; \| g\|_{C^{\theta}} \le 1  } \, 
  | T^{\lambda ,x_0} (S R^{(z)}_{\lambda} g)  | 
 \\ \le \| T^{\lambda ,x_0} \|_{{\cal L}} \,  \sup_{ g \in  C_b^{\theta}(H),\; \| g\|_{C^{\theta}} \le 1  }   \,\| S R^{(z)}_{\lambda} g \|_{C_b^{\theta}}.
\end{gather*}
Using Lemma \ref{stima1} we know that $\| S R^{(z)}_{\lambda} g \|_{C_b^{\theta}} = \|  \langle F - z, (-A)^{1/2} D R^{(z)}_{\lambda} g \rangle \|_{C^{\theta}} \le \frac{1}{2} \, \| g  \|_{C^{\theta}}$. This shows that $\| T^{\lambda ,x_0} \|_{{\cal L}}  \le \frac{1}{2}\| T^{\lambda ,x_0} \|_{{\cal L}}  $  and so $ T^{\lambda ,x_0} =0$ for $\lambda \ge \lambda_0$. 
 
We obtain, for any $g \in C_b^{\theta}(H)$,  $\lambda \ge \lambda_0 >0$, 
  $$ 
  \int_{0}^{\infty}  e^{- \lambda s}  
 \E^1 [g (X_{s}^1)]ds =   \int_{0}^{\infty}  e^{- \lambda s}  
\E^2 [g (X_{s}^2)]ds.
$$  
By the  uniqueness of the Laplace transform and using an approximation argument we find that 
$ \E^1 [g (X_{s}^1)] =  \E^2 [g (X_{s}^2)]$, for any $g \in C_b (H)$, $s \ge 0$.
 Applying   Theorem \ref{ria} we find that $X^1$ and $X^2$ have the same law on  ${\cal B}(C([0, \infty); H))$.  
 This finishes the proof. 
  \end{proof}

\subsection {Weak uniqueness when  $F \in C_b^{\theta}(H,H)$ } 
 
 Here we prove uniqueness using the localization principle (cf. Theorem \ref{uni1})  and  Lemma \ref{loc}.  We will use  the constant ${\tilde C_0}$ introduced in Section 5.2. 
   \begin{lemma}\label{loc1}   Let $x \in H$ and consider the  SPDE \eqref{sde}. 
   If $F \in C_b^{\theta}(H,H)$
  then we have uniqueness in law for \eqref{sde}.
\end{lemma}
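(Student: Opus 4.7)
The plan is to reduce the case of a general $F \in C_b^{\theta}(H,H)$ to the small-oscillation case of Lemma~\ref{loc} by a localization argument, invoking Theorem~\ref{uni1}. For each $x_0 \in H$ I will construct an open neighborhood $U_{x_0}$ of $x_0$ and a modified drift $\tilde F_{x_0} \in C_b^{\theta}(H,H)$ that agrees with $F$ on $U_{x_0}$, has controlled H\"older norm, and satisfies the smallness condition $\|\tilde F_{x_0} - F(x_0)\|_0 < \tilde C_0$.

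Fix $x_0 \in H$ and set $z = z_{x_0} = F(x_0)$. Choose a smooth radial cutoff $\chi_r(x) = \phi(|x-x_0|_H/r)$, where $\phi : [0,\infty)\to[0,1]$ is smooth with $\phi\equiv 1$ on $[0,1/2]$ and $\phi \equiv 0$ on $[1,\infty)$; then $\chi_r$ is Lipschitz with constant $\le L/r$ for some $L$ independent of $r$ and $x_0$. Define
\begin{equation*}
  \tilde F_{x_0}(x) = \chi_r(x) F(x) + (1-\chi_r(x))\, z, \qquad x \in H.
\end{equation*}
Clearly $\tilde F_{x_0} = F$ on $U_{x_0} := \{x : |x-x_0|_H < r/2\}$, while $\tilde F_{x_0} \equiv z$ outside $B(x_0,r)$. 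On the support of $\chi_r$ we have $|F(x)-z| \le [F]_{C^{\theta}} r^{\theta}$, so
\begin{equation*}
  \|\tilde F_{x_0} - z\|_0 \le [F]_{C^{\theta}}\, r^{\theta}.
\end{equation*}

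The key observation, and what I expect to be the main technical point, is that $\|\tilde F_{x_0}\|_{C^{\theta}}$ can be bounded \emph{independently of $r$}. Writing
\begin{equation*}
\tilde F_{x_0}(x)-\tilde F_{x_0}(y) = \chi_r(x)\bigl(F(x)-F(y)\bigr) + \bigl(\chi_r(x)-\chi_r(y)\bigr)\bigl(F(y)-z\bigr),
\end{equation*}
the first term is bounded by $[F]_{C^{\theta}}|x-y|^{\theta}$. For the second term, interpolation of the Lipschitz bound $|\chi_r(x)-\chi_r(y)|\le L|x-y|/r$ with the sup bound $\le 2$ gives $[\chi_r]_{C^{\theta}} \le C r^{-\theta}$; multiplying by the factor $|F(y)-z| \le [F]_{C^{\theta}} r^{\theta}$ (which vanishes outside $B(x_0,r)$) yields a bound $\le C [F]_{C^{\theta}}|x-y|^{\theta}$ independent of $r$. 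Combined with $\|\tilde F_{x_0}\|_0 \le 2\|F\|_0$, we obtain
\begin{equation*}
\|\tilde F_{x_0}\|_{C^{\theta}} \le K \|F\|_{C^{\theta}}
\end{equation*}
for an absolute constant $K$ (depending only on $\theta$ and $\phi$), uniform in $x_0$ and $r$.

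Therefore the constant $\tilde C_0 = \tilde C_0(K\|F\|_{C^{\theta}},\theta)$ from Lemma~\ref{stima1} is a single positive number, and I may now pick $r = r(F,\theta) > 0$ small enough that $[F]_{C^{\theta}} r^{\theta} < \tilde C_0$. Lemma~\ref{loc} then applies to the SPDE driven by $\tilde F_{x_0}$ in place of $F$, giving weak uniqueness for that equation and, by Proposition~\ref{ser}, well-posedness of the martingale problem for the associated Kolmogorov operator $\L_{x_0}$ (obtained from \eqref{ll} by replacing $F$ with $\tilde F_{x_0}$). Since $\tilde F_{x_0} \equiv F$ on $U_{x_0}$, we have $\L_{x_0} f = \L f$ on $U_{x_0}$ for every $f \in D(\L)$. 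The family $\{U_{x_0}\}_{x_0 \in H}$ trivially covers $H$; weak existence from Section~4 together with Proposition~\ref{ser} provides a martingale solution of $(\L,\delta_x)$ for each $x \in H$. Theorem~\ref{uni1} then yields well-posedness of the martingale problem for $\L$, which via Proposition~\ref{ser} translates to weak uniqueness for \eqref{sde} for any $x \in H$.
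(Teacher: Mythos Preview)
Your proof is correct and follows essentially the same localization strategy as the paper: build cutoff modifications $\tilde F_{x_0}$ of $F$ that agree with $F$ on a small ball around $x_0$ and satisfy the small-oscillation hypothesis of Lemma~\ref{loc}, then invoke Theorem~\ref{uni1}. Your handling of the uniform bound $\|\tilde F_{x_0}\|_{C^{\theta}}\le K\|F\|_{C^{\theta}}$ (which is what allows a single $\tilde C_0$ to be used for all $x_0$) is more explicit than the paper's---the paper just records $F_j\in C_b^{\theta}(H,H)$---but the underlying argument is the same.
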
 
\begin{proof}  By Proposition \ref{ser} it is enough to show that 
 the martingale problem for $\L$  is well-posed (cf. \eqref{ll}).  
 By  Section 4,  for any $x \in H$, there exists a solution to the martingale problem for $(\L, \delta_x)$.

 In order to apply Theorem \ref{uni1}  we proceed into  two  steps.
In the first step we construct a suitable covering of $H$;  in the second step we define  suitable operators ${\L}_j$ such that  according to  Lemma \ref{loc}
 the martingale problem associated to each ${\L}_j$ is well-posed.     

 \smallskip
\noindent
{\it I Step.} There exists
     a  countable set of points $(x_j) \subset H$,
$j \ge 1$, and numbers $r_j >0$   with the following properties:

\smallskip
(i) the open balls $U_j= B(x_j, \frac{ r_j}{2})= \{ x \in H \, :\, |x- x_j|_H < r_j/2 \}$ 
  form a covering for $H$;

(ii)   we have:
$ 
| F(x) - F(x_j) |_H < {\tilde  C_0}, \;\;\;  x \in B(x_j, r_j).
$
\\
\\ 
Using   the continuity of $F$: for any $x$ we  find $r(x)>0$ such that 
$$
 \begin{array}{l}
|F(y) - F(x)|_H < {\tilde  C_0}, \;\;\;\;  y \in B(x, r(x)).
 \end{array}
 $$   
 We have a covering  $\{ U_{x}\}_{x \in H}$ with $U_x = B(x, \frac{r(x)}{2})$.
Since $H$ is a separable Hilbert space we can choose a countable subcovering $(U_j)_{j \ge 1}, $ with $U_j = B(x_j, \frac{r(x_j)}{2})$ $= B(x_j, \frac{r_j}{2})$.

\smallskip
\noindent
\textit{II Step.} We construct $\L_j$ in order to apply the localization principle.
  
   Let us consider the previous covering $(B(x_j, r_j/2))$.   We take  $\rho \in { C_0}^{\infty}(\R_+)$, 
  $0 \le \rho \le 1$, $\rho(s)=1$, $0 \le s  \le 1$, $\rho(s) =0$ for $s \ge 2$.
   Define 
   $$
   \rho_j (x) = \rho \big ( 4\,  r_j^{-2}\, |x- x_j|^2_H \big), \;\; x \in H.     
 $$ 
Now  $\rho_{j} =1$ in $B(x_j, \frac{r_j}{2})$ and  $\rho_j =0$ outside $B(x_j,   
 \, r_j )$. Set
 $$
 F_j(x) :=  \rho_{j}(x) F(x)  +  (1 - \rho_{j}(x))F(x_{j} ), \;\;  x\in H.
 $$ 
 It is easy to prove that 
 \begin{equation} \label{qaa}
   F_{j} \in C_b^{\theta}(H,H).
 \end{equation}
%
We also have
 \begin{equation*}
 \sup_{x \in H}| F_j(x) - F(x_j) |_H  =  \sup_{x \in B(x_j, r_j)} \, | F(x) - F(x_j) |_H < {\tilde  C_0}.
 \end{equation*}
 Moreover   $F_j(x) = F(x)$, $x \in B(x_j, \frac{r_j}{2}) = U_j$. Define  
$D(\L_j) = C^2_{cil}(H)$, $j \ge 1$, and   
$$
\L f_j (x) = \frac{1}{2} Tr(D^2 f(x)) + \langle x, ADf(x) \rangle +
\langle (-A)^{1/2} F_j(x), Df(x) \rangle,\;\; f \in C^2_{cil}(H),\; x \in H.
$$
  We have 
 $\L_j f(x) =  {\L} f(x),\;\; x \in U_j,\;\; f \in C^2_{cil}(H)$ and 
  the martingale problem  for each $\L_j$,
  is well-posed by Lemma  \ref{loc} (with $F= F_j$ an\ $z = F(x_j)$).  By Theorem \ref{uni1} we find the assertion.   
   \end{proof}

\section{Proof of weak  uniqueness of Theorem \ref{base}
}   

%

Here 
we  
prove uniqueness in law for \eqref{sde} assuming that  $F: H \to H$ 
 is {\sl locally $\theta$-H\"older continuous and has at most linear growth,} i.e., it verifies \eqref{lin1}. 
 To this purpose  we will use Lemma \ref{loc1} and Theorem  \ref{key}.  

\vskip 1 mm 
Let $X= (X_t)_{t \ge 0}$  be a weak mild solution of \eqref{sde} starting at $x \in H$ (under the assumption \eqref{lin1}) defined on some filtered probability space $(
\Omega,$ $ {\cal F},
 ({\cal F}_{t}), \P) $ on which it is defined a
 cylindrical ${\cal F}_{t}$-Wiener process $W$; see Section 4. For a cylindrical function $f \in C^2_{cil}(H) $ in general $\L f$ (see \eqref{ll}) is not a bounded function on $H$ because  $F$ can be unbounded. However
 we  know by a finite-dimensional It\^o's formula that 
 \begin{equation} \label{ffu}
  \begin{array}{l}
   M_t(f) =  f(X_t) - \int_0^t \L f(X_s)ds = f(x) + \int_0^t \langle Df(X_s), dW_s \rangle 
 \end{array} 
 \end{equation}
 is still   a continuous square integrable ${\cal F}_t$-martingale. Note that we can apply It\^o's formula  because there exists $N \ge 1$ such that  $f(x) = f(\pi_N x)$, $x \in H$, with $\pi_N=\sum_{j=1}^Ne_j\otimes e_j$ (cf. \eqref{pp1}) and so  $f(X_t) = f(\pi_N X_t)$.  Moreover  $X_{t, N}
  = \pi_N X_t$ verifies  
 \begin{equation*} 
 X_{t, N}=e^{tA} \pi_N  x +\int_{0}^{t}e^{\left(  t-s\right)  A} (-A_N)^{1/2}F^{}(X(s))
ds+\int_{0}^{t} \pi_N e^{\left(  t-s\right)  A}dW_{s},\;\;\; t \ge 0,
\end{equation*}
where  $A_N=A\pi_N $; writing $\pi_N W_t = \sum_{k=1}^N W_t^{(k)} e_k$ it follows that $X_{t, N}$ is an It\^o process given by 
\begin{equation} \label{maga} 
X_{t, N}
= \pi_Nx+\int_0^tA_N X_{s,N}ds+\int_0^t  (-A_N)^{1/2}F(X_{s})ds+\pi_N W_t.
 \end{equation} 
   Now let us consider  $B(0,n) = \{x \in H \, :\, |x|_H <n \}$ and define   $F_n \in C_b^{\theta} (H,H)$ such that 
$
F_n (y) = F(y),$ $ y \in B(0,n), 
$ $n \ge 1.  $   

\vskip 0.5 mm 
 To this purpose one can take $\eta \in {C_0}^{\infty}(\R)$ such that $0 \le \eta(s) \le 1$, $s \in \R$,  $\eta(s)=1$ for $|s| \le 1$ and  $\eta(s) =0 $ for $|s|\ge 2,$ and set 
$
F_n(y) = F(y)\,  \eta \big(\frac{ |y|_H}{n}\big), $ $ y \in H
$ (one can easily  check that $F_n \in C_b^{\theta} (H,H) $).
Define  
$$ 
 \begin{array}{l}
\L_n f (y) = \frac{1}{2} Tr(D^2 f(y)) + \langle y, A Df(y) \rangle +
\langle F_n (y),  (-A)^{1/2} Df(y) \rangle, \;\; f \in C^2_{cil}(H), \, y \in H.
\end{array}  
$$   
Let us introduce the exit time   $\tau_n^X = \inf \{ t \ge 0 \, : \, |X_t|_H \ge n \}$ ($\tau_n^X = + \infty$ if the set is empty; cf. \eqref{ta1}) for each $n \ge 1$.
 It is an ${\cal F}_t$-stopping time (cf. Proposition II.1.5 in \cite{EK}).
Applying  the optional sampling theorem (cf. Theorem II.2.13 in \cite{EK})  to \eqref{ffu} we deduce  that   
\begin{gather*}
M_{t \wedge \tau_n^X}(f)=  f( X_{t \wedge \tau_n^X}) - \int_0^{t \wedge \tau_n^X} \L f(X_s)ds
= f(X_{t \wedge \tau_n^X}) - \int_0^{t \wedge \tau_n^X} \L_n f(X_{s   \wedge \tau_n^X})ds, \;\; t\ge 0,
\end{gather*}
 is a  martingale with respect to the filtration $({\mathcal F}_{t \wedge \tau_n^X})_{t \ge 0}$; note that the process $(X_{t \wedge \tau_n^X})_{t \ge 0}$ is adapted with respect to $\big ( {\mathcal F}_{t \wedge \tau_n^X} \big )$ (see Proposition II.1.4 in \cite{EK}).

Thus  $(X_{ t\wedge \tau_n^X  })_{t \ge 0} $ is a solution to  the  {\sl stopped martingale problem for $(\L_n, \delta_x, B(0,n)$).}
 By Lemma \ref{loc1}   {\sl the martingale problem for each $\L_n$ is well-posed because $F_n \in C_b^{\theta}(H,H)$}.   By Theorem \ref{key}  also the   stopped martingale problem for   $(\L_n, \delta_x, B(0,n)$) is well-posed, $n \ge 1$. 
 
\vskip 0.5 mm
Let  $Y$ be another  mild solution starting at $x \in H$. Then $(Y_{ t\wedge \tau_n^Y  })_{t \ge 0} $ also solves the stopped martingale problem for $(\L_n, \delta_x, B(0,n))$. 
  By weak uniqueness of the stopped martingale problem it follows that, for any $n \ge 1$,  $(X_{ t\wedge \tau_n^X  })_{t \ge 0} $ and $(Y_{ t\wedge \tau_n^Y  })_{t \ge 0} $ have the same law. Now it is not difficult to prove that  $X$ and $Y$ have the same law on ${\cal B}(C([0,\infty);  H))$ and this finishes the proof.

\section{An extension to functions $F: H\to H$  without imposing a growth condition}
 
 
 Assuming weak existence for \eqref{sde} one can obtain the following extension of  Theorem
  \ref{base}.

  \begin{theorem}
\label{extension} 
 Let us consider \eqref{sde} under Hypothesis \ref{d1}.
 Assume   that  
 
 
  H1) $F: H \to H$ is $\theta$-H\"older continuous  on each bounded set of $H$, for some $\theta \in (0,1)$;
 
 \vskip 1mm
  
 H2) for any $x \in H$, there exists a weak mild solution $(X_t)_{t \ge 0 }$ of \eqref{sde}
 starting at $x$.   
 
\vskip 0,5 mm 
 Under  the previous assumptions weak uniqueness  holds, i.e., for any $x \in H$, all  weak mild solutions starting at  $x $ have the same law on ${\cal B}(C([0, \infty); H)$.      
\end{theorem}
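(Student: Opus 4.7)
The plan is to reduce to Lemma \ref{loc1} via a truncation argument that parallels the proof of weak uniqueness in Section 6; the linear growth assumption of \eqref{lin1} was used there only to produce weak existence, which is now replaced by H2.

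First I would truncate the drift. Choose $\eta \in C^{\infty}_0(\R)$ with $0 \le \eta \le 1$, $\eta(s)=1$ for $|s|\le 1$ and $\eta(s)=0$ for $|s|\ge 2$, and set
\begin{equation*}
F_n(y) = F(y)\,\eta\!\left(\frac{|y|_H}{n}\right), \qquad y \in H,\ n\ge 1.
\end{equation*}
Since by H1 $F$ is $\theta$-H\"older on the bounded set $B(0,2n) = \{y\in H: |y|_H \le 2n\}$ (hence also bounded there), the product $F_n$ belongs to $C_b^{\theta}(H,H)$, and $F_n(y)=F(y)$ for $y \in B(0,n)$. Let $\L_n$ be the associated Kolmogorov operator defined as in \eqref{ll} with $F$ replaced by $F_n$. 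By Lemma \ref{loc1} the martingale problem for $\L_n$ is well-posed, and by Theorem \ref{key} also the stopped martingale problem for $(\L_n,\delta_x, B(0,n))$ is well-posed, for every $x\in H$ and every $n\ge 1$.

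Next I would show that any weak mild solution $X$ starting at $x$ produces a solution of this stopped martingale problem when stopped at
\begin{equation*}
\tau_n^X = \inf\{t\ge 0:\; |X_t|_H \ge n\},
\end{equation*}
which is an $({\mathcal F}_t)$-stopping time by continuity of trajectories. Repeating the argument of Section 6 verbatim: for $f\in C^2_{cil}(H)$ depending on coordinates $1,\dots,N$, the projection $X_{t,N}=\pi_N X_t$ satisfies the It\^o equation \eqref{maga}, and the finite-dimensional It\^o formula gives that $M_t(f)= f(X_t)-\int_0^t\L f(X_s)ds$ is a continuous local martingale (indeed, by \eqref{ffu}, a continuous square integrable martingale). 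Applying the optional sampling theorem (as in Section 6) and using that $F_n = F$ on $B(0,n)$, hence $\L_n f(X_{s\wedge\tau_n^X}) = \L f(X_{s\wedge\tau_n^X})$, we obtain that $f(X_{t\wedge\tau_n^X}) - \int_0^{t\wedge\tau_n^X}\L_n f(X_{s\wedge\tau_n^X})ds$ is a martingale. Thus $(X_{t\wedge\tau_n^X})_{t\ge 0}$ solves the stopped martingale problem for $(\L_n,\delta_x, B(0,n))$.

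Finally I would conclude. Let $X^1$ and $X^2$ be two weak mild solutions starting at $x$ (which exist by H2). By the above, $(X^i_{t\wedge\tau_n^{X^i}})_{t\ge 0}$, $i=1,2$, both solve the stopped martingale problem for $(\L_n,\delta_x, B(0,n))$, and by well-posedness they have the same law on $C([0,\infty);H)$ for each $n\ge 1$. Since $X^i$ has continuous trajectories in $H$, we have $\tau_n^{X^i}\uparrow +\infty$, $\P^i$-a.s., as $n\to\infty$; a standard measure-theoretic argument (restrict to $[0,T]$, use that $\{\tau_n^{X^i}>T\}\uparrow\Omega^i$ and that the laws of the stopped processes agree on the event where no exit has occurred) then yields that $X^1$ and $X^2$ have the same law on ${\mathcal B}(C([0,\infty);H))$.

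The main obstacle is the verification that the stopped process is a solution of the stopped martingale problem, since a priori $\L f$ need not be bounded under H1 alone; however the only thing that matters is boundedness of $F$ on the trajectory up to $\tau_n^X$, which is automatic since $|X_{s}|_H<n$ there and $F$ is bounded on $B(0,n)$ by local H\"older continuity. No global growth condition on $F$ is used in the reduction.
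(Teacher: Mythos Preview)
Your proposal is correct and follows essentially the same approach as the paper: truncate $F$ via the cutoff $F_n(y)=F(y)\,\eta(|y|_H/n)\in C_b^{\theta}(H,H)$, use Lemma \ref{loc1} and Theorem \ref{key} to get well-posedness of the stopped martingale problem for $(\L_n,\delta_x,B(0,n))$, verify via \eqref{ffu} and optional sampling that any weak mild solution stopped at $\tau_n^X$ solves it, and let $n\to\infty$. Your remark that the linear growth condition from \eqref{lin1} was used only for existence, now replaced by H2, is exactly the point of the extension.
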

  \begin{proof}  The proof is similar to the one of Section 6.
   We give
    some details  
   for the sake of completeness.  
   Let $X= (X_t)_{t \ge 0}$  be a weak mild solution of \eqref{sde} starting at $x \in H$ (defined on some filtered probability space $(
\Omega,$ $ {\cal F},
 ({\cal F}_{t}), \P) $).
 We   have that $M_t(f)$ in \eqref{ffu} is   a continuous square integrable ${\cal F}_t$-martingale, for any 
   $f \in C^2_{cil}(H)$.
    Using   that  $F$ is locally $\theta$-H\"older continuous, we obtain that  the bounded  functions $
F_n(y) = F(y)\,  \eta \big(\frac{ |y|_H}{n}\big), $ $ y \in H,
$ belong to $C_b^{\theta}(H,H)$. 
 
 By the optional sampling theorem 
  we find that  $(X_{ t\wedge \tau_n^X  })_{t \ge 0} $ is a solution to  the  { stopped martingale problem for $(\L_n, \delta_x, B(0,n)$).}
  Using Lemma \ref{loc1} and Theorem  \ref{key}     we know that  the   stopped martingale problem for   $(\L_n, \delta_x, B(0,n)$) is well-posed, $n \ge 1$.  Proceeding as in the final part  of Section 6 we obtain the assertion.
\end{proof}

\subsection{Singular perturbations of classical stochastic Burgers equations}   

Here we show that  Theorem \ref{extension} can be applied to  SPDEs \eqref{sde} in cases when  $F$ grows more than linearly. As an example we consider   
  \begin{gather} \label{bur}
d u (t, \xi)=   \frac{\partial^2}{\partial   \xi^2}  u(t, \xi)dt +   \, { h( }u(t, \xi) )\cdot  g \big (\,  \big |  u(t, \cdot ) \big |_{H^1_0 } \big) dt     
\nonumber 
+ \frac{1}{2} \frac{\partial }{\partial \xi} \big ( u^2(t, \xi) \big ) dt +
\, \sum_{k \ge 1 }  
\frac{1}{k}   dW_t^{(k)}  {e_k(\xi)},  
\\  \;\; u(0, \xi) = u_0(\xi), \;\;\; \xi \in (0,\pi),
\end{gather}
 $u(t,0) = u(t,\pi)=0$, $t >0$, $u_0 \in H^1_0(0,\pi)$;  {\sl $g: \R \to \R $ is locally $\theta$-H\"older continuous, for some $\theta \in (0,1)$,  and $h : \R \to \R$ is  a $C^1$-function with derivative $h'$ which is locally $\theta$-H\"older continuous. Moreover to get existence of solutions we require  } 
 \begin{equation}\label{er3}
 \sup_{s \in \R} |h'(s)| \, \cdot  \sup_{s \in \R} |g(s) | \le 1. 
\end{equation}
For instance, we can consider   $  h( u(t, \xi) )\cdot  g \big (\,  \big |  u(t, \cdot ) \big |_{H^1_0 } \big) = u(t, \xi) \cdot  $ $\big(\sqrt{ |  u(t, \cdot ) 
|_{H^1_0 } } \,\,\,  \wedge 1 \big)$.

  Recall that $
 e_k (\xi) = \sqrt{2/\pi} \,  \sin (k \xi), $ $ \xi \in [0, \pi],\;\; k \ge 1
 $ (cf. Section 2; note that in \eqref{bur} the  noise  is ``more regular'' than  the one in \eqref{bur0}).

\vskip 1mm 

 We first  establish existence of  mild solutions with values in $H^1_0(0,\pi)$ when $g=0$ (see Proposition \ref{dap}).  This is needed in other to show that the classical Burgers equation can be considered in the form \eqref{sde}
  with  a suitable $F = F_0 : H^1_0(0,\pi) \to H^1_0(0,\pi) $ locally $\theta$-H\"older continuous  (see  \eqref{mq144}).
  To this purpose we follow the approach in  Chapter 14 of \cite{ergodicity}.
 
 Then to get  well-posedness of \eqref{bur} (see Proposition \ref{well1}) we will apply the Girsanov theorem using   an exponential estimate proved in \cite{burgers}. Such  Girsanov theorem  provides existence of weak solutions (cf. Remark \ref{che}). Uniqueness in law is  obtained directly using Theorem  \ref{extension}.

 \vskip 1mm 
 We need to review    basic facts about fractional powers of the  
operator  
$A = \frac{d^2}{d  \xi^2}$ with Dirichlet boundary conditions, i.e., $D(A) = H^2(0, \pi) \cap H^1_0 (0, \pi)$ (cf. Section 2).  The
eigenfunctions are
 $
 e_k (\xi),$ $  k \ge 1,
 $ with 
  eigenvalues $- k^2$ (we set
  $\lambda_k =   k^2 $). 
 For $v \in L^2(0, \pi)$ we write $v_k = \langle v,  e_k\rangle$ $= \int_0^{\pi} v(x) e_k (x) dx $, $k \ge 1$.

We introduce  for $ s >0 $  the Hilbert spaces 
  \begin{equation}\label{s335}
   \begin{array}{l}
{\mathcal H}_{s} = D((-A)^s) = \big \{ u \in L^2(0, \pi) \, : \, \sum_{k \ge 1} \lambda_k^{2s} \,  u_k^2 =\sum_{k \ge 1} k^{4s} \,  u_k^2  < \infty \big \}.
\end{array} 
\end{equation}
Moreover, for any $u \in {\mathcal H}_s$, $(-A)^s u = $ $
 \sum_{k \ge 1} k^{2s} \,  u_k e_k  $. 
 He also set ${\mathcal H}_0 = L^2(0, \pi)$.
  We have $\langle  u,v \rangle_{{\mathcal H}_s} = \sum_{k \ge 1} k^{4s} \,  u_k v_k$  (note that $|u|_{L^2} \le |(-A)^s u|_{L^2}= |u|_{{\mathcal H}_s}$, $u \in {\mathcal H}_s $, $s >0$).  

 \vskip 1mm  If $u \in H^1_0 (0,\pi)   $,  $|u|_{H^1_0(0,\pi) } = |u'|_{L^2(0,\pi) }$, where $u' $ is  the weak derivative of $u$.   We have   
\begin{gather}\label{h1o}  
{\mathcal H}_{1/2} =  H^1_0(0,\pi) \;\; \text{with equivalence of norms;  }
\\ 
%
\label{sob1}   
{\mathcal H}_{1/8} \subset L^4(0,\pi)
\end{gather}
(with continuous inclusion, i.e., there exists $C>0$ such that $|u|_{L^4} \le C |u|_{{\mathcal H}_{1/8}}$, $u \in {\mathcal H}_{1/8}$). 
 Assertion  \eqref{sob1} follows by a classical 
 Sobolev embedding theorem (cf. Theorem 6.16 and Remark 6.17 in \cite{hairer}). We only note that if $u \in {\mathcal H}_{1/8}$ one  can consider the odd extension $\tilde u$ of $u$ to $(- \pi , \pi)$; it is easy to check that $\tilde u$ belongs to the space $ H^{1/4}(-\pi, \pi) $ considered in \cite{hairer}.

 \vskip 1mm 
 We also have with  continuous inclusion (cf. Lemma 6.13 in \cite{hairer})
 \begin{equation}\label{sob2}
{\mathcal H}_{s} \subset \{ u \in C([0, \pi]),\; u(0)=u(\pi)=0\}, \;\;\; s > 1/4.
\end{equation}
 Now let us consider the linear bounded operator $T : {\mathcal H}_{1/2} \to {\mathcal H}_{1/2}$, $T u = (-A)^{-1/2} \partial_{\xi} u $, $u \in {\mathcal H}_{1/2}$;  $T$ can be extended to a linear and bounded operator   $T : {\mathcal H}_0 = L^2(0, \pi)\to {\mathcal H}_0$, see  Section 2.0.1. 
By interpolation  it follows that 
\begin{equation}\label{inter}
 T =(-A)^{-1/2} \partial_{\xi} \;\; \text{is a bounded linear operator from    $\; {\mathcal H}_{s}$ into $ {\mathcal H}_s$, $s \in [0,1/2]$}.
\end{equation}
Indeed by Theorem 4.36 in \cite{interpola} we know that 
  ${\mathcal H}_{s/2} $ can be identified with the real interpolation space $({\mathcal H}_0, {\mathcal H}_{1/2})_{s,2}$, $s \in (0, 1)$. Applying Theorem 1.6 in \cite{interpola}   we deduce \eqref{inter}.   

\vskip 1mm Let $T>0$.  For $g \in C([0,T]; {\mathcal H}_0)$ we define $(Sg)(t) =  \int_{0}^{t} e^{(  t-s)  A} g  (s) ds$, $t \in [0,T]$. 
One can prove that 
 $Sg \in C([0,T]; {\mathcal H}_s)$, for any  $s \in [0,1)$. More precisely, 
\begin{equation}\label{bou1}
S \; \text{is a bounded linear operator from $C([0,T]; {\mathcal H}_0)$ into $C([0,T]; {\mathcal H}_s)$, $s \in [0,1)$.}
\end{equation}
 This result can be also deduced from Proposition 5.9 in \cite{DZ} with $\alpha =1$, $E_1 = {\mathcal H}_s$ and $E_2 = {\mathcal H}_0$. We only remark that, for any $p>1$, $L^p (0,T; {\mathcal H}_0) \subset C([0,T]; {\mathcal H}_0)$ (with continuous inclusion) and  $|(-A)^{s} e^{tA} x|_{{\mathcal H}_0}$ $=| e^{tA} x|_{{\mathcal H}_s} \le \frac{C}{t^{s}} |x|_{{\mathcal H}_0}$ (see Proposition 4.37 in \cite{hairer}).

\vskip 1mm      
In the next proposition, assertion (i) extends  a result   of \cite{ergodicity} which actually shows the existence of a mild solution to the stochastic Burgers equations with continuous path in ${\mathcal H}_s$, $s \in (0,1/4)$.        
 Assertion (ii) is proved in \cite{burgers}.
 \begin{proposition} \label{dap}
 Let us consider \eqref{bur} with $g=0$.  Then the following assertions hold:
 
 i) for any $u_0 \in {\mathcal H}_{1/2}$ there exists a pathwise unique mild solution $Y = (Y_t) = (Y_t)_{t \ge 0 }$ with continuous paths in ${\mathcal H}_{1/2}$. 
 
 ii)  The following estimate holds, for any $T>0,$
 \begin{equation}\label{esp}
 \E \Big [ \exp \Big ( {\frac{1}{2} \int_0^T \big |Y_s \big |_{{\mathcal H}_{1/2}}^2 ds} \Big )\Big]  < \infty .
\end{equation}
 \end{proposition}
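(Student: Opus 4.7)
For part (i) I would recast \eqref{bur} with $g=0$ in mild form,
$$ Y_t = e^{tA}u_0 + \frac{1}{2}\int_0^t e^{(t-s)A}\partial_\xi(Y_s^2)\,ds + Z_t, $$
where the stochastic convolution is $Z_t = \sum_{k\ge 1}\frac{1}{k}\int_0^t e^{-(t-s)k^2}\,e_k\, dW_s^{(k)}$, and solve the resulting equation by a contraction argument in $C([0,T];\mathcal{H}_{1/2})$. As a preliminary, the driving noise has covariance $Q=(-A)^{-1}$, hence $\E|Z_t|^2_{\mathcal{H}_s}=\frac{1}{2}\sum_k k^{4s-4}(1-e^{-2tk^2})<\infty$ for every $s<3/4$. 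Pathwise continuity of $Z$ in some $\mathcal{H}_s$ with $s>1/2$ then follows from the factorization method as in Theorem 5.11 of \cite{DZ}, using that $(-A)^{-1+\delta}$ is of trace class for any $\delta\in(0,1/2)$.

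The core of the fixed point is the bilinear estimate: for $u,v\in\mathcal{H}_{1/2}=H^1_0(0,\pi)$, the embedding $H^1_0\hookrightarrow L^\infty$ yields $\partial_\xi(u^2)-\partial_\xi(v^2)=2(uu'-vv')\in L^2=\mathcal{H}_0$ with
$$ |\partial_\xi(u^2)-\partial_\xi(v^2)|_{\mathcal{H}_0}\le C\bigl(|u|_{\mathcal{H}_{1/2}}+|v|_{\mathcal{H}_{1/2}}\bigr)\,|u-v|_{\mathcal{H}_{1/2}}. $$
Combined with \eqref{ewd} this gives, on balls $|u|_{\mathcal{H}_{1/2}},|v|_{\mathcal{H}_{1/2}}\le R$,
$$ |e^{(t-s)A}[\partial_\xi(u^2)-\partial_\xi(v^2)]|_{\mathcal{H}_{1/2}}=|(-A)^{1/2}e^{(t-s)A}[\partial_\xi(u^2)-\partial_\xi(v^2)]|_{\mathcal{H}_0}\le \frac{c_R}{(t-s)^{1/2}}|u-v|_{\mathcal{H}_{1/2}}, $$
and since $\int_0^t(t-s)^{-1/2}ds=2\sqrt{t}$, the Banach fixed point theorem produces a unique local mild solution on some interval $[0,T_0]$; pathwise uniqueness on any common interval of existence is built into the same contraction estimate.

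To extend the solution globally I would pass to the shifted variable $v=Y-Z$, which solves the pathwise PDE $\partial_t v=Av+\frac{1}{2}\partial_\xi((v+Z)^2)$, $v(0)=u_0$, with $Z$ continuous in some $\mathcal{H}_s$, $s>1/2$. Testing formally against $(-A)v$ and exploiting the Dirichlet cancellation $\int_0^\pi v^2 v'\,d\xi=0$, while absorbing the cross terms involving $Z$ into the dissipation $-|(-A)v|^2_{\mathcal{H}_0}$ via Young's inequality, yields a pathwise Gronwall-type bound on $|v_t|_{\mathcal{H}_{1/2}}$ on any bounded interval; this computation is rigorously justified through a Galerkin approximation and passage to the limit. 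Finally, part (ii) is exactly the exponential integrability bound proved in \cite{burgers} for this precise Burgers equation, so it suffices to quote that result. The main technical obstacle is this a priori $\mathcal{H}_{1/2}$-estimate for $v$: the cross terms involving the only modestly regular process $Z$ must be carefully controlled against the dissipation, and it is for this reason that the extra regularity $Z\in\mathcal{H}_s$ with $s>1/2$ established in the first step is essential.
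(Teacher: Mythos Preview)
Your local existence argument via contraction in $C([0,T];\mathcal H_{1/2})$ is correct and in fact more direct than the paper's route. The gap is in the global step. When you test the equation for $v=Y-Z$ against $(-A)v$, the pure-$v$ part of the nonlinearity contributes
\[
\tfrac12\langle \partial_\xi(v^2),(-A)v\rangle_{L^2}=\langle vv',-v''\rangle_{L^2}=-\int_0^\pi v v' v''\,d\xi=\tfrac12\int_0^\pi (v')^3\,d\xi,
\]
not $\int v^2 v'$. The cancellation $\int_0^\pi v^2 v'\,d\xi=0$ that you invoke is the one that arises when testing against $v$ (the $L^2$ estimate), not against $(-A)v$. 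The cubic term $\tfrac12\int (v')^3$ cannot be fed directly into a Gronwall inequality for $|v'|_{L^2}^2$: after Gagliardo--Nirenberg and Young one is left with a superlinear power of $|v'|_{L^2}$. The estimate can be closed, but only after first running the $L^2$ test (where the genuine cancellation lives) to obtain $v\in L^\infty_t L^2_x\cap L^2_t H^1_x$, and then using that bound to make $|v'|^{4/3}$ integrable in time. As written, your single energy identity does not close.

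The paper sidesteps this entirely. It starts from the known result (Theorem 14.2.4 in \cite{ergodicity}) giving $Y\in C([0,T];\mathcal H_0)\cap L^2(0,T;\mathcal H_{1/2})$, and then \emph{bootstraps regularity through the mild formulation}, never writing an energy identity for $v$. The bootstrap goes $\mathcal H_0\to\mathcal H_s$ ($s<1/4$) via a lemma from \cite{ergodicity}, then $\mathcal H_{1/8}\to\mathcal H_s$ ($s<1/2$) using $\mathcal H_{1/8}\hookrightarrow L^4$ so that $u^2\in L^2$ and the smoothing \eqref{bou1} applies, and finally $\mathcal H_s$ ($1/4<s<1/2$) $\to\mathcal H_{1/2}$ using that $\mathcal H_s$ is an algebra (via the Sobolev--Slobodeckij characterization) together with \eqref{inter}. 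This purely linear-smoothing argument avoids the cubic term altogether. Your part (ii), like the paper's, correctly defers to \cite{burgers}.
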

\begin{proof} 
 According to \cite{ergodicity} and  \cite{burgers}, setting   $u(t, \cdot ) = Y_t$ we write \eqref{bur}  with $g=0$ as 
\begin{equation}\label{mil3}
 \begin{array}{l}
  Y_t = e^{tA} u_0 \, + \, \frac{1}{2}\int_{0}^{t} e^{(  t-s)  A} \, \partial_{\xi}  (
Y_{s}^2)
ds
+\int_{0}^{t}e^{(  t-s)  A} \sqrt{C} dW_{s},\;\; t \ge 0,
\end{array} 
\end{equation}
where $W_t = \sum_{k \ge 1 }  
   W_t^{(k)} e_k $ is a cylindrical Wiener process on ${\mathcal H}_0 = L^2(0,\pi)$ and 
   $C= (-A)^{-1}: $ ${\mathcal H}_0 \to {\mathcal H}_0 $ is  symmetric, non-negative
and of trace class,
 $C e_k = \frac{1}{k^2} e_k$, $k \ge 1$.  
  
 \vskip 1mm 
\noindent {\bf (i)} In Theorem 14.2.4 of \cite{ergodicity} (see also the references therein) it is proved that, for any $T>0,$ there exists a pathwise unique solution $Y$ to \eqref{mil3} on $[0,T]$ such that, $\P$-a.s., $Y \in C([0,T]; {\mathcal H}_0) \cap 
L^2(0,T; {\mathcal H}_{1/2})$ (i.e., $\P$-a.s, the paths of $Y$ are continuous  with values in ${\mathcal H}_0$ and square-integrable with values in ${\mathcal H}_{1/2}$); 
such  result  holds even  if we replace $C $ by identity $I$. 
 By a standard argument based on the pathwise uniqueness, we get a solution $Y$ defined on $[0, \infty)$ which verifies  $Y \in C([0,\infty); {\mathcal H}_0) \cap 
L^2_{loc}(0, \infty; {\mathcal H}_{1/2})$, $\P$-a.s.. 

Let us fix $T>0$. To prove our assertion, we will show that
\begin{equation}\label{s119}
 Y \in C([0,T]; {\mathcal H}_{1/2}), \;\;\; \P\text{-a.s.}.
\end{equation}
Note that the stochastic convolution $W_A(t)= \int_{0}^{t}e^{\left(  t-s\right)  A} \sqrt{C} dW_{s}$ has a modification with continuous paths with values in ${\mathcal H}_{1/2}$
(to this purpose one can use Theorem 5.11 in \cite{DZ}).  
  Moreover in  Lemma 14.2.1 of \cite{ergodicity} it is proved that the operator $R$,
\begin{equation}\label{dz1}
\begin{array}{l}
(Rv)(t) = \int_{0}^{t} e^{(  t-s)  A} \, \partial_{\xi}  
v(s)ds ,\;\;\; t \in [0,T],\;   v \in C([0,T]; {\mathcal H}_{1/2}),
\end{array} 
\end{equation}
can be extended to a linear and bounded operator from $C([0,T]; L^1(0, \pi))$ 
 into $C([0,T]; {\mathcal H}_s)$, $s \in (0, 1/4)$. 
 It is straightforward to check that   the mapping: $ h \mapsto h^2$ is 
  continuous from $C([0,T]; {\mathcal H}_0)$ into $C([0,T]; L^1(0, \pi))$. 
  It follows that  
\begin{gather*}
 \begin{array}{l}
u \mapsto R (u^2) \;\; \text{
 continuous  from $C([0,T]; {\mathcal H}_0)$ into $C([0,T]; {\mathcal H}_s)$.}
 \end{array}
\end{gather*} 
We deduce from  Lemma 14.2.1 of \cite{ergodicity} that the solution $Y \in C([0,T]; {\mathcal H}_{s}),   \P$-a.s., $s \in (0,1/4)$. To get more spatial regularity for $Y$  we proceed in two steps.

\vskip 1mm 
\noindent {\it I Step.}  We show that, $ \P$-a.s,  $Y \in C([0,T]; {\mathcal H}_{s})$, $s \in (0,1/2)$.

 Let us fix $s = 1/8$.  By \eqref{sob1} we know that the mapping: $ h \mapsto h^2$ is continuous from $C([0,T]; {\mathcal H}_{1/8})$ into $C([0,T]; {\mathcal H}_0)$. Moreover, using \eqref{inter} we can write, for $w \in C([0,T]; {\mathcal H}_0), $
\begin{gather*}
(Rw)(t) = \int_{0}^{t} e^{(  t-s)  A} \, \partial_{\xi}  
w(s) = \int_{0}^{t} e^{(  t-s)  A} \, (-A)^{1/2}  \, [(-A)^{-1/2} \partial_{\xi}]  
w(s) 
ds, \; t \in [0,T]. 
\end{gather*}
Note that    $[(-A)^{-1/2} \partial_{\xi}]  
w  \in C([0,T]; {\mathcal H}_0)$.  
 By \eqref{bou1} we know that, for any $\epsilon \in (0,1)$,  
 $$ t \mapsto (-A)^{1- \epsilon}\int_{0}^{t} e^{(  t-s)  A}  \, [(-A)^{-1/2} \partial_{\xi}]  
w(s)  ds$$ belongs to $C([0,T]; {\mathcal H}_0)$. Hence 
$$
(-A)^{s} Rw \in C([0,T]; {\mathcal H}_0),\;\; s \in (0, 1/2),\; i.e.,\;  
Rw \in C([0,T]; {\mathcal H}_s), \;\; s \in (0, 1/2).
$$ 
  Using this fact we easily  obtain  that,
  $ \P$-a.s,  $Y \in C([0,T]; {\mathcal H}_{s})$, $s \in (0,1/2)$.
 
\vskip 1mm 

\noindent  {\it II Step.}  We show that $Y \in C([0,T]; {\mathcal H}_{1/2}),  $ $\P$-a.s..  
 
\vskip 1mm 
 Let us fix $s \in (1/4, 1/2)$ and  recall \eqref{sob2}. According to \cite{grisvard} the space  ${\mathcal H}_s$ can be identified with $\{ u \in W^{2s,2}(0, \pi) \, :\,  
 u(0)=u(\pi)=0\}$, where 
\begin{equation*}\label{gris}
 \begin{array}{l}
W^{2s,2}(0, \pi) = \big \{ u \in {\mathcal H}_0 \, :\, [u]_{W^{2s,2}(0, \pi)}^2 
= \int_{0}^{\pi}  \int_{0}^{\pi}  |u(x) - u(y)|^2 \, |x-y|^{-1 - 4s} \, dx dy < \infty  \big \}
\end{array} 
\end{equation*}
 is  
 a Sobolev-Slobodeckij  space; the norm $|u|_{W^{2s,2}(0, \pi)} = |u|_{{\mathcal H}_0} $ $+ [u]_{W^{2s,2}(0, \pi)}$  is equivalent to $|u|_{{\mathcal H}_s}$
 (see also Theorem 3.2.3 in \cite{analityc}, taking into account that 
 ${\mathcal H}_s $ can be identified with the real interpolation space $({\mathcal H}_0, D(A) )_{s,2} $     by Theorem 4.36 in \cite{interpola}). 
 
 Using the previous characterization and \eqref{sob2} it is easy to prove that if  $u \in {\mathcal H}_s$ and $v \in {\mathcal H}_s$ then the pointwise product $u v \in {\mathcal H}_s $. Indeed we have 
  $$
   |u(x) v (x) - u(y) v (y)| \le \| u\|_{0} \, |v(x) - v(y)|
   +  \| v\|_{0} \, |u(x) - u(y)|, \;\; x,y \in [0, \pi],
 $$
 and so  $[u v]_{W^{2s,2}(0, \pi)} \le c |u|_{W^{2s,2}(0, \pi)}
  \, |v|_{W^{2s,2}(0, \pi)}
  \le c' |u|_{{\mathcal H}_s}\, |v|_{{\mathcal H}_s}$. It follows that 
   $|uv|_{{\mathcal H}_s} \le C |u|_{{\mathcal H}_s}\, |v|_{{\mathcal H}_s}$. 
   
 Let now $u  \in C([0,T]; {\mathcal H}_{s})$.  Using that $|u^{2}(t) - u^2(r)|_{{\mathcal H}_s} \le |u(t) - u(r)|_{{\mathcal H}_s} 
  |u^{}(t) + u(r)|_{{\mathcal H}_s} $ $\le  $ $2 |u|_{C([0,T]; {\mathcal H}_{s})} |u(t) - u(r)|_{{\mathcal H}_s}$, $t, r \in [0,T]$, we see that 
    the mapping: 
\begin{equation}
   u \mapsto u^2 \;\; \text{  is continuous from $C([0,T]; {\mathcal H}_{s})$ into $C([0,T]; {\mathcal H}_{s})$}.
\end{equation}
 Hence, 
taking into account I Step, to get the assertion it is enough to prove that 
\begin{gather} \label{da3}
R \eta \in C([0,T]; {\mathcal H}_{1/2}) \;\; \text{if} \; \eta \in C([0,T]; {\mathcal H}_{s}) , \;\; s \in (1/4, 1/2).
\end{gather}
This would imply   $R (\eta^2)  \in C([0,T]; {\mathcal H}_{1/2}) $  $\text{if} \; \eta \in C([0,T]; {\mathcal H}_{s})$ and so  $Y \in C([0,T]; {\mathcal H}_{1/2}),  $ $\P$-a.s..  
   Let us fix $\eta \in C([0,T]; {\mathcal H}_{s}).$
 Using \eqref{inter} we can write 
\begin{gather*}
(R\eta)(t) = \int_{0}^{t} e^{(  t-s)  A} \, \partial_{\xi}  
\eta (s) = \int_{0}^{t} e^{(  t-s)  A} \, (-A)^{1/2}  \, [(-A)^{-1/2} \partial_{\xi}]  
\eta (s) 
ds, \; t \in [0,T], 
\end{gather*}
 where $ [(-A)^{-1/2} \partial_{\xi}]  
\eta   \in C([0,T]; {\mathcal H}_s)$. Hence $
\theta(r)=  (-A)^s [(-A)^{-1/2} \partial_{\xi}]  
\eta(r) \in C([0,T]; {\mathcal H}_0)$. Writing
$$
(R\eta)(t) = \int_{0}^{t} e^{(  t-r)  A} \, (-A)^{1/2 -s}  \, \theta (r) dr, \;\; t \in [0,T],
$$
 and using 
 \eqref{bou1}, we find that $(-A)^{1/2} R \eta \in C([0,T]; {\mathcal H}_0)$
and this shows \eqref{da3}.
 
\vskip 2 mm \noindent {\bf (ii)  } A similar  estimate is proved in  Propositions 2.2 and 2.3 in \cite{burgers}. However in \cite{burgers} equation \eqref{mil3} is considered in $L^2(0,1)$ (instead of $ L^2(0, \pi)$); the authors prove 
that $\E \big [ e^{\epsilon  \int_0^T  |Y_s  |_{H^1_0(0,1)}^2 \,  ds}\, \big] $ $ < \infty$ if $\epsilon \le \epsilon_0 = \pi^2 / 2 \|C \|$ (using the operator   norm  $\|C \|$ of $C$).

The condition $\epsilon \le \epsilon_0$  is used in the proof of Proposition 2.2 in order to get the inequality  $- |x|^2_{H^1_0} + 2 \epsilon |\sqrt{C} x|_{L^2}^2 \le 0$, $x \in H^1_0$. In our case   $\epsilon_0 =1/2$ since $\| C\|=1$.
 \end{proof}
In the remaining part   we consider  
$$
\H = H^1_0 (0, \pi) = {\mathcal H}_{1/2}
$$  
as the reference Hilbert space and  study the SPDE \eqref{bur} 
 in $\H$.  
 
 We will  consider  the following restriction of $A:$ 
 \begin{equation} \label{aa2}
  \A = \frac{d^2}{d  \xi^2} \;\; \text{ with   $D(\A) = \big \{ u \in H^3(0, \pi) \; : \; u, \frac{d^2 u}{d  \xi^2}  \in  H^1_0 (0, \pi) \big \}$;} \;\;\; \A : D(\A) \subset \H \to \H.  
\end{equation}
Eigenfunctions of $\A$ are $\tilde e_k(\xi)= \sqrt{2/\pi} \, \frac{1}{k}  \sin (k \xi) =  
 \frac{1}{k} e_k(\xi)
$ with  eigenvalues  $- k^2$, $k \ge 1$.   

It is clear that $\A$ verifies Hypothesis  \ref{d1} when  $H = \H$. Moreover
 $(\frac{e_k}{k}) = (\tilde e_k)$ forms  an orthonormal basis in $\H$. 
 The noise in \eqref{bur} will be indicated by $\W$; it  is a 
   cylindrical Wiener process on $\H$:
   \begin{equation}\label{noise}
   \begin{array}{l} 
\W_t(\xi) = \sum_{k \ge 1 }  \frac{1}{k}
   W_t^{(k)}  {e_k(\xi)} = \sum_{k \ge 1 }   
   W_t^{(k)}  {\tilde e_k(\xi)},\;\;\; t \ge 0,\; \xi \in [0,\pi]. 
\end{array} 
\end{equation} 
Let $D_0$ be the space of infinitely differentiable functions vanishing in a 
neighborhood of $0$ and $\pi$. Such functions are dense in $\H$. The operator
\begin{equation}\label{s33}
(-\A )^{1/2} \partial_{\xi} : D_0 \to \H \; \text{can be extended to a bounded linear operator from $\H$ into $\H$}. 
\end{equation}
To check this fact we consider $y \in D_0$ and $x \in \H$. Define $x_N = \sum_{k = 1 }^N   
   x_k  {\tilde e_k}$, with $x_k = \langle x, \tilde e_k \rangle_{\H}$, $N\ge 1$.
  Using that $(- \A )^{1/2}$ is self-adjoint on $\H$ and integrating by parts we find 
\begin{gather*} 
\begin{array}{l}
 \langle (-\A )^{-1/2}  \partial_{\xi} \, y , x_N \rangle_{\H} = \langle   \partial_{\xi} y , (- \A)^{-1/2} x_N \rangle_{\H} =  
  \langle \partial_{\xi}^2    y , \partial_{\xi} \, \sum_{k = 1 }^N   
   \frac{x_k}{k}  {\tilde e_k} \rangle_{L^2(0,\pi)}    
  \\ \\
=   - \langle \partial_{\xi}    y , \partial_{\xi}^2 \, \sum_{k = 1 }^N   
   \frac{x_k}{k}  {\tilde e_k} \rangle_{L^2(0,\pi)}
    =
     \langle \partial_{\xi}     y ,   \sum_{k = 1 }^N   
    {x_k} { \sin (k \, \cdot)}  \rangle_{L^2(0,\pi)}. 
    \end{array}
  \end{gather*}   
  Hence $|\langle (-\A )^{-1/2}  \partial_{\xi} \, y , x_N \rangle_{\H}\, |$
  $\le |y|_{\H} \, $ $ (\sum_{k = 1 }^N   
    {x_k}^2)^{1/2}$ $\le |y|_{\H} \,|x|_{\H} $ 
   and 
  we  get the assertion. 
   Let us introduce, for any $x \in \H$,  
   \begin{equation} \label{f02}
   \begin{array}{l}
   F_0 (x) =  \frac{1}{2}  {\A}^{-1/2} \partial_{\xi} [x^2].
   \end{array}
   \end{equation}
   Since the mapping $x \mapsto x^2$ is  locally Lipschitz   from  $\H$ into $\H$   (i.e., it is Lipschitz continuous  on bounded sets of $\H$ with values in $\H$; recall that $|x^2|_{\H} = 2 |x \, \partial_{\xi} x|_{L^2(0,\pi)}$)
it is clear that also
\begin{equation} \label{f0} 
  F_0: \H \to \H  \;\; \text{is locally  Lipschitz.  }
\end{equation}
The mild solution $Y $ of Proposition \ref{dap} with paths in $C([0,\infty); \H)$ verifies, $\P$-a.s.,
\begin{equation}
  \label{mq144}
Y_{t}=e^{t \A }x +\int_{0}^{t}(- \A)^{1/2}e^{(  t-s)  \A}F_0  (
Y_{s})ds + \int_{0}^{t}e^{(  t-s)  \A}d \W_{s};\;\; t \ge 0, 
\end{equation}
 where $\A$ is defined in \eqref{aa2} and we have set $u_0=x \in \H$.
  
  \smallskip 
  {\sl We consider the following  SPDE which includes 
    \eqref{bur} as a special case:}
 \begin{equation}
  \label{cheef}
X_t = e^{t \A }x +\int_{0}^{t}(- \A)^{1/2}e^{(  t-s)  \A}F_0  (
X_{s})ds
 + \int_{0}^{t} e^{(  t-s)  \A} B  (
X_{s})ds 
+
\int_{0}^{t}e^{(  t-s)  \A}d \W_{s},
 \end{equation}
$t \ge 0 $, where  
\begin{equation}\label{nonl}
B: \H \to \H \; \; \text{is locally $\theta$-H\"older  continuous  and } \;\;  |B(x)|_{\H} \le c_0 +  |x|_{\H}, \;\; x \in \H,
\end{equation}
 for some $\theta \in (0,1)$ and $c_0 \ge 0.$ 
 Note that  \eqref{bur} can be written in the form \eqref{cheef} by choosing  
 \begin{equation} \label{s444}
B(x) = h(x)   g(| x|_{\H}), \;\; x \in \H. 
 \end{equation} 
 To check that such $B$ is locally $\theta$-H\"older  continuous we argue similarly to \eqref{bene}.
 Let  $u,v \in B =\{ x \in \H \, :\, |x|_{\H} \le M \}$, for some $M>0$.  Recall  \eqref{sob2}.   
 There exists $C>0$ such that if $u \in B $ then $\sup_{r \in [0, \pi]} |u(r)|  \le CM$.  
We have 
\begin{gather*} \nonumber
\int_0^{\pi} |h'(u(t)) u'(t) g(|u|_{\H}) - h'(v(t)) v'(t) g(|v|_{\H})|^2 dt \\ 
\nonumber \le 
3 \int_0^{ \pi} |h'(u(t)) - h'(v(t))|^2  |u'(t)|^2  |g(|u|_{\H})|^2 dt 
+
 3 \int_0^{ \pi} |h'(v(t))|^2  |u'(t) - v'(t)|^2  |g(|u|_{\H})|^2 dt
\\
+ 3 \int_0^{ \pi} |h'(v(t))|^2  |v'(t)|^2  |g(|u|_{\H}) -  g(|v|_{\H})|^2 dt 
\\
\le 3 c_1 \| u -v \|_0^{2\theta}  \int_0^{ \pi}   |u'(t)|^2   dt
+ 3
  c_1 \int_0^{ \pi}   |u'(t) - v'(t)|^2   dt + 3 c_1  | u - v |^{2\theta}_{\H}
  \int_0^{ \pi}  |v'(t)|^2  dt 
\\ \nonumber \le 
 c_2 
  \, | u - v |^{2\theta}_{\H},
\end{gather*}
for some constants $c_1$ and $c_2$ possibly depending on $M, g, h $ and $\theta$.

 The function $B$ in \eqref{s444}  verifies    \eqref{nonl}  with $c_0 = 0$
 (we only note that  $|B(x)|_{\H}^2 \le \| g\|_{0}^2 \, \int_0^{\pi} |h '(x(\xi)) \cdot\,  \frac{dx}{d \xi}|^2 d \xi $ $\le \| g\|_{0}^2 \, \| h'\|_{0}^2 \, |x|_{\H}^2$ $\le  |x|_{\H}^2$, $x \in \H$). 
  
 We have used  condition \eqref{er3} to guarantee the bound  in \eqref{nonl}. This is   used to check the  Novikov condition \eqref{novikov} and prove the existence part in the following result.
\begin{proposition} \label{well1} 
 Let us consider \eqref{cheef} on    
 $\H= H^1_0(0, \pi)$ with 
  $\A$ given in \eqref{aa2} and the cylindrical Wiener process $\W$ on $\H$ given in \eqref{noise} ($(W^{(k)})_{k \ge 1
  }$ are independent real Wiener processes). 
  Let  $F_0$ as  in \eqref{f02} and  suppose that   $B: \H \to \H$   verifies  \eqref{nonl}. 
   Then the following assertions hold.
  
 i) For any $x \in \H$,  there exists a weak mild solution $(X_t)_{t \ge 0 }$.
 
ii)  Weak uniqueness holds for \eqref{cheef} for any $x \in \H.$
 \end{proposition}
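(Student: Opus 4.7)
The plan is to obtain (i) via a Girsanov transformation applied to the pathwise unique Burgers mild solution $Y$ of Proposition \ref{dap}, and then to deduce (ii) directly from Theorem \ref{extension} after recasting \eqref{cheef} in the form \eqref{sde}.

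For (i), fix $x \in \H$ and $T>0$, and let $Y = (Y_t)_{t \in [0,T]}$ be the pathwise unique $\H$-valued mild solution to \eqref{mq144} with $Y_0 = x$ on some filtered probability space $(\Omega, \mathcal F, (\mathcal F_t), \P)$ carrying the cylindrical $\H$-Wiener process $\W$. Set $\psi_s = B(Y_s)$, which is $(\mathcal F_s)$-progressively measurable. The key step is to apply the infinite-dimensional Girsanov theorem (cf.\ Theorem 10.14 in \cite{DZ}) to the exponential local martingale
\[
\rho_t = \exp\Big(\int_0^t \langle B(Y_s), d\W_s\rangle_{\H} - \tfrac{1}{2}\int_0^t |B(Y_s)|_{\H}^2\, ds\Big), \quad t \in [0,T].
\]
Once Novikov's condition is verified, under $\tilde \P_T = \rho_T \P$ the process $\tilde \W_t = \W_t - \int_0^t B(Y_s)\, ds$ is a cylindrical $\H$-Wiener process, and rewriting \eqref{mq144} in terms of $\tilde \W$ shows that $Y$ is a weak mild solution of \eqref{cheef} on $[0,T]$ under $\tilde \P_T$. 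A Kolmogorov extension (as in Remark 3.7 of \cite{KS}) then produces a solution on $[0,\infty)$.

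For (ii), use that $(-\A)^{-1/2} : \H \to \H$ is bounded, so the identity $e^{(t-s)\A} B(X_s) = (-\A)^{1/2} e^{(t-s)\A} \bigl[(-\A)^{-1/2} B(X_s)\bigr]$ lets us rewrite \eqref{cheef} as
\[
X_t = e^{t\A} x + \int_0^t (-\A)^{1/2} e^{(t-s)\A} F(X_s)\, ds + \int_0^t e^{(t-s)\A} d\W_s, \quad F := F_0 + (-\A)^{-1/2} B,
\]
which is of the form \eqref{sde} with $A = \A$ on $H = \H$. By \eqref{f0} the map $F_0$ is locally Lipschitz, while $(-\A)^{-1/2} B$ inherits the local $\theta$-H\"older regularity of $B$ because $(-\A)^{-1/2}$ is bounded on $\H$; hence $F$ is locally $\theta$-H\"older, which verifies H1 of Theorem \ref{extension}. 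Existence (hypothesis H2) is provided by part (i), and $\A$ satisfies Hypothesis \ref{d1} on $\H$, so Theorem \ref{extension} yields weak uniqueness for \eqref{cheef}.

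The main obstacle is the verification of Novikov's condition $\E[\exp(\tfrac{1}{2}\int_0^T |B(Y_s)|_{\H}^2\, ds)] < \infty$. Via $|B(y)|_{\H}^2 \le (c_0 + |y|_{\H})^2$ this reduces to an exponential moment bound for $\int_0^T |Y_s|_{\H}^2\, ds$; when $c_0 = 0$ (which covers the motivating example \eqref{bur} under \eqref{er3}) the borderline estimate \eqref{esp} applies directly, whereas for general $c_0 \ge 0$ one should either subdivide $[0,T]$ and invoke Khasminskii's criterion on sufficiently short subintervals or use a stopping-time localization at $\tau_r = \inf\{ t \ge 0 : |Y_t|_{\H} \ge r\}$, construct the solution up to $\tau_r$ where $B(Y_\cdot)$ is bounded, and then let $r \to \infty$.
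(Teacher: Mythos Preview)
Your proof is essentially the paper's: existence via Girsanov applied to the Burgers mild solution $Y$ of Proposition \ref{dap} (Novikov from \eqref{esp}, then Kolmogorov extension to $[0,\infty)$), and uniqueness via Theorem \ref{extension} after rewriting \eqref{cheef} with $F = F_0 + (-\A)^{-1/2}B$. The paper handles the Novikov bound for general $c_0 \ge 0$ in one line, asserting $\E\bigl[\exp\bigl(\tfrac12\int_0^T |B(Y_s)|^2_{\H}\,ds\bigr)\bigr] \le C_T\,\E\bigl[\exp\bigl(\tfrac12\int_0^T |Y_s|^2_{\H}\,ds\bigr)\bigr]$ directly; your added caution about \eqref{esp} sitting at the borderline exponent when $c_0>0$ is reasonable but does not change the overall strategy.
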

\begin{proof} {\bf i)} Let us fix $x \in \H$. We will use the Girsanov theorem  as in Appendix A.1 of \cite{DFPR}, using the  reference Hilbert space $\H$.

 Let $Y = (Y_t)$ be the unique  solution to the Burgers equation  \eqref{mq144} with values in $\H$ and such that  $Y_0 =x$. This is defined on  a  filtered probability space 
 $(
\Omega, {\mathcal F},$ $
 ({\mathcal F}_{t}), \P )$  
  on which it is defined the
cylindrical Wiener process $\W$ on $\H$.
 Set 
\begin{gather*}
b(s) = B(Y_s),\;\; s \ge 0,
\end{gather*}
and note that $|b(s)|_{\H} \le c_0  +   |Y_s|_{\H}$, $s \ge 0$,   by \eqref{nonl}. The process   $(b(s))$ is  progressively measurable  and verifies $\E \int_0^T
|b(s)|_{\H}^2    ds < \infty $, $T>0$ (see \eqref{esp}  and recall that $e^r \ge  1 + r$). Moreover, by \eqref{esp} and \eqref{nonl}  it follows  that, for any $T>0,$ 
\begin{gather} \label{novikov}
\E  \big [ e^{\frac{1}{2} \int_0^T  |b(s)  |_{\H }^2 ds}\big]  \le \, C_T \, 
 \E  \big [ e^{\frac{1}{2} \int_0^T  | Y_s |_{\H }^2 ds}\big] < \infty.
\end{gather} 
Let ${U_t = \sum_{k \ge 1} \int_0^t \langle b(s) , \tilde e_k\rangle_{\H}  \, dW^{(k)}_s}$, $t  \ge  0$, and   
    fix $T>0.$ By Proposition 17 in \cite{DFPR} we know that $\tilde W^{(k)}_t = W^{(k)}_t - \int_0^t \langle \tilde e_k, b(s)\rangle_{\H} ds$, $t \in [0,T]$, $k \ge 1,$ are independent real Wiener processes on $(
\Omega, {\mathcal F},$ $
 ({\mathcal F}_{t}), \tilde \P )$, where 
  the probability measure 
 $$
 \tilde \P = e^{ U_T \,  
 - \, \frac{1}{2} \int_0^T  |b(s)  |_{\H }^2 ds} \, \cdot  \P
 $$ is equivalent to $\P$ (the quadratic variation process $\langle U\rangle_t$ $= \int_0^t \big |b(s) \big |_{\H }^2 ds$, $t \in [0,T]$). 
 
 Hence  $\tilde \W_t = \sum_{k \ge 1} \tilde W^{(k)}_t \tilde e_k$, $t \in [0,T]$, is a cylindrical Wiener process on $\H$ defined on $(
\Omega, {\mathcal F},$ $  
 ({\mathcal F}_{t}), \tilde \P )$. 
  Arguing  
as in Proposition 21 of \cite{DFPR} we obtain that 
\begin{gather*}
 \begin{array}{l}
Y_t = e^{t \A }x +\int_{0}^{t}(- \A)^{1/2}e^{(  t-s)  \A}F_0  (
Y_{s})ds
 + \int_{0}^{t} e^{(  t-s)  \A} B  (
Y_{s})ds 
+
\int_{0}^{t}e^{(  t-s)  \A}d \tilde \W_{s}, \;\; t \in [0,T],
 \end{array}
\end{gather*}
 $\P$-a.s.. 
 Thus $Y$ a mild solution on $[0,T]$ to \eqref{cheef} defined on $(
\Omega, {\mathcal F},$ $
 ({\mathcal F}_{t}), \tilde \P )$. 
 
 Since $T>0$ is arbitrary, using   a standard procedure based on the Kolmogorov extension theorem, one can prove the existence of a weak mild solution $X$ to \eqref{cheef} on  $[0, \infty)$. On this respect, we refer to  Remark 3.7, page 303, in \cite{KS} (cf. the beginning of Section 4).

 \vskip 2mm 
 \noindent {\bf ii)} We use Theorem \ref{extension} with $H = \H$, $A = \A$ and $W = \W$. 
 Indeed,  \eqref{cheef} can be rewritten as
 \begin{gather*}
 \label{chee}
 \begin{array}{l}
X_t = e^{t \A }x +\int_{0}^{t}(- \A)^{1/2}e^{(  t-s)  \A}F  (
X_{s})ds
 +
\int_{0}^{t}e^{(  t-s)  \A}d \W_{s}, \;\;\; t \ge 0,
\end{array}
\end{gather*}
 where
 $
 F(x) = F_0(x) + (-\A )^{-1/2} B(x),$ $x \in \H
 $. The function $F : \H \to \H$ is locally $\theta$-H\"older continuous  (cf. \eqref{f0} and \eqref{nonl}). 
\end{proof}

\begin{remark} \label{che} {\rm 
Assertion (ii) in Proposition \ref{well1} cannot be deduced directly 
from  the Girsanov theorem
as in Appendix A.1 of \cite{DFPR}. To this purpose,
 one should prove that 
 $\E \Big [ e^{\frac{1}{2} \int_0^T  |B(X_s)  |_{{\mathcal H}}^2 ds}\Big]$ $  < \infty, $
for any weak mild solution $X$  to \eqref{cheef} starting at  $x \in \H$.  A sufficient condition would be   $\E \Big [ e^{\frac{1}{2} \int_0^T  |X_s  |_{{\mathcal H}}^2 ds}\Big] < \infty$. 
  It seems that such estimate does not hold under our assumptions. 
Note that since the nonlinearity of the Burgers equation grows quadratically 
 one cannot follow the proof of Proposition 22  of \cite{DFPR} to derive 
  $\E \Big [ e^{\frac{1}{2} \int_0^T  |X_s  |_{{\mathcal H}}^2 ds}\Big] < \infty$.
 }
  \end{remark}

 \noindent \textbf{Acknowledgement.}
The author would like to thank D. Bignamini and S. Fornaro  for pointing
out an error in the proof of    Lemma 6 in  \cite{priolaAOP}.

 \def\ciao{
 Probability theory is concerned with random events or
more generally with random variables. Initially
random variables were treated in an intuitive way as
{\it variables which take values with some
probabilities}. They were characterised by their
distribution functions. If $X$ is a random variable then
its distribution function $F$ is defined by the
formula:
$$
F(x) = \P (X\leq x),\qquad x\in \R^1.
$$
Thus $F(x)$ is the probability that $X$ takes values
not greater than $x$. Random functions or, as we say
today, stochastic processes, were defined as families of
random variables $X(t)$, $t\in [0,T]$. They were
characterised by multivariate distribution functions
\begin{equation}\label{findimdistrproc}
F_{t_1,\ldots,t_n} (x_1,\ldots,x_n)
= \P (X(t_1)\leq x_1,\ldots, X(t_n)\leq x_n)
\end{equation}
defined for all $0\leq t_1<t_2<\ldots <t_n\leq T$ and
$x_1,\ldots,x_n\in \R^1$. Mathematical treatment of
stochastic processes was rather  involved and the need to
put the theory on firm mathematical foundations became
apparent. This was done in 1933 by A. Kolmogorov \cite{bib5}.
Probability theory and the theory of stochastic
processes became at that time a part of measure theory
and thus a solid part of mathematics.
\vspace{3mm}

\noindent To see a
need for a formalisation   consider a model
of the Brownian motion proposed  by A. Einstein \cite{einstein}
in 1905. Let $X(t)$ be the projection of the position
of a particle, suspended in a liquid,  onto the first coordinate axis.
It is a random variable in its intuitive sense and
the behaviour of $X(t)$, $t\geq 0$, is very chaotic.
The following postulates seem to be natural:

\begin{enumerate}
  \item[1)] The process $X(t)$, $t\geq 0$,
   has independent increments in the sense that for any sequence $0\leq t_0<t_1<\ldots<t_n$, $n=1,2,\ldots$
   and any numbers $y_1,\ldots,y_n\in\R^1$:
   $$
   \P( X(t_i)-X(t_{i-1})\leq y_i,\; i=1,\ldots,n)
   = \prod_{i=1}^n  \P( X(t_i)-X(t_{i-1})\leq y_i),
   $$
  \item[2)]The trajectories  $X(t)$, $t\geq 0$, should be continuous.
\end{enumerate}
\noindent In fact the postulates  are satisfied by many models and are not
contradictory. If, however, we add the postulate:
\begin{enumerate}
  \item[3)]  The trajectories are differentiable.
\end{enumerate}
then conditions 1)-3) are contradictory. More precisely, only a deterministic, constant speed movement satisfies all the
conditions.  This means
that we cannot construct a realistic and non-contradictory model fulfilling 1)-3). It is
therefore of great importance to deal with existence theorems
and the notes are concerned with the results of that sort.
\vspace{3mm}

\section{Existence questions}

\noindent Let us comment a bit more on the existence problem. Stochastic processes can be defined and characterised
in various
ways, for instance by some postulates like 1)-3) or
by specifying  their finite dimensional distributions
(\ref{findimdistrproc}).
There are several  ways of constructing stochastic processes.
One, very general approach, introduced by A. Kolmogorov \cite{bib5} in 1933,
assumes that finite dimensional distribution functions
$F_{t_1,t_2,\ldots,t_n}$ are given and satisfy some natural
consistency conditions. The approach, called sometimes {\it Kolomogorov's approach},   consists in building a
probability measure $\P$ on the space $\Omega= \R^{[0,T]}$ of all
real functions defined on the time interval $[0,T]$. The measure
$\P$ should be such that for the so called canonical process $X$:
$$
X(t,\omega)=\omega (t), \qquad \omega\in\Omega,\; t\in [0,T],
$$
all the identities (\ref{findimdistrproc})
are true. For the probability that $ X(t_1)\leq x_1,
\ldots, X(t_n)\leq x_n$ we can  now write a correct mathematical
expression:
$$
\P(\{\omega\; :\; X(t_1,\omega)\leq x_1, \ldots
X(t_n,\omega)\leq x_n\}).
$$
\vspace{3mm}

\noindent A variation of Kolmogorov's method is due to Yu. V. Prohorov \cite{prohorov} and is based on the concept of
weak convergence of
measures on metric spaces. The laws of the processes are obtained as weak limits of the laws of directly given simple
processes, see \cite{billingsley1}. The so called martingale method of solving stochastic equations, due to D. Stroock
and S. R. R. Varadhan \cite{stroock} is of the same category.
\vspace{3mm}

\noindent A completely different method is attributed to N. Wiener. It is less general and starts from a specific countable
or uncountable family of  random
variables and builds the required process from the elements of the family in a
constructive manner. For the first time this approach was applied in a joint paper of N. Wiener
and R. E. Paley \cite{bib9}
to  construct a Brownian motion
process. As  the point of
departure one takes  a sequence of independent, normally distributed random
variables $\xi_n$, $n=1,2,\ldots$, defined on $(\Omega,
{\cal F}, \P)$, with
$$
\P(\xi_n\leq x)= \frac{1}{\sqrt{2\pi}} \int_{-\infty}^x
e^{-\frac{y^2}{2}}dy,\qquad x\in\R^1,\;
n=1,2,\ldots.
$$
It turns out that  a process $W$ defined by the infinite series
$$
W(t, \omega)=\sum_{n=1}^{+\infty} \xi_n(\omega)\, e_n(t),\qquad t\in [0,T],
$$
with properly chosen deterministic functions $e_n$, has all the
postulated properties.
\vspace{3mm}

\noindent A powerful version of the method was proposed by K. Ito \cite{ito}.
Processes are constructed as solutions to stochastic
equations which can be solved by iterative methods. To formulate the equations and investigate their properties
K. Ito developed stochastic calculus.

\section{Some history}

\noindent In the language of modern probability, a stochastic process is a
family of random variables $(X(t), t\in {\bf T})$, where ${\bf
T}\subset [0, +\infty)$, defined on a probability space $(\Omega,
{\cal F}, \P)$. Stochastic processes were invented to
articulate and  study random phenomena which evolve in time.
First processes were introduced at the beginning of the twentieth
century. In his 1900 doctoral dissertation L. Bachelier \cite{bib1} regarded
the movement of prices on stock exchange as stochastic processes
and introduced, in an intuitive way, the so called Wiener
process. Several years later in 1909 A. Erlang \cite{bib2} was analysing the
work of telephone exchanges, most modern devices of the time. He
realised that the number of busy lines has a random character and
depends on time in a capricious way. To solve the practical question
of how many operators should work in an exchange he introduced a
special case of  what is now known  as a Markov process with a
finite number of states. In the same year F. Lundberg \cite{bib3} published  a
paper in which he investigated the time evolution of the  capital of
an insurance company and used, for this purpose, compound Poisson
processes. A Wiener process was constructed as a mathematical object in 1923
by N. Wiener \cite{bib7}  and the theory of stochastic processes, as a
part of mathematics, was created by A. Kolmogorov in his 1933 paper on
the axiomatics of probability theory \cite{bib5}. The first monograph on the
subject was published in 1952 by J. Doob \cite{bib6} . Since then the theory of
stochastic processes has been constantly growing and has become the most
important part of  probability theory and one of the most
influential tools of  applied mathematics, see  \cite{bib26}, \cite{bibVII}.
\vspace{3mm}

\noindent The mathematical
theory of stochastic processes became possible due to a rapid
development of  measure theory in the first quarter of the
twentieth century. We will list only the most important events. The
concept of $\sigma$-field was introduced in 1898 by E. Borel \cite{bibI}. Lebesgue's book on integration
\cite{bibII}
 appeared in 1902. Integration on abstract sets
was initiated by M. Fr\'echet \cite{bibIII} in 1915. The fundamental extension theorem by Carath\'eodory \cite{bibIV}
was published in 1918. The general version of the so called Radon-Nikodym theorem was proved by O. Nikodym \cite{bibV} in
 1930. The  first monograph on the general measure theory was written by S. Saks \cite{bibVI} (Polish edition in 1930
 and  English edition in 1937).
\vspace{3mm}

\chapter{Measure theoretic preliminaries}
We collect here  basic definitions and results from measure theoretic foundations of the probability
theory. Special attention is paid to characteristic functions.

\section{Measurable spaces.}

Let $\Omega$ be a set. A collection $\calf$ of subsets
of $\Omega$ is said to be a {\em $\sigma$-field} ({\em
$\sigma$-algebra}) if
\begin{enumerate}
  \item[1)] $\Omega\in\calf$.

  \item[2)] If $A\in\calf$, then $A^c$, the complement
   of $A$, belongs to $\calf$.

  \item[3)] If $A_n\in\calf$, $n=1,2,\ldots$, then
  $\bigcup_{n=1}^\infty A_n\in\calf$.
\end{enumerate}
The pair $(\Omega ,\calf )$ is called a {\em measurable
space}. Let $(\Omega ,\calf )$, $(E ,\cale )$ be two
measurable spaces. A mapping $X$ from $\Omega$ into $E$
such that for all $A\in \cale$ the set $\{\omega :
X(\omega)\in A\}$ belongs to $\calf$ is called a {\em
random variable}, or $E$-valued random variable or a
measurable mapping.

Let $\calm$ be a collection of subsets of $\Omega$. The
smallest $\sigma$-field on $\Omega$, which contains
$\calm$, is denoted by $\sigma (\calm)$ and called the
{\em $\sigma$-field generated by $\calm$}. It is the
intersection of all $\sigma$-fields on $\Omega$
containing $\calm$. Analogously, let $(X_i)_{i\in I}$
be a family of mappings from $\Omega$ into $(E,\cale)$
then the smallest $\sigma$-field on $\Omega$ with
respect to which all functions $X_i$ are measurable is
called the {\em $\sigma$-fields generated by
$(X_i)_{i\in I}$} and is denoted by $\sigma (X_i:i\in
I)$. Given measurable spaces
 $(E_1,\cale_1), (E_2,\cale_2),\ldots ,(E_k,\cale_k)$,
the $\sigma$-field
$\cale_1\times\cale_2\times\ldots\times
\cale_k$ is the smallest $\sigma$-field of subsets of
 $E_1\times
E_2\times\ldots\times E_k$ which contains all sets of
the form  $A_1\times A_2\times\ldots\times A_k$ where
$A_i\in \cale_i$, $i=1,2,\ldots,k$.

Let $(E,\rho)$ be a metric space with metric $\rho$.
The $\sigma$-field on $E$ generated by closed subsets
of $E$ is called {\em Borel} and and is denoted by
$\calb (E)$. Metric spaces will be treated as
measurable spaces with the $\sigma$-field of Borel
sets. In particular random variables taking values in
$\R^1$ will be called real random variables.

\section{Dynkin's $\pi$-$\lambda$ theorem.}

Many arguments in the theory of stochastic processes
depend on Dynkin's $\pi -\lambda$ theorem \cite{Dy}. A collection
$\call$ of subsets of $\Omega$ is said to be a
$\pi$-system if it is closed under the formation of
finite intersections: if $A,B\in\call$ then $A\cap
B\in\call$. A collection $\calm$ of subsets of $\Omega$
is a $\lambda$-system if it contains $\Omega$, is
closed under the formation of complements and of finite
and countable disjoint unions:
\begin{enumerate}
  \item[1)] $\Omega\in\calm$.

  \item[2)] If $A\in\calm$ then $A^c\in\calm$.

  \item[3)] If $A_i\in\calm$ and $A_i\cap A_j=\emptyset$
  for $i\ne j$, $i,j=1,2,\ldots$ then
  $\bigcup_{i=1}^{\infty}A_i \in \calm$.
\end{enumerate}

\begin{Theorem}
  If a $\lambda$-system $\calm$ contains a $\pi$-system
  $\call$, then $\calm\supset\sigma(\call)$.
\end{Theorem}

\noindent {\bf Proof.}
Denote by $\calk$ the smallest $\lambda$-system
containing $\call$, equal to the intersection of all
$\lambda$-systems containing $\call$. Then
$\calk\subset\sigma(\call)$. To prove the opposite
inclusion we show first that $\calk$ is a $\pi$-system.
Let $A\in\calk$ and define $\calk_A=\{B\, :\, B\in\calk
{\rm \; and\;} A\cap B\in\calk\}$. It is easy to check
that $\calk_A$ is closed under the formation of
complements and countable disjoint unions and if
$A\in\call$ then $\calk_A\supset \call$. Thus for
$A\in\call$, $\calk_A=\calk$ and we have shown that if
$A\in\call$ and $B\in\calk$ then $A\cap B\in\calk$. But
this implies that $\calk_B\supset \call$ and,
consequently, $\calk_B=\calk$ for any $B\in\calk$. It
is now an easy exercise to show that if a $\pi$-system
is closed under the formation of complements and
countable disjoint unions then it is a $\sigma$-field.
This completes the proof. \qed

If $E$ is a metric space then the family of all open
(closed) subsets of $E$ is a $\pi$-system. If $E=\R^d$
then the sets $\{y\in\R^d\, : \, y\leq x\}$,
$x\in\R^d$, form a $\pi$-system.

If $(E,\cale)$ is a measurable space and $\mu $ a
nonnegative function  on $\cale$ such that
\begin{enumerate}
  \item[1)] $\mu (A)\in [0, \infty ]$ for all $A\in \cale$,

  \item[2)] $\mu(\emptyset) = 0 $,

  \item[3)] If $A_n\in\calf$, $A_n\cap A_m=\emptyset$
  for $n\ne m$, then
  $$
  \mu (\bigcup_nA_n)=\sum_{n=1}^\infty \mu (A_n).
  $$

\end{enumerate}
then $\mu$ is called a nonnegative measure or shortly a measure.
If $\mu (E) =1$ then the measure is called a {\em probability measure} or
shortly {\em probability}.

The following Proposition will be frequently used.

\begin{Proposition}
  If two probability measures $\mu,\nu$ on $(E,\cale )$
  are identical on a $\pi$-system $\call$ then they
  are identical on $\sigma(\call)$.
\end{Proposition}

\noindent {\bf Proof.}
Let $\calm$ be the collection of all $A\in\cale$ such
that
$$
\mu(A)=\nu(A).
$$
It is clear that $\calm$ is a $\lambda$-system and
therefore $\calm\supset\sigma(\call)$. \qed

\section{Independence}
We recall also the notion of independence playing an essential
role in the theory of stochastic processes.
Let $(\Omega,\calf,\P)$ be a probability space and let
$(\calf_i)_{i\in I}$ be a family of
sub-$\sigma$-fields. These $\sigma$-fields are said to
be independent if, for every finite subset $J\subset I$
and every family $(A_i)_{i\in J}$ such that
$A_i\in\calf_i$, $i\in J$,
\begin{equation}\label{indep}
  \P\left( \bigcap_{i\in J}A_i\right) =
  \prod_{i\in J} \P\left(A_i\right).
\end{equation}
Random variables $(X_i)_{i\in I}$ are independent if
the $\sigma$-fields $(\sigma(X_i))_{i\in I}$ are
independent.

\begin{Proposition}
  If $\call_i$ are $\pi$-systems on $\Omega$ and
  $\calf_i=\sigma(\call_i)$, $i\in I$, then the
  $\sigma$-fields $(\calf_i)_{i\in I}$ are
  independent if for every finite set $J\subset I$
  and sets $A_i\in \call_i$,  $i\in I$,
$$
 \P\left( \bigcap_{i\in J}A_i\right) =
  \prod_{i\in J} \P\left(A_i\right).
  $$
\end{Proposition}

\noindent {\bf Proof.}
Assume, without loss of generality, that
$I=J=\{1,\ldots,n\}$. Let us fix the sets
$A_i\in\call_i$, $i=2,\ldots,n$, and denote by
$\calm_1$ the family of all sets $A_1\in\calf_1$ for
which
\begin{equation}\label{indep1}
   \P\left( \bigcap_{k=1}^nA_k\right) =
  \prod_{k=1}^n \P\left(A_k\right).
\end{equation}
The family contains $\call_1$ and is a
$\lambda$-system, therefore $\calm_1=\calf_1$.
Analogously, fix $A_1\in\calf_1$ and $A_i\in\call_i$,
$i=3,\ldots,n$, and denote by $\calm_2$ the family of
$A_2\in\calf_2$ for which (\ref{indep1}) holds. Then
$\calm_2=\sigma(\call_2)=\calf_2$. Easy induction shows
that (\ref{indep1}) holds for all $A_i\in\calf_i$,
$i=1,2,\ldots,n$. \qed

\begin{Corollary}
  Let $X_1,X_2,\ldots $ be a sequence of real-valued
  random variables. The random variables $(X_i)_{i\in\N}$
  are independent iff for arbitrary $n=1,2,\ldots$ and
  arbitrary real numbers $x_1,\ldots,x_n$:

\begin{equation}\label{indep2}
  \P(X_1\leq x_1,\ldots,X_n\leq x_n)
  =\prod_{i=1}^n\P(X_i\leq x_i).
\end{equation}

\end{Corollary}

\noindent {\bf Proof.} The $\sigma$-fields
$\sigma(X_i)$ are generated by the events $\{X_i\leq
x_i\}$ forming  $\pi$-systems. \qed

\begin{Remark}\begin{em}
  Note that according to the definition $(X_i)$ are
  independent if and only if for arbitrary $n$ and arbitrary
  Borel subsets $\Gamma_1,\ldots,\Gamma_n$ of $\R^1$:
$$
 \P(X_1\in\Gamma_1,\ldots,X_n\in\Gamma_n)
  =\prod_{i=1}^n\P(X_i\in\Gamma_i).
  $$
  So the condition (\ref{indep2}) is a real
  simplification of (\ref{indep}).
\end{em}
\end{Remark}

\section{Expectations and conditioning}

Since $\P$ is a measure on $(\Omega, \calf)$, the Lebesgue
integral,
\begin{equation}\label{defintegrale}
  \int_\Omega X(\omega)\; \P (d\omega),
\end{equation}
of a measurable, non-negative function $X$ is well defined. It is defined also for those functions $X$ for which
either negative or positive parts has a finite integral. If
$$
  \int_\Omega |X(\omega)|\; \P (d\omega)<\infty.
$$
then $X$ is called integrable.
In probability theory,
the Lebesgue integral (\ref{defintegrale}) is called
expected value, or mean value or expectation of $X$ and denoted as
$$
\E\, X.
$$
The law $\mu$ of a random variable $X$ taking values in a measurable space $(E, \cal E)$ is a probability measure on
$(E, \cal E)$ such that
$$
\mu (\Gamma) = \P (\{\omega\,; X(\omega)\in \Gamma\}),\,\,\,\Gamma \in \cal E.
$$
If $\mu$ is the law of a random variable $X$
then for arbitrary Borel, non-negative, or bounded,  function $\phi:E\to\R^1$:
\begin{equation}\label{meanandlaw}
\E(\phi(X))=\int_E\phi(x)\, \mu(dx).
\end{equation}
This identity is true for indicator functions
just by the definition of $\mu$.
Therefore it holds for all measurable functions $\phi$
taking only a finite number of values. But then it
holds for all nonnegative $\phi$ because they are
limits of increasing sequences of functions taking a
finite number of values. Decomposing arbitrary
measurable $\phi$ into its positive and negative parts
one shows that (\ref{meanandlaw}) is true in general.
Identity  (\ref{meanandlaw}) will be frequently used in
the future.

Assume that $\calg$ is a $\sigma$-field contained in $\calf$ and
$X$ is a non-negative or integrable random variable then the
conditional expectation
$$
\E\, (X|\calg)
$$
is defined as  $\calg$-measurable random variable $Z$ such that for each
$A\in\calg$,
$$
\E\, (X\, 1_A) =\E\, (Z\, 1_A).
$$
We take for granted its elementary properties.

\section{Gaussian measures}

A Gaussian measure $\mu$ on $\R^1$ is a probability measure
either concentrated at one point, $\mu=\delta_m$, or having a
density
$$
\frac{1}{\sqrt{2\pi q}}\, e^{\frac{1}{2q}(x-m)^2},
\qquad x\in \R^1,
$$
where $m\in\R^1$, $q>0$.

If $E$ is a Banach space then a probability measure $\mu$ on
$(E,\, \calb(E))$ is said to be Gaussian if and only if the law of an
arbitrary linear functional $h\in E^*$ considered as a random
variable on $(E,\, \calb(E),\, \mu)$ is a Gaussian measure on
$(\R^1,\, \calb(\R^1))$.

In the case when $E=\R^n$, the Gaussian measures are characterized
uniquely by the mean vector $m=(m_1,\ldots ,m_n)$ and the
covariance matrix $Q=(q_{jk})$, and denoted by $N(m,Q)$.
If $m=0$, one writes $N_Q$. Thus
$$
\int_{\R^n} x_j\; N(m,Q)(dx)=m_j, \quad
\int_{\R^n} (x_j-m_j)(x_k-m_k)\; N(m,Q)(dx)=q_{jk},
\qquad j,k=1,\ldots ,n.
$$
If $Q$ is positive definite   $N(m,Q)$ has a density $g_{m,Q}$:
$$
g_{m,Q}= \frac{1}{\sqrt{
(2\pi)^n\det Q }}
\,
e^{-\frac{1}{2} \<Q^{-1}(x-m),(x-m)\>},\qquad x\in\R^n.
$$

A random vector $X$ is Gaussian if its law $\call(X)$ is a
Gaussian measure. If $X=(X_1,\ldots, X_n)$ is Gaussian with values
in $\R^n$, then the random variables $X_i$ are independent if and
only if the matrix $Q$ is diagonal.

\section{Convergence of measures}

A sequence $(\mu_n)$ of probability measures on a
  separable metric space $E$, with metric $\rho$,
   is said to be
{\em weakly convergent} to a measure $\mu$ if for
arbitrary  continuous and bounded $\phi \,
:\, E\to\R^1$:
\begin{equation}\label{weakly1}
\int_E \phi (x)\, \mu_n(dx)\to \int_E \phi (x)\, \mu(dx)
\qquad {\rm as}\; n\to\infty.
\end{equation}
If $(\mu_n)$    converges weakly to  $\mu$    then one
writes $\mu_n\Rightarrow\mu$.
The space $C_b(E)$ of
all bounded and continuous functions of $E$ equipped
with the supremum norm will be denoted by $C_b(E)$.

\begin{Proposition}For a given sequence $(\mu_n)$ there exists
at most one weak limit.
\end{Proposition}
\noindent{\bf Proof.}
It is enough to show that if for two measures $\mu$,
$\nu$:
\begin{equation}\label{weakly2}
 \int_E \phi (x)\, \mu(dx)= \int_E \phi (x)\, \nu(dx)
\end{equation}
for all $\phi\in C_b(E)$ then $\mu=\nu$. Let $A$ be a
closed set and $\phi_k$, $k=1,2,\ldots$, the following
continuous and bounded functions:
$$
\phi_k(x)=\frac{1}{1+k\rho(x,A)},\qquad x\in E,
$$
where $\rho(x,A)=\inf \{\rho(x,y)\, ; \, y\in A\}$.
Then $\phi_k\downarrow\chi_A$ as $k\to\infty$. Since
(\ref{weakly2}) holds for all $\phi_k$ therefore it
holds for the limit. Consequently $\mu(A)=\nu(A)$ for
all closed sets. By Dynkin's $\pi-\lambda$ theorem
$\mu=\nu$. \qed

\begin{Example}\begin{em}
A centred Gaussian measure $\caln_\sigma$ on $\R^d$,
$\sigma >0$, with the covariance matrix $\sigma I$ has
the density
$$
n_\sigma (x)=\frac{1}{\sqrt{(2\pi \sigma)^d}}\,
e^{-\frac{|x|^2} {2\sigma}},\qquad x\in\R^d.
$$
If $\sigma\to 0$, then
$\caln_\sigma\Rightarrow\delta_{\{0\}}$, where
$\delta_{\{0\}}$ is the probability measure
concentrated in $0$.
\end{em}
\end{Example}

The following result is classical attributed to Helly.
It will be proved in Chapter \ref{capprohorov} as a consequence of more general
results.

\begin{Proposition}\label{helly}
  Assume that $(\mu_n)$ is a sequence of probability measures
  on $\R^d$ and that for arbitrary $\epsilon >0$ there exists
  $R>0$ such that
  $$
  \mu_n(B(0,R))\geq 1-\epsilon,
  \qquad n=1,2,\dots.
  $$
Then $(\mu_n)$ contains a subsequence $(\mu_{n_k})$ which
converges weakly to a probability measure $\mu$.
\end{Proposition}

\section{Characteristic functions}

The characteristic function of a
measure $\mu$ on $\R^d$ is defined as follows:
$$
\phi_\mu(\lambda)=\int_E
e^{i\langle\lambda,x\rangle} \mu(dx),\qquad
\lambda\in\R^d.
$$
Characteristic functions are continuous functions, in general, with
complex values. They are denoted also as $\widehat{\mu}$.

\begin{Example}\begin{em}
The characteristic function of  $\caln_\sigma$ is as
follows:
$$
\phi_{\caln_\sigma}(\lambda)=
e^{-\frac {\sigma}{2}|\lambda|^2},\qquad
\lambda\in\R^d.
$$
Assume that $\sigma = 1$  and  $\phi(\lambda )=
\frac{1}{\sqrt{2\pi}}\int_{-\infty}^{+\infty}e^{i\lambda
x}e^{-\frac{x^2}{2}}dx$, $\lambda \in \R^1$. Then
$$\begin{array}{l}\ds
\phi'(\lambda )=
\frac{1}{\sqrt{2\pi}}i\int_{-\infty}^{+\infty}e^{i\lambda
x}xe^{-\frac{x^2}{2}}dx
=
-\frac{i}{\sqrt{2\pi}}\int_{-\infty}^{+\infty}e^{i\lambda
x}\left(e^{-\frac{x^2}{2}}\right)'dx
\eds\\\ds
=
\frac{i^2}{\sqrt{2\pi}}\lambda\int_{-\infty}^{+\infty}e^{i\lambda
x} e^{-\frac{x^2}{2}} dx
=-\lambda \phi(\lambda ).
\eds\end{array}
$$
Therefore $\phi(\lambda )=e^{-\frac{\lambda^2}{2}}$,
$\lambda\in\R^1$.

More generally, the Gaussian distribution
${\cal N}(m,Q)$  on $\R^{d}$ with
 covariance
matrix $Q$  and mean vector $m$  has a characteristic function
$$
e^{i\langle m,\lambda\rangle
-\frac{1}{2}\langle Q\lambda,\lambda\rangle},
\qquad \lambda\in\R^d.
$$

\noindent {\bf Proof.}
For the proof assume that $m=0$ and $Q$ is invertible.
Introduce the new basis given by
the eigenvectors of $Q$. In the
new basis:
$$
\langle Q\lambda,\lambda\rangle =
\sum_{k=1}^d  q_k\lambda_k^2,\qquad
\det \, Q=q_1\ldots q_d,\qquad
\langle Q^{-1}x,x\rangle =
\sum_{k=1}^d  \frac{1}{q_k}x_k^2
$$
and therefore
$$\begin{array}{l}\ds
\frac{1}{\sqrt{(2\pi)^d\det \, Q}}
\int_{\R^d}
e^{i\lambda x}
e^{-\frac{1}{2}\langle Q^{-1}x,x\rangle}
dx
\eds\\\ds
=
\frac{1}{\sqrt{(2\pi)^d\prod_{k=1}^dq_k}}
\int_{\R^d}
e^{i\sum_{k=1}^dx_k\lambda_k}
e^{-\frac{1}{2}\sum_{k=1}^d\frac{1}{q_k}x_k^2}
dx_1\ldots dx_k
\eds\\\ds
=
e^{-\frac{1}{2}\sum_{k=1}^dq_k\lambda_k^2}
=e^{-\frac{1}{2}\langle Q\lambda,\lambda\rangle }. \qed
\eds\end{array}
$$

\end{em}
\end{Example}

\begin{Proposition}
Different probability measures define different
characteristic functions.
\end{Proposition}

\noindent{\bf Proof.}
If $\mu$ and $\nu$ are two measures and $\gamma\in\R^d$
then
$$
e^{-i\langle \gamma,\lambda\rangle} \phi_\mu(\lambda)=
\int_{\R^d}e^{i\langle \lambda, x-\gamma\rangle}
\mu(dx).
$$
Integrating both sides with respect to $\nu$ we have
$$
\int_{\R^d}e^{-i\langle \gamma, \lambda\rangle}
\phi_\mu(\lambda)\nu(d\lambda)=
\int_{\R^d}\phi_\nu( x-\gamma)
\mu(dx),\qquad \gamma\in\R^d.
$$
In particular if $\nu=\caln_\sigma$ one gets:
$$
\frac{1}{\sqrt{(2\pi \sigma)^d}}\,
\int_{\R^d}e^{-i\langle \gamma, \lambda\rangle}
\phi_\mu(\lambda)e^{-\frac{|\lambda|^2} {2\sigma}}
d\lambda =
 \int_{\R^d}e^{-\frac{\sigma}{2}|x-\gamma|^2}
 \mu(dx),\qquad \gamma\in\R^d.
$$
$$
\frac{1}{ (2\pi  )^d}\,
\int_{\R^d}e^{-i\langle \gamma, \lambda\rangle}
\phi_\mu(\lambda)e^{-\frac{|\lambda|^2} {2\sigma}}
d\lambda =
 \int_{\R^d}n_{1/\sigma}(x-\gamma)
 \mu(dx),\qquad \gamma\in\R^d.
$$
The right-hand side is the density of the convolution
$\caln_{1/\sigma}*\mu$ which converges weakly to $\mu$
as $\sigma \to \infty$. Since the left-hand side
depends only on $\mu$ the result follows. \qed

\begin{Theorem}
A sequence $(\mu_n)$ of probability measures
on $\R^d$ converges weakly to a probability
measure $\mu$ if and only if $\phi_{\mu_n}\to \phi_\mu$
pointwise.
\end{Theorem}

\noindent {\bf Proof.}
It is clear that if $\mu_n\Rightarrow\mu$ then
$\phi_{\mu_n}\to \phi_\mu$
pointwise. For the opposite implication we need
the following lemma.

\begin{Lemma}
Assume that in a neighbourhood of $0$ a sequence
$(\phi_{\mu_n})$  of characteristic functions of measures
 $(\mu_n)$  converges  to a function $\phi$ continuous at $0$.
Then for arbitrary $\epsilon >0$ there exists $R>0$ such that
$\mu_n(B(0,R))\geq 1-\epsilon$ for $n=1,2,\ldots$.
\end{Lemma}

\noindent {\bf Proof.}
It is enough to show that the result is true  if $d=1$. If $d>1$ one considers
projections of the measures $(\mu_n)$ on each of the coordinate axes. The
sequences
of projections are all tight by the one dimensional result and therefore
the sequence
$(\mu_n)$ is tight as well.

Let $\mu$ be a probability measure on $\R^1$ and $r$ a positive
number. Then
$$\begin{array}{l}\ds
\frac{1}{2r}
\int_{-r}^r (1-\phi_\mu(\lambda))\, d\lambda
=
\frac{1}{2r}
\int_{-r}^r
\left[
\int_{-\infty}^{+\infty}
 (1-e^{i\lambda x})\, \mu (dx)
\right]\,
d\lambda
\eds\\\ds
=
\int_{-\infty}^{+\infty}
\frac{1}{2r}
\left[
\int_{-r}^r
 (1-\cos \lambda x)\,  d\lambda
\right]\,  \mu (dx)
\geq
\int_{|x|\geq \frac{2}{r}}
\left( 1-\frac{\sin xr}{xr}\right) \,  \mu (dx)
\eds\\\ds
\geq
\int_{|x|\geq \frac{2}{r}}
\left( 1-\frac{1}{ r|x|}\right) \,  \mu (dx)
\geq
\frac{1}{2}\mu \left(x\, ;\, |x|\geq \frac{2}{r}\right).
\eds\end{array}
$$
Assume that $\phi_{\mu_n}$ converges to
$\phi$ on an interval larger than $[-r,r]$.
 Since
$$
\frac{1}{2r}
\int_{-r}^r (1-\phi_{\mu_n}(\lambda))\, d\lambda
\geq
\frac{1}{2}\mu_n \left(x\, ;\, |x|\geq \frac{2}{r}\right)
$$
and  $\lim_{\lambda \to 0}\phi (\lambda)=1$,
$\lim_n \phi_{\mu_n}(\lambda) =\phi (\lambda)$,
$\lambda\in [-r,r]$, therefore for $\epsilon >0$ there exist $n_0$,
$r_0$ such that for $n\geq n_0$ and $0<r<r_0$
$$
\frac{1}{2r}
\int_{-r}^r (1-\phi_{\mu_n}(\lambda))\, d\lambda
<\frac{\epsilon }{2}
$$
and consequently for $n\geq n_0$,
$\mu_n \left(x\, ;\, |x|\geq \frac{2}{r}\right)\leq
\epsilon $. Since an arbitrary finite sequence of
measures on $\R^d$ is tight therefore the result follows.
\qed

By Helly's theorem the sequence $(\mu_n)$ contains
a weakly convergent subsequence.
 Since
all convergent subsequences converge to measures with the same
characteristic functions the sequence is weakly convergent.
\qed

\begin{Proposition}
If $\mu $ is a finite measure on $\R^d$ then
$$
\widehat{\mu}(\lambda) = \int_{\R^d}
e^{i\langle \lambda ,x\rangle}\mu(dx),
\qquad \lambda\in\R^d,
$$
is a uniformly continuous function.
\end{Proposition}

\noindent {\bf Proof.}
One can assume that $\mu$ is a probability measure. If
this is the case then
$$
  \begin{array}{lll}
    | \widehat{\mu}(\lambda) - \widehat{\mu}(\eta)|& =
    & \ds
    \left|  \int_{\R^d}
e^{i\langle \lambda ,x\rangle}\left( 1- e^{i\langle
\eta -\lambda ,x\rangle}\right)
\mu(dx)\right|
\eds\\
      & \leq &\ds
       \int_{\R^d} \left| 1- e^{i\langle
\eta -\lambda ,x\rangle}\right|\mu(dx)
\eds\\
&\leq &\ds
\int_{|x|\leq R} \left| 1- e^{i\langle
\eta -\lambda ,x\rangle}\right|\mu(dx)
+2\mu\{ x\, :\, |x|>R\}
\eds
  \end{array}
$$
For $\epsilon >0$ there exists $R>0$ such that $2\mu\{
x\, :\, |x|>R\}<\frac{\epsilon}{2}$. There exists also
$\delta >0$ such that if $|\eta -\lambda |<\delta$ then
for all $x$, $|x|\leq R$, $\ds\left| 1- e^{i\langle
\eta -\lambda ,x\rangle}\right| \leq
\frac{\epsilon}{2\mu\{ x\, :\, |x|\leq R\}}\eds$.
Consequently, if $|\eta -\lambda |<\delta$ then
$$
| \widehat{\mu}(\lambda) - \widehat{\mu}(\eta)|
\leq \frac{\epsilon}{2}+
\frac{\epsilon}{2\mu\{ x\, :\, |x|\leq R\}}
2\mu\{ x\, :\, |x|\leq R\}< \epsilon .\qed
$$

\begin{Proposition}
If $\mu_n$, $\mu $ are probability measures and
$\mu_n\Rightarrow \mu $ then $\widehat{\mu}_n\to
\widehat{\mu}$ uniformly on bounded sets.
\end{Proposition}

\noindent {\bf Proof.}
It is enough to show that if $\lambda_n\to\lambda$ then
$\widehat{\mu}_n(\lambda_n)\to \widehat{\mu}(\lambda).$
Note that
$$
  \begin{array}{lll}
    | \widehat{\mu}_n(\lambda_n) - \widehat{\mu}(\lambda)|
& \leq  & | \widehat{\mu}_n(\lambda_n) -
\widehat{\mu}_n(\lambda)| +
| \widehat{\mu}_n(\lambda) - \widehat{\mu}(\lambda)|
 \\
    &\leq & \ds
    \int_{\R^d} \left|   e^{i\langle
 \lambda_n ,x\rangle}-  e^{i\langle
 \lambda ,x\rangle}\right|\mu_n(dx)
 +
| \widehat{\mu}_n(\lambda) - \widehat{\mu}(\lambda)|
\eds\\
&\leq & \ds 2 \int_{|x|>R}\mu_n(dx)+ \sup_{|x|\leq R}
 \left| 1-  e^{i\langle
 \lambda_n -\lambda ,x\rangle} \right|
 +
| \widehat{\mu}_n(\lambda) - \widehat{\mu}(\lambda)|
\eds\\
&\leq &I_1+I_2+I_3.
  \end{array}
$$
 The term
$I_3$ tends to zero if $n\to +\infty$. Since the family
$(\mu_n)$ is tight the first term $I_1$ can be made
small by taking $R$ sufficiently large, uniformly in
$n$. Since $\lambda_n\to\lambda$, also the second term
tends to zero if $n\to +\infty$. \qed

\chapter{Kolmogorov's existence theorem}
We prove here the Kolmogorov existence theorem and apply it to establish existence of Gaussian, Markov and L\'evy
processes.

\section{Carath\'eodory's and Ulam's theorems}

A collection $\cale_0$ of subsets of $ E$ is said
to be a {\em field} or {\em algebra} if
\begin{enumerate}
  \item[1)] $E\in \cale_0$.

  \item[2)] If $A\in \cale_0$, then $A^c\in\cale_0$.

  \item[3)] If $A_1,\ldots,A_n\in\cale_0$, then
  $\bigcup_{k=1}^nA_k\in\cale_0$.
\end{enumerate}

We take for granted the following result due to Carath\'eodory \cite{bibIV}.
\begin{Theorem} (Carath\'eodory)
If $\mu$ is a non-negative function on $\cale_0$ such that
\begin{enumerate}
  \item[1)] $\mu (E)< \infty$.
   $\mu (\bigcup_{i=1}^nA_i)
  =\sum_{i=1}^n \mu(A_i)$ if $A_i\cap A_j=\emptyset$,
  $i\ne j$ and $A_i\in\cale_0$.

  \item[2)] If $A_i\in\cale_0$, $A_i\cap A_j=\emptyset$
  for $i\ne j$ and $\bigcup_iA_i\in\cale_0$ then
  $$
  \mu (\bigcup_{i=1}^\infty A_i)=\sum_{i=1}^\infty \mu(A_i).
  $$

\end{enumerate}
Then there exists a unique extension of $\mu$ to a
finite measure on the $\sigma$-field generated by
$\cale_0$.
\end{Theorem}

\begin{Remark}\begin{em}
  If Condition 1) is satisfied $\mu$ is called
  an {\em additive set function}. Condition 2) is equivalent
  to the following {\em continuity condition},

\begin{enumerate}
  \item[2')]  If $A_i\in\cale_0$ and $A_i\supset A_{i+1}$ for
  $i=1,2,\ldots$ and $\bigcap_{i=1}^\infty A_i =\emptyset$ then
  $\mu (A_i)\downarrow 0$ as $i\uparrow \infty$.
\end{enumerate}
\end{em}
\end{Remark}

\noindent {\bf Proof of the Remark.}

2) $\Rightarrow$ 2'). Since $\bigcap_{i=1}^\infty A_i
=\emptyset$,
$A_1 = \bigcup_{i=1}^\infty (A_i \backslash A_{i+1})$
and $A_i \backslash A_{i+1}$, $i=1,2\ldots$, are
disjoint elements of $\cale_0$. Therefore
$$
\mu (A_1)= \sum_{i=1}^\infty [\mu (A_i)-\mu (A_{i+1})]
= \lim_n (\mu (A_1)-\mu (A_{n}))
$$
and $\lim_n  \mu (A_{n}) =0$.

2') $\Rightarrow$ 2). For all $i=1,2,\ldots$, the sets
$A_i'=\bigcup_{n=i}^\infty A_n$ belong to $\cale_0$ and
$\bigcap_iA_i'=\emptyset$. Therefore
$$
\mu \left(\bigcap_{n=1}^\infty A_n\right) =
\mu (A_1)+\ldots +\mu (A_i) +
\mu \left(\bigcap_{n=i+1}^\infty A_n\right)
 =
\mu (A_1)+\ldots +\mu (A_{i}) +
\mu  (A_{i+1}').
$$
Since $\mu (A_{i+1}') \to 0$ as $i\to\infty$, the
result follows. \qed

The following result is due to Ulam \cite{ulam}.
\begin{Theorem} (Ulam)
Assume that $\mu$ is a probability measure on a
separable, complete, metric space. Then for arbitrary
$\epsilon >0$ there exists a compact set $K\subset E$
such that
$$
\mu (K)\geq 1-\epsilon .
$$
\end{Theorem}

\noindent {\bf Proof.}
Let $(a_i)$ be a dense, countable sequence of elements
of $E$. Then, for arbitrary $k=1,2,\ldots$,
$$
\bigcup_{i=1}^\infty B(a_i,\frac{1}{k}) =E,
$$
where $B(a,r)=\{ y\in E\, :\, \rho (a,y)<r\}$, $a\in
E,r>0$. Therefore, for arbitrary $k$, there exists
$n_k\in\N$ such that
$$
\mu\left(\bigcup_{i=1}^{n_k}B(a_i,\frac{1}{k})\right) >
1-\frac{\epsilon}{2^k}.
$$
The set $L= \bigcap_k\bigcup_{i=1}^{n_k}
B(a_i,\frac{1}{k})$ is totally bounded and its closure
$K$ is compact. However
$$
  \begin{array}{l}
    \ds \mu (K)\geq 1-\mu \left(\bigcup_{k=1}^\infty
    \left(\bigcup_{i=1}^{n_k} B(a_i,\frac{1}{k})\right)^c
    \right)\eds \\\ds
    \geq 1-\sum_{k=1}^\infty \mu
     \left(\bigcup_{i=1}^{n_k} B(a_i,\frac{1}{k})\right)^c
     > 1-\sum_{k=1}^\infty\frac{\epsilon}{2^k}=1-\epsilon.
     \eds\qed
  \end{array}
$$

\noindent We have used the following elementary result left as an
exercise.

\begin{Proposition}
  A metric space $E$ is compact iff it is
  totally bounded
\footnote{
A space $E$ is totally bounded if for arbitrary
$\epsilon >0$ there exists a finite set
$\{a_1,\ldots,a_n\}$ such that
$\bigcup_{i=1}^NB(a_i,\epsilon)=E$.
}
   and complete.
\end{Proposition}

\section{Kolmogorov's theorem}

Let us recall that if $(\Omega,\calf)$ is a measurable space and  $\P$ is
a probability mesure on $\calf$ then the collection $(\Omega,\calf ,\P)$
is called a {\em probability space}
and any family $X(t)$, $t\in\calt$, of $E$-valued
random variables defined on $(\Omega,\calf)$ is called
a {\em stochastic process} on $\calt$ or a {\em random
family} on $\calt$. Often $\calt$ is a subset of
$\R^1$. Traditionally if $\calt$ is a subset of $\R^d$
with $d\geq 2$ then a stochastic process is called a
{\em random field}.

As we have already said, a stochastic  process $X$ is usually described in terms of measures it
induces on products of $E$,\,\,see (\ref{findimdistrproc}). Namely if
$X$ is an $E$-valued process on $\calt$ then for each
sequence $(t_1,t_2,\ldots,t_k)$ of distinct elements of
$\calt$, $(X(t_1),\ldots,X(t_k))$ is a random variable
with values in the Cartesian product $E\times
E\times\ldots\times E$ equipped with the product
$\sigma$-field $\cale\times\cale\times\ldots\times \cale$.

The probability
measures on $E^k$:
$$
\mu_{t_1,\ldots,t_k} = \call (X(t_1),\ldots,X(t_k)),
$$
are called {\em finite-dimensional distributions} of
$X$.

Note that the finite-dimensional distributions of a
stochastic process $(X(t))_{t\in\calt}$ satisfy two
{\em consistency conditions}:
\begin{enumerate}
  \item[1)] For any $A_i\in\cale$, $i=1,2,\ldots,k$
  and any permutation $\pi$ of $(1,2,\ldots,k)$:
  $$
  \mu_{t_1,\ldots,t_k}(A_1\times \ldots
  \times A_k) =
   \mu_{t_{\pi 1},\ldots,t_{\pi k}}(A_{\pi 1}\times
   \ldots\times A_{\pi k}).
   $$

  \item[2)] For any $A_i\in\cale$, $i=1,2,\ldots,k-1$,
   $$
  \mu_{t_1,\ldots,t_{k-1}}(A_1\times \ldots
  \times A_{k-1}) =
  \mu_{t_1,\ldots,t_k}(A_1\times\ldots
  \times A_{k-1}\times E).
  $$
\end{enumerate}
The following theorem is due to Kolmogorov.

\begin{Theorem}
  Assume that $E$ is a separable, complete, metric space and
  $\mu_{t_1,\ldots,t_k}$ a family of distributions on
  $E^k$, $k\in \N$, satisfying 1) and 2), then there exists
  on some probability space $(\Omega,\calf,\P)$ a
  stochastic process $(X(t))_{t\in\calt}$ having the
  $\mu_{t_1,\ldots,t_k}$ as its finite-dimensional
  distributions.
\end{Theorem}
The proof will be based on Carath\'eodory and Ulam theorems.
We prove first a version of the Kolmogorov theorem of independent interest.

\begin{Proposition}\label{prekolmogorov}
  Let $E_1,E_2,\ldots$ be a sequence of separable,
  complete, metric spaces and $\mu_1,\mu_2,\ldots$
  probability measures on $\calb(E_1), \calb(E_1)
  \times \calb(E_2),\ldots.$ Then there exists a
  unique probability measure on $(E,\calb (E))$ with
  $E= E_1\times E_2\times\ldots$ such that
  $$
  \mu (a_n\times E_{n+1}\times\ldots)
  =\mu_n(a_n),
  \qquad a_n\in \calb(E_1)
  \times \ldots\calb(E_n),
  $$
  provided the following compatibility condition holds:
  $$
  \mu_{n+1}(a_n\times E_{n+1})
  =
  \mu_n(a_n), \qquad a_n\in \calb(E_1)
  \times \ldots\calb(E_n).
  $$
\end{Proposition}

\noindent {\bf Proof.}
Let $\calf_0$ be the family of all sets of the form $
a_n\times E_{n+1}\times\ldots$, $a_n\in
\calb(E_1)\times \ldots\calb(E_n)$. One checks that
$\calf_0$ is a field and the set function
$$
\mu(a_n\times E_{n+1}\times\ldots)=\mu_n(a_n),
\qquad
a_n\in\calb(E_1)\times \ldots\calb(E_n),\;
n=1,2,\ldots,
$$
satisfies condition 1) of the Carath\'eodory theorem.
We check that also 2') holds. Let $(A_n)$ be a
decreasing sequence of elements of $\calf_0$ such that
$\bigcap_{n=1}^\infty A_n=\emptyset$ and assume that
there exists $\epsilon >0$ such that $\mu (A_n)\geq
\epsilon$, $n=1,2,\ldots$. Without any loss
of generality we can assume that
$$
A_n=a_n\times E_{n+1}\times\ldots,
\qquad {\rm where\;} a_n\in
\calb(E_1)\times \ldots\calb(E_n).
$$
By Ulam's theorem there exists a compact set
$b_n\subset a_n$ such that $\mu_n(a_n\backslash b_n)<
\epsilon /(2^{n+1})$, $n=1,2,\ldots$. Denote
$B_n=b_n\times E_{n+1}\times\ldots$ and define
$C_n=\bigcap_{k=1}^nB_k,$ $n=1,2,\ldots$. The sequence
$C_n$ is decreasing. Since
$$
A_n\backslash C_n = A_1\cap A_2\cap\ldots\cap A_n
\backslash B_1\cap B_2\cap\ldots\cap B_n
\subseteq \bigcup_{k=1}^n(A_k\backslash B_k),
$$
therefore
$$
\mu (A_n\backslash C_n )\leq \sum_{k=1}^n
\mu ( A_k\backslash B_k )<\frac{\epsilon}{2}
$$
and consequently $\mu(C_n)>\frac{\epsilon}{2}$,
$n=1,2,\ldots$. Consequently $C_n,n=1,2,\ldots$ are nonempty sets.
However $C_n=c_n\times
E_{n+1}\times\ldots$ where $c_n$ is a compact subset of
$E_1\times\ldots E_n$. Let $(x_n)$ be any sequence
$x_n=(x_{n1},x_{n2},\ldots)\in C_n$. By diagonal
procedure one can extract a subsequence $x_{n_k}=
(x_{n_k1},x_{n_k2},\ldots)\in C_{n_k}$ such that for
each $m$, $x_{n_km}\to x_{\infty m}$ as $k\to\infty$.
But then $x_\infty =(x_{\infty 1},x_{\infty
2},\ldots)\in \bigcap_kC_{n_k}=\bigcap_nC_{n}\subset
\bigcap_nA_n$ because all sets $C_n$ are closed
in $E$. This contradicts the identity
$\bigcap_nA_n=\emptyset$. \qed

\noindent {\bf Proof of the Kolmogorov theorem.}
We define $\Omega=E^\calt$, $X(t,\omega)=\omega(t)$,
$t\in\calt$ and $\omega\in\Omega$,
$\calf=\sigma(X(t),\, t\in\calt )$. Note that
$$
\calf= \bigcup_{\tau\subset\calt}\sigma
(X(t),\, t\in\tau )
$$
where the union is taken with respect to all countable
subsets $\tau$ of $\calt$. Let
$$
\calf_0= \bigcup_{\eta\subset\calt}\sigma
(X(t),\, t\in\eta )
$$
where the union is taken with respect to all finite
subsets $\eta$ of $\calt$. If $A\in\calf_0$ then for
some $t_1,\ldots,t_k\in\calt$ and $a\in \calb(E^k)$,
$A=\{\omega \, :\,
(X(t_1,\omega),\ldots,X(t_k,\omega))\in a\}$. We define
a set function $\mu$ on $\calf_0$ by the formula
$$
\mu(A) = \mu_{t_1,\ldots,t_k}(a).
$$
It is not difficult to see that Condition 1)  of the
Carath\'eodory theorem is satisfied. To check 2')
assume that $A_1\supset A_2\supset\ldots$ is a
decreasing sequence of $\calf_0$ such that
$\bigcap_nA_n=\emptyset$. Without any loss of
generality one can assume that
$A_n\in\sigma(X(t),t\in\tau)$ where $\tau$ is a
countable subset of $\calt$. Numbering elemnts of
$\tau$ by natural numbers and taking into account that
$A_n$ are cylindrical sets we can assume that
$$
A_n=a_n\times E^{\N},\qquad {\rm where\;} a_n\subset
E^{m_n},n=1,2,\ldots.
$$
We can therefore apply the same reasoning as in the
proof of the proposition to get that $\mu(A_n)\to 0$.
\qed

\section{Some applications}

\subsection{Gaussian processes.}

As we already know a  Gaussian measure on $\R^d$ is any probability measure on $\R^d$
whose characteristic function
$\widehat{\mu}$ is of the form
$$
\widehat{\mu} (\lambda)=e^{i\<\lambda , m\>-\frac{1}{2}
\<Q\lambda,\lambda\>},\qquad \lambda\in\R^d,
$$
where $m$ is a vector form $\R^d$ and $Q$ is a non-negative
definite matrix. The vector $m$ and the matrix $Q$ are the mean vector and the covariance matrix of $\mu$.
A random vector $Z=(Z_1,\ldots,Z_d)$ is Gaussian if its law
$\call(Z)$ is a Gaussian measure. The
random variables $Z_1,\ldots,Z_d$ are independent if and only if
the covariance matrix is diagonal.
\vspace{3mm}

\noindent A real valued process $X(t)$, $t\geq 0$, is called {\it Gaussian} if for
arbitrary numbers $t_1<t_2<\ldots<t_d$, $d=1,2,\ldots$, from the
interval $[0,+\infty)$, the distribution of the random vector
$(X(t_1),\ldots, X(t_d))$ is a Gaussian measure. Let $X$ be
Gaussian and define
\begin{equation}\label{defmediaecov}
  m(t)=\E\, X(t),\qquad q(s,t)= \E\,
  \left((X(s)-m(s))(X(t)-m(t))\right),
  \qquad t,s,\in [0,+\infty).
\end{equation}
The function $q$ is called the {\it covariance} of the process $X$.
Note that for arbitrary real numbers
$\lambda_1,\lambda_2,\ldots\lambda_d$:
$$
 \E\,
  \left(\left(\sum_{j=1}^d(X(t_j)-m(t_j))\lambda_j
  \right)^2\right)=
\sum_{j,k=1}^dq(t_j,t_k)\lambda_j\lambda_k
$$
and therefore
\begin{equation}\label{qdefpos}
  \sum_{j,k=1}^dq(t_j,t_k)\lambda_j\lambda_k\geq 0.
\end{equation}
A function $q$ for which (\ref{qdefpos}) holds
for arbitrary numbers $t_1<t_2<\ldots<t_d$, $d=1,2,\ldots$, from
$[0,+\infty)$
and for arbitrary real numbers
$\lambda_1,\lambda_2,\ldots\lambda_d$, $d=1,2,\ldots$, is called
positive definite. Thus covariances of Gaussian processes are
positive definite functions. We have the following result:

\begin{Theorem}
  For arbitrary function $m(t)$, $t\geq 0$, and arbitrary
  positive definite function $q(t,s)$, $t,s \geq 0$, there exists a
  Gaussian process for which (\ref{defmediaecov}) holds.
\end{Theorem}

\noindent {\bf Proof.}
Let $\mu_{t_1,\ldots,t_d}$ be the Gaussian measure with
$m= (m(t_1),\ldots,m(t_d))$ and
$Q=(q(t_j,t_k))_{j,k=1\ldots d}$. Let $\pi$ be the transformation
from $\R^d$ to $\R^{d-1}$ given by the formula:
$$
\pi (x_1,\ldots, x_d)=(x_1,\ldots , x_{d-1}),
\qquad x=(x_1,\ldots, x_d)\in\R^d.
$$
The consistency condition of Kolmogorov's theorem means that
the image $\mu^\pi_{t_1,\ldots,t_d}$ of the measure
$\mu_{t_1,\ldots,t_d}$ by the transformation $\pi$ is the Gaussian
measure $\mu_{t_1,\ldots,t_{d-1}}$. Let us recall that in general
if $\nu$ is the image of the measure $\mu$ by a trasformation $F$
then for a bounded measurable function $\phi$
$$
\int \phi(F(x))\; \mu(dx)=\int \phi(y)\; \nu(dx).
$$
Thus
$$
\begin{array}{lll}
\widehat{\mu}^\pi_{t_1,\ldots,t_d}(\lambda)
&=&\dis
\int_{\R^{d-1}}e^{i\<\lambda,y\>} \mu^\pi_{t_1,\ldots,t_d}(dy)
\\&=&\dis
\int_{\R^{d}}e^{i\<\lambda,\pi x\>} \mu_{t_1,\ldots,t_d}(dx)
\\&=&\dis
\int_{\R^{d}}e^{i\<\pi^*\lambda, x\>} \mu_{t_1,\ldots,t_d}(dx)
\\&=&\dis
e^{i\<\pi^*\lambda,m\>}
e^{-\frac{1}{2}\<Q\pi^*\lambda,\pi^*\lambda\>}.
\end{array}
$$
Note that
$\pi^* (\lambda_1,\ldots, \lambda_{d-1})=
(\lambda_1,\ldots, \lambda_{d-1}, 0)$, and therefore
$$
\widehat{\mu}^\pi_{t_1,\ldots,t_d}(\lambda)=
e^{i\sum_{j=1}^{d-1}m(t_j)\lambda_j
-\frac{1}{2}\sum_{j,k=1}^{d-1}q(t_j,t_k)\lambda_j\lambda_k}.
$$
Consequently
$\widehat{\mu}^\pi_{t_1,\ldots,t_d}$ is the Gaussian measure on
$\R^{d-1}$ with the mean vector
$(m(t_1)$, $\ldots$, $m(t_{d-1}))$  and the covariance matrix
$(q(t_j,t_k))_{j,k=1\ldots d-1}$, as required.
\qed

\subsection{Wiener processes.}

The following function $q$:
$$
q(t,s)=\min (t,s)=: t\wedge s,\qquad t,s\geq 0,
$$
is positive definite. To see this set
$$
\phi_t(x)= 1_{[0,t]}(x), \qquad x\geq0 .
$$
Then $\phi_t\in L^2(0,+\infty)$ for every $t\geq 0$ and
$$
q(t,s)=\< \phi_t,\phi_s\>_{ L^2(0,+\infty)},\qquad t,s\geq 0.
$$
Consequently
$$\begin{array}{lll}\dis
\sum_{j,k=1}^{d}q(t_j,t_k)\lambda_j\lambda_k
&=&\dis
\sum_{j,k=1}^{d}\<\phi_{t_j},\phi_{t_k}\>\lambda_j\lambda_k
\\
&=&\dis
\left\|\sum_{j=1}^{d}\phi_{t_j}\lambda_j\right\|^2\geq 0.
\end{array}
$$
By the theorem from
the previous subsection there exists a Gaussian process $X(t)$,
$t\geq 0$, such that
$$
\E\,X(t)=0,\qquad\qquad \E\, (X(t)X(s))=t\wedge s,\qquad t,s\geq
0.
$$
A Gaussian process $X$ with continuous trajectories,
mean value $0$ and covariance function $q(t,s)=t\wedge s$,
$t,x,\geq 0$, is called a standard   {\em Wiener process}. Any Wiener process has independent increments
in the sense that for any sequence of non-negative
numbers $t_0<t_1<\ldots<t_d$, $d=1,2,\ldots$, the random
variables
$$
X(t_1)-X(t_0),\ldots ,X(t_d)-X(t_{d-1})
$$
are independent. In fact, by the very definition,  the random vector
$$
Z=(X(t_1)-X(t_0),\ldots ,X(t_d)-X(t_{d-1}))
$$
is Gaussian with the mean vector zero. The elements of its
covariance matrix, out of the diagonal, can be easily calculated.
For $j<k$
$$
\begin{array}{l}
\E\,\left(
(X(t_j)-X(t_{j-1})( X(t_k)-X(t_{k-1}))\right)
\\\qquad
=t_j\wedge t_k-t_j\wedge t_{k-1}
-t_{j-1}\wedge t_k +t_{j-1}\wedge t_{k-1}
\\\qquad
=t_j-t_j
-t_{j-1} +t_{j-1}=0.
\end{array}
$$
Since they vanish the increments are independent.
\vspace{2mm}

\noindent The constructed process is thus a process with independent increments with a prescribed covariance.
Note however that the
continuity of its trajectories does not follow from the Kolmogorov
theorem and requires additional considerations.

\subsection{Markov processes.}

Let $(E,\cale)$ be a complete, separable metric space equipped
with the $\sigma$-field $\cale=\calb (E)$ of Borel subsets of $E$.
Let $\calp (E)$ denote the set of probability measures on
$(E,\cale)$. A {\it transition function}  is a family of transformations
$P^t$, $t\geq 0$, from $E$ into $\calp (E)$ which satisfy the
{\it Chapman-Kolmogorov} equation:
$$
P^{t+s}(x,\Gamma)= \int_E P^{t}(x,dy)P^{s}(y,\Gamma),
\qquad x\in E,\; \Gamma\in \cale,\; t,s,\geq 0.
$$
The value $P^{t}(x,\Gamma)$ can be interpreted as the probability
that a stochastic dynamical system starting from $x$ will be in
the set $\Gamma$ at moment $t$. In a natural way one defines by
induction, for any sequence of non-negative numbers
 $t_1<t_2<\ldots<t_d$, $d=1,2,\ldots$, the {\em probabilities
 of visiting sets } $\Gamma_1,\ldots, \Gamma_d$ {\em at
 moments } $t_1,\ldots ,t_d$ {\em starting  from} $x$ as the
functions
$P^{t_1,\ldots ,t_d}:E\to \calp (E)$:
$$
P^{t_1,\ldots ,t_d}(x, \Gamma_1,\ldots, \Gamma_d)=
\int_{\Gamma_1}P^{t_1}(x, dx_1)
P^{t_2-t_1,\ldots ,t_d-t_{d-1}}(x_1, \Gamma_2,\ldots, \Gamma_d).
$$
The Chapman-Kolmogorov equation implies that
$P^{t_1,\ldots ,t_{d-1}}$  is equal to the projection of
$P^{t_1,\ldots ,t_d}$ on the first $d-1$ coordinates. By
Kolmogorov's theorem there exists an $E$-valued process $X$,
defined on a probability space $(\Omega,\calf,\P)$, such that
\begin{enumerate}
\item[1)] $X(0,\omega)=x,\qquad \omega\in\Omega$;
\item[2)] $\P(\{\omega\; :\; X(t_j,\omega)\in \Gamma_j,\;
j=1,\ldots, d\})= P^{t_1,\ldots ,t_d}(x, \Gamma_1,\ldots,
\Gamma_d)$.
\end{enumerate}
A stochastic process $X$ for which the above two properties 1) and 2) hold is called a {\it Markov process with
transition function} $P^t$. For more information see \cite{bib17}, \cite{Dy}, \cite{BG}, \cite{EK}, \cite{Sh}.

\subsection{L\'evy processes.}

\noindent Let $E=\R^d$. If $X$ is a stochastic process with values in $\R^d$ and for any sequence of non-negative
numbers $t_0<t_1<\ldots<t_d$, $d=1,2,\ldots$, the random
variables
$$
X(t_1)-X(t_0),\ldots ,X(t_d)-X(t_{d-1})
$$
are independent than one says that $X$ has {\it independent increments}. If, for all $t> s\geq 0,$
the distribution of $X(t) - X(s)$ is identical with the distribution of $X(t-s)$, then $X$ has {\it time
homogeneous} increments. A process with independent and time homogeneous increments is called a {\it L\'evy process}
\cite{bib11}, \cite{sato}, \cite{bertoin}.
L\'evy processes are closely related to the so called infinitely divisible families of measures.
\vspace{2mm}

\noindent A family $ \mu_t, t\geq 0$
of probability measures on $\R^d$ is
{\em infinitely divisible} if

1) $\mu_0=\delta_{\{0\}},$

2) $\mu_{t+s}=\mu_t*\mu_s$ for all $t,s\geq 0,$

3) $\mu_t(\{x\; : \; \|x\|<r)\to 1$ as $t\downarrow 0$, for arbitrary $r>0$.

\noindent If $\mu_t$, $t\geq 0$
 is an infinitely divisible family then the formula
\begin{equation}\label{sginvtrans}
  P^{t}(x,\Gamma)= \mu_t (\Gamma -x),
  \qquad \Gamma\in\calb (R^d),\; t\geq 0,\; x\in \R^d,
\end{equation}
defines a transition function. Consequently there exists a Markov
 process $X$, with the transition function
 $P^{t}$. The finite dimensional distributions $\mu_{t_1,t_2,\ldots ,t_n}$ are determined by the identities:
\begin{equation}\label{findimdistrbis}
  \begin{array}{l}\ds
\int_{\R^{nd}}\psi (x_1,\ldots,x_n)\,
 \mu_{t_1,t_2,\ldots ,t_n}(dx_1,\ldots,dx_n)
 \eds\\\ds
 =
\int_{\R^{nd}}\psi (x_1,x_1+x_2,\ldots,x_1+\ldots +x_n)\,
 \mu_{t_1}(dx_1)\mu_{t_2-t_1}(dx_2)\ldots
 \mu_{t_n-t_{n-1}}(dx_n),

 \eds
  \end{array}
\end{equation}
valid for all bounded Borel
$\psi$ and for
all  sequences $0=t_0\leq t_1 < t_2<\ldots <t_n$.

\begin{Proposition}
Let $X(t)$, $t\geq 0$, be an $\R^d$-valued process with
finite dimensional distributions $\mu_{t_1, \ldots
,t_n}$ satisfying (\ref{findimdistrbis}). Then for
arbitrary Borel sets $\Gamma_1,\ldots,\Gamma_n$,
\begin{equation}\label{indincrbis}
\P(X(t_1)\in\Gamma_1,X(t_2)-X(t_1)\in\Gamma_2,\ldots
X(t_n)-X(t_{n-1})\in\Gamma_n)\! =\!
\prod
_{k=1}^{n} \P(X(t_k)-X(t_{k-1})\in\Gamma_k).
\end{equation}
Moreover $\P(X(t)-X(s)\in\Gamma)=\mu_{t-s}(\Gamma)$,
$t\geq s \geq 0$, $\Gamma$, Borel .
\end{Proposition}

\noindent{\bf Proof.}
Let $\phi_1,\ldots,\phi_n$ be bounded continuous
functions on $\R^d$. Define $\psi (x_1,\ldots,x_n)=
\phi_1(x_1)\phi_2(x_2-x_1)\ldots\phi_n(x_n-x_{n-1})$,
 $x_1,\ldots,x_n\in\R^d$. Then
$$
  \begin{array}{l}
    \E \bigg(\psi(X(t_1),\ldots,X(t_n))\bigg)
    =\ds
\int_{\R^{nd}}\psi (x_1,\ldots,x_n)\,
 \mu_{t_1, \ldots ,t_n}(dx_1,\ldots,dx_n)
   \eds  \\\ds
   = \int_{\R^{nd}}\psi (z_1,z_1+z_2\ldots,z_1+\ldots +z_n)\,
 \mu_{t_1}(z_1)\ldots \mu_{t_n-t_{n-1}}(dz_n)
 \eds\\\ds
 =
\int_{\R^{nd}}\phi_1(z_1)\phi_2(z_2)\ldots,\phi_n(z_n)\,
 \mu_{t_1}(z_1)\ldots \mu_{t_n-t_{n-1}}(dz_n)
 \eds\\\ds
=\prod
_{k=1}^{n} \int_{\R^{d}} \phi_k(z_k)\,
\mu_{t_k-t_{k-1}}(dz_k)
=\prod
_{k=1}^{n} \E \bigg(\phi_k(X(t_k-t_{k-1}))\bigg)
\eds\\\ds
=\prod
_{k=1}^{n} \E \bigg(\phi_k(X(t_k)-X(t_{k-1}))\bigg).
\qquad
\qed
\eds
  \end{array}
$$
L\'evy processes have been intensively studied, see \cite{sato} and \cite{bertoin}. The complete description
of their distributions, in terms of their characteristic functions,  will be given in the chapter
on L\'evy processes. However explicit formulae for their distributions are known only in few cases. The most
important ones are the following:
\vspace{2mm}

\noindent 1. {\it Compound Poisson process} on $\R^d$ when
$$
\mu_t  = e^{-t \nu(\R^d)}\sum_{k=0}^{\infty}{\frac{t^k}{k!}}\nu^{*k},
$$
and $\nu$ is a finite measure,
\vspace{2mm}

\noindent 2. {\it Gaussian, L\'evy processes} on $\R^d$. Here
$\mu_t = N( ta, tQ)$,\,\,\,\,$a\in R^d$\,\,\,\, and $Q$ is a symmetric, non-negative, $d\times d$ matrix,
\vspace{2mm}

\noindent 3. {\it Symmetric Cauchy processes} on $\R^d$. Here
$$
\mu_t (dx) = {\frac{\Gamma ((d+1)/2)}{\pi^{(d+1)/2}}}{\frac{t}{(|x|^2 + t^2 )^{(d+1)/2}}}dx,
$$
\vspace{2mm}

\noindent 4. {\it Stable processes of order} $1/2$ on $\R_{+}^1$. Here
$$
\mu_t (dx) = {\frac{t}{\sqrt{2\pi}}}{\frac{1}{\sqrt{}x^3}} e^{-{\frac{t^2}{2x}}}dx.
$$
\subsection{Poisson processes}
A stochastic L\'evy process is called a {\it Poisson process} if its trajectories are increasing functions
with non-negative integer values and with jumps of size $1$. It is a special case of a compound Poisson
process with measure $\nu = \lambda \delta_{\{0\}}$ and it is parametrised by parameter $\lambda > 0$.
\vspace{2mm}

\noindent The value $\pi (t)$ of the Poisson process at moment $t$ can represent,
for instance, the number of customers at a queue at moment $t$, or the number of
car accidents in a given country in the  time interval $[0, t]$.

\chapter{Doob's regularisation theorem}

Kolmogorov's theorem allows to establish existence of a stochastic
process $X$ with prescribed finite dimensional distributions
but does not imply any properties of its
trajectories like continuity or right
continuity. The aim of this chapter is to prove a powerful
regularization result of Doob which, in many cases, allows to
prove existence of a {\it c\`adl\`ag version} of $X$ that is a version which  has right continuous
trajectories with finite left-limits. The theorem and its proof is
also a respectable pretext to introduce {\it martingales}, see \cite{bib6}, a powerful tool of the
theory of stochastic processes. As an application we establish existence of c\`adl\`ag version of L\'evy and
Feller processes.

\section{Martingales and supermartingales.}

Let $T$ be an arbitrary subset of $\R_+$  and let $(\Omega,
\calf,\P)$ be a probability space and $(\calf_t)_{t\in T}$ an
increasing family of sub-$\sigma$-fields of $\calf$. A family
$X_t$, $t\in T$, of finite real-valued random variables
adapted to the family $(\calf_t)_{t\in T}$,
i.e. each $X_t$ is $\calf_t$-measurable, $t\in T$, is said to be a
{\em martingale} \cite{bib14}, respectively a supermartingale, a
submartingale, with respect to  $(\calf_t)_{t\in T}$ if
\begin{enumerate}
\item[1)] $X_t$, $t\in T$, are integrable random variables.
\item[2)] If $s\leq t$, then for every event $A\in\calf_s$
$$
\int_A X_t\; d\P=\int_A X_s\; d\P,
$$
and respectively
$$
\int_A X_t\; d\P\le \int_A X_s\; d\P,
$$
$$
\int_A X_t\; d\P\ge \int_A X_s\; d\P.
$$
\end{enumerate}
In terms of the conditional expectation property 2) can be
rephrased as follows:
\begin{enumerate}
\item[2')] If $s\leq t$, then
$$
\E\; ( X_t|\calf_s) = X_s,\qquad \P-a.s.,
$$
or respectively
$$
\E\; ( X_t|\calf_s) \le  X_s,\qquad \P-a.s.,
$$
$$
\E\; ( X_t|\calf_s) \ge  X_s,\qquad \P-a.s.
$$
\end{enumerate}

Every real constant, respectively decreasing, increasing, function
defined on $T$ is a martingale, respectively a supermartingales, a
submartingale.

\begin{Lemma}\label{martbananli}
  If $(X_t)$ and $(Y_t)$  are supermartingales relative to
  $(\calf_t)$ and $\alpha$, $\beta$ are positive numbers then the
  processes $(\alpha X_t+\beta Y_t)$  and $(X_t\wedge Y_t)$   are
  also supermartingales. If $(X_t)$ is a martingale, then
  $(|X_t|)$ is a submartingale.
\end{Lemma}

\noindent {\bf Proof.} Let $s\leq t$ and $A\in\calf_s$.
Since $\alpha \int_A X_s\; d\P\ge \alpha \int_A X_t\; d\P$ and
$\beta \int_A Y_s\; d\P\ge \beta \int_A Y_t\; d\P$, therefore
$\int_A(\alpha  X_s+\beta Y_s)\; d\P\ge
 \int_A (\alpha X_t+\beta Y_t)\; d\P$.

 Obviously,
$$
\int_A(  X_s\wedge Y_s)\; d\P=
\int_{A\cap \{X_s<Y_s\}}X_s\; d\P+
\int_{A\cap \{X_s\ge Y_s\}}Y_s\; d\P
$$
and
$$
{A\cap \{X_s<Y_s\}}\in \calf_s,\qquad
{A\cap \{X_s\ge Y_s\}}\in \calf_s.
$$
By virtue of the definition of a supermartingale, we obtain
$$
\int_{A\cap \{X_s<Y_s\}}(X_s-X_t)\; d\P\ge 0,\qquad
\int_{A\cap \{X_s\ge Y_s\}}(Y_s-Y_t)\; d\P\ge 0
$$
and, consequently,
$$
\int_A(  X_s\wedge Y_s)\; d\P\ge
\int_{A\cap \{X_s<Y_s\}}X_t\wedge Y_t\; d\P+
\int_{A\cap \{X_s\ge Y_s\}}X_t\wedge Y_t\; d\P=
\int_{A}X_t\wedge Y_t\; d\P.
$$

If $(X_t)$ is a martingale, then $(X_t\vee 0)$ and $(-X_t\vee 0)$
are submartingales, therefore $|X_t| =
(X_t\vee 0)+(-X_t\vee 0)$ is a submartingale, too.
\qed

Let $(\Omega, \calf)$ be a measurable space and $(\calf_t)_{t\in
T}$ an increasing family of $\sigma$-fields, $\calf_t\subset
\calf$, $t\in T$. A function $S:\Omega\to T$ is said to be a {\em
stopping time}, relative to $(\calf_t)$, if for every $t\in T$ the
set $\{\omega\; :\; S(\omega)\leq t\}$ belongs to $\calf_t$, i.e.
the condition $S\le  t$ is a condition involving only what has
happened up to and including time $t$. Let $S$ be a stopping
time. By $\calf_S$ we denote the collection of events $A\in\calf$
such that $A\cap \{S\leq t\}\in \calf_t$ for all $t\in T$. It is
easy to verify that $\calf_S$ is a $\sigma$-field, the so-called
{\em $\sigma$-fields of events prior to S}.

\begin{Proposition}\label{elemstoptimes}
  \begin{enumerate}
  \item[a)] If $T$ is a finite or countable subset of $\R$ then
  $S:\Omega\to T$ is a stopping time if and only if
  $\{\omega\; :\; S(\omega)= t\}\in\calf_t$ for $t\in T$.
    \item[b)] If $S_1$ and $S_2$ are stopping times then
    $S_1\wedge S_2$ and     $S_1\vee S_2$ are again stopping
    times.
      \item[c)] Any stopping time $S$ is $\calf_S$-measurable.
    \item[d)]  If  $S_1\le S_2$ then  $\calf_{S_1}\subset
    \calf_{S_2}$.
    \end{enumerate}
\end{Proposition}

\noindent {\bf Proof.}  The properties $a), b), c)$ follow
directly from the definitions. To prove $d)$ assume that
$A\in \calf_{S_1}$; then $\{S_2\le t\}\cap A=
\{S_2\le t\}\cap \{S_1\le t\}\cap A$ because
$\{S_2\le t\}\cap \{S_1\le t\}= \{S_2\le t\}$.
Since $ \{S_1\le t\}\cap A\in \calf_{t}$ and
$\{S_2\le t\}\in \calf_t$ therefore
$\{S_2\le t\}\cap \{S_1\le t\}\cap A\in \calf_t$.
\qed

\begin{Example}\label{esstopping}\begin{em}
  Let $(X_n)$ be a sequence of random variables adapted to
  $(\calf_n)$
  and let $\calf_\infty=\calf$. Then
  $$
  S=\left\{\begin{array}{l}
  {\rm the \; least\;} n{\rm \; such\; that\;} X_n\ge a,
  \\
  +\infty {\rm \; if \; } X_n<a {\rm \; for\; all\;} n=1,2,\ldots
  \end{array}
  \right.
  $$
  is a stopping time. To see this fix a natural number $k$; then
  $\{S=k\}= \{X_1<a,\ldots ,X_{k-1}<a,X_k\ge a\}\in
  \sigma (X_1,\ldots ,X_k)\subset \calf_k$.
\end{em}
\end{Example}

The following Doob's optional sampling theorem is of fundamental
importance in the whole theory of martingales.

\begin{Theorem}\label{doobsampling}
  Let $(X_n)_{n=1,\ldots ,k}$ be a supermartingale (a martingale)
  relative to
  \\
  $(\calf_n)_{n=1,\ldots ,k}$. Let $S_1,S_2,\ldots
  ,S_m$ be an increasing sequence of $(\calf_n)$-stopping times with values in the set $\{1,\ldots ,k\}$.
  The sequence $(X_{S_i})_{i=1,\ldots ,m}$ is then also a
  supermartingale (a martingale) with respect to the
  $\sigma$-fields $\calf_{S_1},\calf_{S_2},\ldots,\calf_{S_m}$.
\end{Theorem}

\noindent {\bf Proof.} Let $A\in\calf_{S_1}$. We shall prove that
$\int_A(X_{S_1}-X_{S_2})\; d\P\ge 0$ in the supermartingale case
and
$\int_A(X_{S_1}-X_{S_2})\; d\P= 0$ in the martingale case. If, for
every $\omega$, ${S_2}(\omega)-{S_2}(\omega)\le 1$
then
$$
\int_A(X_{S_2}-X_{S_1})\; d\P=
\sum_{r=1}^k \int_{A\cap \{S_1=r\}\cap \{S_2>r\}}
(X_{r+1}-X_{r})\; d\P,
$$
but $A\cap \{S_1=r\}$ and $ \{S_2>r\}$ belong to $\calf_r$,
therefore $A\cap \{S_1=r\}\cap \{S_2>r\}\in\calf_r$. The
definition of a supermartingale (a martingale) implies that the
desired inequality (equality) follows.

Let $r=0,1,\ldots ,k$ and define stopping times $R_r=S_2\wedge
(S_1+r)$. Then $S_1=R_0\leq R_1\le \ldots\le R_k=S_2$ and
$R_{i+1}-R_i\le 1$, $i=0,\ldots,k-1$. By virtue of the first part
of the proof
$$
\int_AX_{S_1}\; d\P\ge \int_AX_{R_1}\; d\P\ge \ldots
\ge \int_AX_{R_k}\; d\P=\int_AX_{S_2}\; d\P,
$$
with equalities in the martingale case. \qed

As the first application of the Doob theorem we establish the
following Doob's inequalities.

\begin{Theorem}\label{doobineq}
  Let $(X_n)_{n=1,\ldots ,k}$ be a supermartingale
  and $c$ a non-negative constant. Then we have
  \begin{enumerate}
\item[1)]
$\dis c\; \P\left(\sup_{ n}X_n\ge c\right)\le \E\,
X_1-\int_{\{\sup_{n}X_n< c\}}X_k\; d\P$.
\item[2)]
$\dis c\; \P\left(\sup_{ n}X_n\ge c\right)\le \E\,
X_1+\E\, X_k^-$.
\item[3)]
$\dis c\; \P\left(\inf_{ n}X_n\le - c\right)\le
-\int_{\{\inf_{n}X_n\le - c\}}X_k\; d\P$.
\item[4)]
$\dis c\; \P\left(\inf_{ n}X_n\le - c\right)\le
\E\, X_k^-$.
  \end{enumerate}
\end{Theorem}

\noindent {\bf Proof.}
To prove the inequalities $1)$, $2)$, define
$S(\omega)=\inf\{n\;:\; X_n\geq c\}$, or $S(\omega)=k$ if
$\sup_nX_n(\omega)<c$. $S$ is a stopping time and $S\ge 1$. Thus,
by Doob's optional sampling theorem, we obtain
$$
\begin{array}{lll}
\E\, X_1&\ge& \E\, X_S
\\&=&\dis
\int_{\{\sup_{n}X_n\ge  c\}}X_S\; d\P+
\int_{\{\sup_{n}X_n< c\}}X_S\; d\P
\\&\ge&\dis
c\; \P\left(\sup_{n}X_n\ge  c\right) +
\int_{\{\sup_{n}X_n< c\}}X_k\; d\P
\end{array}
$$
or equivalently
$$
c\; \P\left(\sup_{n}X_n\ge  c\right) \le \E\, X_1-
\int_{\{\sup_{n}X_n< c\}}X_k\; d\P.
$$
This is exaclty inequality $1)$.

Since $-X_k\le X_k^-$, inequality $2)$ follows, too.

To establish the relations $3)$ and $4)$, we introduce an
analogous stopping time $S$, $S= \inf\{n\;:\; X_n\leq -c\}$
 or $S=k$ if $\inf_nX_n>-c$. Since $S\le k$ therefore
 $$
\begin{array}{lll}
\E\, X_k&\le& \E\, X_S
\\&=&\dis
\int_{\{\inf_{n}X_n\le - c\}}X_S\; d\P+
\int_{\{\inf_{n}X_n>-c\}}X_S\; d\P
\\&\le&\dis
-c\; \P\left(\inf_{n}X_n\le - c\right) +
\int_{\{\inf_{n}X_n>-c\}}X_k\; d\P
\end{array}
$$
These inequalities imply $3)$ and $4)$.
\qed

Let $x=(x_1,\ldots, x_n)$ be a sequence of real numbers and let
$a<b$. Let $R_1$ be the first of the numbers $1,2,\ldots, n$ such
that $x_{R_1}\le a$, or $n$ if there exist no such numbers.
Let $R_k$ be, for every even (respectively odd) integer $k>1$, the
first of the numbers $1,2,\ldots, n$ such
that $R_k>R_{k-1}$ and $x_{R_k}\ge b$ (respectively $x_{R_k}\le
a$), and if no such number exists we set $R_k=n$. In this way a
sequence $R_1,R_2,\ldots $ is defined. The number $U_n(x;a,b)$
of upcrossings
by the sequence $x$  of the interval $(a,b)$ is defined as the
greatest integer $k$ such that one actually has
$x_{R_{2k-1}}\le a$ and $x_{R_{2k}}\ge b$. If no such integer
exists we put $U_n(x;a,b)=0$.

\begin{Theorem}\label{doobupcrossing}
  Let $(X_m)_{m=1,\ldots ,n}$ be a supermartingale
  relative to
  $(\calf_m)_{m=1,\ldots ,n}$ and let $a<b$ be two real numbers.
  Then the following inequality holds:
  $$
  \E\, U_n(X;a,b) \le \frac{1}{b-a}\,\E\, \left((a-X_n)^+\right).
  $$
\end{Theorem}

\noindent {\bf Proof.} Let $R_1,\ldots , R_{2l}$, $2l>n$,
be the stopping times used in the definition of upcrossings and
let $\Sigma_1= (X_{R_2}-X_{R_1})+\ldots
(X_{R_{2l}}-X_{R_{2l-1}})$. Then
$$
\Sigma_1\ge  (b-a)U_n(X;a,b)+(X_n-a)\wedge 0.
$$
Indeed, assume that $U_n(X;a,b)=k$; then
$\Sigma_1= (X_{R_2}-X_{R_1})+\ldots
(X_{R_{2k+2}}-X_{R_{2k+1}})$.
If $R_{2k+1}=n$ then $X_{R_{2k+2}}-X_{R_{2k+1}}=0$, and
if $R_{2k+1}<n$ and $R_{2k+2}=n$ then
$X_{R_{2k+2}}-X_{R_{2k+1}}=X_{n}-X_{R_{2k+1}}\ge X_n-a$. Thus in
both cases
$X_{R_{2k+2}}-X_{R_{2k+1}}\ge (X_n-a)\wedge 0$.

Doob's theorem implies that
$$
0\ge \E\, (\Sigma_1) \ge (b-a)\,\E\,  U_n(X;a,b)
+\E\, ((X_n-a)\wedge 0)
$$
and therefore
$$
(b-a)\,\E\,  U_n(X;a,b)\le - \E\, ((X_n-a)\wedge 0)
=\E\, \left((a-X_n)^+\right).
\qed
$$

\section{Regularization theorem.}

Let $X(t)$, $t\ge 0$, be an $E$-valued stochastic process defined
on $(\Omega,\calf,\P)$. An $E$-valued process
$Y(t)$, $t\ge 0$, is said to be a modification of $X$ if
$$
\P(X(t)=Y(t))=1\quad {\rm for\; all\quad } t\ge0.
$$
It is clear that a modification $Y$ of $X$ has the same finite
dimensional distributions as $X$. If there exists a modification
$Y$ of $X$ which has, $\P$-almost surely, continuous trajectories,
then we say that $X$ has a continuous modification. If there
exists a modification
$Y$ of $X$ whose  trajectories are right-continuous and have
finite left limits
then we say that $X$ has a c\`adl\`ag (continu \`a droite
et pourvu de limites \`a gauche) modification $Y$.

The following Doob's regularization theorem states a condition
under which a supermartingale has a c\`adl\`ag modification.
Although it holds for supermartingales it has been successfully
applied to wide classes of stochastic processes implying existence
of their c\`adl\`ag versions.
We will assume that the filtration $(\calf_t)$ satisfies the
usual conditions, that is
\begin{enumerate}
\item[1)] $\calf$ is $\P$-complete.
\item[2)] $\calf_0 $ contains all $\P$-null sets of $\calf$.
\item[3)] $(\calf_t)$ is right-continuous:
$$
\calf_t= \bigcap_{u>t}\calf_u=: \calf_{t^+},
\qquad t\ge 0.
$$
\end{enumerate}

\begin{Theorem}\label{doobreg}
  Assume that $X(t)$, $t\ge0$, is a supermartingale such that for
  each $t\ge 0$ and $c>0$
\begin{equation}\label{stocconti}
  \lim_{s\to t,\, s>t}\P (|X(t)-X(s)|>c)=0.
\end{equation}
Then $X$ has a c\`adl\`ag version.
\end{Theorem}

\subsection{An application to L\'evy processes}

Before proving the result we deduce a simple application on
regularity of L\'evy processes.

\begin{Theorem}\label{reglevyproc}
An arbitrary  L\'evy process  $X$
 has a c\`adl\`ag version.
\end{Theorem}

\noindent {\bf Proof.}
We can assume that $X$ is a real valued  L\'evy process.

Let $\calg_t=\sigma (X(s)\; :\; s\le t)$, $t\ge 0$,
$\calf$ the completion of $\calg_\infty = \sigma (X(s)\; :\; s\ge
0)$, and $N_0$ all sets of $\calf$ with $\P$-measure zero.
We set
$$
\calf_t = \bigcap_{s>t}\calg_s\vee N_0, \qquad t\ge0.
$$
Then the family $(\calf_t)$ satisfies the usual conditions and for
each $s\le t$ the increment $X(t)-X(s)$ is independent of
$\calf_s$.

For each number $u\in \R$ define a complex valued process
$$
Z(t)=\frac{e^{iuX(t)}}{\E\, e^{iuX(t)}},
\qquad t\ge0.
$$
Then $Z$ is a stochastic  process for which real and imaginary
parts are martingales. In fact
$$
\begin{array}{lll}
  \E\, (Z(t)|\calf_s) & = & \dis
\frac{1}{\E\, e^{iuX(t)}} \E\, \left(
e^{iu(X(t)-X(s))+iuX(s)}|\calf_s\right)
   \\
    & = & \dis
   \frac{1}{\E\, e^{iuX(t)}}
    \E\, \left( e^{iu(X(t)-X(s))}|\calf_s\right)
   e^{iuX(s)},
\end{array}
$$
Since the random variable $X(t)-X(s)$ is independent
of $\calf_s$ we have that
$$
  \E\, (Z(t)|\calf_s)  =
  \frac{1}{\E\, e^{iuX(t)}}
    \E\, \left( e^{iu(X(t)-X(s))}\right)
   e^{iuX(s)}.
   $$
It follows from the definition of L\'evy processes, that
$$
    \E\, \left( e^{iu(X(t)-X(s))}\right)=
     \E\, \left( e^{iuX(t-s)}\right)=
 e^{-(t-s)\psi (u)},
   $$
where $\psi$ is the L\'evy-Khinchin exponent, and the lemma
follows.

The continuity condition (\ref{stocconti}) from the theorem is
satisfied and therefore for arbitrary $u\in\R$ the process
$e^{iuX(t)}$, $t\ge0$, has a c\`adl\`ag modification. It is
possible to show that if, for a sequence $(a_n)$ of real numbers,
the limit $\lim_n e^{iua_n}$ exists for the set of all rational
numbers $u$ then the sequence $(a_n)$ is convergent.

Let $X^u(t)$, $t\ge0$, be a modification of $X(t)$, $t\ge 0$, such
that the process
$$
e^{iuX^u(t)},\qquad t\ge0,
$$
is c\`adl\`ag. For each $u$ we denote by $\Omega_u$ the set of
those $\omega\in\Omega$ for which the trajectory of $X^u$ is
c\`adl\`ag. Denote by $\Q$ and, respectively $\Q_+$, the set of all
rational and, respectively, non-negative rational numbers.
Then the set
$$
\widetilde{\Omega}=\bigcap_{u\in\Q_+}\bigcap_{v\in\Q_+}
\bigcap_{r\in\Q_+} \left(\Omega_u \cap
\{\omega\; :\; X^u(r,\omega)=X^v(r,\omega)\}\right)
$$
is of full measure. If we define
$$
Y(t,\omega)= \left\{
\begin{array}{ll}
  X^u(t,\omega), & \omega\in\widetilde{\Omega}
  {\rm \; and\;} u\in \Q_+, \\
  0, & \omega\in\widetilde{\Omega}^c,
\end{array}
\right.
$$
we get the required modification.
\qed

\subsection{Proof of the regularisation theorem.}
In the proof of Theorem \ref{doobreg}we basically follow  L. C. G. Rogers and D. Williams \cite{rogers}, vol. 1.
To cope with measurability questions it is convenient to
consider, at the beginning, functions defined on $\Q_+$ rather
than on $\R_+$. We say that a function $x:\Q_+\to \R$ has
a c\`adl\`ag regularization if the limits
$$
\lim{s>t,\, s\in \Q_+}x(s),
\qquad
\lim{s<t,\, s\in \Q_+}x(s)
$$
exist for all non-negative, respectively positive $t$.
The following result is a direct consequence of the definition.

\begin{Lemma}\label{regrazionali}
  If $x$ has a c\`adl\`ag regularization  then $y:\R_+\to \R$
  given by the formula
  $$
  y(t)= \lim_{s>t,\, s\in \Q_+}x(s),
\qquad t\ge 0,
$$
is a c\`adl\`ag function.
\end{Lemma}

The next lemma is of basic importance. To formulate it we need to
generalize the definition of the upcrossing function
$U^N(x;a,b)$. Namely the number $U^N(x;a,b)$ of upcrossings by the
function $x$ of the interval $(a,b)$ is the supremum of
$U_n((x(t_1),\ldots, x(t_n));a,b)$ with respect to all
$n=1,2,\ldots$ and arbitrary choice $0\leq t_1<t_2<\ldots <t_n\le
N$, $t_j\in \Q$.

\begin{Lemma}\label{regrazionalidue}
A function $x:\Q_+\to \R$ has a c\`adl\`ag regularization
if and only if for all $N\in\N$ and $a,b\in \Q$, $a<b$,
\begin{enumerate}
\item[i)]
$\sup \{ x(s)\; :\; s\in [0,N]\cap \Q_+\} <+\infty$,
\item[ii)]
$ U^N(x;a,b)<+\infty$.
\end{enumerate}
\end{Lemma}

\noindent {\bf Proof.}
We prove for instance that if $i)$ and $ii)$ hold then $x$ has a
 c\`adl\`ag regularization. The other implication is similar.
 Thus assume
$i)$ and $ii)$ hold but nevertheless for some $t\ge0$
$$
\limsup_{s>t,\, s\in \Q_+} x(s) >
\liminf_{s>t,\, s\in \Q_+} x(s).
$$
There exists rational numbers $a,b$ such that
$$
\liminf_{s>t,\, s\in \Q_+} x(s)<a<b
\limsup_{s>t,\, s\in \Q_+} x(s) .
$$
Consequently for some decreasing sequences of rational numbers,
$s_n>t$, $\overline{s}_n>t$, $n=1,2,\ldots$, converging to $t$:
$$
x(s_n)\le a,\quad x(\overline{s}_n)\ge b,
\qquad n=1,2,\ldots.
$$
This implies that $U^N(x;a,b,)=+\infty$ for $N>t$. Thus
$$
\limsup_{s>t,\, s\in \Q_+} x(s) =
\liminf_{s>t,\, s\in \Q_+} x(s)=
\lim_{s>t,\, s\in \Q_+} x(s).
$$
The condition $i)$ implies that the limit is finite.
\qed

To prove the theorem define
$$
\Gamma =\{ \omega\in \Omega\; :\; X(s,\omega),\; s\in\Q_+,
{\rm \; has\;a\;
c\grave{a}dl\grave{a}g \; regularization}\}.
$$
Then
$$
\Gamma=\bigcup_{N\in\N;\, a,b\in\Q_+;\, a<b}
\left\{
\omega\in\Omega\;:\; U^N(X(\cdot,\omega);a,b)<+\infty,
\quad \sup_{s\in [0,N]\cap \Q_+} |X(s,\omega)| <+\infty\right\}.
$$
It is easy to see that $\Gamma$ is a measurable set
and we define
$$
Y(t,\omega)= \left\{
\begin{array}{ll}\dis
\lim_{s>t,\, s\in \Q_+}  X(s,\omega), & \omega\in\Gamma,
 \\
  0, & \omega\in\Gamma^c.
\end{array}
\right.
$$
We know that $Y(\cdot,\omega)$ is a c\`adl\`ag  function for
$\omega\in\Gamma$. We show now that $\P(\Gamma)=1$. Let us fix $N$
and rational numbers $a<b$. For an arbitrary sequence of rational
numbers $0=s_1<s_2<\ldots <s_{n-1}<s_n=N$ the sequence
$X(s_1),\ldots,X(s_n)$ is a supermartingale. Therefore, by Doob's
inequalites,  for an arbitrary number $c\ge 0$:
$$
\begin{array}{lll}
  \dis
   \P\left(\sup_{k=1,\ldots, n}X(s_k)\ge c\right)
   &\le&\dis
   \frac{1}{c}\,(\E\,
X(0)+\E\, X^-(N)), \\\dis
\P\left(\inf_{k=1,\ldots, n}X(s_k)\le -c\right)
   &\le&\dis
   \frac{1}{c}\,\E\, X^-(N).
\end{array}
$$
>From this, by considering an increasing family of increasing
sequences $(s_1,\ldots,s_n)$, covering $\Q_+\cap [0,N]$, we obtain
that
$$
\lim_{c\uparrow +\infty}\P\left(
\sup_{s\in\Q_+,\, s\leq N}|X(s)|\ge c\right)
\le
\lim_{c\uparrow +\infty}\frac{1}{c}\left(
\E\,X(0)+2\E\, X^-(N)
\right)=0.
$$
In a similar manner, by Doob's upcrossing theorem,
$$
\E\, U^N((X(s_1),\ldots,X(s_n));a,b)\le
\frac{1}{b-a}\, \E\, (a-X(N))^+.
$$
Therefore
$$
\E\, U^N(X;a,b)\le
\frac{1}{b-a}\, \E\, (a-X(N))^+<+\infty
$$
and in particular
$$
\P\,( U^N(X;a,b)<+\infty)=1.
$$
This way we have shown that $\P(\Gamma)=1$.

It is clear that, for each $t\ge 0$, $Y(t)$ is measurable with
respect to
$$
 \bigcap_{s>t}\sigma (X(u)\; :\; u\le s)\vee N_0.
$$
It remains to show that for each $t\ge0$:
\begin{equation}\label{condizmodif}
  \P(Y(t)=X(t))=1.
\end{equation}
Let us fix $t\ge0$ and let $t_n>t$, $n=1,2,\ldots$, be
 a sequence
of rational numbers converging to $t$. Then there exists a
subsequence $t_{n_k}$, $k=1,2,\ldots$, such that
$$
  \P\left(|X(t)-X(t_{n_k})|>\frac{1}{k}\right)\le\frac{1}{k^2}.
  $$
Consequently by the Borel-Cantelli lemma
$$
  \lim_k X(t_{n_k})=X(t),\qquad \P-a.s.
  $$
By the construction
$$
\P\left( Y(t)= \lim_k X(t_{n_k}\right)=1,
  $$
  so (\ref{condizmodif}) holds.
\qed

\subsection{An application to Markov processes.}
We deduce from Theorem \ref{doobreg}
a general regularity result for Markov
processes which implies the regularity of  L\'evy
processes as a very special case.

Let $E$ be a metric, separable, locally compact space and
$E_\partial$ the one-point compactification of $E$. If $E$  is
compact then $\partial$ is an isolated point. Let $P^t$ be a
transition function on $E$. Let $\calb_b(E)$ be the space of all
bounded Borel functions on $E$ and $C_0(E)$ the space of all
continuous functions on $E$ having limit zero at $\partial$.
The formula
$$
P^t\phi(x)=\int_E P^t(x,dy)\, \phi(y),
\qquad t\ge 0,\; x\in E, \; \phi\in\calb_b(E),
$$
defines a semigroup of operators, denoted also by $P^t$, on
$\calb_b(E)$.
The transition function $(P^t)$ is called Feller if
$(P^t)$ is a strongly continuous semigroup on $C_0(E)$.
We can extend $(P^t)$ to $\calb_b(E_\partial)$ by setting
$$
P^t(\partial, \{\partial\})=1,
\quad
P^t(x, \Gamma)=P^t(x, \Gamma\cap E),
\quad {\rm for\;} x\in E \; {\rm and}\;
\Gamma\in \calb(E_\partial).
$$
The extended family is again a transition function. We have the
following result.

\begin{Theorem}\label{regmarkov}
  For arbitrary $x\in E$ there exists a
  c\`adl\`ag  Markov process on $E,$ with a Feller transition function
  $(P^t),$ starting from $x$.
\end{Theorem}

\begin{Remark}\begin{em}
  As we noticed earlier there exists always a Markov process on
  $E$ starting from $x\in E$ and with transition function $P^t$.
  To have a   c\`adl\`ag version we first prove its existence on
  the compact space $E_\partial$ rather than on a locally compact
  space.
  \end{em}
\end{Remark}

\begin{Remark}\begin{em}
  A transition semigroup $(P^t)$ is said to be {\em stochastically
  continuous} if for all $x\in E$ and $\delta>0$
  $$
  \lim_{t\to 0}P^t(x,B(x,\delta))=1.
  $$
If $P^t$ transforms $C_b(E)$ into $C_b(E)$ and is
stochastically continuous, then for all $f\in C_b(E)$ and $x\in E$
  $$
  \lim_{t\to 0}P^tf(x)=f(x).
  $$
Thus the strong Feller property is a slightly stronger condition than
stochastic continuity.
  \end{em}
\end{Remark}

\noindent {\bf Proof of the theorem.}
Let $A$ be the infinitesimal generator of the semigroup $(P^t)$,
regarded on $C_0(E)$, and $R_\lambda$, $\lambda>0$, its resolvent.
Then the domain $D(A)$ of $A$ is dense in $C_0(E)$ and therefore
also the image of $R_1$ is dense in $C_0(E)$. We can easily see
that there exists a sequence $(g_n)$ of non-negative functions
such that the functions $f_n=R_1g_n$, $n=1,2,\ldots$, separate
points in $E$. We extend the functions $f_n$ to $E_\partial$ by
setting $f_n(\partial)=0$. Now the transformation $H:E_\partial\to
\R^\infty$ given by
$$
H(x)=(f_1(x), f_2(x),\ldots )
$$
is continuous onto a compact set and therefore with a continuous
inverse (defined on the image). Consequently $x_n\to x$ in
$E_\partial$ if and only if $f_k(x_n)\to f_k(x)$ for each $k$, as
$n\to+\infty$. Let now $X(t)$, $t\ge0$, be a Markov process
on $E$ starting from $x$. Then for arbitrary $k=1,2,\ldots$ the
process $e^{-t}f_k(X(t))$, $t\ge0$, is a supermartingale, first
with respect to the natural filtration $\sigma (X(s)\,:\,s\le t)$,
$t\ge0$, and then also with respect to the extended versions.
This follows from the formula, valid for all $f\in\calb_b(E)$:
$$
\E\, (f(X(t))\;|\; \sigma (X(u)\,:\,u\le s))
=P^{t-s}f(X(s)),
\qquad s<t,
$$
which can be checked using the $\pi$-systems technique.
By a similar argument as for  L\'evy processes we infer that there
exists an $E_\partial$-valued c\`adl\`ag modification $Y$ of $X$.
To establish existence of an
$E$-valued c\`adl\`ag version we need the following lemma, left as
an exercise.

\begin{Lemma}\label{supermartpos}
  Assume that $Z$ is a c\`adl\`ag, non-negative supermartingale
  and define
  $$
  S(\omega)= \inf\{t\ge 0\;:\; Z(t, \omega )=0\;{\rm or \;} Z(t^-, \omega)=0\}.
  $$
Then
$$
\P(Z(s)=0 {\rm \; for \;all\;} s\ge S)=1.
$$
\end{Lemma}

By the lemma, for arbitrary $t\ge 0$, if either $Y(t^-)=\partial$
or $Y(t)=\partial$, then $Y(s)=\partial$ for $s\ge t$. Since, for
$x\in E$, the transition probabilities $P^t(x,\cdot)$ are
supported by $E$, we see that $\P( Y(t)\in E $ for all $t\geq
0)=1$.
\qed

\chapter{Wiener's approach}

The first rigorous proof of the existence of the Wiener process
was given in 1923 by N. Wiener \cite{bib7}. It was
based on Daniell's method \cite{bib8} of constructing measures on infinite dimensional spaces.
In 1932, N. Wiener together with R. E. Paley \cite{bib9}, gave an explicit construction
of the Wiener process using  Fourier series expansions and assuming only
existence of a sequence of independent, identically distributed
Gaussian random variables. We will present the essence of the method by
describing the so-called L\'evy-Ciesielski construction of the Wiener
process. We describe  also an explicit construction of the Poisson process. At the end of the chapter
direct construction of Steinhaus, of an arbitrary sequence of independent random variables, is given.

\section{Hilbert space expansions}
We start from some elementary facts about Hilbert spaces.
Let $H$ be a separable Hilbert space with scalar product
$\<h,g\>$ and the norm:
$$
\|h\|=\sqrt{\<h,h\>},\qquad h\in H.
$$
Let $h_1,h_2,\ldots$ be an orthonormal complete basis in $H$. Thus
$$
|h_j|=1,\quad \<h_j,h_k\>=0,\qquad  j=1,2,\ldots,\; j\ne k,
$$
and for arbitrary $x\in H$,
$$
  x=\sum_{j=1}^{+\infty} \<x,h_j\>h_j, \qquad x\in H.
$$
The following {\it Parseval identity}  easily follows:
$$
\<x, y\>=\sum_{j=1}^{+\infty} \<x,h_j\>\<y,h_j\>, \qquad x\in H.
$$
Take now $H=L^2(\Omega,\calf,\P)$ and assume that for each
$t\geq 0$, $X(t)\in H$.
If random variables $\xi_1,\xi_2,\ldots$ form in $H$ an
 orthonormal complete basis then we arrive at the {\em first
expansion}:
$$
X(t)=\sum_{j=1}^{+\infty} \xi_j\<X(t),\xi_j\>_{L^2(\Omega)}
=\sum_{j=1}^{+\infty} \xi_j
e_j(t), \qquad t\ge0,
$$
with the convergence, for each $t\geq 0,$ \,in the sense of the space $H$.

If one takes $H=L^2(0,T)$ and if $X(\cdot,\omega)\in H$
and $e_1,e_2,\ldots$ is an orthonormal complete basis  in $L^2(0,T)$,
then we get the {\em second expansion}:
$$
X(t,\omega)=\sum_{j=1}^{+\infty}\<X(\cdot,\omega),e_j\>_{L^2(0,T)}e_j(t)
=\sum_{j=1}^{+\infty}\xi_j(\omega)e_j(t), \qquad t\in[0,T].
$$
Here, for each $\omega \in \Omega $, the convergence is in the $L^2(0,T)$ sense.
Assume, for instance, that
$$
\E X(t)=0,\quad q(t,s)=\E (X(t)X(s)),\qquad t,s\in[0,T].
$$
Define
$$
Qx(t)=\int_0^T q(t,s)x(s)\,ds,\qquad t\in[0,T],\; x\in H= L^2(0,T),
$$
and let $e_j$ be an orthonormal complete basis in $H$, such that
$$
Qe_j=\lambda_je_j,\quad j=1,2,\ldots,\qquad  \lambda_j>0,
\quad j=1,2,\ldots .
$$
Then
$$
\xi_j=\int_0^T X(t)e_j(t)\,dt, \qquad j=1,2,\ldots ,
$$
and
$$
\E(\xi_j\xi_k)=\int_0^T \int_0^T q(t,s)e_j(t)e_k(s)\,dtds=
\lambda_j \delta_{j-k}.
$$
If, in particular, $X(t)=W(t)$, $t\in[0,1]$, is a Wiener process, then
$$
q(t,s)=\E(W(t)W(s))=t\wedge s,
$$
and
$$
e_j(t)=\sqrt2 \sin\Big[\Big(j+\frac12\Big)\pi t\Big],
\quad \lambda_j=\frac{1}{\pi^2\Big(j+\frac12\Big)^2},
\qquad j=0,1,2,\ldots .
$$
Therefore, for each $\omega \in \Omega $,
\begin{equation}\label{espwiener}
W(t, \omega)=\sqrt2 \sum_{j=0}^{+\infty}\xi_j(\omega)
\frac{\sin\Big[\Big(j+\frac12\Big)t\Big]}{\Big(j+\frac12\Big)\pi}
,\qquad t\in[0,1],
\end{equation}
and
$ \xi_0,\xi_1,\xi_2,\ldots $
are independent Gaussian random variables such that
 $\E \xi _j =0$, $\E\xi _j^2=1$.
This expression was found by Wiener.
The difficult thing however is to prove that the series
(\ref{espwiener}) defines a process with continuous paths.
This amounts to the proof that for almost all $\omega\in\Omega$
the series or its  subseries, converges uniformly. Instead of proving this we will
present a similar and an elegant construction due to P. L\'evy and Z. Ciesielski, see \cite{bib11} and \cite{bib12}.

\section{The L\'evy-Ciesielski construction of Wiener's process.}

In the L\'evy-Ciesielski construction, the essential role
is played by a Haar system connected with a dyadic partition
of the interval $[0,1]$.

Namely let $h_0\equiv 1$, and if $2^n\leq k<2^{n+1}$ then
$$
\begin{array}{l}
h_k(t)=\left\{
\begin{array}{lll}
2^{n/2} &{\rm if}&\dis
\frac{k-2^n}{2^n}\leq t < \frac{k-2^n}{2^n}+\frac{1}{2^{n+1}},
\\\\
-2^{n/2} &{\rm if}&\dis
\frac{k-2^n}{2^n}+\frac{1}{2^{n+1}}
\leq t < \frac{k-2^n}{2^n}+\frac{1}{2^{n}},
\end{array}
\right.
\\
h_k(1)=0,
\end{array}
$$
The system $h_k$, $k=0,1,\ldots$ forms an orthonormal and complete
basis in the space $L^2([0,1])$.

\begin{Theorem}
Let $(\xi_k)_{k=0,1,\ldots}$ be a sequence of independent random
variables normally distributed with mean $0$ and covariance $1$,
defined on a probability space $(\Omega, \calf, \P)$. Then,
for $\P$-almost all $\omega \in \Omega $, the series
$$
\sum_{k=0}^{+\infty}\xi_k(\omega)\int_0^th_k(s)\; ds
=W(t,\omega),\qquad t\in [0,1],
$$
is uniformly convergent on $[0,1]$ and defines a Wiener
process on $[0,1]$.
\end{Theorem}

\begin{Lemma}\label{levycieuno}
Let $\epsilon\in (0,1/2)$ and $M>0$. If $|a_k|\leq M\, k^\epsilon$
for $k=1,2,\ldots$ then the series
$\sum_{k=0}^{+\infty}a_k\int_0^th_k(s)\; ds$ is
uniformly convergent on the interval $[0,1]$.
\end{Lemma}

\noindent {\bf Proof.}
If $2^n\le k<2^{n+1}$ then the {\em Schauder functions}
$S_k(t)=\int_0^th_k(s)\; ds$, $t\in [0,1]$, are non-negative, have
disjoint supports and are bounded from above by
$2^{-(n+1)}2^\frac{n}{2}= 2^{-\frac{n}{2}-1}$. Let us denote
$b_n= \max(|a_k|\; :\; 2^n\le k<2^{n+1})$; then
$$
\sum_{2^n\le k<2^{n+1}} |a_k|\, S_k(t)\le
b_n 2^{-\frac{n}{2}-1}
$$
for all $t\in [0,1]$ and $n=0,1,\ldots$. Thus the condition
$$
\sum_{n=0}^{+\infty}b_n 2^{-\frac{n}{2}}<+\infty
$$
is sufficient for the uniform convergence of the series
$$
\sum_{n=0}^{+\infty}\sum_{2^n\le k<2^{n+1}} |a_k|\, S_k(t)
$$
and therefore for the uniform convergence of
$$
\sum_{k=0}^{+\infty}a_k\, S_k(t)
$$
too. From the inequalities $|a_k|\leq M\, k^\epsilon$
it follows that $b_n\le 2^\epsilon M 2^{n\epsilon}$ for all
$n=0,1,\ldots$ and, consequently,
$$
\sum_{n=0}^{+\infty}b_n 2^{-\frac{n}{2}}
\le 2^\epsilon M \sum_{n=0}^{+\infty} 2^{n\left(
-\frac{1}{2}+\epsilon\right)} <+\infty.
\qed
$$

\begin{Lemma}\label{levyciedue}
Let $(\xi_k)_{k=0,1,\ldots}$ be a sequence of normally
distributed random variables with mean $0$ and covariance
$1$. Then, with probability one, the sequence
$$
\left(\frac{|\xi_k|}{\sqrt{\log k}}\right)_{k=2,3,\ldots}
$$
is bounded.
\end{Lemma}

\noindent {\bf Proof.}
Let $c$ be a fixed positive number, then
$$
\P(|\xi_k|\ge c) =\frac{2}{\sqrt{2\pi}}\int_c^{+\infty}
e^{-x^2/2}\; dx \leq
\frac{2}{\sqrt{2\pi}}\int_c^{+\infty}\frac{x}{c}
e^{-x^2/2}\; dx \leq
\frac{2}{c\sqrt{2\pi}}e^{-c^2/2}.
$$
>From this we obtain that for $c>\sqrt{2}$
$$
\sum_{k=2}^{+\infty} \P(|\xi_k|\ge c\sqrt{\log k})
\le
 \frac{2}{\sqrt{2\pi}} \sum_{k=2}^{+\infty}
 \frac{k^{-c^2/2}}{c\sqrt{\log k}}<+\infty.
 $$
Therefore, if $c>\sqrt{2}$ then, with probability one, only for a
finite number of $k,$ we have $|\xi_k|\ge c\sqrt{\log k}$.
\qed

\noindent {\bf Proof of the theorem.}
Lemma \ref{levycieuno} and Lemma \ref{levyciedue} imply that
the series
$$
W(t,\omega):=
\sum_{k=0}^{+\infty} \xi_k(\omega)\, S_k(t)
$$
is for almost all $\omega$ uniformly convergent on the interval
$[0,1]$. Since the functions $S_k$ are continuous and $S_k(0)=0$
for $k=0,1,\ldots$ the constructed process starts from $0$ and has
continuous paths. As the limit of Gaussian processes, $W$ is also
a Gaussian process. Moreover
$$
\E\,(W(t)W(s))=\sum_{k=0}^{+\infty} \int_0^th_k(u)\; du
 \int_0^sh _k(u)\; du
=\sum_{k=0}^{+\infty} \<h_k,\phi_t\>_{L^2(0,1)}
\<h_k,\phi_s\>_{L^2(0,1)},
$$
where $\phi_t=1_{[0,t]}$. Therefore by Parseval's identity
$$\E\,(W(t)W(s))=\<\phi_t,\phi_s\>_{L^2(0,1)}=t\wedge s.
\qed
$$
\subsection{Construction of a  Poisson process}
A non-negative random variable $\xi$ has exponential distribution with parameter $\alpha > 0$ if
$$
\P(\xi > t )=  e^{- \alpha t},\,\,\,\, t\geq 0.
$$
The distribution of $\xi$ has a density $g(t) = \alpha e^{- \alpha t},\,\,t\geq 0.$
\vspace{2mm}

\noindent We have the following result.

\begin{Proposition}
Assume that $(\xi_k)$ is a sequence of independent, exponentially distributed random variables with parameter $\alpha$.
If
$$
  \begin{array}{llll}\label{*}
    \Pi(t) & =  0& {\rm if} & \xi_1>t \\
      & = k      & {\rm if }& \xi_1+\ldots+\xi_k \leq t<
       \xi_1+\ldots+ \xi_{k+1},
  \end{array}
$$
then $\Pi$ is a Poisson process with parameter $\alpha$.
\end{Proposition}
\noindent{\bf Proof.}It is enough to  show that
\begin{enumerate}
  \item[(i)] $\P(\Pi(t) =k) =
   e^{-\alpha t}\ds\frac{(\alpha t)^k}{k!}\eds$.

  \item[(ii)] For arbitrary $k=1,2,\ldots$ and
  $t_0=0<t_1<\ldots <t_k$:
$$
\begin{array}{lll}
\P\left(\Pi(t_i)-\Pi(t_{i-1})=k_i,\, i=1,\ldots,nk\right)
&=&\ds \prod
_{i=1}^{k} \P\left(\Pi(t_i)-\Pi(t_{i-1})=k_i\right)
\eds\\
&=&\ds \prod
_{i=1}^{k} \P\left(\Pi(t_i-t_{i-1})=k_i\right).
\eds
\end{array}
$$
\end{enumerate}
\vspace{2mm}

\noindent Let us recall that if $\xi $ and $\eta$ are independent random variables in a
normed space $E$, say $E=\R^1$ or $E=\R^d$, with the
laws $\mu=\call(\xi)$, $\nu=\call(\eta)$ then their sum
$\zeta = \xi + \eta $ has the law $\sigma=\call(\xi + \eta )$ equal to the
convolution of $\mu$ and $\nu$:
$$
\sigma(\Gamma)=\int_E\mu(\Gamma -y)\, \nu(dy),
\qquad \Gamma\in \calb(E).
$$
In fact if  $\xi$ and $\eta $ are
independent then
$$
\P(\xi + \eta \in\Gamma)=\E (\chi_\Gamma(\xi + \eta))=
\int_E\chi_\Gamma(x+y)\mu(dx)\nu(dy)=
\int_E\left[\int_E\chi_\Gamma(x+y)\mu(dx)\right]\nu(dy).
\footnote{
We used the notation $\chi_\Gamma(z)=1$ if
$z\in\Gamma$, $\chi_\Gamma(z)=0$ if $z\notin \Gamma$.
 }
$$
But $\int_E\chi_\Gamma(x+y)\mu(dx)= \mu(\Gamma -y)$,
$y\in E$, and the required identity holds.

\noindent If random variables  $\xi_1,\ldots,\xi_k$ are independent,
exponentially  distributed with parameter $\alpha$
then, by an easy inductive argument, the distribution
of $\xi_1+\ldots+ \xi_k$ is a measure on $\R^1$ with the
following density $g_k$:
$$
g_k(x)=\alpha\frac{(\alpha
x)^{k-1}}{(k-1)!}e^{-\alpha x},\qquad
x>0,k=1,2,\ldots.
$$
Note that for $k=1,2,\ldots$
$$
  \begin{array}{l}
  \ds
    \P(\Pi(t)=k)= \P\left(X_1+\ldots+X_k \leq t<
      X_1+\ldots+X_{k+1}\right)
      \eds \\
    \ds = \mathop{\int\int}_{
    \begin{array}{c}\scriptstyle{0<y\leq t<y+z}
    \\
    \scriptstyle{0<z}
    \end{array}
    }
    g_k(y)\left(\alpha e^{-\alpha z}\right) dydz=
\frac{\alpha^{k+1}}{(k-1)!}
    \mathop{\int\int}_{
    \begin{array}{c}\scriptstyle{0<y\leq t }
    \\
    \scriptstyle{t-y<z}
    \end{array}
    }
    y^{k-1}e^{-\alpha y}  e^{-\alpha z}  dydz
    \eds
    \\
    \ds
= \frac{\alpha^{k}}{(k-1)!}
\mathop{\int}_{0<y\leq t}
y^{k-1}e^{-\alpha y}  e^{-\alpha (t-y)}  dy
= \frac{\alpha^{k}}{(k-1)!}
 e^{-\alpha t}\int_0^t y^{k-1} dy
= \frac{(\alpha t)^{k}}{k!} e^{-\alpha t}.
\eds
  \end{array}
$$
This proves $(i)$. The identity $(ii)$ is equivalent to the independence of the random variables
$\Pi(t_i)-  \Pi(t_{i- 1}),\,\,i= 1, \ldots, k$ and will  follow from some elementary properties of
exponentially distributed random variables which will be developed now.
\vspace{2mm}

\noindent Let $\xi_1,\xi_2,\ldots$ be a sequence of independent random
variables such that $\P (\xi_m=1)=p$,  $\P (\xi_m=0)=1-p$,
$m=1,2\ldots$. Let $T=\min \{m\,  ;\,  \xi_m=1\}$. Then
$\P (T=k)=(1-p)^{k-1}p$, $k=1,2\ldots$.
Note that the random variable $T$ is the {\it waiting time for the first  success}  in the sequence $\xi_1,\xi_2,\ldots$ .
\begin{Proposition}\label{exponential1}
Denote by $\mu_n$ the distribution  of the random variable $T/n$ where
$p=\alpha /n$,  $n=1,2\ldots$ and $\alpha$ is a positive
constant. Then $(\mu_n)$ converges weakly to the exponential
distribution with parameter $\alpha$.
\end{Proposition}

\noindent {\bf Proof.} The characteristic
function of the distribution of $T$ is
$$
\E\left( e^{i\lambda T}\right) =
\sum_{k=1}^\infty  e^{i\lambda k} (1-p)^{k-1}p
=
pe^{i\lambda }\frac{1}{1-(1-p)e^{i\lambda }},
\qquad \lambda\in\R^1.
$$
Note that
$$\ds
\widehat{\mu}_n (\lambda) =\frac{\alpha}{n}
e^{i\frac{\lambda}{n}}
\frac{1}{1-(1-\frac{\alpha}{n})e^{i\frac{\lambda}{n} }},
\qquad \lambda\in\R^1
\eds
$$
and $\widehat{\mu}_n (\lambda) \to \ds\frac{\alpha}{
\alpha -i\lambda}\eds$,  $\lambda\in\R^1$, and the
characteristic function of the exponential distribution with
parameter $\alpha >0$ is equal to
$$
\alpha\int_0^\infty
e^{i\lambda x } e^{-\alpha  x}dx =
\frac{\alpha}{\alpha -i\lambda},
\qquad  \lambda\in\R^1,
$$
so the result follows. \qed
\vspace{2mm}

\noindent To continue the proof of ii), let, for each natural $n$,\,\,\,\,  $\xi_1^n,\xi_2^n ,\ldots$
be a sequence  of independent random
variables such that
$$
\P (\xi_m^n=1)= \alpha/n\,,  \,\P (\xi_m^n=0)=1-\alpha/n, m=1,2\ldots,
$$
and let
$\pi^n(m)$ be the number of successes in the sequence $\xi_1^n,\xi_2^n,\ldots, \xi_m^n$.
 Define $m_l^n = [nt_l],$ where $[s]$ denotes the integer part of $s$.
By the very definition, for each $n$, the random
variables $\pi^{n}(m_1),\,\pi^{n}(m_2) -\pi^{n}(m_1),\ldots, \pi^{n}(m_{k}) -\pi^{n}(m_{k-1})$, are independent. By a
straightforward generalisation of Proposition \ref{exponential1} the laws of
$$
\pi^{n}(m_1),\,\pi^{n}(m_2) -\pi^{n}(m_1), \ldots, \pi^{n}(m_{k}) -\pi^{n}(m_{k-1})
$$
converge weakly, as $n$ tends to $+\infty$, to the law of
$$
\Pi(t_1),\,\Pi(t_2) -\Pi(t_1),\ldots, \Pi(t_{k}) -\Pi(t_{k-1}),
$$
and the required independence follows. \qed
\vspace{2mm}

\noindent For a more direct proof of ii) see e.g. \cite{billingsley}.
\vspace{2mm}

\noindent The following proposition gives another intuitive characterisation of
exponential random variables.
\begin{Proposition}
Assume that $\xi$ is a positive random variable such that
for all $t,s>0$:
$$
\P(\xi>t+s|\xi>t)=\P(\xi>s).
$$
Then there exists $\alpha >0$ such that
\begin{equation}\label{exponential}
\P(\xi >t)=e^{-\alpha t}.
\end{equation}
\end{Proposition}

\noindent {\bf Proof.} Let $G(s)=\P(alpha >s)$ for $s>0$. Then $G$ satisfies the
functional equation: $G(t+s)=G(s)G(t)$
for all $t,s>0$. The function $G$ is right continuous
and positive, so the functional equation has a unique
solution of the required form. \qed

\section{The Steinhaus construction}

The construction of the Wiener and Poisson processes, presented in the previous section, took for granted existence of
sequence of independent random variables with prescribed  distributions. Existence of such sequences  follows from
the Kolmogorov existence result.  In this section we show
that such sequences can be directly  constructed if the Lebesgue
measure on $[0,1)$ is taken for granted. The construction goes
back to H. Steinhaus \cite{bib10}.

\begin{Theorem}\label{steinh}
  Let $\mu_1, \mu_2,\ldots$ be a sequence of probability measures
on $\R^1$. There exists a sequence $(\xi_n)$ of independent
real-valued random variables defined on $([0,1), \calb ([0,1)),
\P)$, where $\P$ is the Lebesgue measure on $[0,1)$, such that
the distributions of $\xi_n$ are exactly the measures $\mu_n$,
$n=1,2,\ldots$.
\end{Theorem}

\noindent {\bf Proof.}
An arbitrary number $\omega\in [0,1)$, with the exception of a
countable set of numbers of the form $k/2^m$, $k,m=0,1,\ldots$,
can be uniquely represented in the form
$$
\omega= \sum_{n=1}^{+\infty} \frac{\epsilon_n}{2^n},
\qquad {\rm where}\qquad \epsilon_n=0\; {\rm or\;} 1,
\quad n=1,2,\ldots.
$$
The sequence $(\epsilon_1, \epsilon_2,\ldots)$ is called the
dyadic expansion of $\omega$. Define
$$
X_n(\omega)=\epsilon_n,\qquad \omega\in [0,1),\; n=1,2,\ldots.
$$
It is easy to see that if $\epsilon_i=0$ or $1$, $i=1,2,\ldots,
n$,
$n=1,2,\ldots$,
$$
\{\omega\in [0,1)\; :\;
X_1(\omega)=\epsilon_1,\ldots,
X_n(\omega)=\epsilon_n\} =
\left[ \frac{\epsilon_1}{2^1}+\ldots +\frac{\epsilon_n}{2^n},
\frac{\epsilon_1}{2^1}+\ldots +\frac{\epsilon_n}{2^n}
+\frac{1}{2^n}\right).
$$
This implies that
$$\P(X_1=\epsilon_1,\ldots, X_n=\epsilon_n)=
\frac{1}{2^n}=\prod_{i=1}^n \P(X_i=\epsilon_i)
$$
and therefore the random variables $(X_n)$ are independent.

Let $J_i=\{n_{i,j}\; : \; j=1,2,\ldots\}$,
$i=1,2,\ldots$ be disjoint subsets of the set of natural numbers.
Then the random variables
$$
Z_i=\sum_{j=1}^{+\infty} \frac{X_{n_{i,j}}}{2^j},
\qquad i=1,2,\ldots
$$
are independent. We show that they have uniform distribution on
$[0,1)$. Let, for instance, $i=1$ and define
$$
S_n=\sum_{j=1}^{n} \frac{X_{n_{1,j}}}{2^j}.
$$
Then $\P(S_n=k/2^n)= 1/2^n$, $k=0,1,2,\ldots, 2^n-1$, and
therefore, for $t\in [0,1)$, $\P(S_n\leq t)\to t$. On the other
hand, $\P(S_n\leq t)\to \P(Z_1\leq t)$. Thus $\P(Z_1\leq t)=t.$

Now let $\mu$ be a probability measure on $\R^1$ and let $F=F_\mu$
be its distribution function. Define the inverse $F^{-1}$ of $F$ by
$$
F^{-1}(s)=\inf \{ t\; :\; s\leq
F(t)\}, s\in [0,1).
$$
If $Z$  has uniform distribution on $[0,1)$
then the distribution of $F^{-1}(Z)$ is exactly $\mu$. Indeed, from
the definition of $F^{-1}$, for $s\in [0,1)$ and $t\in
(-\infty,+\infty)$, $s\leq F(F^{-1}(s))$ and $F^{-1}(F(t))\leq t$.
Therefore, $\{s\; :\; F^{-1}(s)\leq t\}=[0,F(t)]$ and, consequently,
$\P(\omega\; :\;F^{-1}(Z(\omega))\leq t\}=
\P(\omega\; :\;Z(\omega)\leq F(t)\}= F(t)$.

To finish  the proof of the theorem, it is sufficient to remark
that if $F^{-1}_1, F^{-1}_2,\ldots$ are functions, defined as above,
corresponding to the measures  $\mu_1, \mu_2,\ldots$  and
$Z_1, Z_2,\ldots$ are real-valued random variables uniformly
distributed in $[0,1)$ then the sequence
$\xi_1 = F^{-1}_1(Z_1), \xi_{2} = F^{-1}_2(Z_2)$, $\ldots$ has all the properties required.
\qed

\chapter{Analytic approach to  L\'evy processes}
For  Le\'vy processes
 a complete characterisation of finite dimensional
distributions is possible. It turns out that L\'evy processes are far  reaching generalisation
of the Poisson process. The main result of the chapter is the so called
L\'evy-Khinchin formula. We
derive also some basic properties of the corresponding transition semigroups both in $UC_b(\R^d)$ and in $L^{p}(\R^d)$.
Subordination procedure is discussed as well.
\section{Infinitely divisible families}
We recall  that a family $\mu_t$, $t\geq 0$ of probability measures on $\R^d$ is
infinitely divisible if

1) $\mu_0=\delta_{\{0\}}$,

2) $\mu_{t+s}=\mu_t*\mu_s$ for all $t,s\geq 0$,

3) $\mu_t\Rightarrow \delta_{\{0\}}$ as $t\to 0$.

\noindent The building blocks of the infinite divisible families are the {\it compound Poisson}, the {\it shift}
and the {\it Gaussian}
 families, which we have already met in a slightly different context.
\begin{Proposition} Let $\nu$ be a probability measure on   $\R^d$ and
$\lambda $ a positive constant. Then
$$
\mu_t=e^{-\lambda t} \sum
_{n=0}^{\infty} \frac{(\lambda t)^n}{n!}
\nu^{*n},\; t>0,
\qquad
\mu_0=\delta_{\{0\}},
$$
is an infinitely divisible family, called a compound Poisson family with parameters $\lambda, \nu$.
We  are using the convention $\nu^{*0}=\delta_{\{0\}}$.
\end{Proposition}
\noindent{\bf Proof.}
It is clear that $\mu_t(\R^d)=1$, $t\geq 0$. .
$$
  \begin{array}{lll}
    \mu_t*\mu_s & =&\ds
e^{-\lambda (t+s)} \sum_{n,m}
\frac{(\lambda t)^m}{m!}\frac{(\lambda s)^n}{n!}
\nu^{*(n+m)}
     \eds\\
      & = &\ds
e^{-\lambda (t+s)} \sum_{k=0}^\infty
\nu^{*k}\sum_{n+m=k}
\frac{(\lambda t)^m(\lambda s)^n}{m!\, n!}
\eds\\
      & = &\ds
e^{-\lambda (t+s)} \sum_{k=0}^\infty
\nu^{*k}
\frac{1}{k!} (\lambda t+\lambda s)^k
\eds\\
      &=&\mu_{t+s}.\; \qed
  \end{array}
$$
To check 3) fix a bounded continuous function $\phi$. Then
$$
\int_{\R^d}\phi (x) \mu_{t}(dx) = e^{- \lambda t} \phi (0) + e^{- \lambda t}\sum_{m=1}^
{+\infty}\frac{(\lambda t)^m}{m!}\int_{\R^d}\phi (x) \nu^{*m}(dx).
$$
As $t$ tends to $0$ the sum in the above expression tends to $0$ as well and the required property follows. \qed

\noindent If $(\mu_t)$ are compound Poisson distributions the
corresponding process $X(t)$, $t\geq 0$, is called a
{\it compound Poisson process}. It can be constructed as follows.
\vspace{2mm}

\noindent Let $Z_1,Z_2,\ldots$ be a sequence of independent
random variables with identical laws equal to the
measure $\nu$. Let, in addition, $\Pi(t)$, $t\geq 0$,
be a Poisson process, with parameter $\lambda$,
independent of $Z_1,Z_2,\ldots$. Define
$$
  \begin{array}{llll}
    X(t) & =  0& {\rm if} & \Pi(t)=0 \\
      & = Z_1+\ldots +Z_k  & {\rm if }& \Pi(t)=k.
  \end{array}
$$
Then $X(t)$, $t\geq 0$, is a compound Poisson process
with parameters $\lambda,\nu$. For instance if $\Gamma$
is a Borel subset of $\R^d$ then
$$
  \begin{array}{l}
    \ds
\P(X(t)\in\Gamma )=
\P\left(X(t)\in\Gamma {\rm \; and \;} \Pi(t)=0\right)
+\sum
_{k=1}^{\infty}
\P\left(X(t)\in\Gamma {\rm \; and \;} \Pi(t)=k\right)
    \eds \\
    \ds
= e^{-\lambda t}\delta_{\{0\}}(\Gamma)
+\sum
_{k=1}^{\infty}
\P\left(Z_1+\ldots +Z_k\in\Gamma {\rm \; and \;} \Pi(t)=k\right)
    \eds \\
    \ds
    = e^{-\lambda t}\delta_{\{0\}}(\Gamma)
+\sum
_{k=1}^{\infty}
\P\left(Z_1+\ldots +Z_k\in\Gamma \right)
\P( \Pi(t)=k)
    \eds \\
    \ds
    = e^{-\lambda t}\delta_{\{0\}}(\Gamma)
+\sum
_{k=1}^{\infty}
\nu^{*k}(\Gamma )e^{-\lambda t}
\frac{(\lambda t)^k}{k!}=e^{-\lambda t}
\sum
_{k=0}^{\infty}
\frac{(\lambda t)^k}{k!}\nu^{*k}(\Gamma ). \quad \qed
    \eds
  \end{array}
$$
\vspace{2mm} Fix $a\in\R^d$ and define
$$
\mu_t=\delta_{\{ta\}}\qquad t\geq 0.
$$
Then obviously $(\mu_t)$ is an infinitely divisible family and is called a {\it shift} family.
\vspace{2mm}

\noindent {\it Gaussian infinitely divisible family} on $\R^d$ is parameterised by a vector $a\in \R^d$ and
a symmetric, non-negative, $d\times d$ matrix $Q$ and given by the formula:
$$
\mu_t = N (ta, tQ),\,\,\,t\geq 0.
$$
In particular, if $Q$ is non-degenerate, then $\mu_t$ has a density and
$$
\mu_t(dx)=\frac{1}{\sqrt{(2\pi t )^d detQ}}\,
e^{-\frac{1}{2t}<Q^{-1}(x-a),x-a)>}dx,\qquad t\geq 0.
$$
\vspace{2mm}

\section{The L\'evy-Khinchin formula}

Let $\mu_t$, $t\geq 0$, be the family of compound
Poisson distributions with the intensity parameters $\lambda >0$
and the jump measure $\nu$.
Then it is easy to calculate the
characteristic function of $\mu_t$, $t\geq 0$:
$$
  \begin{array}{l}\ds
    \widehat{\mu}_t(\xi)=\int_{\R^d}e^{i\langle \xi,x\rangle}
    \mu_t(dx) =  e^{-\lambda t}\sum_{n=0}^\infty
    \frac{(\lambda t)^n}{n!}\left( \widehat{\nu}(\xi)\right)^n
    \eds \\\ds
    = e^{-\lambda t}e^{\lambda t\widehat{\nu}(\xi)}
    =e^{-  t\psi (\xi)},\qquad t\geq 0,\xi\in\R^d,
    \eds
  \end{array}
$$
where
$$
\psi(\xi)= \lambda\int_{\R^d}
\left(1-e^{i\langle \xi,y\rangle}\right)\, \nu(dy).
$$
The function $\psi$ is called the exponent of the
family $(\mu_t)$. For the shift family
$\mu_t=\delta_{\{ta\}}$, we have
$$
\widehat{\mu}_t(\xi) =\int_{\R^d}
e^{i\langle \xi,x\rangle}\delta_{\{ta\}}(dx)
=e^{-  t\psi (\xi)}
$$
where $  \psi (\xi)=-i\langle a,\xi\rangle$. For the
Gaussian family $\mu_t=\caln (0,tQ)$,
$\widehat{\mu}_t(\xi)
=e^{-  t\frac{1}{2}\langle Q\xi ,\xi\rangle}$,
so the exponent $\psi$ is of the form
$$
\psi(\xi)=\frac{1}{2}\langle Q\xi ,\xi\rangle,
\qquad \xi\in\R^d.
$$
The following general characterisation, due to L\'evy
and Khinchin,  takes place.

\begin{Theorem}
For arbitrary infinitely divisible family $(\mu_t)$ on
$\R^d$ there exist a vector $a\in\R^d$, a symmetric matrix  $Q\geq 0$
and a nonnegative measure $\nu$ concentrated on
$\R^d\backslash \{ 0\}$  satisfying
\begin{equation}\label{lk1}
\int_{\R^d}\left(|y|^2\wedge 1\right)\, \nu(dy)<\infty,
\end{equation}
such that
\begin{equation}\label{lk2}
\widehat{\mu}_t(\xi)=e^{-t\psi(\xi)},
\end{equation}
where
\begin{equation}\label{lk3}
\psi(\xi)=i\langle a,\xi\rangle
+\frac{1}{2}\langle Q\xi ,\xi\rangle +
\int_{\R^d}\left(1-e^{i\langle \xi,y\rangle}+
\frac{i\langle \xi,y\rangle}{1+|y|^2}\right)\,
\nu(dy).
\end{equation}
Conversely for given $a\in\R^d$, $Q\geq 0$   and a
 measure $\nu$  satisfying (\ref{lk1}), formulae
(\ref{lk2})-(\ref{lk3}) define characteristic functions
of an infinitely divisible family $(\mu_t)$.
\end{Theorem}
Before going to the proof of the theorem we discuss some examples.

\noindent {\bf Examples.}

\noindent {\bf 1)} If $\nu$ is a finite measure
and $\lambda =\nu(\R^d)$ then
$$\psi(\xi)=i\langle a,\xi\rangle
+\frac{1}{2}\langle Q\xi ,\xi\rangle +
\lambda \int_{\R^d}\left(1-e^{i\langle \xi,y\rangle}
\right)\frac{1}{\lambda}\,
\nu(dy)
-i\langle
 \int_{\R^d}
\frac{y}{1+|y|^2}\,
\nu(dy),\xi\rangle,
$$
so $\psi$ is a sum of the exponents introduced at the
beginning.

\noindent {\bf 2)} Assume that $a=0$, $Q=0$ and the measure $\nu$ on $\R^d$ is of the form:
\begin{equation}\label{frazionario}
\nu(dx) =\frac{c}{|x|^{d+\alpha}}dx,
\end{equation}
where $c>0$, $\alpha\in (0,2)$. Then the conditions of
the L\'evy-Khinchin theorem are satisfied and we have the following result.

\begin{Proposition}\label{frazion}
If $a=0$, $Q=0$ and   $\nu$ is given by
(\ref{frazionario}) then
$$
\psi (\xi)=c_1|\xi|^\alpha,\qquad\xi\in\R^d,
$$
where $c_1$ is a positive constant.
\end{Proposition}

\noindent {\bf Proof.}
In the present case
$$
\psi (\xi)=\int_{\R^d}
\left(1-\cos \langle \xi, x\rangle\right)
\frac{c}{|x|^{d+\alpha}}dx.
$$
It is clear that $\psi$ is invariant under rotation
around $0$. Moreover if $\sigma >0$ then
$$
\psi (\sigma\xi)=\int_{\R^d}
\left(1-\cos \langle \xi, \sigma x\rangle\right)
\frac{c}{|x|^{d+\alpha}}dx.
$$
Changing coordinates $y=\sigma x$ one gets the
result.\qed

\noindent The infinitely divisible families described in the
proposition are called $\alpha$-{\it stable}, rotationally invariant families.

\noindent{\bf Proof of the theorem.}
We show first that (\ref{lk1})-(\ref{lk3}) define
infinitely divisible laws. For $r>0$ define
$$
\psi_r(\xi)=
\int_{|y|\geq r}\left(1-e^{i\langle \xi,y\rangle}+
\frac{i\langle \xi,y\rangle}{1+|y|^2}\right)\,
\nu(dy),\qquad \xi\in\R^d,
$$
then $\psi_r$ is the exponent of an infinitely
divisible family. Note that if $|\xi|,|y|\leq R$ then
for some $c>0$:
$$
\left| 1-(\cos \langle\xi,y\rangle)\right|
\leq c |\langle\xi,y\rangle|^2\leq c|y|^2,
$$

$$
\left|  (\sin \langle\xi,y\rangle)
-\frac{\langle\xi,y\rangle}{1+|y|^2}
\right|
\leq c |\sin \langle\xi,y\rangle-\langle\xi,y\rangle|
+\frac{|\langle\xi,y\rangle |\; |y^2|}{1+|y|^2}
\leq c |y|^2.
$$
Consequently
$$
\int_{|y|\leq r}\left|1-e^{i\langle \xi,y\rangle}+
\frac{i\langle \xi,y\rangle}{1+|y|^2}\right|
\frac{1}{|y|^2}|y|^2\,
\nu(dy)\to 0,
$$
as $r\to 0$, uniformly for $\xi$ in a bounded set. Thus
$\psi_r$ converge uniformly on bounded sets to a
continuous function $\psi$. This shows that the
functions defined by (\ref{lk2}) are characteristic
functions of some measures. Since $\widehat{\mu}_t\cdot
\widehat{\mu}_s=
\widehat{\mu}_{t+s}$, and $\widehat{\mu}_t\to 1$ as
$t\downarrow 0$, the family $(\mu_t)$ determined by
(\ref{lk1})-(\ref{lk3}) is infinitely divisible.

\noindent To prove that any infinitely divisible family is of the
form (\ref{lk1})-(\ref{lk3}) we will use a classical
result in the theory of $C_0$-semigroups $(P_t)$ on a
Banach space $X$.

\begin{Proposition}\label{formesponenzclass}
If $(P_t)$ is a $C_0$-semigroup  on   $X$ then for some
$\omega >0$ and $M>0$, $|P_t|\leq e^{\omega t}M$.
Moreover if
$$
R_\lambda = \int_0^{+\infty} e^{-\lambda t}P_tdt,
\qquad \lambda >\omega,
$$
then
$$
P_tf =\lim_{\lambda \to +\infty} e^{tA_\lambda }f,
$$
where
$$
A_\lambda =\lambda \left(\lambda R_\lambda -I\right).
\qquad \lambda >\omega,
$$
\end{Proposition}

\noindent Let now $(\mu_t)$ be an infinitely divisible family.
Define
$$
P_tf(x)=\int_{\R^d} f(x+y)\, \mu_t(dy),
\qquad f\in  UC_b (\R^d),
$$
where $UC_b (\R^d)$ denotes the space of all bounded, uniformly
continuous functions on $R^{d}$, equipped with the supremum norm.
Then, by Proposition \ref{semigroup}, $(P_t)$ is a $C_0$-semigroup on $UC_b (\R^d)$
and for $\lambda >0$
$$
\lambda R_\lambda f(x) =
\lambda \int_0^{+\infty}e^{-\lambda t}\left[
\int_{\R^d}f(x+y)\, \mu_t(dy)\right] \, dt
=\int_{\R^d}f(x+y)\, \eta _\lambda (dy),
$$
where $\eta _\lambda$ is the probability measure
$\eta_\lambda = \lambda \int_0^{+\infty}e^{-\lambda
t}\mu_t\, dt$. (The question of measurability of $t\to \mu_t(\Gamma)$ arises
here. Note that function   $t\to \int f\mu_t(dy)$ is
continuous for $f\in C_b(\R^d)$. Taking a sequence
$f_n\downarrow \chi_\Gamma$  as in a previous proposition we
conclude that measurability of $t\to \mu_t(\Gamma)$ holds
for $\Gamma$ closed. By Dynkin's $\pi -\lambda$ lemma it
holds for $\Gamma$ Borel.)

\noindent The
semigroup
$$
P_t^\lambda = e^{tA_\lambda}
$$

$$
=e^{-\lambda t}e^{t\lambda^2 R_\lambda}
=e^{-\lambda t}\sum_{n=0}^\infty
\frac{(t\lambda)^n}{n!}(\lambda R_\lambda)^n.
$$
 corresponds to the compound Poisson family with the
intensity $\lambda >0$ and the jump measure
$\eta_\lambda$. It follows from the proposition that
these compound Poisson families converge, for each
$t>0$,  weakly to $\mu_t$. Strictly speaking the proposition gives convergence
for every function
belonging to $UC_b (\R^d)$ but the generalization to all functions from
$C_b (\R^d)$ is
immediate. More precisely we have that

\begin{equation}\label{lk4}
\widehat{\mu}_t(\xi) =\lim_{\lambda \to +\infty}
e^{-t\lambda \int_{\R^d} \left( 1- e^{i\langle
\xi,y\rangle}\right)\,\eta_\lambda (dy) }.
\end{equation}
Set
\begin{equation}\label{lk5}
\psi_\lambda (\xi) =
\int_{\R^d} \left( 1- e^{i\langle
\xi,y\rangle}\right)\,\nu_\lambda (dy),
\end{equation}
where
$$ \nu_\lambda =\lambda\eta_\lambda$$
is
a measure of total mass $\lambda$.

\noindent We show that (\ref{lk4}) implies that
\begin{equation}\label{lk6}
\psi_\lambda (\xi)\to \psi(\xi),\qquad
{\rm as}\; \lambda\to +\infty,
\end{equation}
where $\psi$ is a continuous function. To see this note
that $\widehat{\mu}_t\to 1$ uniformly on bounded sets
as $t\to 0$. Therefore, for arbitrary $r>0$, there
exists $t>0$ such that if $|\xi|\leq r$ then,
$$
\left| \widehat{\mu}_t(\xi)-1\right|
\leq \frac{1}{4}.
$$
Let $(\mu_t^\lambda)$ be the family corresponding to
$\lambda$. Since
$\widehat{\mu}_t^\lambda\to\widehat{\mu}_t$, uniformly
on bounded sets as $\lambda\to +\infty$,  therefore for
some $\lambda_0>0$ and $\lambda >\lambda_0$
$$
\left| e^{-t\psi_\lambda(\xi)}-1\right|
\leq \frac{1}{2}.
$$
Consequently
$$
-t \, {\rm Re\;}\psi_\lambda(\xi)=\ln
\left|\widehat{\mu}_t^\lambda(\xi)\right|,
$$
$$
-t\,  {\rm Im\;}\psi_\lambda(\xi)=
{\rm Arg\;}\widehat{\mu}_t^\lambda(\xi) ,
$$
where ${\rm Arg\;}z$ for $z$ such that
$|z-1|<\frac{1}{2}$ is a well defined continuous
function. This shows existence and continuity of the
limit $\psi$ of $\psi_\lambda$. Thus
$$
\widehat{\mu}_t=e^{-t\psi}
$$
and we have to show that $\psi$ is of the form
(\ref{lk1})-(\ref{lk3}).

\noindent Let us start with some heuristic considerations. In
view of (\ref{lk5})-(\ref{lk6}) it seems natural to
investigate convergence of $\nu_\lambda$.
We can not expect convergence for $\lambda\to +\infty$,
since $\nu_\lambda (\R^d)=\lambda$, so the
total mass diverges to $\infty$. However, since
$\nu_\lambda =
\int_0^{+\infty}\lambda^2 e^{-\lambda t}\mu_t\, dt$
and $\mu_0=\delta_{\{0\}}$, we see that
$\nu_\lambda $ accumulates mass near $0$.
So it is reasonable to expect convergence of a measure
of the form $K(y)\nu_\lambda (dy)$, where
$K$ is a function that tends to $0$ as $|y|\to 0$. A
reasonable attempt would be $K(y)=1\wedge |y|^2$
(compare (\ref{lk1})). For technical reasons, we will
take a smoothed version of this function.

Now we come back to precise arguments.
 Let us fix $h>0$ and define a function $K$:
$$
K(y)= \frac{1}{(2h)^d}\int_{[-h,h]^d}
\left( 1-\cos \langle \sigma ,y\rangle\right)\,
d\sigma =
\left(1-\prod_{k=1}^d\frac{\sin hy_k}{hy_k}\right),
\qquad y=(y_1,\ldots ,y_d)\in\R^d.
$$
Then $K(y)>0$ for $y\neq 0$, $\lim_{y\to 0}
\frac{1}{|y|^2}K(y)=\frac{h^2}{3}$,
$\lim_{|y|\to +\infty} K(y)=1$. Denote
$$
\widetilde{\nu}_\lambda(dy)
= K(y)\nu_\lambda(dy), \qquad \lambda >0.
$$
We will show that
$$\widetilde{\nu}_\lambda
\Rightarrow
\widetilde{\nu}
$$
where $\widetilde{\nu}$
is a finite measure on $\R^d$. To this end let us
compute the Fourier transform $\widetilde{\psi}_{\lambda}$ of
$\widetilde{\nu}_\lambda$. For $\xi\in\R^d$,
$$
  \begin{array}{l}
    \ds
    \int_{\R^d}e^{i\langle \xi,y\rangle} \,
    \widetilde{\nu}_\lambda(dy)
    =
\int_{\R^d}e^{i\langle \xi,y\rangle}K(y)\,
\nu_\lambda(dy)
\eds
     \\
    \ds
= \int_{\R^d}e^{i\langle \xi,y\rangle}
\left(1-\frac{1}{(2h)^d}\int_{[-h,h]^d}
e^{i \langle \sigma,y\rangle} \, d\sigma\right)
\,\widetilde{\nu}_\lambda (dy)
=
\frac{1}{(2h)^d}\int_{[-h,h]^d}
\psi_\lambda (\xi+\sigma)\, d\sigma - \psi_\lambda (\xi),
\eds  \end{array}
$$
where $\psi_\lambda $ has been defined in (\ref{lk5}).
Since $\psi_\lambda\to\psi$ uniformly on bounded sets
as $\lambda\to +\infty$, the Fourier transform of
$\widetilde{\nu}_\lambda$ converges to the Fourier
transform $\widetilde{\psi}$ of a bounded measure $\widetilde{\nu}$,
$$
\widetilde{\psi}(\xi) =
\int_{\R^d}e^{i\langle \xi,y\rangle}
\,\widetilde{\nu}(dy).
$$

\begin{Lemma}
If a sequence of nonnegative measures $(\mu_n)$
converges weakly to a measure $\mu$ and a closed
set $\Gamma$ is such that $\mu(\partial
\Gamma)=0$ then the measures $\mu_n^\Gamma$, equal to
the the restrictions of  $\mu_n$ to $\Gamma$,  converge
weakly to $\mu^\Gamma$.
\end{Lemma}

\noindent {\bf Proof.}
The lemma will be proved later, together with other
elementary properties of the weak convergence. Below we will take
$\Gamma=\{ r\leq |y| \leq R\}$ \qed

\noindent We fix numbers $0<r<R<+\infty$ such that the
$\widetilde{\nu}$ measure of the spheres $\{|x|=r\}$,
$\{|x|=R\}$ is zero. Then
$$
  \begin{array}{l}\ds
    \psi_\lambda (y) =
-i\int_{|y|<R} \sin \langle \xi,y\rangle\,
\nu_\lambda (dy)
+
\int_{r\leq |y|<R}\left(1- \cos \langle \xi,y\rangle
\right)\,
\nu_\lambda (dy)
    \eds \\\ds
    +
\int_{|y|<r} \left(1- \cos \langle \xi,y\rangle
\right)\,
\nu_\lambda (dy)
+
\int_{R\leq |y|}
\left(1- e^{i \langle \xi,y\rangle}
\right)\,
\nu_\lambda (dy).
\eds
  \end{array}
$$
The limit of the last integral, as $\lambda\to
+\infty$, is equal to
$$
\int_{R\leq |y|}
\left(1- e^{i \langle \xi,y\rangle}
\right)\,
 \nu (dy),
$$
where $ \nu (dy)=K^{-1}(y)\widetilde{\nu}(dy)$.
Therefore the limit of the first integral exists as
well. Since
$$\begin{array}{l}\ds
\int_{|y|<R} \sin \langle \xi,y\rangle\,
\nu_\lambda (dy)
\eds\\\ds
=
\int_{|y|<R} \frac{\sin \langle \xi,y\rangle
-\langle \xi,y\rangle}{|y|^2}\left(
|y|^2K^{-1}(y)\right)\,
\widetilde{\nu}_\lambda (dy)
+
\langle \xi,\int_{|y|<R} y\,
\nu_\lambda (dy)\rangle
\eds\\ \ds =I_1+I_2,
\eds\end{array}
$$
and the functions $\ds y\to \frac{\sin \langle
\xi,y\rangle
-\langle \xi,y\rangle}{|y|^2}\eds$,
$y\to |y|^2K^{-1}(y)
$
 are continuous and bounded on
$\R^d$ therefore the limit of $I_1$, as $\lambda $ tends to $\infty$,
exists and is equal
to
$$
\int_{|y|<R}\left(\sin \langle \xi,y\rangle
-\langle \xi,y\rangle \right)\,
 \nu  (dy).
$$
Consequently the limit of $I_2$ exists as well and is
 of the form $\langle
\xi,a\rangle$ for some $a\in\R^d$. In a similar
 way one can show that
$$
\lim_{r\to 0}\left[
\lim_{\lambda\to +\infty}
\int_{r\leq |y|<R}\left(1- \cos \langle \xi,y\rangle
\right)\,
\nu_\lambda (dy)\right]
=
\int_{0< |y|<R}\left(1- \cos \langle \xi,y\rangle
\right)\,
 \nu  (dy).
$$
It remains to prove that for some $Q\geq 0$ and all
$\xi\in\R^d$
$$
\frac{1}{2}\langle Q\xi ,\xi\rangle =
\lim_{r\to 0}\left[
\lim_{\lambda\to +\infty}
\int_{  |y|<r}\left(1- \cos \langle \xi,y\rangle
\right)\,
\nu_\lambda (dy)\right].
$$
However
$$\begin{array}{l}\ds
\int_{  |y|<r}\left(1- \cos \langle \xi,y\rangle
\right)\,
\nu_\lambda (dy)
\eds \\\ds
=
\int_{  |y|<r}\frac{1- \cos \langle \xi,y\rangle
-\frac{1}{2} \langle \xi,y\rangle^2
}{|y|^2} |y|^2 K^{-1}(y)\,
\widetilde{\nu}_\lambda (dy)
+
\frac{1}{2}\int_{  |y|<r}
\langle \xi,y\rangle^2 \,
\nu_\lambda (dy)
\eds\\ \ds
=J_\lambda (r,\xi) +\frac{1}{2}
\langle Q_{r,\lambda}\, \xi,\xi\rangle.
\eds\end{array}
$$
It follows, again from the weak convergence of
$\widetilde{\nu}_\lambda\Rightarrow
\widetilde{\nu}$, that
$$
J_\lambda (r,\xi)\mathop{\longrightarrow}_{\lambda\to
+\infty}
\int_{  |y|<r}\left[
1- \cos \langle \xi,y\rangle
-\frac{1}{2} \langle \xi,y\rangle^2\right] \, \nu(dy).
$$
Consequently also $Q_{r,\lambda}\to Q_{r}$ as
$\lambda\to +\infty$. Setting now $r\to 0$
and taking the limit along a
sequence $r_n\to 0$ such that
$\widetilde{\nu}(|x|=r_n)=0$, we get the desired
result. \qed

\section{Infinitely divisible
families  and semigroups}

\subsection{Infinitely divisible families on $[0,+\infty)$}

Similarly as on $\R^d$, a family of probability measures $(\mu_t)_{t\geq 0}$ on
$[0,+\infty)$ is called infinitely divisible if and
only if

$i)$ $\mu_0=\delta_{\{ 0\} }$, $ii)$
$\mu_{t+s}=\mu_t
*
\mu_s$, $t,s\geq 0$, $iii)$ $\mu_t\Rightarrow
 \delta_{\{ 0\}}$ as $t\downarrow 0$.
\vspace{2mm}

\noindent The proper tool to study infinitely divisible families
on $[0,+\infty)$ are Laplace transforms. Denote by
$\widetilde{\mu}$ the Laplace transform of a measure on
$[0,+\infty)$. Thus
$$
\widetilde{\mu}_t(\xi ) =\int_0^{+\infty}
e^{-\xi x}\mu_t(dx),
\qquad \xi \geq 0,\, t\geq 0.
$$
If $\nu$ is a probability measure concentrated on
$[0,+\infty)$
 and $\lambda >0$ a
positive constant then
$$
\mu_t=e^{-\lambda t}\sum_{n=0}^\infty
\frac{(\lambda t)^n}{n!}\nu^{*n},
\qquad t\geq 0,
$$
is an infinitely divisible family on $[0,+\infty)$. It
is called the compound Poisson family. Note that, in
this case,
$$
\widetilde{\mu}_t(\xi ) =e^{-t\psi (\xi)},\qquad
{\rm where}\quad
\psi (\xi)= \lambda \int_0^{+\infty} \left(
1-e^{-\xi x}\right) \, \nu (dx).
$$
In a similar way as for $\R^d$ one can
show that the following result holds.

\begin{Theorem}
Laplace transforms $(\widetilde{\mu}_t)$ of an
infinitely divisible family are of the form:
\begin{equation}\label{laplacelk1}
\widetilde{\mu}_t(\xi ) =e^{-t\psi (\xi)},\qquad
{\rm where}\quad
\psi (\xi)= \lambda \int_0^{+\infty} \left(
1-e^{-\xi x}\right) \, \nu (dx),
\end{equation}
where $\nu$ is a nonnegative measure on $(0,+\infty)$
such that
\begin{equation}\label{laplacelk2}
\nu ([1,+\infty))<\infty,\qquad
\int_0^1x\, \nu(dx)<\infty.
\end{equation}
Conversely, the formula (\ref{laplacelk1}) defines an
infinitely divisible family if $\nu$ satisfies
 (\ref{laplacelk2}).
\end{Theorem}
We also have

\begin{Proposition}
Assume that $\nu(dx) =\frac{c}{x^{1+\beta}}dx$,
$\beta\in (0,1)$, $c>0$, then
$$
\psi (\xi)=c_1\, \xi^\beta,\qquad\xi\geq 0,
$$
for some    constant $c_1>0$.
\end{Proposition}

\noindent {\bf Proof.}
The same as of proposition \ref{frazion}.

\subsection{Subordination}

\begin{Theorem}
Let $(\mu_t)$, $(\nu_t)$ be infinitely divisible
families on $\R^d$ and  $[0,+\infty)$
 with exponents $\psi$
and $\phi$ respectively. Then the formula
$$
\sigma_t=\int_0^\infty \mu_s\, \nu_t(ds),\qquad t\geq 0,
$$
defines an infinitely divisible family on $\R^d$ with
 exponent $\phi (\psi )$.
\end{Theorem}

\noindent {\bf Proof.}
We check first that $\sigma_t*\sigma_u=\sigma_{t+u}$.
Indeed
$$
  \begin{array}{lll}
    \sigma_t*\sigma_u&= &\ds
\left[ \int_0^\infty \mu_s\, \nu_t(ds)\right] *
\left[ \int_0^\infty \mu_r\, \nu_u(dr)\right]
\eds
     \\
    &= & \ds
 \int_0^\infty  \int_0^\infty
 \mu_s * \mu_r\, \, \nu_t(ds) \nu_u(dr)
 \eds
  \\
    &= & \ds
 \int_0^\infty  \int_0^\infty
 \mu_{s +r}\, \, \nu_t(ds) \nu_u(dr)
 \eds
 \\
    &= & \ds
 \int_0^\infty
 \mu_{\sigma}\, ( \nu_t* \nu_u)(d\sigma)
 =\int_0^\infty
 \mu_{\sigma}\,  \nu_{t+u}(d\sigma)=\sigma_{t+u}.
 \eds
  \end{array}
$$
Moreover, by the definition of $\psi$:
$$
\widehat{\sigma}_t(\xi) =\int_0^\infty
\widehat{\mu}_s(\xi)\,\nu_t(ds)=
\int_0^\infty
e^{-s\psi (\xi)} \,\nu_t(ds).
$$
The definition of $\phi$ therefore gives
$$
\widehat{\sigma}_t(\xi) =
e^{-t\phi(\psi (\xi))},\qquad t\geq 0,\,\xi\in\R^d.
$$
Note that $\phi$ is defined on the right half plane of
$\C$ and $\psi (\xi)\geq 0$ for all $\xi\in\R^d$. \qed

\subsection{Semigroups determined by infinitely divisible families}

\noindent Let $(P_t)$ be a semigroup  of linear, continuous  operators on the space
$UC_b (\R^d)$ of continuous where $UC_b (\R^d)$ denotes the space of all bounded, uniformly
continuous functions on $R^{d}$, equipped with the supremum norm.
If $P_t1=1$, $t\geq 0$, and $P_tf\geq 0$
for $f\geq 0$ then $(P_t)$ is called {\it Markovian}. For
each $a\in\R^d$ define the translation $\tau_af$ of a
function $f$ by the formula $\tau_af(x)=f(x+a)$,
$a\in\R^d$, $x\in\R^d$. The semigroup $(P_t)$ is called
{\it translation invariant} if for arbitrary $a\in\R^d$,
$t\geq 0$ and $f\in UC_b (\R^d)$,
$P_t(\tau_af)=\tau_a(P_tf)$.

\noindent We set
\begin{equation}\label{defsg}
P_tf(x)=\int_{\R^d}f(x+y)\, \mu_t(dy),
\end{equation}
where $\mu_t$ is an infinitely divisible family.

\begin{Proposition}\label{semigroup}
If $(P_t)$ is defined on $UC_b(\R^d)$ by the formula
(\ref{defsg}) then $(P_t)$ is a $C_0$-semigroup on
$UC_b(\R^d)$.
\end{Proposition}
\noindent {\bf Proof.}
$i)$ Assume that $f\in UC_b(\R^d)$ and $t>0$. Then
$$
|P_tf(x) -P_tf(z)| = \left| \int_{\R^d} [f(x+y)
-f(z+y)]\,\mu_t(dy)\right| \leq
 \int_{\R^d} \left| f(x+y)
-f(z+y)\right| \,\mu_t(dy).
$$
For $\epsilon >0$ there exists $\delta >0$ such that if
$|x-x'|<\delta$ then $|f(x)-f(x')|<\epsilon$. Thus if
$|x-z|<\delta$ then also $|(x+y)-(z+y)|<\delta$ and
$|f(x+y)-f(z+y)|<\epsilon$. This gives
$|P_tf(x)-P_tf(y)|<\epsilon$.

$ii)$ $\ds |P_tf(x)-f(x)| =\left| \int_{\R^d}\left(
f(x+y)-f(x)\right)\, \mu_t(dy)\right|\eds$. For
$\epsilon
>0$ there exists $\delta >0$ such that
$|f(x+y)-f(x)|<\epsilon$ if $|y|<\delta$ and $x$
arbitrary. Therefore
$$
|P_tf(x)-f(x)| \leq \epsilon \int_{|y|\leq \delta}
\mu_t(dy) +2 \| f\|\int_{|y|> \delta}\mu_t(dy)
\leq \epsilon +2\| f\|\int_{|y|> \delta}\mu_t(dy).
$$
Since $\mu_t\Rightarrow \delta_{\{ 0\}}$ as
$t\downarrow 0$, $P_tf\to f$ uniformly. \qed

\noindent In fact we have the following characterisation of infinite divisible families which proof we
left as an exercise.
\begin{Proposition}
A Markovian semigroup
$(P_t)$ on $UC_b (\R^d)$ is translation invariant if and only if
$$
P_tf(x)=\int_{\R^d}f(x+y)\, \mu_t(dy),
$$
for some  infinitely divisible family $(\mu_t)$.
\end{Proposition}

\begin{Proposition}
There exists a unique extension of the operators
$(P_t)$ given by (\ref{defsg}) from uniformly continuous
functions with bounded supports to the whole
$L^p(\R^d)$ for any  $p\in [1,\infty)$.
The extended family is a $C_0$-semigroup on
$L^p(\R^d)$.
\end{Proposition}

\noindent {\bf Proof.}
$i)$ Extendability.

\noindent Note that
$$
  \begin{array}{lll}
    \|P_tf\|^p_{L^p}&= & \ds
\int_{\R^d}\left| \int_{\R^d} f(x+y)\, \mu_t(dy)
\right|^p dx
    \eds \\
    &\leq & \ds
\int_{\R^d}\left[ \int_{\R^d} |f(x+y)|^p\, \mu_t(dy)
\right]\, dx
=\int_{\R^d}\left[ \int_{\R^d} |f(x+y)|^pdx
\right]\, \mu_t(dy)
\eds\\
 &\leq & \ds
\int_{\R^d}\left( \|f\|^p_{L^p}\right) \, \mu_t(dy)
\leq \|f\|^p_{L^p}.
\eds
  \end{array}
$$

\noindent $ii)$ Continuity of $P_tf$ as $t\downarrow 0$.

\noindent One can
assume that $f$ is continuous with bounded support, as
such functions are dense in $L^p$ and the norms of
$P_t$ are bounded by $i)$.
 Let $f(x)=0$ if $|x|\geq R$. Note that
$$
\begin{array}{lll}\ds
\int_{\R^d}|P_tf(x)-f(x)|^pdx
\eds &=&\ds
\int_{\R^d}\left| \int_{\R^d} \left(
f(x+y)-f(x)\right)\, \mu_t(dy)
\right|^p dx
\eds\\
&\leq&\ds
 \int_{\R^d}\left[ \int_{\R^d} \left|
f(x+y)-f(x)\right|^pdx \, \right] \mu_t(dy).
\eds
\end{array}
$$
For any $r\in (0,R)$,
$$
\begin{array}{lll}\ds
\int_{\R^d}|P_tf(x)-f(x)|^pdx
\eds &\leq&\ds
\int_{|y|\leq r}\left( \int_{|x|\leq 2R} \left|
f(x+y)-f(x)\right|^pdx\right)\, \mu_t(dy)
\eds\\ & +&\ds
  \int_{|y|>r}\mu_t(dy)\left[ 2^{p}
\|f\|^p_{L^p}   \right] \leq I_1+I_2.
\eds
\end{array}
$$
 By taking $r>0$ sufficiently
small the term $I_1$ is smaller than $\epsilon >0$ for
all $t>0$. By taking $t>0$ sufficiently small also the
second term is smaller than $\epsilon$. This completes
the proof.
\qed

\subsection{Generators of $(P_t)$ on $L^2(\R^d)$}

We choose now $p=2$ and extend $P_t$ to the space
$L^2_\C$ of all $2$-summable complex functions.
Denoting as before by $\widehat{g}$ the Fourier
transform (characteristic function)
of $g$ we have for $f$ with compact support
$$
  \begin{array}{lll}
    \widehat{P_tf}(\lambda)&= & \ds
\int_{\R^d}e^{i\langle \lambda ,x\rangle}
[P_tf(x)]\, dx =
\int_{\R^d}e^{i\langle \lambda ,x\rangle}
\left[\int_{\R^d} f(x+y)\, \mu_t(dy)\right]\, dx
    \eds \\
    &= & \ds
\int_{\R^d}e^{-i\langle \lambda ,y\rangle}
\left[\int_{\R^d}
e^{i\langle \lambda ,x+y\rangle} f(x+y)\, dx
\right]\, \mu_t(dy)
\eds \\
&=&\ds
\widehat{f}(\lambda)e^{-t\psi(-\lambda)},
\qquad t\geq 0,\, \lambda\in\R^d.
\eds
  \end{array}
$$
Consequently the Fourier image of $P_t$ acts as a
multiplication by $\exp (-t\psi(-\cdot))$. Therefore
the domain of its generator $A$ is of the form:
$$
D(A)=\{g\in L^2_\C \, ;\,
\psi (-\cdot)\widehat{g}(\cdot)\in L^2_\C\}
$$
and $A$ is given by the formula:
$$
\widehat{Ag}(\lambda)=-\psi(-\lambda)
\widehat{g}(\lambda),
\qquad \lambda\in\R^d.
$$
In particular for the Wiener semigroup with
$\psi(\lambda)=\frac{1}{2}|\lambda|^2$,
$\lambda\in\R^d$:
$$
\widehat{Ag}(\lambda)=-\frac{1}{2}|\lambda|^2
\widehat{g}(\lambda),
$$
therefore $A$ can be identified as the Laplace operator
$\Delta$ on $L^2_\C(\R^d)$.

In the case of stable processes on $\R^d$, with
$\psi(\lambda)=|\lambda|^\alpha$, $\alpha\in (0,2)$,
$$
\widehat{Ag}(\lambda)=-|\lambda|^\alpha
\widehat{g}(\lambda),
$$
and $A$ can be idenitfied with a pseudodifferential
operator with symbol $-|\lambda|^\alpha$. In fact in
this case $A=-(-\Delta)^{\alpha /2}$ and one can write
a formula for the semigroup generated by $A$ in terms
of the heat semigroup only, using the idea of
subordination.

Let $(P_t)$ denote the heat semigroup and
$(P_t^\alpha)$ the semigroup corresponding to
$\psi(\lambda)=|\lambda|^\alpha$, called the
$\alpha$-stable semigroup, $\alpha\in (0,2)$. Note that
$$
|\lambda|^\alpha=2^{-\frac{\alpha}{2}}
\left( \frac{1}{2}|\lambda|^2\right)^{\frac{\alpha}{2}}
= \phi \left( \frac{1}{2}|\lambda|^2\right),
$$
where $\phi(\xi) =2^{-\frac{\alpha}{2}}
\xi^{\frac{\alpha}{2}}$, $\xi\geq 0$. But $\phi $ is the
exponent of an $\frac{\alpha}{2}$-stable semigroup of
measures $\mu_t^\alpha$ on $[0,+\infty)$ and by the
subordination formula
$$
P_t^\alpha = \int_0^\infty P_s\, \mu_t^\alpha(ds).
$$

\chapter{Representation of L\'evy processes}

We show here that Wiener's constructive approach can be applied
to processes with discontinuous paths. In fact starting from a
countable family of random variables we will construct an arbitrary
L\'evy process. The obtained
expression will give an additional interpretation of the
L\'evy-Khinchin formula.

\subsection{Poissonian random measures.}

Let $(E,\cale)$ be a measurable space and $\nu$ a finite measure on $E$.
We recall that a random variable $N$ has Poisson distribution with
parameter $c>0$ if
$$
\P(N=k)=e^{-c}\, \frac{c^k}{k!},\qquad k=1,2,\ldots.
$$
By direct calculation we obtain a formula for the Laplace
transform of the Poisson distribution
$$
\E\, (e^{-\alpha N})= e^{-c(1-e^{-\alpha})},
\qquad \alpha\geq 0.
$$
Let $N$ have a Poisson distribution with parameter $c=\nu(E)$ and
let $\xi_1,\xi_2,\ldots$ be a sequence of independent random
variables with distribution $\frac{1}{\nu(E)}\nu$.

\begin{Theorem}\label{poissonrandommeasure}
  For arbitrary $A\in\cale$ define
  $$
  \pi(A)=\sum_{k=1}^N1_A(\xi_k).
  $$
\begin{enumerate}\item[i)]
The random variable $\pi(A)$ has Poisson distribution with
parameter $\nu(A)$.
\item[ii)] For disjoint sets $A_1,\ldots,A_m$ the corresponding
 random variables are independent and
 $$
 \pi(A_1\cup \ldots\cup A_m)=\pi(A_1)+\ldots+\pi(A_m),
 \qquad \P-a.s.
 $$
 \end{enumerate}
\end{Theorem}

\noindent The family $\pi$ is called a {\it Poissonian measure with intensity
measure} $\nu$.

\noindent {\bf Proof.}
$i)$ Fix $\alpha>0$ and $A\in \cale$. Then,  by   independence,
$$\begin{array}{lll}\dis
\E\, (e^{-\alpha \pi(A)})
&=&\dis
\sum_{m=0}^{+\infty}
\E\, \left(e^{-\alpha \sum_{k=1}^m 1_A(\xi_k)}1_{N=m}\right)
\\&=&\dis
\sum_{m=0}^{+\infty}\P(N=m) \prod_{k=1}^m
\E\, (e^{-\alpha  1_A(\xi_k)})
\\&=&\dis
\sum_{m=0}^{+\infty}e^{-c}\, \frac{c^m}{m!} \left(
\E\, (e^{-\alpha  1_A(\xi_1)})\right)^m.
\end{array}
$$
However
$$
\E\, (e^{-\alpha  1_A(\xi_1)})=
e^{-\alpha}\P(\xi_1\in A)+1-\P(\xi_1\in A)
= \frac{\nu(A)}{c}(e^{-\alpha}-1)+1.
$$
Finally
$$
\E\, (e^{-\alpha \pi(A)})=e^{-c}\; e^{c[
\frac{\nu(A)}{c}(e^{-\alpha}-1)+1]}
= e^{-c\nu(A)(1-e^{-\alpha})},\qquad \alpha\geq 0.
$$

$ii)$ Assume that the sets  $A_1,\ldots,A_m$ are disjoint. For
arbitrary non-negative numbers  $\alpha_1,\ldots,\alpha_m$:
$$\begin{array}{lll}\dis
\E\, \left(e^{-\sum_{j=1}^m\alpha_j \sum_{k=1}^N 1_A(\xi_k)}\right)
&=&\dis
\sum_{l=0}^{+\infty}
e^{-c}\, \frac{c^l}{l!}\E\, \left(
e^{-\sum_{j=1}^m\alpha_j \sum_{k=1}^l 1_{A_j}(\xi_k)}\right)
\\&=&\dis
\sum_{l=0}^{+\infty}
e^{-c}\, \frac{c^l}{l!}\E\, \left(
e^{-\sum_{k=1}^l(\sum_{j=1}^m\alpha_j  1_{A_j}(\xi_k))}\right)
\\&=&\dis
e^{-c}\,\sum_{l=0}^{+\infty} \frac{1}{l!}
 \left[c\,\E\,(
e^{-\sum_{j=1}^m\alpha_j  1_{A_j}(\xi_k)})\right]^l
\\&=&\dis
e^{-c}\,e^{c\,\E\,(
-\sum_{j=1}^m\alpha_j  1_{A_j}(\xi_1))}
\end{array}
$$
Since
$$
\E\,(
-\sum_{j=1}^m\alpha_j  1_{A_j}(\xi_1))
=
\sum_{j=1}^me^{-\alpha_j} \P (\xi_1\in A_j)+1- \P (\xi_1\in A_j)
=\sum_{j=1}^m \frac{\nu(A_j)}{c}(e^{-\alpha_j}-1)+1,
$$
therefore
$$\begin{array}{lll}\dis
\E\, \left(e^{-\sum_{j=1}^m\alpha_j \sum_{k=1}^N 1_A(\xi_k)}\right)
&=&\dis
e^{-c}\,e^{c\,\Big(
 \sum_{j=1}^m \frac{\nu(A_j)}{c}(e^{-\alpha_j}-1)+1
\Big)}
\\&=&\dis
\prod_{j=1}^m e^{-\nu(A_j)(1-e^{-\alpha_j})}
\\&=&\dis
\prod_{j=1}^m \E\, (e^{-\alpha_j\pi(A_j)}).
\end{array}
$$
Since the multivariate Laplace transform of the vector
$(\pi(A_1),\ldots,\pi(A_m))$ is equal to the product of the
Laplace transform of $\pi(A_1),\ldots,\pi(A_m)$ therefore the
required independence follows.
\qed

\begin{Theorem}\label{poissonrandommeasuredue}
Let $\nu$ be a $\sigma$-finite measure on $E$
and $\nu_1,\nu_2,\ldots$ finite measures with
disjoint supports  such that
$\nu=\sum_{m=1}^{+\infty}$. Let $\pi^1,\pi^2,\ldots$
be Poissonian measures constructed for measures
$\nu_1,\nu_2,\ldots$ with independent double system
$(N_m, \xi_1^m,\xi_2^m,\ldots)$. Then the family
  $$
  \pi(A)=\sum_{m=1}^{+\infty} \pi^m(A),
  \qquad A\in\cale,
  $$
  satisfies properties $i)$ and $ii)$ from the previous
  theorem.
\end{Theorem}

 \noindent{\bf Proof.} Since the proof is similar to the previous
 one it will be omitted. \qed

\noindent  It follows from the construction that the family
$\pi(A)$, $ A\in\cale$, has the following structure. There exists
a sequence $(x_k)$ of $E$-valued random variables such that
$$
\pi(A)(\omega) = \sum_{k=1}^{+\infty} \delta_{x_k(\omega)}(A),
\qquad \omega\in\Omega,\; A\in \cale.
$$
Thus the family $\pi(A)$, $ A\in\cale$,  can be identified with a
random distribution of a countable number of points $(x_k)$ and
$\pi(A)$ is equal to the number of  points in the set $A$.
One can also integrate with respect to $\pi$. Assume that $f$ is a
measurable, real function defined on $E$ and set
\begin{equation}\label{intpoisson}
\int_Ef(x)\; \pi (dx)= \sum_{k=1}^{+\infty} f(x_k)
\end{equation}
for all those $f$ for which the series in (\ref{intpoisson}) is
convergent $\P$-a.s. One can also define the integral using the
Lebesgue scheme: first for simple, non-negative functions
$f=\sum_{j=1}^M\alpha_j\, 1_{A_j}$, $A_j\cap A_k=\emptyset$,
$j\neq k$, by setting
$$
\int_Ef(x)\; \pi (dx)= \sum_{j=1}^M \alpha_j\, \pi(A_j),
$$
then by monotone limits for all measurable non-negative $f$
and finally, for arbitrary $f$,  by splitting it into the
difference of positive and negative parts.

\noindent We gather basic properties of  the integral in the following
theorem.

\begin{Theorem}\label{poissonintegral}
\begin{enumerate}\item[i)]
  For arbitrary non-negative measurable $f$
\begin{equation}\label{duestelle}
  \E\, \left( e^{-\alpha \int_Ef(x)\; \pi (dx)}\right)
  = e^{-\int_E(1-e^{-\alpha f(x)})\; \nu (dx)} ,
  \qquad \alpha\geq 0.
\end{equation}

\item[ii)] If $\int_E|f(x)|\; \pi (dx)<+\infty$ then
\begin{equation}\label{trestelle}
  \E\, \left( e^{i\lambda\int_Ef(x)\; \pi (dx)}\right)
  = e^{-\int_E(1-e^{i\lambda f(x)})\; \nu (dx)} ,
  \qquad \lambda\in\R^1.
\end{equation}

\item[iii)] If $\int_E|f(x)|\; \nu (dx)<+\infty$ then
$\E\, \left|\int_Ef(x)\; \pi (dx)\right|<+\infty$ and
$$
\E\, \int_Ef(x)\; \pi (dx)=
\int_Ef(x)\; \nu (dx).
$$

\item[iv)] If $\int_E|f(x)|\; \nu (dx)<+\infty$
and $\int_E|f(x)|^2\; \nu (dx)<+\infty$
then
$$\E\, \left|\int_Ef(x)\; \pi (dx)
- \int_Ef(x)\; \nu (dx)\right|^2=
\int_E|f(x)|^2\; \nu (dx).
$$

\item[v)] If non-negative measurable functions
$f_1,f_2,\ldots , f_M$ have disjoint supports then the random
variables
$\int_Ef_1(x)\; \pi (dx),\dots,\int_Ef_M(x)\; \pi (dx)$ are
independent.

 \end{enumerate}
\end{Theorem}

\noindent {\bf Proof.} We
prove only $i)$ as the proofs of the other points go the same way.

\noindent If $f=\sum_{j=1}^M\alpha_j\, 1_{A_j}$, $A_j\cap A_k=\emptyset$,
$j\neq k$, then
$$
\E\, \left(e^{-\alpha \int_Ef(x)\; \pi (dx)}\right)
=\prod_{j=1}^M \E\, (e^{-\alpha \alpha_j \pi (A_j)})
=\prod_{j=1}^M e^{-\nu (A_j)(1-e^{-\alpha \alpha_j}) }
$$
and the formula holds. By monotone passage to the limit
one gets it for all
non-negative measurable functions.
\qed

\subsection{Representation theorem.}
It is clear that the exponent $\psi$, see formula (\ref{lk3}), of the characteristic  function
$$
\widehat{\mu}_t(\lambda)=e^{-t\psi(\lambda)},\qquad t\geq 0,\;
\lambda\in\R^d,
$$
of an  infinitely divisible law, can be written in an equivalent way
\begin{equation}
  \psi(\lambda)= i<a, \lambda> + <Q \lambda, \lambda> + \psi_{0}(\lambda),
\end{equation}
where
\begin{equation}\label{quattrostelle}
  \psi_{0}(\lambda)= \int_{|x|\geq 1}(1-e^{i\<\lambda,x\>})\;\nu(dx)
  +\int_{|x|< 1}(1-e^{i\<\lambda,x\>}+i\<\lambda,x\>)\;\nu(dx),
\end{equation}
and the measure $\nu,$ concentrated  on $\R^d\backslash \{0\},$ satisfies
$$
\int_{|x|< 1}|x|^2\;\nu(dx) + \nu \{x: |x|\geq 1 \} <+\infty.
$$
The main result of the chapter is the following theorem. In the proof
we follow basically \cite{bertoin}.

\begin{Theorem}\label{rapprlevyprocess}
  Assume that the exponent $\psi_{0}$ of the characteristic function
  $\widehat{\mu}_t$, $t\geq 0$, is of the form
  (\ref{quattrostelle}). Let $\pi$ be the Poissonian measure on
  $[0,+\infty)\times (\R^d\backslash \{0\})$ with the intensity
  $l_1\times \nu$, where $l_1$ denotes the Lebesgue measure.
  \begin{enumerate}\item[i)]
The formula
\begin{equation}\label{cinquestelle}
  X(t)= \int_0^t \int_{|x|\geq 1}x\; \pi(ds,dx)+
  \lim_{\epsilon\downarrow 0}\left[
\int_0^t \int_{\epsilon\leq |x|< 1}x\; \pi(ds,dx)-
\int_0^t \int_{\epsilon\leq |x|< 1}x\; \nu(ds,dx)
  \right]
\end{equation}
defines a process with independent increments having exponent
$\,\,\psi_{0}$. The limit in (\ref{cinquestelle}) exists $\P$-almost
surely uniformly on arbitrary time interval $[0,T]$ if $\epsilon$
tends to $0$ sufficiently fast. In particular (\ref{cinquestelle})
defines a c\`adl\`ag process.
\item[ii)] The process $X$ given by (\ref{cinquestelle}) has
trajectories with bounded variation if and only if
$$
\int_{|x|< 1}|x|\;\nu(dx)<+\infty.
$$
\end{enumerate}
\end{Theorem}

\noindent {\bf Proof.}
$1)$ Consider first the process
$$
X_1(t)=  \int_0^t \int_{|x|\geq 1}x\; \pi(ds,dx),
\qquad t\ge 0.
$$
If $0=t_0<t_1<\ldots <t_M$ then

\begin{equation}\label{increments}
X_1(t_j)-X_1(t_{j-1})=
 \int_{]t_{j-1},t_j]\times \{x\; :\; |x|\geq 1\}}x\; \pi(ds,dx),
\qquad j=1,2,\ldots ,M,
\end{equation}
and since the sets $]t_{j-1},t_j]\times \{x:|x|\geq 1\}$,
$ j=1,2,\ldots ,M$, are disjoint therefore, by Theorem
\ref{poissonrandommeasuredue}, the random variables (\ref{increments}) are
independent. Moreover, the sets $[0,T]\times\{x:|x|\geq 1\}$
contain only a finite number of points $\{(t_k,x_k)\}$ and
therefore the trajectories of $X_1$ are c\`adl\`ag. By Theorem
\ref{poissonintegral}-$ii)$,
$$\begin{array}{lll}\dis
  \E\, \left(e^{i\<\lambda,X_1(t)\>}\right) & = & \dis
 \E\, \left(e^{i\<\lambda, \int_0^t \int_{|x|\geq 1}x\; \pi(ds,dx)\>}
 \right)   \\
    & = & \dis
e^{-  \int_0^t \int_{|x|\geq 1}(1-e^{i\<\lambda,x\>})\; \nu(dx)\,ds}
     \\
    & = & \dis
    e^{-t \int_{|x|\geq 1}(1-e^{i\<\lambda,x\>})\; \nu(dx)},\qquad
    \lambda\in\R^d.
\end{array}
$$
Define, for $\epsilon\in (0,1)$,
$$
X_{2,\epsilon}(t)=
\int_0^t \int_{\epsilon\leq |x|< 1}x\; \pi(ds,dx)-
t \; \nu(\{x\;:\; \epsilon\leq |x|< 1\}),
\qquad  t\ge 0.
$$
In a similar way one shows that the process $X_{2,\epsilon}$ has
independent increments, is independent of $X_1$ and that
$$
 \E\, \left(e^{i\<\lambda,X_{2,\epsilon}(t)\>}\right)  =
  e^{-t \int_{\epsilon \le |x|< 1}(1-e^{i\<\lambda,x\>}
  +i\<\lambda,x\>)\; \nu(dx)}.
  $$
It is therefore enough  to show that the sequence of processes
$X_{2,\epsilon}$ converges  $\P$-a.s.
 uniformly on any fixed time interval $[0,T]$.
It follows from Theorem \ref{poissonintegral} that
the process $X_{2,\epsilon}$ is a martingale with finite
second moment. In fact if $0<\eta<\epsilon <1$ then
$$
\E\, |X_{2,\epsilon}(t)- X_{2,\eta}(t)|^2=
t \int_{\eta\leq |x|<\epsilon}|x|^2\; \nu(dx).
$$
We have the following corollary from the theorem on Doob's
inequalities.

\begin{Lemma}
  If $(Z(t))$ is a martingale with right continuous trajectories
  then for arbitrary $c>0$
  $$
  \P\left(\sup_{0\le t\le T}|Z(t)|\ge c\right)
  \le \frac{1}{c}\, \E\, |Z(T)|.
  $$
\end{Lemma}

\noindent {\bf Proof.} It is enough to prove that for arbitrary
discrete time martingale $(X_n)$ and arbitrary  $c>0$
 $$
  \P\left(\sup_{1\le n\le k}|X_n|\ge c\right)
  \le \frac{1}{c}\, \E\, |X_k|.
  $$
Let us remark that
 $$
  \P\left(\sup_{1\le n\le k}|X_n|\ge c\right)
  \le \P\left(\sup_{1\le n\le k}X_n\ge c\right)
  + \P\left(\inf_{1\le n\le k}X_n\le -c\right)
  $$
  and by inequalities $1)$ and $4)$ of the Doob theorem we have
  that
 $$
  \P\left(\sup_{1\le n\le k}|X_n|\ge c\right)
  \le \frac{1}{c}\, (\E \, X_1+2\E\, X_k^-).
  $$
But $\E\, X_1=\E\, X_k = \E\, X_k^+-\E\, X_k^-$ and
$\E\,|X_k|= \E\, X_k^+ +\E\, X_k^-$, so the result holds.
\qed

\noindent It follows from the lemma that for arbitrary $T>0$
$$
\begin{array}{lll}\dis
\P\left(\sup_{0\le t\le T}|
X_{2,\epsilon}(t)- X_{2,\eta}(t) |\ge c\right)
&\le&\dis
 \frac{1}{c} \,\E\,|
X_{2,\epsilon}(T)- X_{2,\eta}(T) |
\\
&=&\dis
 \frac{1}{c} \left(\E\,
X_{2,\epsilon}(T)- X_{2,\eta}(T) |^2\right)^{1/2}
\\
&=&\dis
 \frac{\sqrt{T}}{c} \left(
\int_{\eta\le |x|<\epsilon}|x|^2\; \nu(dx)
 \right)^{1/2}\to 0
 \qquad {\rm as\;} \eta,\epsilon\to 0.
 \end{array}
$$
Consequently there exists a process $X_2(t)$, $t\in [0,T]$,
and a sequence $\epsilon_n\downarrow 0$ such that
$$
\P\left(\lim_{n\to +\infty}\sup_{0\le t\le T}|
X_{2,\epsilon_n}(t)- X_{2}(t) |=0\right)=1.
$$
This completes the proof of $i)$.

$ii)$ Let $\Delta X_t=X(t)-X(t-)$, $t\in (0,T]$. Then
$$
\E\, \left(e^{-\sum_{0<t\le T}|\Delta X_t|}\right)
=\E\, \left( e^{-\int_0^{+\infty}\int_{\R^d}f(t,x)\;
\pi(dt,dx)}\right),
$$
where $f(t,x) = 1_{[0,T]}(t)|x|$. Consequently
$$
\E\, \left(e^{-\sum_{0<t\le T}|\Delta X_t|}\right)
= e^{-T\int_{\R^d}(1-e^{-|x|})\;\nu(dx)}
$$
Thus if $\int_{\R^d}(1-e^{-|x|})\;\nu(dx)=+\infty$ then
$\P (\sum_{0<t\le T}|\Delta X_t|=+\infty)=1$. If
$\int_{\R^d}(1-e^{-|x|})\;\nu(dx)<+\infty$, equivalently
$\int_{\R^d}|x|\;\nu(dx)<+\infty$, then
$$
\E \,\sum_{0<t\le T}|\Delta X_t|1_{|X(t)|\le M}=
T \int_{|x|\le M} |x|\; \nu(dx)<+\infty),
$$
so $X$ is of bounded variation.
\qed

\chapter{Stochastic integration and Markov processes.}

Wiener's idea to construct stochastic processes starting from
simple probabilistic objects has been extended to all Markov
processes by K. Ito in his seminal paper {\it On stochastic differential
equations} \cite{ito}.
His starting point was a Wiener process and a Poissonian random
measure. He first developed a  stochastic integration theory with
respect to Wiener processes and Poissonian random measures. Then
he introduced stochastic equations and showed that Markov
processes can be obtained as their solutions. In this chapter we
describe the Ito construction and sketch the proofs of basic results.

\section{Constructing Markov chains}

Markov chains are discrete time analogues of Markov processes. We
present here a construction of Markov chains as solutions of
discrete time stochastic equations to motivate better
Ito'approach.

Let $P(x,\Gamma)$,  $x\in E$, $\Gamma\in\cale$, be a transition
kernel on a
measurable space $(E,\cale)$.
Thus for each $x$, $P(x,\cdot)$ is a probability measure on $E$
and for each $\Gamma\in\cale$, $P(\cdot,\Gamma)$ is an
$\cale$-measurable function. By $P^n$, $n=0,1,\ldots$, we define
the iterated kernels
$$
P^{n+1}(x,\Gamma)=\int_E P(x,dy)\; P^n(y,\Gamma),
\qquad x\in E,\, \Gamma \in \cale.
$$
As in continuous time we define by induction, for any sequence of
non-negative integers $n_1<n_2<\ldots < n_d$, the
probabilities of visiting the sets $\Gamma_1,\ldots,\Gamma_d$ at
moments $n_1,\ldots , n_d$ starting from $x$ as the functions
$P^{n_1,\ldots , n_d}:E\to \calp(E^d)$:
$$
P^{n_1,\ldots , n_d}(x,\Gamma_1,\ldots,\Gamma_d)=
\int_{\Gamma_1} P^{n_1}(x,dx_1)\;
P^{n_2-n_1,\ldots , n_d-n_1}(x_1,\Gamma_2,\ldots,\Gamma_d).
$$
A Markov chain $X$ with the transition kernel $P$ and starting
from $x\in E$ is a sequence $X_n$, $n=0,1,\ldots$, of $E$-valued
random variables on a probability space, such that
\begin{enumerate}
\item[1)]
$X_0=x, \qquad \P$-a.s.
\item[2)] $ \P(\{\omega: X_{n_j}(\omega)\in\Gamma_j,\
j=1,\ldots,d\})
=P^{n_1,\ldots,n_d}(x,\Gamma_1,\ldots,\Gamma_d)$.
\end{enumerate}
We have the following result.

\begin{Theorem}
  Let $E$ be a Polish space and $\cale=\calb(E)$ the family of its
  Borel sets. Then for any transition kernel $P$ there exists a
  measurable mapping $F:E\times [0,1)\to E$
  such that, for an arbitrary sequence of independent random
  variables $\xi_1,\xi_2,\ldots $ with uniform distribution on
  $[0,1)$, the inductively defined sequence $X_n$:
\begin{equation}\label{ricorsiva}
  X_0=x,\qquad X_{n+1}=F(X_n, \xi_{n+1}),\qquad
  n=0,1,\ldots
\end{equation}
is a Markov chain with the transition kernel $P$.
\end{Theorem}

\noindent {\bf Proof.} We construct $F$ in the case when $E$ is a
countable set and $E=\R^1$. Let $E=\{1,2,\ldots\}=\N$,
$p_{n,m}=P(n,\{m\})$, $n,m\in\N$. We define
$$
F(n,u)= m\qquad {\rm for}\qquad
u\in [p_{n,1}+\ldots+p_{n,m-1}, p_{n,1}+\ldots+p_{n,m}).
$$
The required measurability is obvious and if $\xi$ has uniform
distribution on $[0,1)$ then
$$
\P(\{\omega\; :\; F(n,\xi(\omega))=m\})= P(n,\{m\}),
$$
as required.

If $E=\R^1$ then we define first a measurable function
$$
G(x,v)  =P(x,(-\infty,v]),
\qquad x\in\R^1,\; v\in\R^1,
$$
and set
$$
F(x,u)= \inf\{v\; :\; u\le G(x,v)\},
\qquad u\in [0,1),
$$
as in the proof of the Steinhaus theorem it is therefore clear
that if $\xi$ has uniform distribution on $[0,1)$ then
\begin{equation}\label{verifsufdue}
\P(\{\omega\; :\; F(x,\xi(\omega))\in\Gamma\})= P(x,\Gamma),
\qquad x\in\R^1,\; \Gamma \in \calb(\R^1).
\end{equation}

A direct construction of $F$ is possible also in the general case
but we will limit ourselves to recalling a general result of
Kuratowski saying (in particular) that if $E$ is Polish then there
exists a one-to-one measurable mapping $S$ from $E$ either on a
finite set $\{ 1,2,\ldots, N\}$ or on $\N$ or on $\R^1$ having
measurable inverse. It is therefore clear that a function $F$ can
be constructed also for the  Polish spaces.

The fact that $(X_n)$
is a Markov chain with the transition kernel $P$ follows from the
independence of $\xi_1,\xi_2,\ldots$ by an induction argument.
\qed

Thus any Markov chain can be generated from a sequence of
independent random variables and Ito's idea was that a similar
result is true for general continuous Markov processes.

\section{The Courr\`ege theorem}

Let  $P^t$ be a transition function on
$(\R^d, \calb(\R^d))$. For each bounded measurable function
$\phi$ on $\R^d$ we set
$$
P^t\phi(x)= \int_{\R^d} P^t(x,dy)\, \phi(y),
\qquad x\in \R^d.
$$
We say that a transition function $P^t$ is Feller if
\begin{enumerate}
\item[1)]
$P^t\phi\in C_0(\R^d)$, for $\phi\in C_0(\R^d)$.
\item[2)] $ P^t\phi\to\phi$ as $t\downarrow 0$, uniformly,
for all $\phi\in C_0(\R^d)$.
\end{enumerate}
The following theorem is due to
Ph. Courr\`ege \cite{curr}. In its formulation
$C_0^\infty (\R^d)$ stands for the
space of infinitely differentiable functions
vanishing at infinity with
all their derivatives
and
$ L_+(\R^d,\R^d)$  for the set of
symmetric, non-negative, $d \times d$
matrices.

\begin{Theorem}\label{courrege}
Let $P^t$ be a Feller transition function such that
for all $\phi$ $C_0^\infty (\R^d)$ and all $x\in\R^d$, the function
$P^t\phi(x)$, of parameter $t\geq 0$, is differentiable.
Then there exist
transformations $F:\R^d\to\R^d$, $Q:\R^d\to L_+(\R^d,\R^d)$ and
a family $\nu(x,\cdot)$, $x\in\R^d$, of non-negative
measures concentrated on
$\R^d\backslash \{0\}$ and satifying
$$
\int_{\R^d} (|y|^2\wedge 1)\; \nu (x,dy)<+\infty,
\qquad x\in\R^d,
$$
such that for all $ x\in\R^d$, $\phi\in C_0^\infty (\R^d)$
$$
\lim_{t\to 0} \frac{P^t\phi(x)-\phi(x)}{t}
= A\phi(x)
$$
where
$$\begin{array}{l}\dis
 A\phi(x)= \<F(x),D\phi(x)\> +\frac{1}{2}{\rm \ Trace}\
 [Q(x)D^2\phi(x)]
 \\\dis
 \qquad\qquad+ \int_{\R^d} \left(
 \phi(x+y)-\phi(x)-\frac{\<y,D\phi(x)\>}{|y|^2+ 1}
 \right)\; \nu(x,dy).
 \end{array}
 $$
\end{Theorem}

The functions $F$, $Q$ and the measure $\nu$ are called
{\it characteristics} of $P^t$ or of the process.
They have interpretations as {\it local drift, local diffusion} and
{\it local jump measure} of the corresponding Markov process. The operator $A$ is, the
so called, {\it generator} of $P^t$ or generator of the corresponding Markov process.

We will not prove this theorem but will make only some comments.

Let us notice that Proposition
\ref{formesponenzclass} states that
the transition function $(P^t)$ is a
limit of transition functions $(P^t_\lambda)$ of the following
form:
$$
P^t_\lambda\phi = e^{t\lambda \left(\lambda R_\lambda
-I\right)}\phi=
e^{-\lambda t} \sum_{k=0}^{+\infty}
\frac{(\lambda t)^k}{k!}\, \widetilde{P}_\lambda^k\phi
$$
where $\widetilde{P}_\lambda$ is a transition kernel defined for
all bounded measurable $\phi$:
$$
\widetilde{P}_\lambda\phi
= \lambda\int_0^{+\infty} e^{-\lambda t}P^t\phi\; dt.
$$
The operator $\widetilde{P}_\lambda$ is bounded on
$C_0(\R^d)$.
Notice that for the approximating transition semigroup
$(P^t_\lambda)$, called also {\it compound Poissonian semigroup},
the operator $A$ from the Courr\`ege theorem is of the form
$$
A_\lambda \phi(x)=
\lambda \int_{\R^d} \left(
 \phi(x+y)-\phi(x) \right)\; \widetilde{P}_\lambda(x,dy).
$$
Thus for the compound Poissonian semigroup, the drift  and
diffusion vanish and the jump measure is equal to
$\lambda\widetilde{P}_\lambda(x,\cdot)$, $x\in\R^d$.
As for L\'evy's processes the structure of the limiting semigroup
$(P^t)$ is more complex. From the proof of the
L\'evy-Khinchin formula we could deduce the following result.

\begin{Theorem}
  Let $(P^t)$ be a transition function of the form
$$
P^t\phi(x)= \int_{\R^d} \phi(x+y)\; \mu_t(dy),
$$
where $(\mu_t)$ is an infinitely divisible family of measures with
the representation (\ref{lk2})-(\ref{lk3}). Then
$(P^t)$ satisfies the assumptions of the Courr\`ege theorem and
for $\phi\in C_0^\infty (\R^d)$:
$$\begin{array}{l}\dis
 A\phi(x)= \<a,D\phi(x)\> +\frac{1}{2}{\rm \ Trace}\
 [QD^2\phi(x)]
 \\\dis
 \qquad\qquad+ \int_{\R^d} \left(
 \phi(x+y)-\phi(x)-\frac{\<y,D\phi(x)\>}{|y|^2+ 1}
 \right)\; \nu(dy).
 \end{array}
 $$
\end{Theorem}
The operator
$$
 A_0\phi(x)= \<a,D\phi(x)\> +\frac{1}{2}{\rm \ Trace}\
 [QD^2\phi(x)]
 $$
corresponds to the  process:
$$
X(t) = a\, t + Q^{1/2}W(t),\qquad t\ge 0,
$$
where $W$ is a Wiener process on $\R^d$ with identity covariance.
The L\'evy process with the integral part of $A$ was constructed,
in an earlier chapter, from  Poissonian random measures.

\section{Diffusion with additive noise}
Ito, in the already quoted paper \cite{ito}, constructed Markov processes with
rather general  characteristics $\,(F(\cdot),Q(\cdot),\nu(\cdot))\,$. In the simpler case,
when $\nu(\cdot)=0$ and the diffusion matrix is constant, the construction
does not require any stochastic calculus as we will show now.

Let $R=(r_{kl})$ be a non-negative $m\times m$ matrix, $W$ an
$m$-dimensional continuous Wiener process with components
$W_1,W_2,\ldots ,W_m$ such that
$$
\E\, W_k(t)=0,\quad
\E\,( W_k(t)W_l(t))=(t\wedge s)r_{kl},\quad
t,s\ge 0,\; k,l=1,\ldots m.
$$
Let in addition $F(x)=(F_1(x),\ldots ,F_d(x))$, $x\in\R^d$, and
$B$ be a $d\times m$ matrix.
Ito's equation, formally written as
$$
dX(t)=F(X(t))\; dt+ B\;dW(t),\qquad X(0)=x,
$$
should be understood as an integral equation
\begin{equation}\label{eqito}
  X(t,\omega)=x+\int_0^t
  F(X(s,\omega))\; ds+ B\;W(t,\omega),\qquad t\ge0.
\end{equation}
We have the following result.

\begin{Theorem}
  Assume that the mapping $F$ satisfies a Lipschitz condition.
  Then for all $\omega\in\Omega$, $x\in\R^d$, the equation
  (\ref{eqito}) has a unique solution $X^x$. It is a Markov
  process starting from $x$ with respect to a transition function
  having the generator
\begin{equation}\label{genaddnoise}
 A\phi(x)= \<F(x),D\phi(x)\> +\frac{1}{2}{\rm \ Trace}\
 [QD^2\phi(x)]
\end{equation}
 where $Q=BRB^*$.
\end{Theorem}

The existence can be proved by a fixed point method in the
space of continuous functions $C([0,T],\R^d)$, for any $T>0$.
Note also that the form of $A$ is obvious if $Q=0$.

We close this
section by presenting examples.

The stochastic harmonic oscillator equation
\begin{equation}\label{stocarmeq}
  \frac{d^2X}{dt^2}=-\alpha X+\frac{dW}{dt},
  \qquad X(0)=x_0,\quad \frac{dX}{dt}(0)=v_0,
\end{equation}
can be written in a vector form:
\begin{equation}\label{stocarmsist}
d\left(\begin{array}{l}{X(t)}\\{v(t)}
\end{array}\right)
=\pmatrix{0&1\cr-\alpha&0}
\left(\begin{array}{l}X(t)\\{v(t)}
\end{array}\right)\,dt
+\left(\begin{array}{l}{0}\\{1}
\end{array}\right)
\,dW(t)
\end{equation}
where $W$ is a $1$-dimensional Wiener process and $\alpha\ge 0$.
It has the following solution:
$$
\begin{array}{lll}
X(t)&=&\dis
(\cos\sqrt{\alpha} t)x_0+ \frac{1}{\sqrt{\alpha}}
(\sin\sqrt{\alpha} t)v_0+
\frac{1}{\sqrt{\alpha}}\int_0^t \sin(\sqrt{\alpha} (t-s))\,dW(s)\\
v(t)&=&\dis
-\sqrt{\alpha}(\sin\sqrt{\alpha} t)
+(\cos\sqrt{\alpha} t)v_0+\int_0^t
\cos(\sqrt{\alpha}(t-s))\,dW(s).
\end{array}
$$
The integrals with respect to $W$ are the usual Stieltjes
integrals.

The equation (\ref{stocarmsist}) is a special case of the linear
equation
\begin{equation}\label{itolineare}
dX=AX\;dt + B\;dW, \qquad X(0)=x\in\R^d,
\end{equation}
where $A$ and $B$ are respectively  $d\times d $ and
$d\times m$  matrices and
$W$ is an $m$-dimensional Wiener process with covariance $R$.
Again the equation  (\ref{itolineare})
should be understood in the integral
form
$$
X(t)=x+\int_0^t AX(s)\;ds+B\; W(t) ,\qquad t\ge0.
$$
The solution to  (\ref{itolineare}) can be written explicitely
\begin{equation}\label{esplsolitolin}
 X(t)=e^{At}x+ \int_0^t
e^{A(t-s)}B\;dW(s)= e^{At}x+ B\;W(t)+ \int_0^t
Ae^{A(t-s)}BW(s)\;ds,
\end{equation}
and this can be easily checked by substitution.
Note also that
$$ \call (X(t))=N(e^{At}x,Q_t),
$$
where
$$
Q_t=\int_0^t e^{As}BB^*e^{A^*s}\;ds
$$
and
$$
P^t\phi(x)= \int_{\R^d} \phi (e^{At}x+y)\;
N(0,Q_t)(dy),\qquad t\ge0.
$$
Assuming that $\phi$ has continuous and bounded Fr\'echet
derivatives up to order $2$ one checkes, by applying Taylor's
formula to $\phi$, that
$$
 A\phi(x)= \lim_{t\downarrow 0} \frac{P^t\phi (x)-\phi(x)}{t}=
 \<Ax,D\phi(x)\> +\frac{1}{2}{\rm \ Trace}\
 [QD^2\phi(x)],\qquad x\in\R^d,
 $$
where $Q=BRB^*$, which confirms the formula
(\ref{genaddnoise}).

\section{Stochastic integrals}

We pass now to the general case when the diffusion $Q$ may depend
on the state and the jump measure does not vanish.
To introduce stochastic equations which determine Markov processes
with given characteristics we need stochastic
integrals with respect to a Wiener process $W$ on $\R^m$ and with
respect to a Poissonian random measure $\pi$ on $[0,+\infty)\times
\R^m$. Let  $W$ be  a $\R^m$-valued Wiener process
with covariance $R=(r_{kl})$ and  $\pi$  a
Poissonian measure with the intensity $\mu$ on $[0,+\infty)\times
\R^m$. We can assume that $W$ and $\pi$ are defined on a probability
space $(\Omega,\calf,\P)$ equipped with a filtration $(\calf_t)$
satisfying the usual conditions and that they satisfy the
following conditions:
\begin{enumerate}
  \item[1)] $W$ has continuous paths, $W(0)=0$.
  \item[2)] $W(t)$ is $\calf_t$-measurable and $\sigma(W(t)-W(s))$
  is independent of $\calf_s$ for all $t\ge s\ge0$.
  \item[3) ] $\pi(\Gamma)$ is $\calf_t$-measurable for each Borel
  set $\Gamma\subset [0,t]\times \R^m$
  and $\sigma(\pi(\Delta))$
  is independent of $\calf_t$ for each Borel set
  $\Delta\subset (t,+\infty)\times \R^m$.
  \item[4) ] The $\sigma$-fields generated by $W$ and $\pi$ are
  independent.
\end{enumerate}

\subsection{Integration  with respect to a Wiener process $W$.}

The aim is to define
$$
\int_0^t \Phi(s)\; dW(s),\qquad t\ge0,
$$
where $\Phi$ is a stochastic process whose values are in the space
$L(\R^m,\R^d)$ of linear operators from
$\R^m$ to $\R^d$. One starts with the simple integrands which are
of the form
\begin{equation}\label{integrandisempl}
  \Phi(s)=\sum_{j=1}^k 1_{(t_j,t_{j+1}]}(s)\, \Phi_j
\end{equation}
where $0\le t_1<t_2<\ldots<t_{k+1}$ is any finite sequence
and $\Phi_j$ is an $L(\R^m,\R^d)$-valued random variable,
$\calf_{t_j}$-measurable, such that $\E\, |\Phi_j|^2<+\infty$.
For $\Phi$ of the form
(\ref{integrandisempl}) one sets
$$
\int_0^t \Phi(s)\; dW(s)=
\sum_{j=1}^k\Phi_j\, ( W(t\wedge t_{j+1})-W(t\wedge t_j)).
$$
It is clear that the stochastic integral is a square-integrable
martingale with continuous trajectories. We have the following
fundamental {\em  isometric formula}:

\begin{Proposition}
  For an arbitrary simple process $\Phi$ and $t\ge0$ one has
\begin{equation}\label{isomitodimfin}
  \E\,\left|\int_0^t \Phi(s)\; dW(s)\right|^2
  =\E\,\int_0^t \|\Phi(s)R^{1/2}\|_{HS}^2\; ds,
\end{equation}
  where, for an arbitrary operator $C\in L(\R^m,\R^d)$,
$  \|C\|_{HS}= \left(\sum_{j=1}^m |Ce_j|^2\right)^{1/2}$ and
$e_1,\ldots e_m$ is an orthonormal basis of $\R^m$.
\end{Proposition}

\noindent{\bf Proof.}
Let $\Phi_j$ be represented as a matrix $(\Phi_j(k,l))$ and let
$W_1(t),\ldots ,W_m(t)$ be the components of $W(t)$. Then for
$t\ge s$ and $\Phi$, $\calf_s$-measurable and $W(t)-W(s)$
independent of $\calf_s$ we have
$$
\begin{array}{l}
  \E\, |\Phi_j(W(t)-W(s))|^2 \\
  \qquad =  \dis
\sum_{k=1}^d\E\, \left|\sum_{l=1}^m
\Phi_j(k,l)(W_l(t)-W_l(s))\right|^2
   \\
\qquad = \dis
\sum_{k=1}^d\; \sum_{l_1=1}^m\sum_{l_2=1}^m\E\, \left[
\Phi_j(k,l_1)\Phi_j(k,l_2)(W_{l_1}(t)-W_{l_1}(s))
(W_{l_2}(t)-W_{l_2}(s))\right]
\end{array}
$$
But
$$
\begin{array}{l}
\dis
\E\, \left[
\Phi_j(k,l_1)\Phi_j(k,l_2)(W_{l_1}(t)-W_{l_1}(s))
(W_{l_2}(t)-W_{l_2}(s))\; \vert \calf_s\right]
\\\dis
\qquad =
\Phi_j(k,l_1)\Phi_j(k,l_2)\,
\E\, \left[
(W_{l_1}(t)-W_{l_1}(s))
(W_{l_2}(t)-W_{l_2}(s))\right]
\\\dis
\qquad =
\Phi_j(k,l_1)\Phi_j(k,l_2)\,
r_{l_1,l_2} \, (t-s),
\end{array}
$$
and therefore
$$
 \E\, |\Phi_j(W(t)-W(s))|^2 =
 (t-s)\, {\rm \ Trace}\
 [\Phi_jR\Phi_j^*] =
  (t-s)\, \|\Phi_jR^{1/2}\|^2_{HS}.
$$
Thus the formula (\ref{isomitodimfin}) follows for $k=1$. The
general case can be obtained by induction and proper conditioning.
\qed

The isometric formula is of fundamental importance. It allows to
extend the integral to processes $\Phi$ which are limits of simple
processes in the norms
$$
\E\,\int_0^T \|\Phi(s)R^{1/2}\|_{HS}^2\; ds
$$
for each $T$.

These processes are again $(\calf_t)$-adapted, the integral is a
square integrable martingale and
the formula (\ref{isomitodimfin}) holds.
The integration can be extended to even larger classes of
integrands which are $(\calf_t)$-adapted and for which
$$
\P\left( \int_0^t \|\Phi(s)R^{1/2}\|_{HS}^2\; ds<+\infty,
\quad t\ge 0\right)=1.
$$

\subsection{Integration  with respect to a Poissonian measure $\pi$.}

Let $\pi$ be a Poissonian measure satifying
$3)$ with the intensity $\mu$.
We will denote by $\cale_0$ the family of all sets $\Gamma$ from
$\cale=\calb ([0,+\infty)\times \R^d)$ for which
$\mu(\Gamma)<+\infty$. The co called {\em compensated Poissonian
measure} $\check{\pi} $ is given by the formula
$$
\check{\pi}(\Gamma)=\pi(\Gamma)-\mu(\Gamma),\qquad
\Gamma\in\cale_0.
$$
Our aim is to define first stochastic integrals:
\begin{equation}\label{stelladue}
  \int_0^t\int_{\R^d}\phi(s,x)\; \check{\pi}(ds,dx),
\end{equation}
for a class of stochastic random fields $\phi$. Let us recall that
stochastic integrals of the type (\ref{stelladue}), with
deterministic $\phi$, were introduced in the section on random
measures. We will call a real-valued field $\phi$ {\em simple}
if there exist a finite sequence of
non-negative numbers $0\le t_1<t_2<\ldots<t_{k+1}$, a sequence
$\phi_j$ of square integrable  $\calf_{t_j}$-measurable random
variables and a sequence $\Gamma_j$ of subsets of $\R^d$ such that
$(  t_j,t_{j+1}]\times \Gamma_j\in\cale_0$. For simple fields $\phi$, we
define
$$
 \int_0^t\int_{\R^d}\phi(s,x)\; \check{\pi}(ds,dx)=
\sum_{j=1}^k\phi_j\,
\check{\pi}((  t_j \wedge t, t_{j+1}\wedge t]\times \Gamma_j).
$$
We have the following {\em isometric formula}:

\begin{Proposition}
  For simple random fields $\phi$:
\begin{equation}\label{isomrandomfield}
  \E\,\left| \int_0^t\int_{\R^d}\phi(s,x)\; \check{\pi}(ds,dx)
  \right|^2
  =\E\,\int_0^t \int_{\R^d}\phi^2(s,x)\;\nu( ds,dx).
\end{equation}
\end{Proposition}

\noindent{\bf Proof.}
As in the proof of the isometric formula for the Wiener process we
limit our calculations to $k=1$.
Let $0\le s<t$, $\phi$ be $\calf_s$-measurable random variable  and
$\mu((  s,t]\times \Gamma)<+\infty$. Then,
$$
  \E\,\left(| \phi\; \check{\pi}((  s,t]\times \Gamma)|^2
  \right)
  = \E\,\left(\phi^2\, \E\,(  |\check{\pi}((  s,t]\times \Gamma)|^2 \right)
  =(\E\,\phi^2)\;\mu((  s,t]\times \Gamma),
  $$
so (\ref{isomrandomfield}) holds in this case.
\qed

The isometric formula allows to extend the concept of integral to
all those random fields which are limits, in the norm defined
by the right hand side of (\ref{isomrandomfield}), of simple
random fields $\phi$. The formula (\ref{isomrandomfield}) remains
true for them and the stochastic integral is again a square
integrable martingale.  Doob's regularisation theorem allows to
claim that the stochastic integral
$$
 \int_0^t\int_{\R^d}\phi(s,x)\; \check{\pi}(ds,dx)
$$
has  a c\`adl\`ag version.

If $\phi$ is a simple field and
\begin{equation}\label{stellaquattro}
  \E\,\int_0^t \int_{\R^d}|\phi(s,x)|\;\mu( ds,dx)<+\infty,
  \qquad t\ge0,
\end{equation}
then the stochastic integral
\begin{equation}\label{stellacinque}
 \int_0^t\int_{\R^d}\phi(s,x)\; {\pi}(ds,dx),
  \qquad t\ge0,
\end{equation}
can be defined, starting again from simple fields $\phi$, but it
is not a martingale. Thus if $\phi$ is a limit, in
all the norms (\ref{stellaquattro}), of simple fields  then the stochastic integral
is a well defined process as the limit of stochastic integrals.

\subsection{Determining equations}

 To obtain Markov processes with general characteristics
$(F,Q,\nu)$ one needs Ito equations of the form:
\begin{equation}\label{stellona}
\begin{array}{lll}
  dX(t) & = &
  \dis f(X(t))\; dt+B(X(t))\; dW(t) \\
   &  &\dis  +
  \int_{\R^d}G_0(X(t-),y)\;\check{\pi}(dt,dy)
  +\int_{\R^d}G_1(X(t-),y)\;{\pi}(dt,dy), \\
  X(0) & = & x
\end{array}
\end{equation}
with properly chosen coefficients and random measure.
The equation (\ref{stellona}) should be understood as an integral
equation
\begin{equation}\label{stellonadue}
\begin{array}{l}\dis
X(t) = x+\int_0^t f(X(s))\; ds+
\int_0^tB(X(s))\; dW(s) \\
\dis\qquad+\int_0^t
  \int_{\R^d}G_0(X(s-),y)\;\check{\pi}(ds,dy)
  +\int_0^t\int_{\R^d}G_1(X(s-),y)\;{\pi}(ds,dy),
\end{array}
\end{equation}
and a solution $X$ should be a c\`adl\`ag,
$(\calf_t)$-adapted process.

Assume now that characteristics $(f,Q,\nu)$ are given. How the
coefficients of the equation (\ref{stellona}) should be defined to
obtain a solution $X$ with the given characteristics?
Following Ito's 1951 paper we write the operator $A$, from Theorem \ref{courrege}, in a more
convenient form:
$$\begin{array}{l}\dis
 A\phi(x)= \<f(x),D\phi(x)\> +\frac{1}{2}{\rm \ Trace}\
 [Q(x)D^2\phi(x)]
 \\\dis
 \qquad\qquad+ \int_{|x|<1} \left(
 \phi(x+y)-\phi(x)-\<y,D\phi(x)\> \right)\; \nu(x,dy)
  \\\dis
 \qquad\qquad
 +\int_{|x|\ge1} \left(
 \phi(x+y)-\phi(x) \right)\; \nu(x,dy),
 \end{array}
 $$
where
$$
f(x)= F(x)+
 \int_{|y|<1}\frac{y\,|y|^2}{1+|y|^2}\; \nu(x,dy)
 -\int_{|y|\ge1}\frac{y}{1+|y|^2}\; \nu(x,dy).
$$
Let $\pi$ be the Poissonian measure corresponding to the
intensity measure
$$
\mu(ds,dx)=ds\; \frac{1}{|x|^{d+1}}\,dx.
$$
Assume that for each $x$ there exists a transformation
$G(x,y)$ from $\R^d\backslash \{0\}$ into $\R^d$ such that
$$
\begin{array}{lll}
  |G(x,y)|<1 & {\rm for} & |y|<1, \\
  |G(x,y)|\ge 1 & {\rm for} & |y|\ge1,
\end{array}
$$
and that the image of the measure $|y|^{-d-1}dy$ by the
transformation $G(x,\cdot)$ is precisely $\nu(x,\cdot)$.
Define
$$
G_0(x,y)=\left\{
\begin{array}{ll}
  G(x,y), & {\rm if\;}  |y|< 1, \\
0 & {\rm otherwise},
\end{array}
\right.
$$
$$
G_1(x,y)=\left\{
\begin{array}{ll}
  G(x,y), & {\rm if\;}  |y|\ge 1, \\
0 & {\rm otherwise}.
\end{array}
\right.
$$
Let in addition, for a given matrix valued function $B$,
$B(x)RB^*(x)=Q(x)$, $x\in\R^d$. Then, under appropriate regularity
conditions on $f$, $B$, $G_0$, $G_1$, the equation
(\ref{stellona}) has a unique Markovian solution with the
characteristics $(F, Q, \nu)$.
We do not specify conditions under which the described
representation holds but  refer to  Ito's  papers and to the
third  volume of {\it The Theory of Stochastic Processes} by
I.I. Gihman and A.V. Skorohod \cite{skorochod}.
\vspace{2mm}

\noindent Let us finally notice that the Poissonian measure, used in the
construction, is exactly the one which defines a L\'evy process
with the exponent function $\psi(\lambda)=|\lambda|$,
$\lambda\in\R^d$. This process is called a {\it symmetric Cauchy
process}. Therefore, with some abuse of language, we can say that
Markov processes can be effectively constructed from  uniform,
Brownian and Cauchy motions.

\chapter{Stochastic integration in infinite dimensions}
Of great interest in applications are  dynamical systems for which the state
space is infinite dimensional. Specific examples are provided by heat and reaction-diffusion processes or by
electromagnetic waves.
Deterministic dynamical systems of this type are described as solutions of partial differential equations of
evolutionary types. They can be  regarded as solutions of differential equations in infinite dimensional spaces.
Stochastic dynamical systems in infinite dimensions, or equivalently, Markov processes in infinite dimensions, can
be constructed  using appropriate generalisation of the theory of stochastic integration in Hilbert spaces.
To treat stochastic evolution equations with
respect to both continuous and discontinuous noise processes it is convenient to develop  the theory
of   stochastic integration   with respect to square
integrable Hilbert-valued martingales. To motivate better the material we start from more classical results when the
martingale is a Wiener process.
\section{Integration with respect to a Wiener process}
\subsection{Wiener process on a Hilbert space}
Wiener processes which are used
in applications evolve on  a Hilbert or a Banach space $U$ or also on linear topological spaces like  the space of Schwartz distributions.
The general  definition is as follows.  A
stochastic process $W(t)$, $t\geq 0$, with values in $E\,,$ is called a Wiener
process if
\begin{enumerate}
\item $W(t)$, $t\geq0$ has continuous paths and starts from $0$.
\item The increments $W(t_2) - W(t_1), \ldots, W(t_{n}) - W(t_{n-1})$,
for $0 \leq t_1 < t_2 < \ldots < t_{n}$ are independent random variables:
\begin{equation}
    \P \left(W(t_2) - W(t_1) \in \Gamma_1, \ldots,
    W(t_n) - W(t_{n-1}) \in \Gamma_{n-1}
    \right) = \prod_{i=1}^{n-1} \Pr \left\{ W(t_{i+1}) - W(t_i) \in \Gamma_i \right\}.
\end{equation}
\item For each measurable set  $\Gamma \subset E$ and all $t, h \geq 0$
$$
    \P\left(W(t+h) - W(t) \in \Gamma \right) =
    \P\left(W(h) \in \Gamma \right).
$$
\end{enumerate}
In other words, an $E$-valued, continuous, stochastic process is a Wiener
process if and only if for each $x^*$ in the space  $E^*$ of all continuous linear functionals on $E$, the real valued process $\langle
W(t), x^*\rangle$ is a 1-dimensional Wiener process.  All processes and random
variables are always assumed to be defined on a fixed probability space
$(\Omega, {\cal F}, \Pr)$.

\noindent It is well known that 1-dimensional Wiener processes have Gaussian
distributions.  A probability measure $\mu$ on $\R^1$ is said to be Gaussian
if it is either concentrated at a point $m\in \R^1$ or has a density
$$
\frac{1}{\sqrt{2\pi q}} e^{-\frac{1}{2q}(x-m)^2}, \quad x \in \R^1.
$$
It is denoted by $N(m,q)$.  If $q = 0$, $N(m,0) = \delta_{\{m\} }$.  More
generally a probability measure $\mu$ on a linear topological space is
Gaussian if every functional  $x^* \in E^*$, considered as a random variable on $(E,{\cal
B}(E),\mu)$, is a real Gaussian random variable. Equivalently if linear functionals transport
$\mu$ onto some $N(m,q))$.

We will additionally require that random variables $W(t)$ are  symmetric that is,
$$
\E < W(t), x^* > = 0, \quad t \geq 0,\, x^* \in
E^*.
$$
By far the most important Wiener processes are those which evolve on Hilbert
spaces.  In a sense all other Wiener processes are variants of Hilbert space
valued Wiener processes.  Basic properties of Gaussian Hilbert space valued
random variables are gathered in the following theorem.

\begin{Theorem}
Assume that $\xi$ is a symmetric Gaussian random variable with values in a separable
Hilbert space $H$.  Then for sufficiently small $s > 0$,
$$
\E \left( e^{s |\xi|^2 } \right) < +\infty.
$$
In particular all moments $\E |\xi|^k < +\infty$, $k = 1,2,\ldots$ are finite.
\end{Theorem}
It follows from the theorem that $\E |\xi| \< +\infty$ and $E\xi =0$ and
there exists a non-negative operator $Q$ such that
$$
    \E \langle \xi , a \rangle \langle \xi , b \rangle =
    \langle Qa, b \rangle.
$$
A non-negative operator $Q$ is said to be trace class if and only if
for an arbitrary orthonormal complete sequence $(e_n)$:
$$
\sum_{n=1}^{+\infty} \langle Qe_n, e_n \rangle < +\infty
$$

We have also
\begin{Theorem}
\label{thrm:Gaussian-moments} The covariance operator $Q$ is compact and trace
class. For arbitrary $m \in H$ and arbitrary trace
class non-negative  operator $Q$ there exists a unique Gaussian
 measure corresponding to
$(m,Q)$ and denoted by  $N(m,Q)$ or $N_{m,Q}$.
\end{Theorem}

\noindent{\bf Proofs.}
\begin{Lemma}
Assume that $\xi$ takes values in $\R^d$, is Gaussian and such that $\E\xi =
0$.  Then for all
$$
    s < {\frac{1}{2\E|\xi|^2}}, \quad \rm{we \; have}\quad
    \E\left(e^{s|\xi|^2}\right) \leq \frac{1}{\sqrt{1 - 2s \E |\xi|^2}}.
$$
\end{Lemma}

\noindent{\bf Proof.}
We consider first the case $s < 0$ and denote $s = -\lambda$, $\eta =
\sqrt{\lambda} \xi$.  Then $\E e^{-\lambda|\xi|^2} = \E e^{-|\eta|^2}$.  Let
the matrix $Q$ be the covariance of $\eta$, and $e_1, \ldots , e_d$ an
orthonormal basis such that
$$
Qe_j = \lambda_j e_j, \quad j = 1, \ldots, d
$$
where $\lambda_1, \ldots, \lambda_d$ are the eigenvalues of $Q$.  Then
$$
\eta = \sum_{j=1}^d \langle \eta,e_j \rangle e_j = \sum_{j=1}^d \eta_j e_j
$$
where $\eta_1,\ldots,\eta_d$ are independent with
$N0,\lambda_1),\ldots,N(0,\lambda_d)$ distributions.  Consequently
\begin{equation}
    \E \left( e^{-|\eta|^2} \right) =
    \E \left( e^{-\sum_{j=1}^d \eta_j^2 }\right) =
    \prod_{j=1}^d \E \left( e^{-\eta_j} \right)=
    \prod_{j=1}^d \frac{1}{\sqrt{1 + 2\lambda_j}}
\end{equation}
$$=
    \frac{1}{\sqrt{(1 + 2\lambda_1) \cdots (1 + 2\lambda_d) }} \leq
    \frac{1}{\sqrt{1 + 2(\lambda_1 + \cdots \lambda_d)}}.
$$
Since $\E |\eta|^2 = \lambda_1 + \cdots + \lambda_d$ the result follows. The
proof for $s \in (0,\frac{1}{2\E|\xi|^2})$ is similar.
\qed

\noindent{\bf Proof }
of Theorem \ref{thrm:Gaussian-moments}.
\newcommand{\xidsq}{|\xi_d|^2}
Choose an arbitrary
orthonormal basis $(e_n)$.  Then
$$
\xi = \sum_{n=1}^{+\infty} < \xi , e_n > e_n
$$
and $\xi = \lim_d \xi_d$ where $\xi_d = \sum_{n=1}^d < \xi,e_n > e_n$.
Moreover $\xi_d$ is a $d$-dimensional Gaussian random variable and by the lemma
$$
\E \left( e^{-|\xi_d|^2} \right) \leq \frac{1}{\sqrt{1 + 2 \E|\xi_d|^2}}.
$$
Note that $|\xi_d|^2 \uparrow |\xi|^2$ so $\E |\xi_d|^2 \uparrow \E|\xi|^2$
and therefore
$$
\E\left(e^{-|\xi|^2} \right) \leq \frac{1}{\sqrt{1 + 2\E |\xi|^2 }}
$$
If $\E |\xi|^2 = +\infty$ then $|\xi|^2 = +\infty$ with probability $1$, which
is not the case, so $\E |\xi|^2 < +\infty$.  But then the lemma is true also for
infinite dimensional Gaussian random variables using the fact that $|\xi_d|^2
\uparrow |\xi|^2$.  This completes the proof of Theorem \ref{thrm:Gaussian-moments}.
\qed
\vspace{2mm}

\noindent Since
$$
    \E |\xi|^2 = \E \left( \sum_{n}^{+\infty} < \xi , e_n \>^2 \right)
    = \sum_{n}^{+\infty} \E < \xi, e_n \>^2 = \sum_{n}^{+\infty} < Qe_n,
    e_n >,
$$
Trace $Q$ $ < +\infty.$ \qed
\vspace{3mm}

Thus distributions of any Hilbert space valued Wiener process $W$ are
uniquely determined by a non-negative trace class operator $Q$ being the
covariance of $W(1)$.  Moreover
$$
W(t) = \sum_{n}^{+\infty} \sqrt{\lambda_n}\beta_n(t) e_n \quad t \geq 0
$$
where $\beta_1, \beta_2, \ldots$ are independent real Wiener processes and
$(\lambda_n,e_n)$ is the eigensequence of $Q$.

We assume now that $H = L^2({\cal O},\mu)$ where $\mu$ is a finite,
non-negative measure.  Then the operator $Q$ is an integral operator
$$
Q\varphi(x) = \int\limits_{{\cal O}} q(x,y) \varphi(y) \mu(dy)
$$
and the (positive definite) function $q$ is the spatial correlation.  If
$W(t,x)$, $t \geq 0, x \in {\cal O}$ is the $Q$-Wiener process then
$$
\E W(t,x) W(s,y) = (t \wedge s) q(x,y).
$$
This heuristic formula can be justified in many specific cases.
\vspace{2mm}

\noindent {\bf Examples} Define
$$
    W(t,x) = \sum_{n}^{+\infty} \frac{\beta_n(t)}{n} \sqrt{\frac{2}{\pi}} \sin nx
    \quad x \in (0,\pi).
$$
Then $q(x,y) = x \wedge y - \frac{1}{\pi} xy$, $x,y \in (0,\pi)$ and for each
fixed $t$, $W(t,\cdot)$ is a Brownian bridge process.
\vspace{2mm}

\noindent Define
$$
    \widetilde{W}(t,x) = \sum_{n}^{+\infty} \frac{\beta_n(t)}{n+\frac12}
    \sqrt{\frac{2}{\pi}} \sin \left[ \left(n+\frac12\right) x \right]
    \quad x \in (0,\pi).
$$
Then $\widetilde{q}(x,y) = x \wedge y$.  For each fixed $t$,
$\widetilde{W}(t,\cdot)$ is a real valued Brownian motion.  The process
$\widetilde{W}$ is also called Brownian sheet on $[0,+\infty) \times [0,\pi]$.

\subsection{Stochastic integration with respect to a Wiener process}
To deal with stochastic equations one needs the concept of stochastic
integrals:
$$
\int_0^t \Phi(s) dW(s)
$$
where $\Phi(s,\omega)$ are operators from $U$ into another Hilbert space $H$.
\vspace{2mm}

\noindent Let us assume that the process $W$ is defined on a probability space
$(\Omega, \cal F, \P)$ and that ${\cal F}_t$ is an increasing family of $\sigma $ fields contained in $\cal F$ and such
that $W(t)$ is ${\cal F}_t$ measurable, for any $t\geq 0,$ and that for any $t\geq s\geq 0$, $W(t) - W(s)$ is independent of
${\cal F}_s$. If this is the case one says that $W$ is a {\it Wiener process with respect} to ${\cal F}_t$. The following
proposition follows directly from the definitions.
\begin{Proposition}\label{midentities}
Let $W$ be a {\it Wiener process with respect} to ${\cal F}_t$ with the covariance operator $Q$. Then
for arbitrary vectors $a, b \in U$ and arbitrary $0\leq s \leq t \leq u \leq v$
\begin{equation}
\E (<W(t) - W(s), a> <W(t) - W(s), b> |{\cal F}_s) = (t- s)<Q a, b>,
\end{equation}
\begin{equation}
E (<W(t) - W(s), a> <W(v) - W(u), b> |{\cal F}_u)= 0.
\end{equation}
\end{Proposition}
Let $L(U, H)$ be the Banach  space of linear continuous operators from $U$ into $H$.  An $L(U, H)$stochastic process $\Phi $
 is said to be simple if there
exist a   sequence of nonnegative numbers  $t_{0}= 0 <t_1 < \ldots < t_m$,  a sequence of operators $\Phi_{j}\in L(U, H),
j= 1, \ldots, m,$
and  a sequence of events $ A_j\in {\cal F}_{t_j}, j= 0, \ldots, m-1, $ such that
$$
\Phi (s) = \sum_{k= 0}^{m- 1} {\bf 1}_{A_k}\,\,{\bf 1}_{]t_k, t_{k+ 1}]}(s),\,\,\,s\in [0, +\infty [,
$$
where  ${\bf 1}_{B}$ denotes the indicator function of the set $B$. For simple processes $\Phi$ we set:
$$
\int_0^t \Phi(s) dW(s)= \sum_{k= 0}^{m- 1} \Phi_{k} (W(t_{k+ 1}\wedge t)- W(t_{k }\wedge t)),\,\,\,t\in ]0,+\infty [.
$$
Let $L_{HS}(U,H) $ be the space of all Hilbert- Schmidt operators, from $U$ into $H$, equipped with
the Hilbert- Schmidt norm $||\cdot ||_{L_{HS}(U, H) }$.

\begin{Proposition}\label{isometricW}
For arbitrary simple process $\Phi$:
$$
\E \left| \int_0^t \Phi(s)dW(s) \right|^2 =
    \E \int_0^t \|\Phi(s)Q^{1/2} \|^2_{L_{HS}(U,H)} ds,
    \quad t \geq 0.
$$
\end{Proposition}
\noindent{\bf Proof}
By the very definition
$$
\E \left| \int_0^t \Phi(s)dW(s) \right|^2 = \E |\sum_{k= 0}^{m- 1} {\bf 1}_{A_k}
\Phi_{k} (W(t_{k+ 1}\wedge t)- W(t_{k }\wedge t))|^2
$$
$$
= \E \sum_{k, l= 0}^{m- 1}{\bf 1}_{A_k}{\bf 1}_{A_l}<\Phi_{k}(W(t_{k+ 1}\wedge t)- W(t_{k }\wedge t)), \Phi_{l}
(W(t_{l+ 1}\wedge t)- W(t_{l }\wedge t))>.
$$
By formulas (\ref{midentities}),
\begin{equation}\label{midentities1}
\E (<W(t_{k+ 1}\wedge t)- W(t_{k }\wedge t), a> <W(t_{l+ 1}\wedge t)- W(t_{l}\wedge t), b>|{\cal F}_{t_{l}})=0,
\end{equation}
if $k\neq l$ and is equal
\begin{equation}\label{midentities2}
(t_{k+ 1}\wedge t - t_{k }\wedge t) <Q a, b>,
\end{equation}
if $k= l$.

\noindent Let $e_{j}$ be  an orthonormal basis in $U$ and $k\leq l$. We have
$$
I_{klj}= \E (<\Phi_{k}(W(t_{k+ 1}\wedge t)- W(t_{k }\wedge t)), e_j> <\Phi_{l}
(W(t_{l+ 1}\wedge t)- W(t_{l }\wedge t)), e_j >|{\cal F}_{t_{l}})
$$
$$
=\E (< (W(t_{k+ 1}\wedge t)- W(t_{k }\wedge t)), \Phi_{k}^{*} e_j> <
(W(t_{l+ 1}\wedge t)- W(t_{l }\wedge t)), \Phi_{l}^{*}e_j >|{\cal F}_{t_{l}}).
$$
By formulas  $(\ref{midentities})$ if $k < l$,
$$
\E I_{klj}{\bf 1}_{A_k}{\bf 1}_{A_l}= 0
$$
and if $k=l$,
$$
\E I_{kkj}{\bf 1}_{A_k}= \E {\bf 1}_{A_k}(t_{k+ 1}\wedge t - t_{k }\wedge t) <Q \Phi_{k}^{*}e_{j} ,  \Phi_{k}^{*}e_{j}>.
$$
But
$$
\sum_{j} <Q \Phi_{k}^{*}e_{j} ,  \Phi_{k}^{*}e_{j}> = \sum_{j} |Q^{1/2} \Phi_{k}^{*}e_{j}|^2 =
|Q^{1/2} \Phi_{k}^{*}|^{2}_{L_{HS}(U,H)},
$$
and since
$$
|Q^{1/2} \Phi_{k}^{*}|^{2}_{L_{HS}(U, H)}=|\Phi_{k}Q^{1/2}|^{2}_{L_{HS}(U, H)},
$$
the required identity follows by elementary calculations.\qed

\noindent  Note that the space
$$
\widetilde{U} = Q^{1/2}(U)
$$
equipped with the scalar product
$$
< a, b >{\widetilde{U}} = < Q^{-1/2}a, Q^{-1/2}b >_U,
$$
is a Hilbert space.
\vspace{2mm}

\noindent Let $L_{HS}(\widetilde{U},H) $ be the space of all Hilbert- Schmidt operators equipped with
the Hilbert- Schmidt norm $||\cdot||_{HS}=  ||\cdot ||_{L_{HS}(\widetilde{U},H) }$. By Proposition \ref{isometricW}, for simple $\Phi$,
\begin{equation}\label{isometricW1}
    \E \left| \int_0^t \Phi(s)dW(s) \right|^2 =
    \E \int_0^t \|\Phi(s) \|^2_{HS} ds,
    \quad t \in [0,T].
\end{equation}
The definition of the stochastic integral and the above formula \ref {isometricW1} extend to all adapted,
$L_{HS}(\widetilde{U},H)$ valued
processes for which the right hand side of (\ref{isometricW1}) is finite. By the same localisation procedure as in finite
dimensions the concept of the stochastic integral and all its basic properties can be extended to all adapted
$L_{HS}(\widetilde{U},H) $- processes
$\Phi(s)$ for which
$$
    \P \left(
    \int_0^t \| \phi(s) \|_{HS}^{2} ds <  +\infty,\,\,
    \ t \geq 0 \right) = 1.
$$

\section{Stochastic integration with respect to martingales}
\subsection{Introduction}
If $U$ is  a separable Hilbert space
and  $W(t)$, $t\ge0$, a $U$-valued Wiener process, with covariance
$Q$, then the   isometric formula holds,
$$
  \E\,\left|\int_0^t \Phi(s)\; dW(s)\right|^2
  =\E\,\int_0^t \|\Phi(s)Q^{1/2}\|_{HS}^2\; ds.
  $$
 To prove  the isometric formula in the general case, when the integrator is an $U$- valued square integrable
 martingale $M$, one needs to introduce bracket processes
$$
\l M, M \r _t,\,\,\, \ll M, M \rr _{t}.
$$
The former  is a real process  and the
latter an  operator valued. It turns out that there
exists an operator valued stochastic process $Q_s, s\geq 0$ \, such that,
\begin{equation}\label{mp1}
\ll M, M \rr _t = \int_{0}^{t} Q_{s} d \l M, M \r _s,\,\,\,t\geq 0.
\end{equation}
The values of the process $Q_s, s\geq 0,$ are non-negative, trace class operators.
In the case when $M$ is the Wiener process $W$, then
$$
\l W, W \r _t = t\,\, {\rm Trace}\, Q,\,\,\,\,\,  \ll W, W \rr _t = Q/{\rm Trace}\, Q,\,\,t\geq 0.
$$
The isometric formula in the general case was discovered by  M\'etivier and G. Pistone \cite{metivier1} and is the form
\begin{equation}\label{mp2}
  \E\,\left|\int_0^t \Phi(s)\; dM(s)\right|^2
  =\E\,\int_0^t \|\Phi(s)Q_s^{1/2}\|_{L_{HS}(U, H)}^2\; d\l M, M \r _s.
\end{equation}

\subsection{Doob-Meyer decomposition}

Assume that $U$ is a separable Hilbert space,
with norm $|\cdot|$ and scalar product $(\cdot,\cdot)$,
and $(\Omega, \calf, \P)$ a probability space equipped with an
increasing family of $\sigma$-fields $\calf_t\subset\calf$,
$t\in I$. The set $I$ will be either a bounded interval $[0,T]$ or
$[0,\infty)$. A $U$-valued family of random variables $X(t)$,
$t\in I$ is called an integrable process if
\begin{equation}\label{integrproc}
  \E\, |X(t)|<+\infty,\qquad t\in I.
\end{equation}
If for all $t\in I$, $X(t)$ is an $\calf_t$-measurable random
variable, then the family $X$ is called an adapted process. An
integrable and adapted process $X(t)$, $t\in I$, is said to be a
martingale if
\begin{equation}\label{defmartingale}
  \E\, (X(t)\, |\, \calf_s)=X(s),\qquad
  \P-a.s.
\end{equation}
for arbitrary $t,s\in I$, $t\geq s$. The identity
(\ref{defmartingale}) is equivalent to the statement
\begin{equation}\label{defmartingaleequiv}
  \int_FX(t)\;d\P=  \int_FX(s)\;d\P,\qquad
  F\in \calf_s,\; s\leq t, \; s,t\in I.
\end{equation}
A real valued integrable and adapted process
$X(t)$, $t\in I$ is said to be a submartingale (resp. a
supermartingale) if
$$
  \E\, (X(t)\, |\, \calf_s)\geq X(s),
\qquad {\rm (resp.\;\;}
  \E\, (X(t)\, |\, \calf_s)\leq X(s)
  {\rm )},\qquad
  \P-a.s.
$$

\begin{Lemma}
\begin{enumerate}
  \item[i)] If $M(t)$, $t\in [0,T]$ is a $U$-valued martingale,
  then $|M(t)|$, $t\in [0,T]$ is a submartingale.
  \item[ii)] If $g$ is an increasing, convex function from
  $[0,+\infty)$ into   $[0,+\infty)$ and $\E\, g(|M(t)|)<+\infty$
  for $t\in [0,T]$, then $g(|M(t)|)$, $t\in [0,T]$ is a
  submartingale.
\end{enumerate}
\end{Lemma}

\noindent{\bf Proof.}
$i)$ Let $t,s\in [0,T]$, $t>s$, then
$$
|M(s)|=|\E\, (M(t)\,|\,\calf_s)|\leq
\E\, (|M(t)|\,|\,\calf_s)
$$
as required.

$ii)$ Since $|M(s)|\leq\E\, (|M(t)|\,|\,\calf_s)$,
$\P$-a.s., $s<t$, then monotonicity and convexity
of $g$ together with Jensen's inequality imply:
\begin{equation}\label{convmart}
  g(|M(s)|)\leq   g(\E\, (|M(t)|\,|\,\calf_s))\leq
\E\,(  g (|M(t)|)\,|\,\calf_s)
\end{equation}
as required. \qed

>From now on we will assume that the martingale
$M(t)$, $t\in [0,T]$ is right-continuous
and square-integrable:
$$
\E\, |M(t)|^2<+\infty,\qquad t\in [0,T].
$$
Then by the lemma,
$|M(t)|^2$, $t\in [0,T]$ is a submartingale.
Denote by $\calp_T$ the smallest $\sigma$-field of subsets of
$[0,T]\times \Omega$ containing all sets of the form:
$(s,t]\times \Gamma$, where $0\leq s<t\leq T$ and $\Gamma\in
\calf_s$, and also $\{0\}\times \Gamma$ where $\Gamma\in\calf_0$.
A stochastic process $X(t)$, $t\in [0,T]$ with values in a
measurable space is called predictable if the function
$X(t,\omega)$, $t\in [0,T]$, $\omega\in \Omega$ is measurable
with respect to the $\sigma$-field $\calp_T$. We will need the
following fundamental result, see e.g. \cite{rogers}.

\begin{Theorem}
{\em (Doob-Meyer decomposition).}
  For arbitrary c\`adl\`ag real valued submar\-tingale $Y$
  there exists a unique predictable and right-continuous
  increasing process $A(t)$, $t\in [0,T]$ such that $A(0)=0$ and
\begin{equation}\label{meyerdecomp}
  M(t)=Y(t)-A(t),
  \qquad t\in [0,T]
\end{equation}
is a martingale.
\end{Theorem}

\noindent We give only a proof of the discrete time version of the theorem.
\begin{Proposition}
  Assume that $Y_0,Y_1,\ldots,Y_N$ is a submartingale with respect
  to $\calf_0$, $\calf_1$, $\ldots,\calf_N$. Then
\begin{equation}\label{meyerdecompdiscr}
  Y_n=M_n+A_n,\qquad n=0,1,\ldots ,N
\end{equation}
where $(M_n)$ is an $(\calf_n)$ martingale and $(A_n)$ is an
increasing sequence such that
\begin{equation}\label{incprocdiscr}
  A_0=0,\qquad
  A_n {\rm \; is \;} \calf_{n-1} {\rm\; measurable
  \; for\;} n=1,2,\ldots, N.
\end{equation}
The decomposition (\ref{meyerdecompdiscr}) with the
specified properties is unique.
\end{Proposition}

\noindent{\bf Proof.}
Define
$$
A_0=0\qquad {\rm and}\qquad A_n=\sum_{k=0}^{n}\left(
\E\, (Y_k\,|\, \calf_{k-1})-Y_{k-1}\right),
\quad n=1,2,\ldots, N.
$$
Since $(Y_k)$ is a submartingale we have
$$
\E\, (Y_k\,|\, \calf_{k-1})\geq Y_{k-1},
\qquad k=1,2,\ldots, N
$$
and therefore the sequence $(A_n)$ is increasing. It is also clear
that $A_n$ is $\calf_{n-1}$ measurable. If now
$$
M_n=Y_n-A_n,\qquad n=0,1,\ldots, N,
$$
then, for $n=1,2,\ldots, N$,
$$
M_n-M_{n-1}=Y_n-Y_{n-1} -(A_n-A_{n-1})=
Y_n-\E\, (Y_n\,|\,\calf_{n-1}).
$$
Consequently
$$
\E\, (M_n-M_{n-1}\,|\,\calf_{n-1})
=\E\, (Y_n\,|\,\calf_{n-1})-\E\, (Y_n\,|\,\calf_{n-1})=0,
$$
and therefore the sequence $M_n$, $n=0,1,\ldots, N$ is a
martingale as required. To show uniqueness assume that
$$
Y_n=M'_n+A'_n,\qquad
n=0,1,\ldots, N,
$$
for a martingale $(M')$ and an increasing sequence
$(A'_n)$ satisfying (\ref{incprocdiscr}). Then
\begin{equation}\label{unicmeyer}
  M_n-M_n'=A_n'-A_n,\qquad
n=0,1,\ldots, N
\end{equation}
and
$$
  M_{n-1}-M_{n-1}'= \E\, (  M_{n}-M_{n}'\,|\,\calf_{n-1})
=\E\, (  A_{n}'-A_n\,|\,\calf_{n-1})=A_{n}'-A_n,
$$
because $A_{n}'-A_n$ is $\calf_{n-1}$ measurable.
By (\ref{unicmeyer}) we therefore have
$$
 M_{n-1}-M_{n-1}'= M_{n}-M_{n}',
 \qquad n=1,2,\ldots, N,
 $$
 since $M_0=M_0'=Y_0$ the uniqueness follows.
\qed

If $M(t)$, $t\in [0,T]$ is a square-integrable
and right-continuous martingale, with values in a separable
Hilbert space $U$, then  $|M(t)|^2$, $t\in [0,T]$
is a right-continuous real valued submartingale
and the corresponding predictable process is denoted by
$\l M,M \r _t$, $t\in [0,T]$ and called the (angle) bracket process.
If $M$ and $N$ are two square-integrable right-continuous
martingales then there exists a unique right-continuous, predictable
process denoted by $\<M,N\>_t$, $t\in [0,T],$ with bounded
variation, such that
$$
<M(t),N(t)>-\<M,N\>_t, \qquad t\in [0,T]
$$
is a martingale. It is given by a polarization identity,
$$
\<M,N\>_t=\frac{1}{2}\left( \<M+N,M+N\>_t
-\<M,M\>_t -\<N,N\>_t\right), \qquad t\in [0,T].
$$

\begin{Example}\begin{em}
  Assume that $M(t)=W(t)$, $t\geq 0$ is a $U$-valued Wiener
  process. Then
  $$
  \<W,W\>_t=t\; {\rm Trace \;} Q,
  $$
where $Q$ is the covariance operator of $W(1)$.

In fact, for $0\leq s\leq t\leq T$, by the independence of the
increments,
$$
\begin{array}{lll}
  \E\, (|W(t)|^2\, |\, \calf_s) & = &
    \E\, (|(W(t)-W(s))+W(s)|^2\, |\, \calf_s) \\
    & = &     \E\, ((|W(t)-W(s)|^2+|W(s)|^2
    +2 <W(t)-W(s),W(s) >)\, |\, \calf_s) \\
    & = &  \E\, |W(t)-W(s)|^2+|W(s)|^2 \\
    & = & (t-s)\, {\rm Trace \;}Q+|W(s)|^2.
\end{array}
$$
Consequently
$$
 \E\, (|W(t)|^2-t\, {\rm Trace \;} Q\, |\, \calf_s)
 =|W(s)|^2-s\, {\rm Tr\;} Q,
 $$
 as required.
\end{em}
\end{Example}

With the same proof we have a more general result.

\begin{Proposition}
Assume that $M(t)$, $t\in [0,T]$ is a square-integrable,
right-continuous process, having independent, time-homogeneous
increments, with respect to the family $(\calf_t)$, $t\in [0,T]$,
and starting from $0$. Then
  $$
  \<M,M\>_t=t\, {\rm Trace  \;} Q,
  $$
where $Q$ is a trace class, non-negative operator on $U$ such that
\begin{equation}\label{defqindip}
  <Qa,b>=\E\, [<M(1),a> <M(1),b>],\qquad a,b\in U.
\end{equation}
\end{Proposition}

Note that the relation (\ref{defqindip}) defines the operator $Q$
uniquely. Its symmetricity and non-negativity is obvious. Since,
for an arbitrary, complete orthonormal basis $(e_n)$ in $U$,
$$
\begin{array}{lll}
  {\rm Trace \;} Q & = &
  \dis \sum_n <Qe_n,e_n>=\sum_n \E\, (<M(1),e_n>^2)  \\
    & = & \dis \E\, (\sum_n  <M(1),e_n>^2)=
    \E\,|M(1)|^2<+\infty,
\end{array}
$$
the trace of $Q$ is finite.
As for the Wiener process one shows that
$\E |M(t)|^2=t \E |M(1)|^2$ for $t\ge 0$.

\subsection{Operator valued angle bracket process}
For stochastic integration we need a generalisation of the angle
bracket. Denote by $L_1(U)$ the space of all nuclear operators on
$U$ equipped with the standard nuclear norm. Then $L_1(U)$ is a
separable Banach space. By $a\otimes b$ we denote a rank one
operator on $U$ given by:
$$
a\otimes b\; (u)=a\, <b,u>,\qquad u\in U.
$$
It is easy to show that $\|a\otimes b\|_{L_1(U)}=|a|\, |b|$,
$a,b\in U$.
By $L_1^+(U)$ we denote the subspace of $L_1(U)$ consisting of all
self-adjoint, non-negative, nuclear operators.
If $M(t)$, $t\in [0,T]$ is a right-continuous,
square-integrable martingale then the process
$$
M(t)\otimes M(t),\qquad t\in [0,T],
$$
is an $L_1(U)$ valued, right-continuous process such that
$$
\E\, \|M(t)\otimes M(t)\|_{L_1(U)}=
\E\, |M(t)|^2\leq \E\, |M(T)|^2<+\infty,
\qquad t\in [0,T].
$$
We have the following basic result, mentioned in the introduction to the section, from which the
isometric formula will follow.
\begin{Theorem}\label{MAIN}
  There exists a unique right-continuous, $L_1^+(U)$ valued,
  increasing, predictable process $\<\<M,M\>\>_t$, $\in [0,T]$,
  $\<\<M,M\>\>_0=0$ such that the process
  $$
  M(t)\otimes M(t)-\<\<M,M\>\>_t,\qquad t\in [0,T],
  $$
  is an $L_1(U)$ martingale. Moreover there exists a predictable
  $L_1^+(U)$ valued process $Q_t$, $t\in [0,T]$ such that
\begin{equation}\label{defqtdim}
  \<\<M,M\>\>_t=\int_0^tQ_s\; d\<M,M\>_s
\end{equation}
\end{Theorem}

\noindent{\bf Proof.}
Let $(e_k)$ be an orthonormal and complete basis in $U$. Then
$$
M(t)=\sum_{k=1}^{+\infty} <M(t),e_k>e_k,\qquad t\in [0,T].
$$
Denote the process
$<M(t),e_k>$, $t\in [0,T]$ by $M^k(t)$, $t\in [0,T]$. Since
$$
\E\, |M(t)|^2=\E\, \sum_{k=1}^{+\infty}
(M^k(t))^2
=\sum_{k=1}^{+\infty}\E\, (M^k(t))^2<+\infty
,\qquad t\in [0,T],
$$
the processes $M^k$ are right-continuous, square-integrable, real
martingales. Let $\<M^k,M^j\>$, $k,j=1,2,\ldots$ be the
corresponding angle bracket processes. Set
\begin{equation}\label{defanglebrakvect}
  \<\<M,M\>\>_t=\sum_{k,l=1}^{+\infty}e_k\otimes e_l
  \; \<M^k,M^l\>_t,\qquad t\in [0,T].
\end{equation}
Thus the infinite matrix $(\<M^k,M^l\>_t)$ is a representation of
the operator $\<\<M,M\>\>_t$ in the basis $(e_k)$.
To see that (\ref{defanglebrakvect}) defines an $L_1^+(U)$ valued
process we identify $U$ with $l^2$ and for each $a=(a^j)\in l^2$
consider a process
$$
<a,M(t)>=\sum_{j=1}^{+\infty} a^jM^j(t),\qquad t\in [0,T].
$$
It is a square-integrable right-continuous martingale such that
for $a,b\in l^2$
$$
\< <a,M>, <b,M>\>_t= \lim_N \sum_{i,j=1}^N a^ib^j \<M^i,M^j\>_t
= (\<\<M,M\>\>_ta,b).
$$
with  the limit existing in $L^1(\Omega)$. The inequality
$|\<M^i,M^j\>_t|^2\leq \<M^i,M^i\>_t\<M^j,M^j\>_t$, needed for the prof of the convergence, follows
from the proof of the lemma below.
This proves that the operator-valued function $\<\<M,M\>\>$ is a
well defined $L(U)$ valued function. The operators
$\<\<M,M\>\>_t$, $t\in [0,T]$ are symmetric and non-negative
because
$$
\<<a,M>, <b,M>\>_t=\< <b,M>, <a,M>\>_t
$$
and
$$
\< <a,M>, <a,M> \>_t\geq 0, \qquad t\in [0,T].
$$
For $0\leq s\leq t\leq T$ we have
$$
 (\{\<\<M,M\>\>_t-\<\<M,M\>\>_s\}a,a) =
\< <a,M>, <a,M> \>_t-\< <b, M> <a,M> \>_s\geq 0,
$$
so the operators $\<\<M,M\>\>_t-\<\<M,M\>\>_s$ are non-negative.
Consequently
$$\begin{array}{lll}\dis
\|\<\<M,M\>\>_t-\<\<M,M\>\>_s\|_{L_1(U)}
&=&\dis {\rm Tr\; }\{\<\<M,M\>\>_t-\<\<M,M\>\>_s\}\\
&
=&\dis\sum_j\{\<M^j,M^j\>_t-\<M^j,M^j\>_s\},
\end{array}
$$
$$
\E\, \|\<\<M,M\>\>_t-\<\<M,M\>\>_s\|_{L_1(U)}=
\E\, (|M(t)|^2 - |M(s)|^2 )<+\infty.
$$
This shows that $\<\<M,M\>\>_t$, $t\in [0,T]$ is an $L_1^+(U)$
valued, predictable, increasing function. To show that it is also
right-continuous and can be represented in the form
(\ref{defqtdim}) we establish first  a lemma.

\begin{Lemma}
  Assume that $M$, $N$, are right-continuous, square-integrable
  martingales on $[0,T]$. There exists a predictable process
  $q(s)$, $s\in [0,T]$ such that
  $$
  \<M,N\>_t=\int_0^tq(s)\; d[\<M,M\>_s+\<N,N\>_s].
  $$
\end{Lemma}

\noindent{\bf Proof.}
It is enough to show that for almost all random outcome
the measure in $[0,T]$ corresponding to the function
$\<M,N\>$ of bounded variation is absolutely continuous with
respect to the sum of the measures induces by $<M,M>$ and $<N,N>$.
For fixed $s\in [0,T]$ and arbitrary real $x$, the process
$$
\<M+xN, M+xN\>_t - \<M+xN, M+xN\>_s,\qquad
t\in [s,T]
$$
is the angle bracket corresponding to
$M(t)+xN(t)$, $t\in [s,T]$. Consequently, for all $x\in \R^1$
$$
\begin{array}{l}
  \<M+xN, M+xN\>_t-\<M+xN, M+xN\>_s\\
  \qquad =x^2(  \<N,N\>_t-\<N, N\>_s)
  +2x(  \<M, N\>_t-\<M, N\>_s)
  +(  \<M, M\>_t-\<M, M\>_s)
  \\
  \qquad \geq 0
\end{array}
$$
and thus
$$
(  \<M, N\>_t-\<M, N\>_s)^2\leq
(  \<M, M\>_t-\<M, M\>_s)
(  \<N,N\>_t-\<N, N\>_s)
$$
or equivalently
\begin{equation}\label{schwartsforvarq}
|  \<M, N\>_t-\<M, N\>_s|\leq
(  \<M, M\>_t-\<M, M\>_s)^{1/2}
(  \<N,N\>_t-\<N, N\>_s)^{1/2}.
\end{equation}
>From (\ref{schwartsforvarq}),
$$
|  \<M, N\>_t-\<M, N\>_s|\leq\frac{1}{2}\{
(  \<M, M\>_t+\<N, N\>_t)-
(  \<M,M\>_s+\<N, N\>_s)\}.
$$
This way we have shown that on each subinterval of $[0,T]$ the
 total
variation of the measure corresponding to $\<M,N\>$ is smaller
than the total variation corresponding to $\<M,M\>+\<N,N\>$. In
particular if a Borel set $\Gamma\subset [0,T]$ is of measure zero
with respect to $d(\<M,M\>+\<N,N\>)$ it is of measure zero
with respect to $d\<M,N\>$ and this implies the required absolute
continuity.

To prove predictability of $q$ we use a real analysis result. If
$\mu$ and $\nu$ are two finite non-negative measures on
$[0,+\infty)$ and $\mu$ is absolutely continuous with respect to
$\nu$ then for $\nu$-almost all $t>0$
$$
\frac{d\mu}{d\nu}(t) = \liminf_{r\uparrow t}
\frac{\mu((r,t])}{\nu((r,t])},
$$
where the limit is taken with respect to $r<t$, $r$ rational.
Since the limes inferior of predictable processes is predictable
the result follows.
 \qed

It follows from the lemma that for each $i,j\in\N$ there exists a
predictable process $q^{i,j}(t)$, $t\in [0,T]$ such that
$$
\<M^i,M^j\>_t=\int_0^tq^{i,j}(s)\; d\<M,M\>_s
=\int_0^tq^{i,j}(s)\; d\sum_{k=1}^{+\infty}\<M^k,M^k\>_s.
$$
Thus, if the space $U$ is finite dimensional we have the
representation
$$
 \<\<M,M\>\>_t=\int_0^tQ_s\; d\<M,M\>_s
 $$
 where $Q_s$, $s\in [0,T]$ is a predictable function with values
 in the space $L_1^+(U)$. In general we have for arbitrary
 $a\in l^2$ with only a finite number of coordinates different
 from zero
 $$
(\<\<M,M\>\>_ta,a) = \< <a,M>, <a,M> \>_t=
\int_0^t <Q_sa,a> \; d\<M,M\>_s.
$$
In particular
$$
{\rm Trace \;} \<\<M,M\>\>_t =\int_0^t{\rm Trace\;}Q_s\; d\<M,M\>_s
<+\infty
$$
$\P$-a.s.,
where  $Q_s$ is given by the formula
$ Q_s=\sum_{k,l=1}^{+\infty}e_k\otimes e_l
  \; q^{k,l}(s)$.
 This implies that $Q_s$ takes values in $L_1^+(U)$ even
if $\dim U=+\infty$.
 \qed
\subsection{Final comments}
The proof of the isometric formula (\ref{mp2}) is now similar to its proof for the Wiener process. The main
ingredient  is  the following generalisation of Proposition \ref{midentities} which follows directly from
Theorem \ref{MAIN},
\begin{Proposition}\label{midentities3}
Let $M$ be a {\it Wiener process with respect} to ${\cal F}_t$ with the covariance operator $Q$. Then
for arbitrary vectors $a, b \in U$ and arbitrary $0\leq s \leq t \leq u \leq v$
\begin{equation}
\E (<M(t)- M(s), a> <M(t)- M(s)), b> |{\cal F}_s) = \int_{s}^{t} <Q_{\sigma}a, b> d<M, M>(\sigma)
\end{equation}
\begin{equation}
E (<M(t)- M(s) , a> <M(u)- M(v), b> |{\cal F}_u)= 0.
\end{equation}
\end{Proposition}
\vspace{3mm}

\noindent As an introduction to {\it stochastic evolution equations} with respect to Hilbert valued
martingales we comment on the so called {\it weak solution}, playing a fundamental role in the applications.
\vspace{3mm}

\noindent Let $H$ and $U$ be separable Hilbert spaces, $A,$  the infinitesimal generator
of a $C_0$-semigroup $S(t)$, $t\geq 0$ on $H$ and $M(t)$, $t \geq0$ a square integrable
martingale on $U$.   Consider an
equation
\begin{equation} \label{eq:dy}
    dy(t) = Ay(t) dt + f(t) dt + \phi(t) dM(t), \quad t \geq 0,\,\,y(0) = x,
\end{equation}
where $f$ is an $H$-valued, adapted process and $\phi(t)$,
$t\geq0$ an adapted process of,
possibly unbounded, linear operators from $U$ into $H$. Both processes $f, \phi$ can be functionals of the unknown process $y$.

\noindent A linear subspace $D \subset H$ is said to be a core for $S(t)$, $t \geq 0$ if
$D \subset D(A)$ and for every $x\in D(A)$ there exists a sequence $(x_n)$ of
elements from $D$ such that $x_n \to x$ and $Ax_n \to Ax$ as $n \to +\infty$.
It is well known, see [2], that if a dense linear subspace $D \subset H\,$
is invariant for
the semigroup then it is a core.
Let $D^*$ be a core of the operator $A^*$ adjoint to $A$.  A continuous,
adapted process $y(t)$, $t\geq 0$ is said to be {\it a weak  solution} of
\ref{eq:dy} if for each $x^* \in D^*$ and $t \geq 0$, $\Pr$-a.s.,
\begin{equation}
    < x^*, y(t) > =
    < x^*, x >
    \int_0^t < A^*x^*, y(s) > ds
\end{equation}
\begin{equation}
    +\int_0^t < x^*, f(s) > ds +
    \int_0^t < \phi^*(s)x^*, dM(s) >_U.
\end{equation}
We have the following important proposition which allows to replace differential equations by integral
equations.

\begin{Proposition}
Process
\begin{equation}
\label{eq:VOC}
    y(t) = S(t)x + \int_0^t S(t-s)f(s) ds + \int_0^t S(t-s) \phi(s) dM(s)
\end{equation}
is a weak solution of the equation \ref{eq:dy}.
\end{Proposition}
The relation (\ref{eq:VOC})
is the so called  {\it variation of constant formula}.

\chapter{Prohorov's theorem}\label{capprohorov}
The chapter is devoted to an  exposition of  Prohorov's tightness theorem in metric spaces.

\section{Motivations}
The Kolmogorov theorem is of great theoretical
importance. In practice however only seldom finite
dimensional distributions are known. Two stochastic
processes $X$ and $X'$ are called {\em equivalent} if
they have identical finite dimensional distributions.
For each $\omega\in\Omega$, the function $t\to
X(t,\omega)$ is called {\em a path}
of the process $X$. It is
easy to construct two equivalent processes such that
one of them has continuous paths and the paths  of the
other one are discontinuous.

\begin{Example}\begin{em}
Define $X(t)=t$, for $t\in [0,1]$. Let $X'(t)$, $t\in
[0,1]$, be defined on the probability space
$((0,1],\calb((0,1]), \P )$, where $\P$ is the Lebesgue
measure on $(0,1]$, by the formula: $X(t,\omega)=0 $ if
$t=\omega$ and $X'(t,\omega)=t$ if $t\ne \omega$. Then
the processes $X$ and $X'$ are equivalent although the
trajectories of $X$ are continuous and all the
trajectories of $X'$ are discontinuous. \qed
\end{em}
\end{Example}

A priori any function from $\calt$ into $E$ can be a
path of a process constructed in Kolmogorov's way. To
prove that there exists an equivalent version of the
process with special paths properties requires an
additional work.

A powerful way of constructing good versions of
stochastic processes is based on the concept of weak
convergence of measures.

\begin{Example}\begin{em}
Let $\xi_1,\xi_2,\ldots$ be a sequence of independent,
real, random variables with identical Gaussian laws
with mean (expectation) $0$ and second moment $1$. Such
a sequence exists on a probability space
$(\Omega,\calf,\P)$. For each $n=1,2,\ldots$ and $t\in
[0,1]$ define
$$
S_n=\xi_1+\ldots+\xi_n,\qquad S_0=0,
$$
$$
X_n(t)= \frac{1}{\sqrt{n}}S_{[nt]} + (nt-[nt])
 \frac{1}{\sqrt{n}}\xi_{[nt]+1}.
$$
For each $n=1,2,\ldots$ and $\omega\in\Omega$,
$X_n(\cdot,\omega)$ is a continuous function and $X_n$
can be regarded as a random variable with values in
$C[0,1]$. Let $\mu_n$ be the law of $X_n$ treated as
$C[0,1]$-valued random variable. One can show that the
sequence $\mu_n=\call(X_n)$ weakly converges to a
measure $\mu$ on $C[0,1]$ which can be identified with
the so called Wiener measure. The measure $\mu$ is
automatically concentrated on $C[0,1]$. If one takes
now as a new probability space $(C[0,1],\calb (C[0,1]),
\mu)$ and defines $X(t,\omega)=\omega (t)$ for
$\omega\in C[0,1]$, then $X$ is the so called Wiener
process with continuous pahts.
\end{em}
\end{Example}
Let $(X_n)$ be a sequence of $E$-valued random variables. One
says that  $(X_n)$  converges weakly to $X$ if the corresponding
distributions ${\cal L}(X_n) = \mu_n$  converge weakly to the law
 ${\cal L}(X) = \mu$.   If $\mu_n\Rightarrow\mu$ and $\phi$ is a
continuous function $\phi:E\to \R^1$ then also  $(\phi(X_n))$
converges weakly to $\phi(X)$.

{\bf Proof.}
For arbitrary $\lambda\in\R^1$,
$$
\E \left( e^{i\lambda\phi(X_n)}\right)
=\int_E   e^{i\lambda\phi(x)}\mu_n(dx)
\to \int_E   e^{i\lambda\phi(x)}\mu(dx),
$$
because $x\to e^{i\lambda\phi(x)}$ is a bounded continuous
function. This implies that characteristic functions of the laws of
real valued random variables $\phi(X_n)$ converge to the
characteristic function of the law of $\phi (X)$. This proves the
result. \qed

In particular if we know that continuous processes $X_n(t)$, $t\in
[0,T]$, converge weakly in $C([0,T];E)$ to a continuous process
 $X(t)$, $t\in [0,T]$,  and $\phi$ is a continuous function on
 $C([0,T];E)$ then $\phi(X_n)$  converges weakly to $\phi(X)$.
Interesting functions are for instance the following ($E=\R^1$):
$$
\phi (X)=\max_{0\leq t\leq T}X(t),
\qquad
\phi (X)=\int_0^TX(t)\, dt.
$$

\section{Weak topology}

$E$ is Polish,
that is separable, complete, metric space,
 with metric $\rho$, $\cale$ is the Borel
$\sigma$-field of subsets of $E$ and $\calm_1(E)$ is
the set of all probability measures on $E$. Denote by
$C_b(E)$ the space of all bounded continuous functions
on $E$. Then $\calm_1(E)$ can be regarded as a subset
of $C_b(E)^*$, the space of all bounded linear
functionals on $C_b(E)$, equipped with the
weakest topology
such that all mappings $\phi\to\phi (f)$ are continuous
for all $f\in C_b(E)$.
We say that a sequence
$(\mu_n)$ converges weakly to $\mu$ if an arbitrary
$C_b(E)^*$ neighbourhood of $\mu$ contains all but a
finite number of elements of $(\mu_n)$. This can be
equivalently stated as follows:
for arbitrary $f\in C_b(E)$
\begin{equation}\label{weakconv}
(\mu_n,f)\to (\mu,f)\quad {\rm as}\quad n\to +\infty.
\end{equation}

\begin{Proposition}
Let $(\mu_n)$ and $\mu $ be elements of $\calm_1(E)$.
The following conditions are equivalent.
\begin{enumerate}
  \item[(i)] $(\mu_n,f)\to (\mu,f)$ for all $f\in C_b(E)$.

  \item[(ii)] $(\mu_n,f)\to (\mu,f)$ for all $f\in UC_b(E)$.

  \item[(iii)] $\overline{\lim}_{n\to\infty}
  \mu_n(F)\leq \mu(F)$ for all closed sets $F\subset E$.

\item[(iv)] $\underline{\lim}_{n\to\infty}
  \mu_n(G)\geq \mu(G)$ for any open set $G\subset E$.

\item[(v)] $ {\lim}_{n\to\infty}
  \mu_n(A)=\mu(A)$ for any   $A\in\cale$
   such that $\mu(\partial A)=0$.
\end{enumerate}
\end{Proposition}

\noindent {\bf Proof.}
$(i)\Rightarrow (ii)$ is obvious.

$(ii)\Rightarrow (iii)$. Let $F$ be a given closed set
and $F_\epsilon$ its $\epsilon$-neighbourhood. Then
there exists a uniformly continuous function
$f_\epsilon$ on $E$ such that $0\leq f_\epsilon \leq
1$, $f_\epsilon
=1$ on $F$ and $f_\epsilon =0$ on $F_\epsilon^c$.
Define
$f_\epsilon(x)=\phi(\frac{1}{\epsilon}\rho(x,F))$,
where $\phi :\R^1\to\R^1$ is piecewise linear,
$\phi(\xi)=1$ for $\xi\leq 0$, $\phi(\xi)=0$ for
$\xi\geq 1$; also recall that $\rho(x,F)-\rho(y,F)\leq
\rho(x,y)$, $x,y\in E$. Then
$$
\mu_n(F)=\int_F f_\epsilon\mu_n\leq
\int_Ef_\epsilon\mu_n\to\int f_\epsilon\mu,
$$
so $\overline{\lim}_{n\to\infty}
  \mu_n(F)\leq \int_E f_\epsilon(x)\mu(dx)$, but
$f_\epsilon\to \chi_F$ as $\epsilon\to 0$, so $\int_E
f_\epsilon \mu\to \mu(F)$.

$(iii)\Leftrightarrow (iv)$. By taking into account
that $G^c$ is a closed set and $\nu(G^c)=1-\nu(G)$ for
arbitrary $\nu\in\calm_1(E)$.

$(iii)\Rightarrow (i)$. We will show that
$\overline{\lim}_{n\to\infty}
  (\mu_n,f)\leq (\mu, f)$. We can assume
that $0<f<1$ (adding a constant and multiplying by a
positive number). Fix $k\in\N$ and define closed sets
$$
F_i=\{ x\, :\, f(x)\geq \frac{i}{k}\},\qquad
i=0,1,\ldots,k.
$$
Then $F_i$ are closed sets and for arbitrary
$\mu\in\calm_1(E)$
$$
\sum_{i=1}^k\frac{i-1}{k}
\mu\left(x\, ;\, \frac{i-1}{k}\leq f<\frac{i}{k}\right)
\leq \int f\, \mu\leq
\sum_{i=1}^k\frac{i}{k}
\mu\left(x\, ;\, \frac{i-1}{k}\leq f<\frac{i}{k}\right),
$$
or equivalently
$$
\sum_{i=1}^k\frac{i-1}{k}
 \left[ \mu(F_{i-1})-\mu(F_i)\right]
\leq \int f\, \mu\leq
\sum_{i=1}^k\frac{i}{k}
 \left[ \mu(F_{i-1})-\mu(F_i)\right].
$$
Taking into account cancellation:
$$
\frac{1}{k}\sum_{i=1}^k \mu(F_i) \leq \int f\, \mu\leq
\frac{1}{k}+\frac{1}{k}\sum_{i=1}^k\mu(F_i).
$$
Consequently
$$
\mathop{\overline{\lim}}_{n\to\infty}\int f\, \mu_n\leq
\frac{1}{k}+\frac{1}{k}\sum_{i=1}^k\mu(F_i)
\leq \frac{1}{k}+\int f\, \mu.
$$
Since $\overline{\lim}_{n\to\infty}(\mu_n,-f)
\leq (\mu,-f)$ one gets also that
$\underline{\lim}_{n\to\infty}(\mu_n,f)
\geq (\mu,f)$. This proves $(i)$.

Equivalence with $(v)$. Recall that $\partial A$
consists of all those points in $E$ which have in any
neighbourhood points in $A$ and points in $A^c$. Then
$\partial A=\overline{A}\cap \overline{A^c}$. Note also
that $\overline{A} = \stackrel{\circ}{A}\cup \partial
A$ and that $\partial A = \overline{A}\backslash
\stackrel{\circ}{A}$. Assume that $\mu(\partial A)=0$.
Then
$$
\mu(\overline{A})
\geq\mathop{\overline{\lim}}_{n\to\infty} \mu_n(\overline{A})
\geq\mathop{\overline{\lim}}_{n\to\infty}\mu_n( {A})
\geq\mathop{\underline{\lim}}_{n\to\infty}\mu_n( {A})
\geq\mathop{\underline{\lim}}_{n\to\infty} \mu_n(\stackrel{\circ}{A})
\geq \mu( \stackrel{\circ}{A}).
$$
But if $\mu(\partial A)=0$ then $
\mu(\overline{A})=\mu(
\stackrel{\circ}{A})$ and from the above inequalities
$$
\mathop{\overline{\lim}}_{n\to\infty} \mu_n( {A})=
\mathop{\underline{\lim}}_{n\to\infty} \mu_n( {A}).
$$

Assume that for all Borel sets $A$ such that
$\mu(\partial A)=0$ $ {\lim}_{n\to\infty}
\mu_n( {A})=\mu(A)$. Now for arbitrary closed
set $F$ and its closed $\delta$-neighbourhood
$F_\delta=\{ x\, :\, \rho(x,F)\leq \delta \}$ one has
that $\partial F_\delta \subset \{x\, :\, \rho(x,F)=
\delta\}$. For different $\delta >0$ the sets
$\{x\, :\, \rho(x,F)=
\delta\}$ are disjoint and there exists a
sequence $\delta_k\downarrow 0$ such that $\mu(x\, :\,
\rho(x,F)=\delta_k)=0$. Consequently
$\lim_{n\to\infty}\mu_n(F_{\delta_k})=\mu(F_{\delta_k})$.
However $\mu_n(F)\leq \mu_n(F_{\delta_k})$ and
therefore $\overline{\lim}_{n\to\infty} \mu_n(F)\leq
\mu(F_{\delta_k})$ for each $k$. But
$\mu(F_{\delta_k})\downarrow \mu(F)$ so
 $\overline{\lim}_{n\to\infty} \mu_n(F)\leq\mu(F)$.
  This proves the result. \qed

\section{Metrics on $\calm_1(E)$}

A metric space $E$ equipped with a metric $d$ will be denoted by
$E_d$.

\begin{Lemma}\label{ucbedseparable}
Let $E_\rho$ be a separable metric space. Then
there exists a metric $d$ equivalent to $\rho$ such that
the space $UC_b(E_d)$ is separable.
\end{Lemma}

\noindent {\bf Proof.}
Let $(a_k)$ be a dense countable subset of $E$. Define
a mapping $h\, :\, E\to [0,1]^\N$ by the formula:
$$
h(x)=\left(\frac{\rho(x,a_k)}{1+\rho(x,a_k)}\right).
$$
Then $h$ is continuous,
one to one and the inverse is continuous.
Thus $E$  is homeomorphic to a subset  $h(E)$ of the compact
metric space $[0,1]^\N$. Identifying $E$ with $h(E)$,
a metric that $E$ inherits
from $[0,1]^\N$ is
$$
d(x,y)= \sum_{k=1}^{+\infty}\frac{1}{2^k}
\frac{\rho(x,a_k)}{1+\rho(x,a_k)},
$$
which is
therefore equivalent to $\rho$.

Now
let $\widehat{E}_d$ be the completion of $E_d$ with respect
to $d$. Then $\widehat{E}_d$ is compact and
$UC_b(\widehat{E}_d) = C(\widehat{E}_d)$.
But $C(\widehat{E}_d)$ is separable,
by Lemma \ref{ceseparable} below, and consequently
$UC_b(\widehat{E}_d)$ is separable as well.
Since $UC_b(\widehat{E}_d)$ is isomorphic to
$UC_b(E_d)$ the result follows. \qed

\begin{Lemma}\label{ceseparable}
If $F$ is a compact metric space then the space
$C(F)$ is separable.
\end{Lemma}

\noindent {\bf Proof.}
We first note that the space $C([0,1]^\N)$ is separable
because, for arbitrary natural number $k$,
$C([0,1]^k)$ is separable, and continuous functions
on $[0,1]^\N$, which depend on a finite number
of coordinates, are dense in $C([0,1]^\N)$
by the Stone-Weierstrass theorem.

Next we construct a homeomorphism $h$ of $F$ onto a
subset of $[0,1]^\N$, proceeding as in the proof of Lemma
\ref{ucbedseparable}. Then $C(F)$ and $C(h(F))$ are isometric.
Since $F$ is compact, $h(F)$ is a compact subset of
$[0,1]^\N$. Since every
continuous function on $h(F)$ extends
to a continuous function on $[0,1]^\N$,
it is easily seen that separability of
$C([0,1]^\N)$ implies that
$C(h(F))$ is separable as well, and the result follows.
\qed

\begin{Proposition}
Let $E$ be a Polish space. Then there
exists a
countable, dense subset of $C_b(E)$, denoted by
$(f_k)$, such that
the following metric
$$
\Delta (\mu,\nu)=
\sum_{k=1}^{+\infty}\frac{1}{2^k}
\frac{|(\mu,f_k)-(\nu ,f_k)|}{1+|(\mu,f_k)-(\nu ,f_k)|}
$$
defines the weak topology on  $\calm_1(E)$.
\end{Proposition}

\noindent {\bf Proof.}
Let $d$ be the metric constructed in
Lemma \ref{ucbedseparable}. We take as
$(f_k)$ a
countable, dense subset of $UC_b(E_d)$.

Let $ {\cal O} $ be a $C_b(E)^*$ neighbourhood of
$\mu$. One has to show that there exists $r>0$ such
that $\{\nu\, :\, \Delta (\mu,\nu)<r\}\subset {\cal O}
$. One can assume that ${\cal O} $ is determined by a
finite number of continuous functions
$g_1,\ldots,g_k\in C_b(E)$ and numbers
$r_1,\ldots,r_k$:
$$
{\cal O} =\{ \nu\, :\, |(g_i,\mu)-(g_i,\nu )|<r_i,\;
i=1,2,\ldots ,k\}.
$$
Assume to the contrary that there exists a sequence of
measures $\nu_n$ such that $\Delta (\mu,\nu_n)\to 0$
but nevertheless $\nu_n\notin {\cal O}$. Without any
loss of generality one can assume that
$$
|(g_1,\mu)-(g_1,\nu_n )|\geq r_1
$$
for all $n=1,2,\ldots$. But
$(f_k,\nu_n)\stackrel{n\to\infty}{\longrightarrow}
(f_k,\mu)$ for a dense set in $UC_b(E_d)$ and therefore
the convergence takes place for all $f$ in $UC_b(E_d)$
and therefore for all $f$ in $C_b(E)$, in particular
for $f=g_1$, a contradiction. Fix now $r>0$ and
consider a ball of radius $r$ with center $\mu$:
$
B(\mu,r)=\{ \nu\, :\, \Delta (\mu,\nu)<r\}.
$
Take
$$
{\cal O} =\{ \nu\, :\, |(f_k,\mu)-(f_k,\nu )|<r_k,\;
k=1,2,\ldots ,N\}
$$
and $N$ such that $\sum_{k>N}2^{-k}< r/2$. If
$r_1,\ldots,r_N$ are chosen such that
$$
\sum_{k=1}^N \frac{1}{2^k}\frac{r_k}{1+r_k}<
\frac{r}{2}
$$
then ${\cal O}\subset B(\mu,r)$. This completes the
proof. \qed

\begin{Remark}\begin{em}
One can introduce an equivalent metric which is even
complete. An example of such a metric was considered by
Prohorov:
$$
L(\mu,\nu)=\inf \{\delta\, :\, \mu(F)\leq
\nu(F_\delta)+\delta \; {\rm and}\;
\nu(F)\leq
\mu(F_\delta)+\delta\; {\rm for \; all \; closed}\;
F\subset E\}.
$$
Another possibility is to use the so called Wasserstein
metric, called also Fortet-Mourier metric
$$\begin{array}{ll}
W(\mu,\nu)=\sup&\ds
\bigg\{\left| \int_E\phi(x)\,\mu(dx)
- \int_E\phi(x)\,\nu(dx)\right|\, :\,
\eds\\&\ds
\sup_x|\phi(x)|\leq 1,\;
|\phi(x)-\phi(y)|\leq \rho(x,y),\,x,y\in E\bigg\}.
\eds  \end{array}
$$
\end{em}
\end{Remark}

\section{Prohorov's theorem}

\begin{Theorem}
\begin{enumerate}
  \item[(i)] Let $\Gamma\subset \calm_1(E)$
be a compact set then for arbitrary $\epsilon >0$ there
exists a compact set $K\subset E$ such that
\begin{equation}\label{tight}
\mu(K)\geq 1-\epsilon\quad {\rm for\; all}\; \mu\in\Gamma.
\end{equation}
  \item[(ii)] If $\Gamma\subset \calm_1(E)$ is a
 set such that for each $\epsilon >0$ there exists a
compact set $K\subset E$ such that (\ref{tight}) holds
then $\overline{\Gamma}$ is a compact subset of
$\calm_1(E)$.
\end{enumerate}
\end{Theorem}

\noindent {\bf Proof.}
Let $\{a_k\}$ be a dense set in $E$. Define
$$
G_{n,k}=\bigcup_{j=1}^nB\left(a_j,\frac{1}{k}\right).
$$
The maps $\mu\to\mu(G_{n,k})$ are lower semicontinuous
and for each $k$, $\mu(G_{n,k})\uparrow 1$. By Dini's
theorem for each $\epsilon >0$ and $k$ there exists an
$n_k$ such that for all $\mu\in\Gamma$
$$
\mu(G_{n_k,k})\geq 1-\frac{\epsilon}{2^k}.
$$
Define $K=\bigcap_{k=1}^\infty\overline{G_{n_k,k}}$.
Then for all $\mu\in\Gamma$
$$
\mu(K)=\mu\left(\bigcap_{k=1}^\infty\overline{G_{n_k,k}}\right)
=1-\mu\left(\left(\bigcap_{k=1}^\infty\overline{G_{n_k,k}}\right)^c
\right)
= 1-
\mu\left(\bigcup_{k=1}^\infty\overline{G_{n_k,k}}^c\right)
\geq 1- \sum_{k=1}^\infty
\mu\left(\overline{G_{n_k,k}}^c\right),
$$
$$
\mu\left(\overline{G_{n_k,k}}^c\right)=
1- \mu\left(\overline{G_{n_k,k}}\right)\leq
\frac{\epsilon}{2^k}.
$$
Consequently $\mu(K)\geq 1-\epsilon$. The set is
closed, and totally bounded because for each $k$
$$
K\subset
\bigcup_{j=1}^{n_k}B\left(a_j,\frac{2}{k}\right).
$$
\begin{Lemma}
If a metric
 space $E$ is compact then it is complete and
totally bounded. Conversely, a complete and totally
bounded space is compact.
\end{Lemma}
This completes the proof of $(i)$.

To show $(ii)$ we use Riesz's theorem.
\begin{Theorem}
If a metric
space $K$ is compact then an arbitrary linear
functional $\psi$ on $C(K)$ such that $\psi(f)\geq 0$
for all $f\geq 0$ is of the form
$$
\psi(f)=\int_Ef(x)\,\mu(dx)
$$
where $\mu$ is a nonnegative finite measure and the
representation is unique.
\end{Theorem}

Proof of $(ii)$. Assume that $E$ is compact and define
$\psi_\mu(f)=\int_Ef(x)\,\mu(dx)$. Note that $C(E)$ is
a separable space, by Lemma \ref{ceseparable},
 and
therefore, given an arbitrary sequence
in $\Gamma$, by a diagonal procedure
 one can extract a subsequence
$\psi_{\mu_n}$ such that
$\psi_{\mu_n}(f)$ is convergent for all $f$
in a dense subset of $C(E)$. But then  $\psi_{\mu_n}$
is convergent (by $3\epsilon$-method) on the whole
$C(E)$ and its limit is a nonnegative functional on
$C(E)$. By Riesz's theorem the result follows if $E$ is
compact.

Next assume that
$E$ is $\sigma$-compact, i.e. a countable union of
compact subsets.
We saw in the proof of
Lemma \ref{ucbedseparable} that
there exists a
homeomorphism $h$ of $E$ onto a subset
 $h(E)$ of a compact
metric space $F$ (and one can take $F=[0,1]^\N$).
Let us identify $E$ with $h(E)$ and think of
$E$ as a topological subspace of $F$.
Since compact sets of $E$ are compact subsets of $F$
 as well, it follows that $E$ is a $\sigma$-compact subset
 of $F$, hence a Borel subset of $F$.
Thus $\calm_1(F)\supseteq \calm_1(E)$. Assume
that $(\mu_n)\subset \Gamma$. Then there exists a
subsequence $(\mu_{n_k})$ such that
$\mu_{n_k}\Rightarrow \mu$ in $\calm_1(F)$.
For arbitrary $\epsilon >0$ there exists a compact
subset $K_\epsilon \subset E$ such that
$$
\mu_{n_k}(K_\epsilon)\geq 1-\epsilon\quad {\rm for\; all}\;
k=1,2,\ldots.
$$
Since $K_\epsilon$ is closed in $F$ as well
therefore
$$
\overline{\lim}_k \mu_{n_k}(K_\epsilon)\leq \mu(K_\epsilon)
$$
and we see that $\mu(K_\epsilon)\geq 1-\epsilon$. Thus
$\mu(F\backslash E)=0$. So $\mu$ is
supported by $E$. An arbitrary $f\in UC_b(E)$ can be
uniquely extended to $f\in C(F)$ and
$$
\int_Ef(x)\,\mu_{n_k}(dx) =
\int_{F}f(x)\,\mu_{n_k}(dx) \to
\int_{F}f(x)\,\mu (dx)=
\int_Ef(x)\,\mu(dx).
$$
This proves the result if $E$ is $\sigma$-compact.

In the general case, it follows from the tightness
 condition that there exists a
$\sigma$-compact subset $E_0$ of $E$ such that
$\mu (E\backslash E_0)=0$ for all
 $\mu\in\Gamma$. We can replace $E$ by
 $E_0$ and the general case
  follows from the result proved above.
This completes the proof.
\qed

\chapter{Invariance principle and Kolmogorov's test}

We develop  some criteria of weak convergence and tightness  in the space $C([0,T); E)$ of continuous functions
with values in a metric space $E$
The convergence of random walks to the Wiener process is given as application.
We introduce also the factorisation method, due to L. Schwartz,
to establish tightness and prove Donsker's invariance
principle. Finally we prove Kolmogorov's continuity criteria
in the context of weak convergence.
\section{Weak convergence in $C([0,T];E)$}

Let $(E,\rho)$ be a complete, separable metric space
and  $C([0,T];E)$ the space of all continuous functions
defined on $[0,T]$ with values in $E$ equipped with
metric $R$:
\begin{equation}\label{defdiR}
R(x,y)=\sup_{t\in [0,T]} \rho (x(t),y(t)).
\end{equation}
The space  $C([0,T];E)$ is also a complete metric
space. To find out whether a given sequence of measures
on  $C([0,T];E)$ is tight it is necessary to know
characterizations of compact subsets of  $C([0,T];E)$.
We have the following generalization of the
Arzel\`a-Ascoli theorem.

\begin{Proposition}
A set $K\subset  C([0,T];E)$ has a compact closure in
$C([0,T];E)$ if and only if
\begin{enumerate}
  \item[(i)] There exists a compact set $L\subset E$ such that
\begin{equation}\label{compprima}
K\subset \{ x\, ;\, x(t)\in L {\rm \; for \; all\; }
t\in [0,T]\}.
\end{equation}

  \item[(ii)] For arbitrary $\epsilon >0$ there
  exists $\delta >0$ such that
\begin{equation}\label{compseconda}
K\subset \{ x\, ;\,
\sup_{
\begin{array}{c}
\begin{scriptstyle}
|t-s|\leq \delta
\end{scriptstyle}
 \\ \begin{scriptstyle}
 t,s\in [0,T]
 \end{scriptstyle}
\end{array}
 }
\rho(x(t),x(s))\leq \epsilon
\}.
\end{equation}
\end{enumerate}
\end{Proposition}

For each $x\in C([0,T];E)$ define the modulus of
continuity $\omega_\delta (x)$, $\delta >0$, by the
formula
\begin{equation}\label{modcontin}
\omega_\delta (x)=
\sup_{
\begin{array}{c}
\begin{scriptstyle}
|t-s|\leq \delta
\end{scriptstyle}
 \\ \begin{scriptstyle}
 t,s\in [0,T]
 \end{scriptstyle}
\end{array}
 }
\rho(x(t),y(t)),\qquad \delta >0.
\end{equation}
Then the inclusion (\ref{compseconda}) can be written
$$
K\subset \{ x\, ;\,
\omega_\delta (x)
\leq \epsilon
\}.
$$

\begin{Theorem}\label{compinc}
A sequence $(\mu_n)$ of probability measures on
$C([0,T]; E)$ is tight if and only if
\begin{enumerate}
  \item[(i)] For arbitrary $\epsilon >0$ there exists
  a compact set $L\subset E$ such that for all
  $n=1,2,\ldots$

\begin{equation}\label{compmisprima}
\mu_n (x\, ; \, x(t)\in L {\rm \; for \; all \;}
t\in [0,T] ) \geq 1-\epsilon.
\end{equation}

  \item[(ii)] For arbitrary $\epsilon >0$ and arbitrary
  $\eta >0$  there exists
  $\delta >0$ such that  for all
  $n=1,2,\ldots$

\begin{equation}\label{compmisseconda}
\mu_n (x\, ; \, \omega_\delta (x)\leq \eta )
 \geq 1-\epsilon.
\end{equation}

\end{enumerate}
\end{Theorem}

\noindent {\bf Proof.}
Assume that $(\mu_n)$ is tight and let $K_\epsilon$ be
a compact set such that
$$
\mu_n(K_\epsilon)\geq 1-\epsilon,
\qquad {\rm for\; all\; }n=1,2,\ldots.
$$
Since $K_\epsilon$ is compact in $C([0,T];E)$, there
exists a compact set $L_\epsilon\subset E$ such that
$$
K_\epsilon\subset \{ x\, ;\, x(t)\in L_\epsilon {\rm \;
for
\; all\; } t\in [0,T]\}.
$$
Therefore
$$
\mu_n (x\, ; \, x(t)\in L_\epsilon {\rm \; for \; all \;}
t\in [0,T] ) \geq 1-\epsilon
\qquad {\rm for\; all\; }n=1,2,\ldots.
$$
In a similar way, by the compactness of $K_\epsilon$,
for arbitrary $\eta >0$ there exists $\delta >0$ such
that if $x\in K_\epsilon$ then $\omega_\delta
(x)\leq\eta$. Therefore for each $n\in\N$,
$$
\mu_n (x\, ; \, \omega_\delta (x)\leq \eta )
\geq \mu_n(K_\epsilon) \geq 1-\epsilon .
$$
We show now that conditions $(i),(ii)$ imply tightness
of $(\mu_n)$. Fix $\epsilon >0$ and choose first a
compact set $L_\epsilon \subset E$ such that
(\ref{compmisprima}) holds for all $n$ with $\epsilon$
replaced by $\epsilon /2$. Let $(\eta_m)$ be a sequence
of positive numbers converging to zero. There exists a
sequence of numbers $\delta_m >0$ such that for all
$n$, and all $m$:
$$
\mu_n( x\, ;\, \omega_{\delta_m}(x)>\eta_m)
<\frac{\epsilon}{2^{m+1}}.
$$
Define
$$
K_\epsilon = \{ x\, ;\, x(t)\in L_\epsilon {\rm
\; for \; all\;} t\in [0,T]\}
\cap
\bigcap_{m=1}^\infty \{ x\, ;\,
\omega_{\delta_m}(x)\leq\eta_m\}
= B_0 \cap \bigcap_{m=1}^\infty B_m.
$$
It is easy to see that $K_\epsilon$ is closed and by
the Arzel\`a-Ascoli theorem it is compact. Now for each
$n$:
$$
  \begin{array}{lll}
    \mu_n(K_\epsilon) & = &
    \ds
1-\mu_n\left(B_0^c \cup\bigcup_{m=1}^\infty
B_m^c\right)
\geq 1-\frac{\epsilon}{2}-\sum_{m=1}^\infty
\mu_n(B_m^c)
    \eds \\
    &\geq & \ds 1-\frac{\epsilon}{2}-\sum_{m=1}^\infty
\frac{\epsilon}{2^{m+1}}\geq 1-\epsilon.\eds
  \end{array}
$$
as required. \qed

\begin{Remark}\begin{em}
Condition $(i)$ in Theorem \ref{compinc} is equivalent
to the requirement that there exists an increasing
sequence of compact sets $L_m\subset E$ such that
\begin{equation}\label{equivcompprima}
\lim_m\, \sup_n\, \mu_n(x\, ;\,
x(t)\in L_m {\rm \; for \; all\;} t\in [0,T])=0
\end{equation}
Similarly condition $(ii)$ can be replaced by the
following one:
\begin{equation}\label{equivcompseconda}
{\rm For \; arbitrary \; }\eta >0,\qquad
\lim_{\delta\to 0}\left[
\overline{\lim_n}\, \mu_n(x\, ;\,
\omega_\delta (x)>\eta)\right]=0.
\end{equation}
We show for instance that (\ref{equivcompseconda})
implies (\ref{compmisseconda}).  If
(\ref{equivcompseconda}) holds then for $\epsilon >0$
there exists $\delta_0>0$ such that for all $\delta\in
(0,\delta_0]$:
$$
\overline{\lim_n}\, \mu_n(x\, ;\,
\omega_\delta (x)>\eta)<\epsilon .
$$
But then there exists $n_0$ such that for all $n\geq
n_0$,
$$
 \mu_n(x\, ;\,
\omega_{\delta_0} (x)>\eta)<\epsilon .
$$
 Since, by Ulam's theorem, an arbitrary finite
set of measures if tight, one can find
$\delta_1<\delta_0$ such that for $n\leq n_0$
$$
\mu_n(x\, ;\, \omega_{\delta_1}(x)>\eta)<\epsilon,
$$
consequently for all $n=1,2,\ldots$
$$
\mu_n(x\, ;\, \omega_{\delta_1}(x)>\eta)<\epsilon,
$$
as required.
\end{em}
\end{Remark}

\begin{Remark}\begin{em}
Assume that $E=\R^1$. Then condition $(i)$ in Theorem
\ref{compinc} can be replaced by the requirement that for
arbitrary $\epsilon >0$ there exists
  a compact set $L\subset E$ such that for all
  $n=1,2,\ldots$
$$
\mu_n (x\, ; \, x(0)\in L ) \geq 1-\epsilon.
$$
Indeed, it is easy to check that, together with
condition $(ii)$, this implies that
(\ref{compmisprima}) holds.
\end{em}
\end{Remark}

If $X_n$, $n=1,2,\ldots$ and $X$ are random variables
with values in a metric space $E$ and defined on
possibly different probability spaces
$(\Omega_n,\calf_n,\P_n)$, $n=1,2,\ldots$,
$(\Omega_0,\calf_0,\P_0)$ respectively then we say that
$(X_n)$ converges weakly to $X$ if and only if
$\call(X_n)\Rightarrow\call(X)$ where $\call (Z)$
denotes the distribution of the random variable $Z$. If
$(X_n)$ converges weakly to $X$ then we write
$(X_n)\Rightarrow X$.

It is instructive at this moment to recall the so
called Skorokhod imbedding theorem although we will not
need it.

\begin{Theorem} (Skorokhod)
Assume that $E$ is a complete,
separable  metric space. If
$(X_n)\Rightarrow X$ then there exists a probability
space $(\Omega ,\calf,\P)$ and $E$-valued random
variables $X_n'$, $n=1,2,\ldots$, $X'$ defined on
$(\Omega ,\calf,\P)$ such that
\begin{enumerate}
  \item[(i)] $\call(X_n')=\call(X_n)$,
  $\call(X')=\call(X)$.

  \item[(ii)] For almost all $\omega\in \Omega$,
   $X_n'(\omega) \to X(\omega)$ as $n\to\infty$.

\end{enumerate}

\end{Theorem}

\begin{Remark}\begin{em}
It is clear that if $(ii)$ holds then
$(X_n')\Rightarrow X'$.
\end{em}
\end{Remark}

We will need the following elementary properties to
identify the limiting measure.

\begin{Proposition}\label{sommaprob}
Assume that $E$ is a separable normed space and $(X_n)$
are $E$-valued random variables. If $X_n\Rightarrow X$
and
$$
X_n=Y_n+\xi_n
$$
where $\xi_n\to 0$ in probability then
$$
Y_n\Rightarrow X.
$$
\end{Proposition}

\noindent {\bf Proof.}
Let $\phi\in UC_b(E)$. Then
$\E(\phi(Y_n))=\E(\phi(X_n-\xi_n))$ and
$$
\left| \E(\phi(Y_n)) -\E(\phi(X_n))\right| \leq
\E \left| \phi(X_n-\xi_n) -\phi(X_n)\right| .
$$
For arbitrary $\epsilon >0$ there exists $\delta >0$
such that if $\|x-y\|\leq\delta$ then
$|\phi(x)-\phi(y)|\leq \epsilon$. Therefore
$$
  \begin{array}{lll}
    \ds \E \left| \phi(X_n-\xi_n) -\phi(X_n)\right|\eds
    &\leq & \ds
\E \left( \chi_{\|\xi_n\|\leq \delta}
\left| \phi(X_n-\xi_n) -\phi(X_n)\right|\right)
+ 2 \|\phi\|\, \P\left( \|\xi_n\|>\delta\right)
    \eds \\
     &\leq & \ds
     \epsilon +
      2 \|\phi\|\, \P\left( \|\xi_n\|>\delta\right)
      \eds
      \\
      & \leq & 2\epsilon
  \end{array}
$$
if $n$ is sufficiently large. \qed

\begin{Proposition}
If $F$ is a continuous mapping from a metric space $E$
into a metric space $E_1$ and $E$-valued random
variables $(X_n)$ converge weakly to $X$ then the
random variables $F(X_n)$ converge weakly to $F(X)$.
\end{Proposition}

\noindent{\bf Proof.}
If $\phi : E_1\to\R^1$ is bounded and continuous then
obviously $\phi(F)$ is a bounded and continuous
function from $E$ into $\R^1$ and the result follows.
\qed

\begin{Theorem}
Let $E$ be a  Polish space.
A sequence of probability measures $(\mu_m)$ on
$C([0,T];E)$
 converges weakly to a measure $\mu$ if and only if
\begin{enumerate}
  \item[(i)] $(\mu_n)$ is tight.

  \item[(ii)] For any sequence $0=t_0\leq t_1\leq t_2
  \leq\ldots\leq t_k\leq T$ the finite dimensional
  distributions $\mu_m^{(t_1,\ldots ,t_k)}$ converge
  weakly to $\mu^{(t_1,\ldots ,t_k)}$.
\end{enumerate}
\end{Theorem}

\begin{Remark}\begin{em}
As we already know, if $\mu$ is a measure on
$C([0,T];E)$ then $\mu^{(t_1,\ldots ,t_k)}$ is a
measure on $E^k$ given by the formula
$$
\mu^{(t_1,\ldots ,t_k)}(\Gamma)=
\mu \{x\, ;\, (x(t_1),\ldots,x(t_k))\in\Gamma\}
\qquad {\rm for \; any \; }\Gamma\in \calb(E^k).
$$
\end{em}
\end{Remark}

\noindent {\bf Proof.}
If $\mu_m\Rightarrow \mu$ then $(\mu_m)$ is tight so
$(i)$ holds. If $\psi\in C_b(E^k)$ then
$$\phi(x)=\psi (x(t_1),\ldots,x(t_k)) ,
\qquad x\in C([0,T];E),
$$
 is a
bounded continuous function on $C([0,T];E)$ and
therefore
$$
\int_{C([0,T];E)}\phi(x)\, \mu_m(dx)
\to \int_{C([0,T];E)}\phi(x)\, \mu(dx).
$$
But
$$
\int_{C([0,T];E)}\phi(x)\, \mu_m(dx)=
\int_{E^k}\psi (y)\, \mu_m^{(t_1,\ldots,t_k)}(dy),
$$
so $\mu_m^{(t_1,\ldots,t_k)}\Rightarrow
\mu^{(t_1,\ldots,t_k)}$ on $E^k$.

Conversely, assume that $(\mu_n)$ is tight and $(ii)$
holds. It is enough to show that arbitrary weakly
convergent subsequences of $(\mu_m)$ converge to the
same limit. If $\mu^1$ and $\mu^2$ are two such limits
then, for arbitrary $k=1,2,\ldots$ and $\Gamma\in
\calb(E^k)$,
$$
\mu^1\left( x\, ;\, (x(t_1),\ldots,x(t_k))\in\Gamma \right)
=
\mu^2\left( x\, ;\, (x(t_1),\ldots,x(t_k))\in\Gamma \right).
$$
But the cylindrical sets $\{x\, ;\,
(x(t_1),\ldots,x(t_k))\in\Gamma\}$ generate the Borel
$\sigma$-field $\calb(C([0,T];E))$ so by Dynkin's
$\pi-\lambda$ theorem, $\mu^1=\mu^2$. \qed

\section{Classical proof of the invariance principle.}

Let $\xi_n^m$ be independent real valued random
variables defined on a probability space $(\Omega,
\calf, \P)$,
$$
\E\, \xi_n^m=0,\quad \E\, (\xi_n^m)^2 =\frac{1}{m},
\qquad n,m=1,2,\ldots,
$$
which, in addition, have Gaussian distributions. We
define, by induction, the sequences
$(B_n^m)_{n=1,\ldots}$:
$$
B_{n+1}^m= B_n^m+\xi_{n+1}^m,\quad B_0^m=0,
\qquad n=0,1,2,\ldots.
$$
Let $(X_t^m)$ be the following stochastic processes:
\begin{equation}\label{defdixt}
X_t^m=B_n^m+(mt-n)(B_{n+1}^m-B_n^m)
= B_n^m+(mt-n)\xi_{n+1}^m,\qquad {\rm for\;}
t\in \left[\frac{n}{m},\frac{n+1}{m}\right].
\end{equation}
Let us recall that a {\em Wiener process} is a
stochastic process $W$ such that: $i)$ $W(0)=0$; $ii)$
for arbitrary $0=t_0 < t_1<\ldots<t_k$ the random
variables
$$
W(t_1),W(t_2)-W(t_1),\ldots,W(t_k)-W(t_{k-1})
$$
are centred
 Gaussian and independent; $iii)$ $\E|W(t)-W(s)|^2 =
t-s$, $t\geq s\geq0$; $iv)$ the trajectories of $W$ are
continuous with probability one.

On the measurable space $(E, \calb(E))$ where
$E=C([0,T])$ define a canonical process $X$ as follows:
$$
X_t(f)=f(t),\qquad f\in E, t\in [0,T].
$$

\begin{Theorem}\label{11.2.1}
The distributions $\mu_m$ of the processes $X^m$
converge weakly to a measure $\mu$ and the canonical
process on $(E,\calb(E),\mu)$ is a Wiener process.
Consequently the Wiener process $W$ exists and
$X_m\Rightarrow W$.
\end{Theorem}

\noindent {\bf Proof. Step 1.}
We will show first that the sequence $(\mu_m)$ is tight
on $E$. Since for all $m=1,2,\ldots$, $X_0^m=0$ it is
enough to prove that
\begin{equation}\label{perdonsker2}
\lim_{\delta\to 0}\, \overline{\lim_m}\,
\P\bigg( \sup_{
\begin{array}{c}
\begin{scriptstyle}
|t-s|\leq \delta
\end{scriptstyle}
 \\ \begin{scriptstyle}
 t,s\in [0,T]
 \end{scriptstyle}
\end{array}
} |X_t^m-X_s^m|>\epsilon\bigg)=0
\end{equation}
for arbitrary fixed $\epsilon >0$. Choose $\delta\in
(0,T]$ and assume that $1/m<\delta$. Define
$$
  \begin{array}{lll}
    k_m&= & \ds
    \max \{k\, :\, \frac{k}{m}\leq\delta\}+1,
    \qquad k_m=[m\delta]+1
    \eds \\
    l_m&= & \ds
    \inf \{ l \, ;\, l\frac{k_m}{m}\geq T\} -1,
    \; m=1,2,\ldots .
    \eds
  \end{array}
$$
Thus $k_m$ denotes the minimal number of intervals of
length $1/m$ covering the interval $[0,\delta]$ and
$l_m$ measures how many nonintersecting intervals of
length $k_m/m$ (equal approximately to $\delta$) can be
included in $[0,T]$. Let
$$
I^m_l =\left[ l\frac{k_m}{m}, (l+1)\frac{k_m}{m}
\right],\qquad l=0,1,2,\ldots .
$$
We show first that
\begin{equation}\label{perdonsker3}
\sup_{
\begin{array}{c}
\begin{scriptstyle}
|t-s|\leq \delta
\end{scriptstyle}
 \\ \begin{scriptstyle}
 t,s\in [0,T]
 \end{scriptstyle}
\end{array}
} |X_t^m-X_s^m|
\leq
3\max_{0\leq l\leq l_m-1}\left[
\max_{lk_m<j\leq (l+1)k_m}
|B_j^m - B^m_{lk_m}|\right].
\end{equation}
If $|t-s|\leq\delta$ and $t,s\in I^m_l$ for some $l$
then
$$
|X_t^m-X_s^m|\leq \left| X^m_{j\frac{k_m}{m}} -
 X^m_{k\frac{k_m}{m}}\right|
$$
for some $j,k\in\{ lk_m,lk_m+1,\ldots,(l+1)k_m\}$, and
therefore
$$
  \begin{array}{lll}
    |X_t^m-X_s^m|&\leq & \ds
\left| X^m_{j\frac{k_m}{m}} -
 X^m_{l\frac{k_m}{m}}\right| +
 \left| X^m_{k\frac{k_m}{m}} -
 X^m_{l\frac{k_m}{m}}\right|
    \eds \\
    &\leq & \ds
\left| B^m_{jk_m} - B^m_{lk_m}\right| +
\left| B^m_{kk_m} - B^m_{lk_m}\right| .
\eds
  \end{array}
$$
If $|t-s|\leq\delta$ and $t\in I^m_l$, $s\in I^m_{l+1}$
then in a similar way:
$$
|X_t^m-X_s^m|\leq \left| X^m_{j\frac{k_m}{m}} -
 X^m_{k\frac{k_m}{m}}\right|
$$
where $j\in\{ lk_m,\ldots,(l+1)k_m\}$, $k\in\{
(l+1)k_m,\ldots,(l+2)k_m\}$, and
$$
  \begin{array}{lll}
    |X_t^m-X_s^m|&\leq & \ds
\left| X^m_{j\frac{k_m}{m}} -
 X^m_{l\frac{k_m}{m}}\right| +
 \left| X^m_{(l+1)\frac{k_m}{m}} -
 X^m_{l\frac{k_m}{m}}\right| +
\left| X^m_{k\frac{k_m}{m}} -
 X^m_{(l+1)\frac{k_m}{m}}\right|
    \eds \\
    &\leq & \ds
\left| B^m_{jk_m} - B^m_{lk_m}\right| +
\left| B^m_{(l+1)k_m} - B^m_{lk_m}\right| +
\left| B^m_{kk_m} - B^m_{(l+1)k_m}\right| .
\eds
  \end{array}
$$
This way the estimate (\ref{perdonsker3}) has been
proved.

It follows from (\ref{perdonsker3}) that
$$
  \begin{array}{lll}
   \ds
\P\bigg(
\sup_{
\begin{array}{c}
\begin{scriptstyle}
|t-s|\leq \delta
\end{scriptstyle}
 \\ \begin{scriptstyle}
 t,s\in [0,T]
 \end{scriptstyle}
\end{array}
} |X_t^m-X_s^m| >\epsilon \bigg)
\eds & \leq& \ds
\P \bigg(
\max_{0\leq l\leq l_m-1}\;
\max_{lk_m<j\leq(l+1)k_m}
|B_j^m- B_{lk_m}^m| >\frac{\epsilon}{3}\bigg)
\eds \\
&\leq &\ds
\sum_{l=0}^{l_m-1} \P\bigg(
\max_{lk_m<j\leq(l+1)k_m}
|B_j^m- B_{lk_m}^m| >\frac{\epsilon}{3}\bigg).
\eds
\end{array}
$$
By Ottaviani's inequality, see Subsection
\ref{Ottaviani},
$$\begin{array}{l}\ds
\P\bigg(
\max_{lk_m<j\leq(l+1)k_m}
|B_j^m- B_{lk_m}^m| >2\frac{\epsilon}{6}\bigg)
\left(
1-
\max_{lk_m<j\leq(l+1)k_m}
\P\bigg(|B_{(l+1)k_m}^m- B_{j}^m| >\frac{\epsilon}{6}\bigg)
\right)
\eds\\\ds
\leq \P\bigg(
|B_{(l+1)k_m}^m- B_{lk_m}^m| >\frac{\epsilon}{3}\bigg).
\eds\end{array}
$$
However by Chebyshev's inequality and the definition of
$B^m_j$:
$$
\P\bigg(
|B_{(l+1)k_m}^m- B_{lk_m}^m| >\frac{\epsilon}{6}\bigg)
\leq
\frac{\E\left|B_{(l+1)k_m}^m- B_{lk_m}^m\right|^2}
{\left(\frac{\epsilon}{6}\right)^2}
\leq \frac{k_m}{m}\frac{36}{\epsilon^2}.
$$
Consequently,
$$
\P\bigg(
\max_{lk_m<j\leq(l+1)k_m}
|B_j^m- B_{lk_m}^m| >\frac{\epsilon}{3}\bigg)
\leq
\frac{1}{1-\frac{k_m}{m}\frac{36}{\epsilon^2}}
\P\left(|B_{k_m}^m|>\frac{\epsilon}{3}\right).
$$
Taking into account that $\frac{k_m}{m}\to\delta$,
$1-\frac{k_m}{m}\frac{36}{\epsilon^2}\to
1-\delta\frac{36}{\epsilon^2}$ and that $l_m\leq
\frac{T}{\delta}+1\leq \frac{2T}{\delta}$ we have that
$$
\overline{\lim_m}\, \P\bigg(
\sup_{
\begin{array}{c}
\begin{scriptstyle}
|t-s|\leq \delta
\end{scriptstyle}
 \\ \begin{scriptstyle}
 t,s\in [0,T]
 \end{scriptstyle}
\end{array}
} |X_t^m-X_s^m|
\bigg) \leq
\overline{\lim_m}\,
\frac{l_m}{1-\frac{k_m}{m}\frac{36}{\epsilon^2}}
\P\left(|B_{k_m}^m|>\frac{\epsilon}{3}\right)
\leq
\frac{
2T\frac{1}{\delta}
\P\left(|\zeta_\delta |>\frac{\epsilon}{3}\right)
}
 { 1-\delta \frac{36}{\epsilon^2}}
 ,
$$
where $\call(\zeta_\delta)=N(0,\delta)$. It is
therefore enough to show that
\begin{equation}\label{perdonsker4}
\lim_{\delta\to 0} \frac{1}{\delta}\frac{1}
{\sqrt{2\pi\delta}}\int_{|x|>\frac{\epsilon}{3}}
e^{-\frac{|x|^2}{2\delta}}\, dx =0.
\end{equation}
However, for any $c>0$, taking
$\frac{x}{\sqrt{\delta}}=y$,
$$\begin{array}{l}\ds
\frac{1}{\sqrt{2\pi\delta}}
\int_{|x|>c}
e^{-\frac{|x|^2}{2\delta}}\, dx =
\frac{1}{\sqrt{2\pi }}
\int_{|y|>\frac{c}{\sqrt{\delta}}}
e^{-\frac{|y|^2}{2}}\, dy
\eds\\\ds
\leq
\frac{1}{\sqrt{2\pi }}
\int_{|y|>\frac{c}{\sqrt{\delta}}}
\left( |y|\frac{\sqrt{\delta}}{c}\right)^2
e^{-\frac{|y|^2}{2}}\, dy =
\frac{\delta}{c^2\sqrt{2\pi }}
\int_{|y|>\frac{c}{\sqrt{\delta}}}
 y^2
e^{-\frac{|y|^2}{2}}\, dy.
\eds\end{array}
$$
Since
$$
\lim_{\delta\to 0} \; \int_{|y|>\frac{c}{\sqrt{\delta}}}
 y^2
e^{-\frac{|y|^2}{2}}\, dy =0,
$$
the identity (\ref{perdonsker4}) holds.

{\bf Step 2.} We show that the finite dimensional
distributions of $X^m$ converge weakly to the finite
dimensional distributions of $W$. Although we do not
know whether $\call(W)$ is supported by
$C([0,T];\R^1)$, the statement is meaningful.

Let $0=t_0<t_1<t_2<\ldots<t_k\leq T$. It is enough to
show that the laws of
$$
\left(X^m_{t_1}, X^m_{t_2}-X^m_{t_1},\ldots ,
X^m_{t_k}-X^m_{t_{k-1}}\right)
$$
converge to the law of
$$
\left(W(t_1), W(t_2)-W(t_1),\ldots ,
W(t_k)-W(t_{k-1})\right) .
$$
For each $m=1,2,\ldots$, $j=1,2,\ldots$ set
$$
t_{j,m}=\max \, \left\{
\frac{k}{m}\, ; \, \frac{k}{m}\leq t_j\right\},
\qquad j=1,\ldots ,k.
$$
It is enough to show that the random variables
$$
\left(X^m_{t_{1,m}}, X^m_{t_{2,m}}-X^m_{t_{1,m}},\ldots ,
X^m_{t_{k,m}}-X^m_{t_{k-1,m}}\right)
$$
converge weakly to
$$
\left(W(t_1), W(t_2)-W(t_1),\ldots ,
W(t_k)-W(t_{k-1})\right) .
$$

\footnote{
Indeed we have, for instance,
$$
X^m_{t_1}-X^m_{t_{1,m}}=\theta_m^{(1)}\xi_m^{(1)},
$$
with $\theta_m^{(1)} $ a number in $[0,1]$ and
$\xi_m^{(1)} $ a random variable with $\E\,
(\xi_m^{(1)})^2=1/m$, hence converging to zero in
probability; by Proposition \ref{sommaprob} the random
variables $X^m_{t_1}$ and $X^m_{t_{1,m}}$ have the same
weak limit.
}
 However $X^m_{t_{1,m}},
X^m_{t_{2,m}}-X^m_{t_{1,m}},\ldots ,
X^m_{t_{k,m}}-X^m_{t_{k-1,m}}$ are independent Gaussian
random variables identical with
$$
\left(B^m_{mt_{1,m}}, B^m_{mt_{2,m}}-B^m_{mt_{1,m}},\ldots ,
B^m_{mt_{k,m}}-B^m_{mt_{k-1,m}}\right).
$$
The second moments of these random variables are
$$
 t_{1,m}, t_{2,m} - t_{1,m},\ldots ,
 t_{k,m} - t_{k-1,m}
$$
and they converge to
$$
 t_{1}, t_{2} - t_{1},\ldots ,
 t_{k} - t_{k-1}.
$$
This easily implies the required convergence. \qed

\section{The Ottaviani inequality} \label{Ottaviani}

\begin{Theorem}
Let $\xi_1,\xi_2,\ldots$ be independent random
variables with values in a normed space $(E,|\cdot |)$.
Define $S_k=\xi_1+\ldots+\xi_k$, $k=1,2\ldots$. Then
for arbitrary $r\geq 0$ and arbitrary natural numbers
$n>m\geq 1$
$$
\P\bigg( \max_{m<j\leq n}|S_j-S_m|>2r\bigg)
\left( 1-\max_{m<j\leq n}\P\bigg(|S_n-S_j|>r\bigg)\right)
\leq\P\bigg(  |S_n-S_m|>r\bigg).
$$
\end{Theorem}

\noindent {\bf Proof.}
 Let
$$
  \begin{array}{lll}
    \tau &= & \ds
    \min\, (k\in\{m+1,\ldots,n\}\, ;\,
    |S_k-S_m|>2r)
    \eds \\
    &= & \ds
    +\infty {\rm \; if\;} |S_k-S_m|\leq 2r {\rm \;
    for\;} k=m+1,\ldots,n.
    \eds
  \end{array}
$$
Note that if $\tau=k$ and $|S_n-S_k|\leq r$ then
$$
|S_n-S_m|\geq |S_k-S_m| -|S_n-S_k|>r,
\qquad k=m+1,\ldots,n.
$$
Consequently
\begin{equation}\label{Ottprima}
\P ( \tau=k {\rm \; and\;} |S_n-S_k|\leq r )\leq
\P ( \tau=k {\rm \; and\;} |S_n-S_m|> r ).
\end{equation}
The event $\{\tau=k\}$ is $\sigma\{\xi_{m+1},\ldots
,\xi_k\}$ measurable and the event $\{|S_n-S_k|\leq r
\}$
 is $\sigma\{\xi_{k+1},\ldots
,\xi_n\}$ measurable so they are independent. Therefore
$$
\P ( \tau=k {\rm \; and\;} |S_n-S_k|\leq r )
= \P ( \tau=k)\,\P (  |S_n-S_k|\leq r )
$$
and by (\ref{Ottprima})
\begin{equation}\label{Ottseconda}
\P ( \tau=k)\,\P (  |S_n-S_k|\leq r )\leq
\P ( \tau=k {\rm \; and\;} |S_n-S_m|>r ).
\end{equation}
Adding inequalities in (\ref{Ottseconda}) with respect
to $k=m+1,\ldots,n$ one gets
$$
\left( \sum_{k=m+1}^n\P (\tau=k)\right)
\min_{m+1\leq k\leq n}\,\P (  |S_n-S_k|\leq r )
\leq
\P (|S_n-S_m|>r ).
$$
But $\sum_{k=m+1}^n\P (\tau=k) =
\P ( \max_{m<j\leq n}|S_j-S_m|>2r )$ and the result follows.
\qed

\section{Tightness by the factorisation method}

We prove now the tightness of the sequence $(\mu_n)$
using the so called factorisation method, which goes back to L. Schwartz, see a discussion in \cite{factorization}. Both methods,
the classical one and the factorisation, have much
wider range of applications.

For arbitrary integrable function $f$ defined on
$[0,T]$ denote by $I_1f$ its integral:
\begin{equation}\label{defi1}
I_1f(t)= \int_0^tf(s)\, ds,\qquad t\in [0,T],
\end{equation}
and, more generally, by $I_\alpha f$, $\alpha >0$, the
$\alpha$-fractional integral of $f$,
whenever defined:
\begin{equation}\label{defi2}
I_\alpha f(t)=\frac{1}{\Gamma(\alpha)}
\int_0^t (t-s)^{\alpha -1}\, f(s)\,
ds,\qquad t\in [0,T].
\end{equation}

The $\alpha$-fractional integral of $f$ is called also
the Riemann-Liouville integral of $f$, see \cite{oldham}
The operators $I_\alpha $, $\alpha >0$, have the
semigroup property
$$
I_{\alpha +\beta}f = I_\alpha (I_\beta f)
$$
and in particular, for $\alpha\in (0,1)$,
\begin{equation}\label{semigrdii}
I_1 f= I_\alpha (I_{1-\alpha } f).
\end{equation}
To check (\ref{semigrdii}) notice that
$$
  \begin{array}{lll}
    \ds I_\alpha (I_{1-\alpha } f)(t)\eds & =&
    \ds
\frac{1}{\Gamma(\alpha)}\frac{1}{\Gamma (1-\alpha)}
\int_0^t(t-s)^{\alpha -1} \left[
\int_0^s(s-\sigma)^{-\alpha}f(\sigma)\, d\sigma
\right]\,ds
    \eds \\
    &= & \ds
\frac{1}{\Gamma(\alpha) \,\Gamma (1-\alpha)}
\int_0^t \left[
\int_\sigma^t
(t-s)^{\alpha -1}(s-\sigma)^{-\alpha}\,ds
\right] f(\sigma)\, d\sigma .
\eds
  \end{array}
$$
But changing variables $u=(t-\sigma)v$
$$
  \begin{array}{l}
    \ds
\int_\sigma^t
(t-s)^{\alpha -1}(s-\sigma)^{-\alpha}\,ds
=
\int_0^{t-\sigma}
(t-\sigma -u)^{\alpha -1}u^{-\alpha}\,du
    \eds \\
    \ds
=\int_0^1
(t-\sigma)^{\alpha -1} (1-v)^{\alpha
-1}(t-\sigma)^{-\alpha}v^{-\alpha}(t-\sigma)\,dv
= \int_0^1
 (1-v)^{\alpha -1} v^{-\alpha} \,dv.
\eds
  \end{array}
$$

 We will call (\ref{semigrdii}) the factorisation
formula. For its infinite dimensional generalisation see \cite{DZ1} and \cite{DZ2}. We will need the following functional analytic
result:

\begin{Proposition}\label{compfraz}
If $1/p<\alpha \leq 1$ then the operators $I_\alpha$
are compact from $L^p[0,T]$ into $C[0,T]$.
\end{Proposition}

We prove the tightness of $(\mu_m)$ in the following
way.

Note first that
\begin{equation}\label{fact4}
X^m_t =\int_0^t \dot{X}^m_s \, ds = I_1(\dot{X}^m)(t)
= I_\alpha (I_{1-\alpha } (\dot{X}^m))(t),
\end{equation}
where we denoted by $\dot{X}^m_s $ the derivative
$\frac{d}{ds}X^m_s $ (which is a piecewise constant
function for each $\omega$). One can treat the random
processes $I_{1-\alpha } (\dot{X}^m)$ as
$L^p[0,T]$-valued random variables. We will show that:

\begin{Lemma}\label{lpestimate}
If $2\alpha <1$  then for all $m=1,\ldots$ and
$p>0$
\begin{equation}\label{fact5}
\E \| I_{1-\alpha } (\dot{X}^m)\|_{L^p[0,T]}\leq
\frac{c_p}{\Gamma^p(1-\alpha)}
\frac{T^{1+(1-2\alpha){(p/2)}}}
{ (1-2\alpha)^{p/2}},
\end{equation}
 where $c_p$ is the $p$-th
moment of a normalized Gaussian random variable.
\end{Lemma}

By Chebishev's inequality, for arbitrary $r>0$,
$$
\P\bigg(  \| I_{1-\alpha } (\dot{X}^m)\|_{L^p[0,T]}
\geq r\bigg)\leq
\frac{\E  \| I_{1-\alpha } (\dot{X}^m)\|_{L^p[0,T]}^p}{r^p},
$$
so by (\ref{fact5})
$$
\lim_{r\to\infty} \left[
\sup_m \, \P\bigg(  \| I_{1-\alpha } (\dot{X}^m)\|_{L^p[0,T]}
> r\bigg)\right] =0.
$$
If $p$ and $\alpha$ are chosen in such a way that
$$
\frac{1}{p}<\alpha<\frac{1}{2}
$$
then by Proposition \ref{compfraz} and by the semigroup
property (\ref{semigrdii}), the laws $\call(X^m)$ are
tight on $C[0,T]$.

It remains to prove Lemma \ref{lpestimate}.

Note that, by the very definition,
$$
\dot{X}^m_\sigma =m\, \xi^m_{n+1}
\qquad {\rm \; for\;\;} \frac{n}{m}<\sigma <\frac{n+1}{m}.
$$
Consequently,
$$
\dot{X}^m_\sigma =m\sum_{n=0}^\infty
\chi_{(\frac{n}{m},\frac{n+1}{m})}(\sigma)
\xi^m_{n+1},
\qquad  \sigma \in [0,T],
$$
and
$$
\E \| I_{1-\alpha } (\dot{X}^m)\|_{L^p[0,T]}^p=
m^p \, \E\left[
\int_0^T\left(
\sum_n (
 I_{1-\alpha } \chi_{(\frac{n}{m},\frac{n+1}{m})}
 )(t)\xi^m_{n+1}
 \right)^p
 \, dt\right].
 $$
If $\zeta$ is a Gaussian random variable, $\E\zeta =0$,
then $\E |\zeta |^p =c_p (\E |\zeta |^2)^{p/2}$, where
$c_p$ is the $p$-th moment of a normalised Gaussian
random variable. Therefore
$$
\E \| I_{1-\alpha } (\dot{X}^m)\|_{L^p[0,T]}^p=
m^p c_p \int_0^T \left[ \E
\left(
\sum_n (
 I_{1-\alpha } \chi_{(\frac{n}{m},\frac{n+1}{m})}
 )(t)\xi^m_{n+1}
 \right)^2
 \right]^{p/2} \, dt.
 $$
But, for each $t\in [0,T]$,
$$
\E
\left(
\sum_n (
 I_{1-\alpha } \chi_{(\frac{n}{m},\frac{n+1}{m})}
 )(t)\xi^m_{n+1}
 \right)^2=
 \frac{1}{m} \sum_n (
 I_{1-\alpha } \chi_{(\frac{n}{m},\frac{n+1}{m})}
 )(t)^2
$$
and
$$
(
 I_{1-\alpha } \chi_{(\frac{n}{m},\frac{n+1}{m})}
 )(t)^2 = \frac{1}{\Gamma^2(1-\alpha)}
 \left< \chi_{[0,t]}(t-\cdot)^{-\alpha},
 \chi_{(\frac{n}{m},\frac{n+1}{m})}
 \right>^2_{L^2[0,T]}.
$$
Since
$$
 \left\|\chi_{(\frac{n}{m},\frac{n+1}{m})}
 \right\| ^2_{L^2[0,T]} =\frac{1}{m}
$$
and, for different $n$,
$\chi_{(\frac{n}{m},\frac{n+1}{m})}$ are orthogonal
functions, consequently, by Parseval's inequality:
$$
  \begin{array}{lll}
    \ds
    \sum_n  \left< \chi_{[0,t]}(t-\cdot)^{-\alpha},
 \chi_{(\frac{n}{m},\frac{n+1}{m})}

 \right>^2
 \eds &=& \ds
\frac{1}{m} \sum_n
\left< \chi_{[0,t]}(t-\cdot)^{-\alpha},
 \sqrt{m}\chi_{(\frac{n}{m},\frac{n+1}{m})}
 \right>^2
 \eds \\
    &\leq & \ds
\frac{1}{m} \int_0^t(t-\sigma)^{-2\alpha}d\sigma =
\frac{1}{m} \int_0^t \sigma^{-2\alpha}d\sigma,
\quad t\in [0,T].
\eds
  \end{array}
$$
This way we arrive at the estimate:
$$
  \begin{array}{lll}
    \ds \E \| I_{1-\alpha } (\dot{X}^m)\|_{L^p[0,T]}^p\eds
  &  \leq & \ds
  m^p c_p \frac{1}{m^{p/2}}\frac{1}{\Gamma^p(1-\alpha)}
  \int_0^T\left( \frac{1}{m}
  \int_0^T  \sigma^{-2\alpha}d\sigma\right)^{p/2}dt\eds
  \\
    &\leq & \ds
\frac{c_p}{\Gamma^p(1-\alpha)} T
\frac{T^{(1-2\alpha){(p/2)}}}
{ (1-2\alpha)^{p/2}},
\eds
  \end{array}
$$
which proves the lemma. \qed

\section{Donsker's theorem}

\begin{Theorem}\label{donsker}
Assume that for each $m$,
 $\xi_1^m, \xi^m_2,\ldots $
are independent, identically distribu\-ted such that
$$
\E\, \xi_n^m=0,\quad \E\, (\xi_n^m)^2 =\frac{1}{m},
\qquad  m=1,2,\ldots,\; n=1,2,\ldots,
$$
then the processes $X^m$ converge weakly to a Wiener
process.
\end{Theorem}

\noindent {\bf Proof.}
The proof is the same as for Gaussian $\xi_n^m$. Only
in the final part one has to use the following central
limit theorem.

{\bf Central limit theorem.} \begin{em} Under the
conditions of Theorem \ref{donsker}
$$
\call (B^m_m) \Rightarrow N(0,1)
\qquad {\; as \;} m\to\infty.
$$
More generally, if $\frac{k_m}{m}\to \sigma^2$ as
$m\to\infty$ then
$$
\call (B^m_{k_m}) \Rightarrow N(0,\sigma^2)
\qquad {\; as \;} m\to\infty.
$$
\end{em}

\section{Tightness by Kolmogorov's test}

Let $(E,\rho)$ be a metric space and $X_n$ a sequence
of $E$-valued, continuous processes defined on possibly
different probability spaces $(\Omega_n, \calf_n,
\P^n)$. We have the following version of
 the Kolmogorov's continuity test, see \cite{bib13}.

\begin{Theorem}\label{kolmtest}
Assume that there exist positive numbers $\alpha$, $r$,
$c$, such that for all $s,t\in [0,T]$
\begin{equation}\label{kolmogtest1}
\E^n\bigg(
\rho^r(X_n(t),X_n(s))\bigg) \leq c \,
|t-s|^{1+\alpha}.
\end{equation}
If the laws of $X_n(0)$ are tight on $E$ then also the
laws of $X_n(\cdot)$ on $C([0,T];E)$ are tight.
\end{Theorem}

\noindent {\bf Proof.}
To simplify notation we assume that $T=1$. Let $\cald$
be the set of dyadic numbers $t_{mk}=k/2^m$,
$k=0,1,2,\ldots,2^m$, $m=0,1,2,\ldots $. We show that
for arbitrary $\epsilon >0$
$$
\lim_{\delta\to 0}\, \sup_n\, \P^n \bigg(
\sup_{
\begin{array}{c}
\begin{scriptstyle}
 t,s\in \cald
\end{scriptstyle}
 \\ \begin{scriptstyle}
|t-s|\leq \delta
 \end{scriptstyle}
\end{array}
}
\rho(X_n(t),X_n(s)) > \epsilon\bigg) =0.
$$
Let us fix $k\in\N$ and assume that $\frac{1}{2^k}\leq
\delta <\frac{1}{2^{k-1}}$. Then
$$
\sup_{
\begin{array}{c}
\begin{scriptstyle}
 t,s\in \cald
\end{scriptstyle}
 \\ \begin{scriptstyle}
|t-s|\leq \delta
 \end{scriptstyle}
\end{array}
}
\rho(X_n(t),X_n(s)) \leq 3 \sup_j\,
\bigg[ \sup_{\frac{j}{2^k} <\frac{l}{2^m} <\frac{j+1}{2^k}}
\rho\left(X_n\left(\frac{l}{2^m} \right),
X_n\left(\frac{j}{2^k} \right)\right) \bigg],
$$
where $m>k$.

We have the following elementary lemma.
\begin{Lemma}\label{dyadic}
Any dyadic number $x=\frac{l}{2^m}<1$ where
$l=0,1,\ldots 2^m-1$ has a unique representation in the
form:
$$
x= \sum_{j=1}^m \frac{\epsilon_j}{2^j},
\qquad {\; \rm where\;} \epsilon_j =0 {\rm \; or\;} 1.
$$
\end{Lemma}
\noindent {\bf Proof of the Lemma.}
If $m=0$ and $m=1$ the result is true. Let the result
be true for $m-1$ and $\frac{l}{2^m}<1$. If $l=2l'$
then  $\frac{l'}{2^{m-1}}<1$ and by induction
$$
 \frac{l}{2^m}=\frac{l'}{2^{m-1}} =
 \sum_{j=1}^{m-1}\frac{\epsilon_j}{2^j}.
$$
If $l=2l'+1$ then
$$
 \frac{l}{2^m}=\frac{l'}{2^{m-1}} +
  \frac{1}{2^m}
$$
and again we have representation.
Uniqueness: assume
$$
\sum_{j=1}^{m}\frac{\epsilon_j}{2^{j}}
=\sum_{j=1}^{m}\frac{\epsilon_j'}{2^{j}}.
$$
Let $k$ be the smallest value of the index $j$ such
that $\epsilon_j \neq \epsilon_j'$. Without loss of
generality we may suppose $\epsilon_k =1,$
$\epsilon_k'=0$. Then
$$
\frac{1}{2^{k}}+\sum_{j=k+1}^{m}\frac{\epsilon_j}{2^{j}}
=\sum_{j=k+1}^{m}\frac{\epsilon_j'}{2^{j}}.
$$
This is impossible, since
$$
\frac{1}{2^{k}} >
\sum_{j=k+1}^{m}\frac{1}{2^{j}}\geq
\sum_{j=k+1}^{m}\frac{\epsilon_j'}{2^{j}}.
\qed
$$

\noindent {\bf Proof of the Theorem.}
By the lemma
$$
\frac{l}{2^m}= \frac{j}{2^k}+
 \sum_{r=1}^s \frac{1}{2^{m_r}},
$$
where $k< m_1 <\ldots <m_s\leq m$. Consequently
$$
\rho\left(X_n\left(\frac{l}{2^m} \right),
X_n\left(\frac{j}{2^k} \right)\right) \leq
\sum_{i=1}^s
\rho\left(X_n\left(\frac{j}{2^k}+\sum_{r=1}^i
 \frac{1}{2^{m_r}}\right),
X_n\left(\frac{1}{2^k} +\sum_{r=1}^{i-1}
 \frac{1}{2^{m_r}}\right)\right),
$$
and
$$
\sup_{
\begin{array}{c}
\begin{scriptstyle}
 t,s\in \cald
\end{scriptstyle}
 \\ \begin{scriptstyle}
|t-s|\leq \delta
 \end{scriptstyle}
\end{array}
}
\rho(X_n(t),X_n(s)) \leq 2 \sum_{m=k+1}^\infty
\bigg[
\sup_{l\leq 2^m-1}
\rho\left(X_n\left(\frac{l+1}{2^m} \right),
X_n\left(\frac{l}{2^m} \right)\right)
\bigg].
$$
However
$$\begin{array}{l}\ds
 \P^n \bigg(
\sup_{l\leq 2^m-1}
\rho\left(X_n\left(\frac{l+1}{2^m} \right),
X_n\left(\frac{l}{2^m} \right)\right)
>\frac{1}{m^2}
\bigg)
\eds\\\ds
\leq
\sum_{l\leq 2^m-1} \P^n \bigg(
\rho\left(X_n\left(\frac{l+1}{2^m} \right),
X_n\left(\frac{l}{2^m} \right)\right)
>\frac{1}{m^2}
\bigg)
\eds\end{array}
$$
and by Chebishev's inequality and (\ref{kolmogtest1})
$$
\leq m^{2r}\sum_{l\leq 2^m-1} \E^n
\rho^r\left(X_n\left(\frac{l+1}{2^m} \right),
X_n\left(\frac{l}{2^m} \right)\right)
\leq
m^{2r} 2^m c\, \left(\frac{1}{2^m}\right)^{1+\alpha}
= c\, \frac{m^{2r}}{2^{m\alpha}}.
$$
Choose $k$ such that
$$
\sum_{m=k+1}^\infty \frac{1}{m^2} <\frac{\epsilon}{2},
$$
then
$$
  \begin{array}{l}
    \ds
    \P^n \bigg(
\sup_{
\begin{array}{c}
\begin{scriptstyle}
 t,s\in \cald
\end{scriptstyle}
 \\ \begin{scriptstyle}
|t-s|\leq \delta
 \end{scriptstyle}
\end{array}
}
\rho(X_n(t),X_n(s)) > \epsilon\bigg)
\eds \\\ds\leq
 \P^n \bigg(
2 \sum_{m=k+1}^\infty
\sup_{l\leq 2^m-1}
\rho\left(X_n\left(\frac{l+1}{2^m} \right),
X_n\left(\frac{l}{2^m} \right)\right)
>\epsilon \bigg)
\eds \\
    \ds\leq
 \P^n \bigg(
 \sum_{m=k+1}^\infty
\sup_{l\leq 2^m-1}
\rho\left(X_n\left(\frac{l+1}{2^m} \right),
X_n\left(\frac{l}{2^m} \right)\right)
>\sum_{m=k+1}^\infty \frac{1}{m^2} \bigg)
\eds
\\ \ds\leq
\sum_{m=k+1}^\infty
\P^n \bigg(
\sup_{l\leq 2^m-1}
\rho\left(X_n\left(\frac{l+1}{2^m} \right),
X_n\left(\frac{l}{2^m} \right)\right)
>  \frac{1}{m^2} \bigg)
\eds \\ \ds
\leq
 c\, \sum_{m\geq k+1}
 \frac{m^{2r}}{2^{m\alpha}}.
 \eds
  \end{array}
$$
Since the series $\sum_{m}
 \frac{m^{2r}}{2^{m\alpha}}$ is convergent, and
$\delta\in \Big[\frac{1}{2^k},\frac{1}{2^{k-1}}\Big)$,
$$
\sum_{m\geq k+1}
 \frac{m^{2r}}{2^{m\alpha}} \leq
 \sum_{m\geq [\log_2\frac{1}{\delta}]+2}
 \frac{m^{2r}}{2^{m\alpha}}\to 0
 \qquad {\;\rm as\;} \delta\to 0. \; \qed
$$

Theorem \ref{kolmtest} implies Theorem \ref{11.2.1}. Note first the following result.
from the following lemma.

\begin{Lemma}\label{perkolmotest}
Assume that $d >0$ is a positive number and
$\xi_1,\xi_2,\ldots $ are independent, real random
variables such that for some $\sigma >0$ and all
$k=1,2,\ldots$,
$$
\E\, \xi_k=0,\qquad \E\, \xi_k^2=\sigma\, d.
$$
Define $X_{kd}= \xi_1+\ldots+\xi_k$, $k=1,2,\ldots$ and
$$
X_t=X_{kd} +\frac{t-kd}{d}\, \xi_{k+1},\qquad {\rm \;
for
\;} t\in [kd,(k+1)d].
$$
Then
$$
\E \, |X_t-X_s|^2\leq \sigma \, |t-s|,\qquad {\rm \; for
\;all\; } t,s\geq 0.
$$
\end{Lemma}

\noindent {\bf Proof.}
Assume that $s\geq t$ and $ s\in [ld,(l+1)d]$. Then
$$
  \begin{array}{lll}
    X_s-X_t&= & \ds
    X_{ld}-X_{kd} +
\frac{s-ld}{d}\, \xi_{l+1}
- \frac{t-kd}{d}\, \xi_{k+1}
    \eds\\
    &= & \ds
\xi_l+\ldots+\xi_{k+1} +
\frac{s-ld}{d}\, \xi_{l+1}
- \frac{t-kd}{d}\, \xi_{k+1}
\eds\\
    &= & \ds
\frac{s-ld}{d}\, \xi_{l+1}
+\xi_l +\xi_{l-1}+\ldots +\xi_{k+2}+
\left(
1- \frac{t-kd}{d}\right)\, \xi_{k+1}.
\eds
  \end{array}
$$
So
$$
\E \, |X_t-X_s|^2 = \left( \frac{s-ld}{d}\right)^2
\sigma\, d + \sigma \, d \, (l-(k+1)) +
\left(
1- \frac{t-kd}{d}\right)^2\sigma \, d.
$$
Since
$$
\sigma\, (s-t) = \sigma \, (s-ld)+\sigma \, d \, (l-(k+1))
+\sigma \,   ((k+1)d-t),
$$
$$
\sigma\, d \,\left( \frac{s-ld}{d}\right)^2
=
  \frac{s-ld}{d} \sigma\, (s-ld) \leq \sigma\, (s-ld),
$$
$$
\sigma \, d\, \left(
1- \frac{t-kd}{d}\right)^2 =
\frac{(k+1)d-t}{d} \sigma\, ((k+1)d-t)\leq
\sigma \, ((k+1)d-t),
$$
the result follows. \qed
\vspace{2mm}

\noindent If now $\zeta$ is a Gaussian random variable, $\E\zeta =0$,
$\E \zeta^2 =\gamma^2$, then for arbitrary $p>0$
$$
\E\, |\zeta|^p =
\left( \E\, \left| \frac{\zeta}{\gamma}\right|^p\right)
\gamma^p
$$
and consequently for any $p>0$ there exists $c_p >0$
such that
$$
\E\, |\zeta|^p \leq c_p\,
\left( \E\, \left|  \zeta \right|^2\right)^{p/2}.
$$
Thus if, in addition to the assumptions of Lemma
\ref{perkolmotest}, one requires that $\xi_1, \xi_2, \ldots, $ are
Gaussian then
$$
\E \, |X_t-X_s|^p\leq
c_p\, \left( \E \, |X_t-X_s|^2\right)^{p/2} \leq c_p \,
\sigma^{p/2} |t-s|^{p/2} .
$$
So if $p>2$ the conditions of the Kolomogorov test are
satisfied.
}


\begin{thebibliography}  {999999}                                                                                           %
{\small 
\bibitem{bass}
 S. R. Athreya, R. F. Bass, E. A. Perkins, H\"older norm estimates for elliptic operators on finite and infinite-dimensional spaces. Trans. Amer. Math. Soc. 357 (2005)  5001-5029.





\bibitem{bass1}  S. R. Athreya, R.F. Bass, M. Gordina, E. A. Perkins, 
Infinite dimensional stochastic differential equations of Ornstein-Uhlenbeck type, Stoch. Proc. and Appl. 116 (2006) 381-406.  
 
 
\bibitem{brezniak}  {  Z. Brzezniak, W. Liu, J. Zhu,  Strong solutions for SPDE with locally monotone coefficients driven by L\'evy noise,   
 Nonlinear Anal. Real World Appl. 17 (2014) 283-310.}


\bibitem{mytnik} O.  Butkovsky, L. Mytnik,  Regularization by noise and flows of solutions for a stochastic heat equation, Ann. Probab. 47 (2019)  165-212.   
 


\bibitem{CL} S. Cerrai, A. Lunardi, Schauder theorems for Ornstein-Uhlenbeck equations in infinite dimensions, 
 J. Differential Equations 267 (2019),  7462-7482.  
 
 


\bibitem{CG1}
A. Chojnowska-Michalik, B. Goldys,  Symmetric Ornstein-Uhlenbeck
Semigroups and their Generators, {Probab. Theory Relat.
Fields} 124 (2002) 459-486.

 
 

\bibitem{D2} G. Da Prato, A new regularity result for Ornstein-Uhlenbeck generators and applications, J. Evol. Eq. 3 (2003) 485-498. 


\bibitem{burgers} {  G. Da Prato, A. Debussche,
Differentiability of the transition semigroup of the stochastic Burgers equation, and application to the corresponding Hamilton-Jacobi equation,
 Atti Accad. Naz. Lincei Cl. Sci. Fis. Mat. Natur. Rend. Lincei (9) Mat. Appl. 9 (1998)  267-277.}


\bibitem{DD} G. Da Prato, A. Debussche, Stochastic Cahn-Hilliard equation, Nonlinear Anal.
26 (1996), 241-263.



\bibitem {DF}
 G. Da Prato, F. Flandoli, Pathwise uniqueness for a class of SDE 
in
Hilbert spaces and applications, \textit{J. Funct. Anal.} 259 
(2010) 243-267.


\bibitem{DFPR}  G. Da Prato, F. Flandoli, E. Priola, M. R\"ockner,  Strong uniqueness for stochastic evolution equations in Hilbert spaces perturbed by a bounded measurable drift, Ann. Probab. 41 (2013), 3306-3344. 


 
 
\bibitem{DFRV}  G.  Da Prato, F. Flandoli, M. R\"ockner, A. Yu. Veretennikov,  Strong uniqueness for SDEs in Hilbert spaces with nonregular drift, Ann. Probab. 44 (2016), 1985-2023. 


\bibitem{DKP} G. Da Prato, S. Kwapien, J. Zabczyk   
solutions of linear stochastic equations in Hilbert Spaces, Stochastics, 23 (1987) 1-23.


\bibitem{DL}  G.  Da Prato, A.  Lunardi, 
 {On the Ornstein-Uhlenbeck operator
 in spaces of continuous functions}, {J. Funct.
 Anal.} {  131} (1995) 94-114.

 



 
 
  



\bibitem{ergodicity} {  G. Da Prato, J. Zabczyk,  Ergodicity for infinite-dimensional systems,  London Math. Soc. Lecture Notes
vol. 229, Cambridge University Press,  1996.
}

 
 \bibitem{DZ1} G. Da Prato, J.
 Zabczyk, Second Order Partial Differential
Equations in Hilbert Spaces, London Math. Soc. Lecture Notes
vol. 293, Cambridge University Press,  2002.
 
\bibitem{DZ} G. Da Prato, J. Zabczyk, Stochastic equations in 
 infinite
dimensions, Encyclopedia of Mathematics and its Applications, \! II edition, 152. Cambridge University Press,  2014. 
 


\bibitem{doleraP} E. Dolera, E. Priola, {A counterexample to $L^{\infty}$-gradient type estimates for Ornstein-Uhlenbeck operators,} 
    preprint   
    \url{http://arxiv.org/abs/2210.06347}

\bibitem{EM} N. Elezovi\'c and A. Mikeli\'c, On the stochastic Cahn-Hilliard equation, Nonlinear Anal. 16 (1991), 1169-1200.  
 
 \bibitem{ES}  A. Es-Sarhir,  W. Stannat, Maximal dissipativity of Kolmogorov operators with
Cahn-Hilliard type drift term, J. Diff. Eq. 247 (2009) 424-446. 
 
 \bibitem{EK} S. Ethier, T. G. Kurtz, 
  Markov processes.
Characterization and convergence. Wiley Series in Probability and Mathematical Statistics: Probability and Mathematical Statistics. John Wiley \& Sons, Inc., 1986.



\bibitem{GG0} D. Gatarek, B. Goldys,
 On solving stochastic evolution
equations by the change of drift with application to optimal
control, 
 Pitman Res. Notes
Math. Ser., 268, Longman Sci. Tech., Harlow, 180-190, 1992.
 


\bibitem{GG} 
 D. Gatarek, B. Goldys,   On weak solutions of stochastic equations in Hilbert spaces. Stochastics Stochastics Rep. 46 (1994) 41-51.  
 
 
\bibitem{grisvard} {   \! \! P. Grisvard, Caract\'erisation de quelques espaces d'interpolation. Arch. Rational Mech. 25
(1967),  4.  } 
 
\bibitem{Gy} I. Gy\"ongy,  Existence and uniqueness results for semilinear stochastic partial differential equations. Stochastic Process. Appl. 73 (1998) 271-299.  
 

 

\bibitem{hairer} M. Hairer, An Introduction to Stochastic PDEs,  2009,  available at \url{arXiv:0907.4178v1.}  
 
 
\bibitem{Tala} I.  Iscoe, M.B. Marcus, D. McDonald,  M. Talagrand, J.  Zinn,  Continuity of $l_2$-valued Ornstein-Uhlenbeck processes.
Ann. Probab. 18 (1990)  68-84.   
 

\bibitem{KS} I. Karatzas, S. E.
 Shreve,  Brownian motion and stochastic calculus. Second
edition. Graduate Texts in Mathematics, 113. Springer-Verlag,
  1991. 

  
  
\bibitem{IW}
N.  Ikeda, S. Watanabe, Stochastic Differential Equations
and Diffusion Processes. North Holland-Kodansha, II edition, 1989.

  



\bibitem{kunze}  M. C. Kunze, On a class of martingale problems on Banach spaces. Electron. J. Probab. 18  (2013) 1-30.
 
 
\bibitem{lorenzi}  {  L. Lorenzi, M. Bertoldi,  Analytical methods for Markov semigroups. Pure and Applied Mathematics, 283. Chapman \& Hall/CRC, Boca Raton, FL, 2007.}
 
 
\bibitem{analityc}  {   A. Lunardi,  Analytic semigroups and optimal regularity in parabolic problems, Birkh\"auser, 1995. }
 
 
\bibitem{interpola} {  A. Lunardi,  Interpolation theory,  Lecture Notes, III edition, vol. 16. Scuola Normale Superiore di Pisa (New Series), 2018. 
 }
 
\bibitem{LR} A. Lunardi and M. R\"ockner, Schauder theorems for a class of (pseudo-)differential operators on finite and infinite dimensional state spaces, 
  J. Lond. Math. Soc. (2) 104 (2021),    492-540. 



\bibitem{NC} A. Novick-Cohen, The Cahn-Hilliard equation: Mathematical and modeling perspectives, Adv. Math. Sci. Appl. 8 (1998), 965-985.






\bibitem{PrPot} E. Priola, On weak uniqueness for some degenerate SDEs by global $L^p$ estimates, { Potential Analysis,} 42 (2015) 247-281.

 

 

\bibitem{priolaAOP} E. Priola, An optimal regularity result for Kolmogorov equations and weak uniqueness for some critical SPDEs, Ann. of Prob. 49
(2021)  1310-1346.   
 

 \bibitem{PriolaStudia} E.  Priola, E. Global Schauder estimates for a class of degenerate Kolmogorov equations, Studia Math. 194 (2009) 117–153.
 

\bibitem{RS} M. R\"{o}ckner, Z. Sobol,
 Kolmogorov equations in infinite dimensions: well-posedness and regularity of solutions, with applications to stochastic generalized Burgers equations, Ann. Probab. 34 (2006) 663-727.

 


\bibitem {SV79}  D.W. Stroock, 
 S.R.S Varadhan,  Multidimensional diffusion
processes. Grundlehren der Mathematischen Wissenschaften
 233. Springer-Verlag, 1979.


\bibitem{Za} L. Zambotti, An       analytic approach to existence and uniqueness for martingale problems in infinite dimensions, Prob. Theory Rel. Fields 118 (2000)  147-168.

} 
\end{thebibliography}
\end{document}